\setlist[enumerate,2]{label=(\alph*),ref=(\alph*)}
\theoremstyle{definition}
\newtheorem{thm}{Theorem}[section]
\newtheorem{lem}[thm]{Lemma}
\newtheorem{cor}[thm]{Corollary}
\newtheorem{prop}[thm]{Proposition}
\newtheorem{defn-prop}[thm]{Definition-Proposition}
\newtheorem{defn-thm}[thm]{Definition-Theorem}
\newtheorem{rem}[thm]{Remark}
\newtheorem{claim}[thm]{Claim}
\newtheorem{defn}[thm]{Definition}
\newtheorem{construction}[thm]{Construction}
\newcommand{\rank}{\mathop{\mathrm{rank}}}
\def\p{{\mathfrak p}}
\def\F{{\mathbb F}}
\def\G{{\mathbb G}}
\def\P{{\mathbb P}}
\def\Q{{\mathbb Q}}
\def\C{{\mathbb C}}
\def\Z{{\mathbb Z}}
\def\cO{\mathcal{O}}
\newcommand{\cC}{\mathcal{C}}
\def\red{\mathrm{red}}
\DeclareMathOperator\SL{\mathrm{SL}}
\DeclareMathOperator\Aut{\mathrm{Aut}}
\DeclareMathOperator\Gal{\mathrm{Gal}}
\DeclareMathOperator\Pic{\mathrm{Pic}}
\DeclareMathOperator\Sym{\mathrm{Sym}}
\DeclareMathOperator\Frac{\mathrm{Frac}}
\DeclareMathOperator\Spec{\mathrm{Spec}}
\DeclareMathOperator\Proj{\mathrm{Proj}}
\DeclareMathOperator\Shaf{\mathrm{Shaf}}
\DeclareMathOperator\Hom{\mathrm{Hom}}
\DeclareMathOperator\Gr{\mathrm{Gr}}
\DeclareMathOperator\Fl{\mathrm{Fl}}
\DeclareMathOperator\Br{\mathrm{Br}}
\DeclareMathOperator\diag{\mathrm{diag}}
\DeclareMathOperator{\PGL}{PGL}
\DeclareMathOperator{\PSL}{PSL}
\DeclareMathOperator{\spl}{split}
\DeclareMathOperator{\id}{id}
\DeclareMathOperator{\Alt}{Alt}
\DeclareMathOperator{\Pf}{Pf}
\newcommand{\sV}{\mathscr{V}}
\newcommand{\sA}{\mathscr{A}}
\newcommand{\sS}{\mathscr{S}}
\newcommand{\trans}[1]{{^t}\!{#1}}
\title{Quintic del Pezzo threefolds in positive and mixed characteristic}
\begin{document}

\numberwithin{equation}{subsection}

\author{Tetsushi Ito}
\address{
Department of Mathematics, Faculty of Science, Kyoto University
Kyoto, 606--8502, Japan}
\email{tetsushi@math.kyoto-u.ac.jp}
\thanks{Supported by JSPS KAKENHI Grant Numbers
JP21K18577, JP23K20204, JP23K20786, JP24K21512 and JP25K00905.}

\author{Akihiro Kanemitsu}
\address{Department of Mathematical Sciences, Graduate School of Science, Tokyo Metropolitan University, 1-1 Minami-Osawa, Hachioji-shi, Tokyo 192-0397, Japan}
\email{kanemitsu@tmu.ac.jp}
\thanks{Supported by JSPS KAKENHI Grant Numbers 23K12948.}

\author{Teppei Takamatsu}
\address{Department of Mathematics (Hakubi Center),
Kyoto University,
Kitashirakawa, Oiwake-cho, Sakyo-ku,
Kyoto 606-8502, JAPAN}
\email{teppeitakamatsu.math@gmail.com}
\thanks{Supported by JSPS KAKENHI Grant Numbers JP22KJ1780 and JP25K17228.}

\author{Yuuji Tanaka}
\address{Beijing Institute of Mathematical Sciences and Applications (BIMSA), No. 544, Hefangkou Village, Huaibei Town, Huairou District, Beijing 101408, China}
\email{ytanaka@bimsa.cn}
\thanks{Supported by JSPS KAKENHI Grant Numbers JP21H00973, JP21K03246, and Startup Grant at BIMSA}

\begin{abstract}
We show that smooth quintic del Pezzo threefolds over arbitrary base schemes are classified by non-degenerate ternary symmetric bilinear forms.
Then we describe the automorphism group schemes, the Hilbert schemes of lines and the orbit structures of quintic del Pezzo threefolds, and we find several new phenomena in characteristic two.
As arithmetic applications,
we prove a refinement of the Shafarevich conjecture, and prove that there are exactly two isomorphism classes of quintic del Pezzo threefolds over the ring of rational integers.
\end{abstract}

\subjclass[2020]{14J45, 14G17, 11G35}  
\keywords{del Pezzo threefold, Fano variety, Shafarevich conjecture}

\maketitle

\section{Introduction}
A \emph{quintic del Pezzo threefold} $Y$, also called a \emph{$V_5$-variety} $Y$, is a smooth Fano variety $Y$ with index two and $(-K_Y/2)^3=5$.
Among Fano threefolds, quintic del Pezzo threefolds form one of the most fundamental, but nontrivial, classes after the projective space $\P^3$ and the hyperquadric $Q^3$,
and hence such varieties have been a subject of
many studies of Fano threefolds.
Especially, the following three descriptions of $V_5$-varieties $Y$ over algebraically closed fields of characteristic zero play an important role in the study of their geometry \cite{Iskovskih1, Fuj90, Muk92, Ili94}
(see also \cite{DK19} for $V_5$ varieties over non-closed fields of characteristic zero):
\begin{enumerate}
 \item \label{description1} \textbf{(Projection from a line)} The projection from a line maps $Y$ to a three-dimensional quadric $Q^3$.
 \item \label{description2} \textbf{(Grassmann embedding)} $Y$ is a codimension $3$ linear section of the six-dimensional Grassmann variety $\Gr(5,3) \subset \P^9$,
 where $\Gr(5,3)$ is embedded in $\P^9$ via the Pl\"ucker embedding.
 \item \label{description3} \textbf{(Varieties of sums of powers)}
 $Y$ is the variety of sums of powers of a non-degenerate ternary quadratic form.
 Equivalently, it is the variety of trisecant lines to the projected Veronese surface $\P^2 \subset \P^4$.
\end{enumerate}

The descriptions \ref{description2} and \ref{description3} show that, over an algebraically closed field of characteristic zero, $V_5$-varieties are classified by \emph{nets of quinary alternating forms} and \emph{ternary quadratic forms}, respectively.
These descriptions have several important consequences for the geometry of $V_5$-varieties.
For example, over an algebraically closed field $k$ of characteristic zero, it follows:
\begin{enumerate}\setcounter{enumi}{3}
\item \label{application1} There exists a unique $k$-isomorphism class of $V_5$-varieties over $k$.
\item \label{application2} The Hilbert scheme of lines on a $V_5$-variety $Y$ is isomorphic to $\P^2$.
\item \label{application3} The automorphism group of a $V_5$-variety $Y$ is isomorphic to $\PGL_2$,
and $Y$ has three orbits under the action of $\Aut(Y) \simeq \PGL_2$.
\end{enumerate}

Contrary to the above case in characteristic zero,
the geometry of $V_5$-varieties in positive or mixed characteristic,
especially in characteristic two,
has not been much studied.
The descriptions as in \ref{description1} or \ref{description2} were announced in \cite{Fuj90,Meg98},
but the detailed proofs were not given (see Remark~\ref{rem:W5_p=2}).
The description \ref{description3} or the applications \ref{application1}--\ref{application3} does not seem to have been studied before.
More generally, there are very few studies on
$V_5$-schemes over general base schemes,
despite their importance within moduli theoretic
and number theoretic applications, which we have in mind.

The purpose of this paper is to generalize the above descriptions and applications to
all \emph{$V_5$-schemes} $Y$ over \emph{arbitrary} base schemes $B$.
The main results of this paper are summarized as follows:
\begin{enumerate}
\item \textbf{(Classification theorem)}
For an arbitrary scheme $B$, we show that $B$-isomorphism classes of $V_5$-schemes over $B$
are in natural bijection with $B$-similarity classes of non-degenerate ternary symmetric bilinear forms over $B$
(see Theorem \ref{thm:classification_W5}
for the precise statement).

\item \textbf{(Geometric applications)}
We describe the automorphism group schemes, the Hilbert schemes of lines, and the orbit structures of $V_5$-varieties explicitly (see Theorems \ref{thm:hilb_of_lines} and \ref{thm:splitV5_over_field}).

\item \textbf{(Arithmetic finiteness results)}
We study good reduction of $V_5$-varieties over $p$-adic fields,
and prove a refinement of the Shafarevich conjecture.
We also prove that there are exactly two isomorphism classes of $V_5$-schemes over $\mathbb{Z}$.
See Subsection~\ref{subsection:arithmetic_finiteness_results}
and
Section~\ref{section:arithmetic}
for details.
\end{enumerate}

The use of ternary symmetric bilinear forms,
rather than ternary quadratic forms,
is essential in this paper.
Note that, when $2$ is not invertible on the base scheme,
the theory of quadratic forms and that of symmetric bilinear forms are not equivalent,
while the latter provides the correct counterpart of $V_5$-schemes in our classification theorem.
This, in turn, leads to finding of several new interesting properties of $V_5$-varieties  in characteristic two.

\subsection{Classification of $V_5$-schemes over arbitrary schemes}

We give the precise statements of
our classification theorem for $V_5$-schemes.
To state it, we give the following definition (see Definitions~\ref{defn:W5} and \ref{defn:forms} for details).

\begin{defn}
 Let $B$ be a scheme.
\begin{enumerate}
\item A smooth projective scheme $Y$ over $B$
is called a \emph{$V_5$-scheme} over $B$
if, for every point $b \in B$, the fiber $Y_b$ is a $V_5$-variety.

\item A \emph{net of quinary alternating forms of rank $4$ over $B$} is a triple
\[ (U_5,N_3,\nu), \]
where $U_5$ and $N_3$ are locally free sheaves on $B$ of rank $5$ and $3$ respectively,
and $\nu \colon \wedge^2 U_5 \to N_3$ is a surjective map that locally defines a three-dimensional family of alternating forms of rank $4$.

\item Similarly, a \emph{non-degenerate ternary symmetric bilinear form over $B$} is a triple
\[ (N_3,L,\varphi), \]
where $N_3$ and $L$ are locally free sheaves on $B$ of rank $3$ and $1$ respectively,
and $\varphi \colon S^2 N_3 \to L$ is a surjective map that locally defines a non-degenerate symmetric bilinear form.
\end{enumerate}
Two nets of quinary alternating forms, or two symmetric bilinear forms, are said to be \emph{$B$-similar} (denoted by $\sim_B$) if they are isomorphic modulo twisting with line bundles.
\end{defn}

The following is the first main result of this paper.

\begin{thm}[Classification theorem, see Theorem~\ref{thm:W5description}]
\label{thm:classification_W5}
Let $B$ be a scheme.
There are natural bijections among the following three sets:
\begin{itemize}
\item The set of $B$-isomorphism classes of $V_5$-schemes over $B$:
\[ \{ Y \mid \text{$Y$ is a $V_5$-scheme over $B$} \} /\simeq_B. \]

\item The set of $B$-similarity classes of
nets of quinary alternating forms of rank $4$ over $B$:
\[
\{ (U_5,N_3,\nu)  \mid  \text{$(U_5,N_3,\nu)$ is a net of quinary alternating forms of rank $4$}
\}/\sim_B.
\]

\item The set of $B$-similarity classes of
non-degenerate ternary symmetric bilinear forms over $B$:
\[
\{ (N_3,L,\varphi) \mid
\text{$(N_3,L,\varphi)$ is a non-degenerate ternary symmetric bilinear form}
\}/\sim_B.
\]
\end{itemize}

For the construction of the correspondence, see Construction~\ref{construction:W5}.
\end{thm}

\subsection{Split $V_5$-schemes}

Our classification theorem implies the existence of a \emph{ubiquitous} model of $V_5$-schemes,
which we call the \emph{split} $V_5$-scheme $Y_{\spl, B}$;
it corresponds to the split symmetric bilinear form
\[ (N_3,L,\varphi)=(\cO_B^3, \cO_B, \varphi_{\spl}), \]
where 
$\varphi_{\spl} \colon S^2 \cO_B^3 \to \cO_B$
is the symmetric bilinear form whose matrix is
\[
\begin{pmatrix}
 0&-1&0\\-1&0&0\\0&0&1
\end{pmatrix}.
\]
The split $V_5$-scheme $Y_{\spl,B}$ is defined by the following $5$ quadratic equations in $\P^6_B$ (see Definition-Proposition~\ref{defn-prop:splitform} (3)):
\[
\begin{cases}
a_0 a_4 - a_1 a_3 + a_2^2,\\
a_0 a_5 - a_1 a_4 + a_2 a_3,\\
a_0 a_6 - a_2 a_4 + a_3^2,\\
a_1 a_6 - a_2 a_5 + a_3 a_4,\\
a_2 a_6 - a_3 a_5 + a_4^2.
\end{cases}
\]
Thus the split $V_5$-scheme $Y_{\spl,B}$ is defined over $\Z$, i.e.,\ we have an isomorphism
$Y_{\spl,B} \simeq Y_{\spl,\Z} \times_{\Spec \Z} B$.

Over an algebraically closed field or a finite field,
we can show that every $V_5$ variety is split.

\begin{thm}[see Corollary~\ref{cor:W5eqn}]
 Let $k$ be an algebraically closed field or a finite field.
 Then every $V_5$-variety over $k$ is isomorphic to the split $V_5$-variety $Y_{\spl,k}$.
\end{thm}

\subsection{Geometric applications}

Theorem \ref{thm:classification_W5} provides various fundamental geometric properties of $V_5$-schemes.

\subsubsection{Automorphism group schemes and Hilbert schemes of lines}

First, we describe the automorphism group schemes and the Hilbert schemes of lines: 

\begin{thm}[see Propositions~\ref{prop:aut_split_W5} and \ref{proposition:Hilb_of_lines}]\label{thm:hilb_of_lines}
Let $B$ be a scheme, $Y$ a $V_5$-scheme over $B$, and $(N_3,L,\varphi)$ the corresponding non-degenerate ternary symmetric bilinear form.
Then the following hold:
\begin{enumerate}
 \item \label{theorem:Aut_Hilb1}The automorphism group scheme $\Aut_{Y/B}$ is isomorphic to the subgroup scheme of $\Aut_{\P(N_3)/B}$ preserving $(N_3,L,\varphi)$.
  \item The Hilbert scheme of lines $\Sigma(Y/B)$ is isomorphic to $\P(N_3)$. Moreover, this isomorphism is equivariant with respect to the identification in \ref{theorem:Aut_Hilb1}.
\end{enumerate}
\end{thm}

Note that the theory of symmetric bilinear forms needs special treatment in characteristic two.
This, in turn, implies that $V_5$-varieties exhibit special behavior in characteristic two. 
This is summarized as follows:

\begin{thm}[=Proposition~\ref{prop:aut_split_W5}, Corollary~\ref{cor:lines_split_W5}, and Propositions~\ref{prop:generalW5} and \ref{prop:char2act}]\label{thm:splitV5_over_field}
Let $k$ be a field of characteristic $p \geq 0$, and $Y= Y_{\spl}$ be the split $V_5$-variety over $k$.
(When $k$ is an algebraically closed field or a finite field, then any $V_5$-variety is split.)
We identify $\Sigma(Y)$ with  $\P(N_3) = \P^2$ whose coordinates are given by $(x,y,z)$.
We have the following:
\begin{enumerate}
 \item
 $\Aut_{Y/k}$ is isomorphic to
 \[
\begin{cases}
 \PGL_{2,k} & (p\neq 2),\\
 \text{a non-reduced group scheme $G$ with $G_{\red} \simeq \SL_{2,k}$} & (p=2).
\end{cases}
 \]
 Also, we have
 \[
\begin{cases}
 h^0(T_Y) =3, \quad h^1(T_Y)=0  & (p\neq 2),\\
 h^0(T_Y) =5, \quad h^1(T_Y)=2 & (p=2).
\end{cases}
 \]
 See also Section~\ref{section:characteristic_two} for the details on the non-reduced automorphism group and non-trivial first-order deformations in characteristic two.
 \item
The Hilbert scheme of lines $\Sigma(Y)=\P^2$ is decomposed into $\Aut_{Y/k,\red}$-orbits as follows:
\[
\begin{cases}
\text{(open orbit)} \sqcup V(z^2-2xy) & (p\neq 2),\\
\text{(open orbit)} \sqcup V(z) \sqcup   \{(0,0,1)\} & (p=2).
\end{cases}
\]
In other words, there are two types of lines when $p\neq 2$, but three types when $p=2$.
 \item
 $Y$ is decomposed into $\Aut_{Y,k,\red}$-orbits as follows:
\begin{itemize}
 \item When $p \neq 2$, then there are three orbits $O_3^{(p)}$, $O_2^{(p)}$ and $O_1^{(p)}$.
 \item When $p = 2$, then there are four orbits $O_3^{(2)}$, $O_2^{(2)}$, $O_1^{(2)}$ and $O_1^{(2)'}$.
\end{itemize}
Each orbit $O_i^{(p)}$ or $O_i^{(p)'}$ is an $i$-dimensional orbit.
\end{enumerate}
\end{thm}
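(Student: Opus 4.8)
The plan is to reduce all three assertions to linear algebra over the split form $\varphi_{\spl}$, using the two structural results already established: the identification (Proposition~\ref{prop:aut_split_W5}) of $\Aut_{Y/k}$ with the subgroup scheme of $\Aut_{\P(N_3)/k}=\PGL_3$ preserving $(N_3,L,\varphi_{\spl})$, and the $\Aut$-equivariant isomorphism $\Sigma(Y)\cong\P(N_3)=\P^2$ (Proposition~\ref{proposition:Hilb_of_lines}). Since the similarity relation allows scaling of $L$, the first result says precisely that $\Aut_{Y/k}=\Stab_{\PGL_3}([\varphi_{\spl}])$, the projective similitude group of $\varphi_{\spl}$, acting on $\P^2$ through its image in $\PGL_3$.

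For part (1), when $p\neq2$ the group $\Stab_{\PGL_3}([\varphi_{\spl}])$ is the classical $\mathrm{PGO}_3\cong\SO_3\cong\PGL_{2,k}$, smooth of dimension $3$. When $p=2$ I would exploit that $x\mapsto\varphi_{\spl}(x,x)=z^2$ is a Frobenius-semilinear functional: any similitude $g$ satisfies $(gx)_3^2=\lambda\,x_3^2$, hence $(gx)_3=\sqrt{\lambda}\,x_3$, so $g$ preserves the hyperplane $V=\{z=0\}$ and the orthogonal line $V^\perp=\langle e_3\rangle$, on which $\varphi_{\spl}$ restricts to a non-degenerate alternating form and a non-degenerate line respectively. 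Writing $g=\diag(A,\mu)$ with $\det A=\mu^2$ and normalizing $\mu=1$ in $\PGL_3$ identifies $G_{\red}(\bar k)$ with $\SL_2(\bar k)$; as $\SL_2$ is smooth and connected of dimension $3=\dim G_{\red}$, this gives $G_{\red}\cong\SL_{2,k}$. The cohomology is governed by the tangent--obstruction exact sequence of the classification: since Theorem~\ref{thm:classification_W5} identifies, functorially in the base, the deformation groupoid of $Y_{\spl}$ with that of $(N_3,L,\varphi_{\spl})$ up to similarity, the infinitesimal action of $\PGL_3$ on the space $\P(S^2N_3^\vee)=\P^5$ of forms at $[\varphi_{\spl}]$ fits into $0\to H^0(T_Y)\to\mathfrak{pgl}_3\to T_{[\varphi_{\spl}]}\P^5\to H^1(T_Y)\to0$, with $H^0(T_Y)=\mathrm{Lie}(\Aut_{Y/k})$ the kernel and $H^1(T_Y)$ the cokernel. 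A short computation of the stabilizer $\{\xi\in\mathfrak{gl}_3:\xi^{\mathrm{t}}M+M\xi\in kM\}/\langle I\rangle$, with $M$ the Gram matrix of $\varphi_{\spl}$, gives $h^0=3$ for $p\neq2$ and $h^0=5$ for $p=2$ (the extra directions being the infinitesimal elements mixing $V$ and $V^\perp$, which also witness the non-reducedness of $G$). The exact sequence then forces $h^1=h^0-3$, i.e. $0$ and $2$ respectively; note that in characteristic $2$ the orbit of $[\varphi_{\spl}]$ is still open by Corollary~\ref{cor:W5eqn}, so the non-vanishing of $h^1$ comes entirely from the inseparability of the orbit map.

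For part (2), the equivariant isomorphism $\Sigma(Y)\cong\P^2$ turns the classification of lines into the orbit decomposition of $\P^2$ under $G_{\red}$. For $p\neq2$ this is the standard action of $\PGL_2=\SO_3$ on the space of binary quadratic forms, whose two orbits are the complement of the null conic and the null conic $V(z^2-2xy)=\{\varphi_{\spl}(v,v)=0\}$ itself. For $p=2$, $G_{\red}=\SL_2$ acts through $\diag(A,1)$ preserving $V\oplus\langle e_3\rangle$; inspecting this action in the charts $z\neq0$ and $z=0$ yields exactly three orbits: the open orbit, the line $V(z)=\P(V)$ on which $\SL_2$ acts transitively, and the fixed point $(0,0,1)=[e_3]$.

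Finally, for part (3) I would stratify $Y$ by the $\Aut_{Y,\red}$-equivariant incidence correspondence $I=\{(p,\ell):p\in\ell\}\subset Y\times\Sigma(Y)$: the subscheme $\Sigma_p\subset\P^2$ of lines through a point $p$ is an isomorphism invariant, so its type determines the orbit, and orbit dimensions follow from $\dim O=\dim G_{\red}-\dim\Stab(p)$ computed on representatives via the explicit equations of $Y_{\spl}\subset\P^6$ and the induced $G_{\red}$-action on $a_0,\dots,a_6$. I expect this to give three orbits $O_3^{(p)},O_2^{(p)},O_1^{(p)}$ when $p\neq2$. The main obstacle is the characteristic-two case: the extra line-type $\{(0,0,1)\}$ of part (2) should refine the incidence stratification and split off an additional one-dimensional orbit, and the delicate points are to verify that the reduced group $\SL_2$ (not the full non-reduced $G$) yields exactly the four claimed orbits $O_3^{(2)},O_2^{(2)},O_1^{(2)},O_1^{(2)'}$ of the stated dimensions, that they are pairwise distinct, and that their closures cover $Y$. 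This char-$2$ bookkeeping, where the symmetric-bilinear-form degeneracy feeds back into the geometry of $Y$, is where the bulk of the work lies.
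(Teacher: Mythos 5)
Your parts (1) and (2) are sound and track the paper closely: the char-$2$ identification of the reduced stabilizer with $\SL_{2}$ via the Frobenius-semilinear functional $v\mapsto\varphi_{\spl}(v,v)=z^2$ recovers exactly the paper's \eqref{eq:aut_Y_B_split} and \eqref{eq:SL2_PGL3}, and part (2) is the same direct orbit computation on $\P^2$ as Corollary~\ref{cor:lines_split_W5}. Your route to $h^0(T_Y)$ and $h^1(T_Y)$ is a genuinely different one: you read off the kernel and cokernel of the orbit map $\mathfrak{pgl}_3\to T_{[\varphi_{\spl}]}\P(S^2N_3)$ through the moduli-stack identification of Remark~\ref{rem:stacky}, whereas the paper computes $\dim T_{[\id]}\Aut_{Y}=5$ from the explicit group-scheme equations in Section~\ref{section:characteristic_two} and then cites Fujita for $h^1$. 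Your version is self-contained modulo the stack isomorphism (which the paper does establish), and your remark that $h^1\neq 0$ in characteristic $2$ despite the orbit of $[\varphi_{\spl}]$ being open — i.e.\ that it is caused purely by inseparability of the orbit map — is correct and a nice observation.

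The genuine gap is part (3), which in your write-up is a plan rather than a proof, and whose pivotal claim is stated in the wrong logical direction. That the configuration $\Sigma_p\subset\P^2$ of lines through $p$ is an isomorphism invariant shows only that points in the \emph{same} orbit have the same incidence type; it does not show that two points with the same type lie in the same orbit, which is the actual content of the orbit decomposition. Closing this requires proving transitivity on each stratum, and that is exactly where the paper does its work: (i) every point of $Y$ lies on a line ($Y$ is covered by lines because $Y_{\spl}$ lifts to characteristic zero) — without this your strata need not even cover $Y$; (ii) by part (2), every line is $\Aut_{\red}$-equivalent to one of the representative lines $l_{(0,1,0)}$, $l_{(0,0,1)}$ (and $l_{(1,1,1)}$ when $p=2$); (iii) Proposition~\ref{prop:line_stab_action} computes the action of each line's stabilizer \emph{on that line} explicitly, showing it has a fixed point (or fixed pair) and a dense complementary orbit, so that every point of $Y$ is equivalent to one of three (resp.\ four) explicit points; (iv) these explicit points are pairwise inequivalent, checked against the explicit actions \eqref{eqn:actionPGL2} and \eqref{eqn:sigma'}, and their orbits have the stated dimensions and closure relations (Propositions~\ref{prop:generalW5} and \ref{prop:char2act}, including the normalization maps $\nu$, $\nu'$ of the $2$-dimensional orbit closure). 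Your sketch contains none of (i), (iii), (iv) — indeed you flag precisely this as ``where the bulk of the work lies'' — so part (3) of the statement remains unproven in your proposal; the mechanism that moves two points of the same incidence type into one another is the stabilizer-on-line computation (iii), not the invariance of $\Sigma_p$.
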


Moreover, we describe the interplay between lines and orbits on $V_5$-varieties.
As is shown in the above theorem, the Hilbert scheme of lines $\Sigma(Y) \simeq \P^2$ admits the orbit decomposition
\[
\Sigma(Y) \simeq \P^2 
=
\begin{cases}
\text{(open orbit)} \sqcup V(z^2-2xy) & (p\neq 2),\\
\text{(open orbit)} \sqcup V(z) \sqcup \{(0,0,1)\} & (p=2).
\end{cases}
\]
A line belonging to the open orbit is called \emph{ordinary}, and that belonging to the one-dimensional orbit is called \emph{special}.
The last one corresponding to the point $(0,0,1)$ when $p=2$ is called \emph{exceptional}.
In characteristic zero, Furushima--Nakayama (\cite{FN89}) studied the lines on $V_5$,
and described the lines passing through a fixed point.
In Section~\ref{section:orbit}, we investigate a similar description and prove:
\begin{thm}[=Propositions~\ref{prop:numberoflinep=2} and \ref{prop:numberoflinep>2}]
 Let $Y$ be a quintic del Pezzo threefold over an algebraically closed field $k$.
 Then we have the following:
\begin{enumerate}
 \item For a point $P \in O_3^{(p)}$, there are exactly three lines passing through $P$. These lines are ordinary.
 \item For a point $P \in O_2^{(p)}$, there are exactly two lines passing through $P$. One of these lines is ordinary, and the other is special.
 \item For a point $P \in O_1^{(p)}$, there is a unique line passing through $P$, which is special.
 \item For a point $P \in O_1^{(2)'}$ ($p=2$), there are exactly two lines passing through $P$. One of these lines is special, and the other is exceptional.
\end{enumerate}
\end{thm}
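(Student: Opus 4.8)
The plan is to study the universal incidence correspondence
\[
\mathcal{I} = \{(P,[\ell]) \mid P \in \ell\} \subset Y \times \Sigma(Y), \qquad \Sigma(Y) \simeq \P(N_3) \simeq \P^2,
\]
through its two projections $\pi_1 \colon \mathcal{I} \to Y$ and $\pi_2 \colon \mathcal{I} \to \Sigma(Y)$. Since $\pi_2$ is the universal family of lines it is a $\P^1$-bundle, so $\dim \mathcal{I} = 3 = \dim Y$ and $\pi_1$ is generically finite; the lines through a point $P$ are precisely the closed points of the fibre $\pi_1^{-1}(P) \subset \Sigma(Y)$. Once the explicit computation below shows that every fibre is finite, $\pi_1$ is finite and, by miracle flatness (as $\mathcal{I}$ is smooth and $Y$ is smooth), flat of degree three, so each $\pi_1^{-1}(P)$ has length $3$ as a scheme. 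Because the isomorphism $\Sigma(Y) \simeq \P(N_3)$ is $\Aut_{Y,\red}$-equivariant, the scheme $\pi_1^{-1}(P)$ depends only on the orbit of $P$. It therefore suffices to compute $\pi_1^{-1}(P)$ for one representative of each orbit $O_i^{(p)}$ and of $O_1^{(2)'}$, for which I would take the explicit representatives supplied by the orbit decomposition established earlier.

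For the computation I would work in the split model, which is legitimate since $k$ is algebraically closed and hence $Y \simeq Y_{\spl} \subset \P^6_k$ is cut out by the five displayed quadrics, with $\Sigma(Y) = \P^2$ carrying the coordinates $(x,y,z)$. From the proof of $\Sigma(Y) \simeq \P(N_3)$ one extracts a bihomogeneous parametrization $\P^1 \times \P^2 \to \P^6$ of the universal line whose restriction to $\{[v]\} \times \P^1$ is $\ell_v$. For a fixed representative $P = (a_0 : \dots : a_6)$, imposing $P \in \ell_v$ and eliminating the $\P^1$-parameter produces the finite subscheme $\pi_1^{-1}(P) \subset \P^2$, which I would compute directly in each case; its total length is $3$. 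The type of each resulting line is read off from the position of $[v]$ in the orbit decomposition of $\Sigma(Y)$: ordinary if $[v]$ lies in the open orbit, special if it lies on the null conic $V(z^2 - 2xy)$, and, in characteristic two, exceptional if $[v] = (0,0,1)$. For $P$ in the open orbit $O_3^{(p)}$ this should yield three distinct reduced points in the open orbit, hence three ordinary lines; as $P$ specializes into $O_2^{(p)}$ and $O_1^{(p)}$ I expect the length-$3$ fibre to degenerate as $1 + 2$ and as a single fat point, giving respectively one ordinary plus one special line, and a single special line.

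The main obstacle is characteristic two, where the theory of the symmetric form --- and therefore the geometry of $\Sigma(Y)$ --- degenerates: the null conic $z^2 - 2xy = z^2$ collapses to the double line $V(z)$, and the new orbit $O_1^{(2)'}$ together with the exceptional point $(0,0,1)$ appears. Here $\pi_1^{-1}(P)$ is typically non-reduced, so the bare count $\deg \pi_1 = 3$ no longer reflects the geometry faithfully, and I must pass to the underlying reduced fibre. Using the explicit elimination above for a representative $P \in O_1^{(2)'}$, I would show that the support of $\pi_1^{-1}(P)$ consists of exactly two points, one on the special locus $V(z)$ and the other equal to $(0,0,1)$, whereas for $P \in O_1^{(2)}$ the unique supported point is special. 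The delicate step is to confirm that $(0,0,1)$ genuinely carries a line through such $P$, rather than contributing only through a non-reduced or infinitely-near term; I would settle this by exhibiting the exceptional line $\ell_{(0,0,1)}$ explicitly and verifying directly that it passes through the chosen representative. Cases (1)--(3) then follow uniformly from the reduced computation, and (4) is precisely the characteristic-two refinement just described.
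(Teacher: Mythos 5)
Your proposal is correct and takes essentially the same route as the paper: reduce to the explicit orbit representatives via $\Aut_{Y,\red}$-equivariance of $\Sigma(Y)\simeq\P(N_3)\simeq\P^2$, find all lines through each representative by direct computation in the split model $Y_{\spl}\subset\P^6$, and read off the type of each line from the orbit decomposition of $\Sigma(Y)$. The incidence-correspondence/miracle-flatness overlay is extra packaging the paper does without — and as stated it is mildly circular, since the degree of $\pi_1$ is asserted to be three before the reduced open-orbit fiber computation that would establish it — but because the theorem's assertions are set-theoretic counts verified orbit by orbit, this does not affect the correctness of your argument.
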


\subsubsection{The variety of trisecant lines to a projected Veronese variety}

The above theorem enables us to interpret $Y$ as the variety of trisecant lines to a projected Veronese variety.
More generally, we will prove that a $V_5$-scheme $Y$ is a variety of trisecant lines to a projected Veronese surface over arbitrary \emph{reduced} base schemes $B$:
\begin{thm}[=Theorem~\ref{thm:trisecant}]
Let $B$ be a reduced scheme, $Y$ a $V_5$-scheme over $B$, and $(N_3,L,\varphi)$ the corresponding symmetric bilinear form.
Let $\P(N_3) \subset \P(\ker \varphi)$ be the projected Veronese surface, that is, the projection of the Veronese embedding $\P(N_3) \subset \P(S^2N_3)$ from the point $[\varphi] \in \P(S^2N_3)$.
 Then $Y$ is isomorphic to the Zariski closure of
 \[
 \{ \, [L] \in \Gr(\ker \varphi, 2) \mid
 \text{\rm $\P(L) \cap \P(N_3)$ consists of three points} 
 \} \, \subset \, \Gr(\ker \varphi, 2)
\]
with its reduced structure.
\end{thm}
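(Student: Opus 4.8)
The plan is to construct an explicit $B$-morphism $f\colon Y\to\Gr(\ker\varphi,2)$ whose image is the reduced trisecant closure, and then to verify that it is an isomorphism by reducing to geometric fibers. Write $\overline{T}\subset\Gr(\ker\varphi,2)$ for the trisecant locus with its reduced structure. Since $Y$, the form $(N_3,L,\varphi)$, the Veronese map $\beta\colon\P(N_3)\hookrightarrow\P(\ker\varphi)$, $[v]\mapsto[\overline{v^2}]$, and the incidence geometry are all formed by operations compatible with base change, and since the formation of $\overline{T}$ (taking the closure with reduced structure) commutes with base change precisely because $B$ is reduced, it suffices to produce $f$ landing in $\overline{T}$ and to check that $f$ is an isomorphism onto $\overline{T}$ on each geometric fiber. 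By Corollary~\ref{cor:W5eqn} every geometric fiber is the split variety $Y_{\spl}$ equipped with the split form, so the fiberwise check reduces to $Y_{\spl}$ over an algebraically closed field in every characteristic, including $p=2$.

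For the construction of $f$ I would use the universal family of lines. By Proposition~\ref{proposition:Hilb_of_lines} the Hilbert scheme of lines is $\Sigma(Y/B)\cong\P(N_3)$; let $\mathcal I\subset Y\times_B\P(N_3)$ be the universal incidence, which over the open orbit is finite of degree three over $Y$ by Propositions~\ref{prop:numberoflinep=2} and \ref{prop:numberoflinep>2}. Composing the second projection with $\beta$ sends the three lines through a general point $P\in Y$ to three points of the projected Veronese surface, and these span a line $\P(W_P)\subset\P(\ker\varphi)$; the assignment $P\mapsto[W_P]$ defines $f$ on the open orbit, with image in $\overline{T}$ by construction. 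To extend $f$ across the degenerate orbits I would describe it concretely through the sub-Pfaffian (linear-algebra) map relating the net $(U_5,N_3,\nu)$ of Theorem~\ref{thm:classification_W5} to $(N_3,L,\varphi)$, which manifestly extends to a morphism on all of $Y$; alternatively one can invoke the valuative criterion together with the smoothness (hence normality) of $Y$ and the properness of the Grassmannian.

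To check that $f$ is an isomorphism onto $\overline{T}$ over an algebraically closed field, I would verify bijectivity on points by matching the orbit decomposition of $Y$ with the stratification of $\overline{T}$ by the intersection type of the trisecant with the Veronese surface: a point of $O_3$ corresponds to a genuine trisecant through three distinct Veronese points, a point of $O_2$ to a secant--tangent line, a point of $O_1$ to an osculating (triple-point) line, and, when $p=2$, the extra orbit $O_1'$ to the exceptional configuration detected by the special behaviour of the symmetric bilinear form in characteristic two. The inverse morphism is given geometrically by sending a trisecant $\P(W)$ to the common intersection point on $Y$ of the three lines it determines; the existence and uniqueness of this common point is exactly the content of the line-incidence results already established. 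Since $Y$ is smooth, $\overline{T}$ is reduced, and $f$ admits this explicit inverse on the open orbit, $f$ is an isomorphism fiberwise, and hence an isomorphism onto $\overline{T}$ over $B$ by the fiberwise-isomorphism criterion.

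The main obstacle is the behaviour over the degenerate orbits. Extending $f$ there as a genuine morphism, and simultaneously controlling the scheme structure so that its image is exactly the reduced trisecant closure rather than a thickening, is the delicate point; this is where the hypothesis that $B$ is reduced is used, and where characteristic two requires separate treatment, since there the isotropic locus of $\varphi$ and the limiting trisecants degenerate in a way absent when $p\neq 2$, producing the extra orbit $O_1'$ and the exceptional line. Verifying directly, using the five quadrics of Definition-Proposition~\ref{defn-prop:splitform}, that both $f$ and its inverse extend across these loci is the technical heart of the argument.
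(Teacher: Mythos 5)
Your overall skeleton (reduce to geometric fibers, where every fiber is the split variety, and verify things via the explicit line-incidence results) overlaps with the paper, but two steps of your plan have genuine gaps. First, your morphism $f$ is not well defined as you set it up: that the three points attached to the three lines through a general $P$ ``span a line'' is a nontrivial claim (three points of $\P(\ker\varphi)\simeq\P^4$ generically span a plane), and you give no argument for it. It is true, but for a reason you never invoke: by Definition-Proposition~\ref{prop:relativeW5}, $Y$ \emph{already} sits as a closed subscheme of $\Gr(U_5,3)\simeq\Gr(\ker\varphi,2)$ (using $\ker\varphi\simeq\wedge^4U_5$), and by the proof of Proposition~\ref{proposition:Hilb_of_lines} the projected Veronese map sends a line $[l]\in\Sigma(Y)$ to the center of the pencil that $l$ is; hence for each line $l_i\ni P$ the point $\beta([l_i])$ lies on the line in $\P(\ker\varphi)$ which \emph{is} the Grassmannian point $P$. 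So the three points are automatically collinear, your $f$ is forced to coincide with the tautological closed immersion $Y\hookrightarrow\Gr(\ker\varphi,2)$, and the extension problem you treat as the main obstacle (sub-Pfaffians, valuative criterion, degenerate orbits) is vacuous. Second, the ``fiberwise-isomorphism criterion'' you invoke at the end is false as stated: a bijective morphism of reduced schemes over an algebraically closed field admitting an inverse on a dense open need not be an isomorphism (normalization of a cuspidal curve; Frobenius --- a live threat precisely in your characteristic-two case), and Zariski's main theorem would require normality of $\overline{T}$, which is part of what is being proved. One only gets an isomorphism by recognizing $f$ as a closed immersion, i.e.\ by the first point.

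The other missing idea is the one that lets the paper avoid all analysis of the boundary of $\overline{T}$, which you defer as ``the technical heart.'' A trisecant line is the same thing as an orthogonal basis of $(N_3,\varphi_{\spl})$ up to scaling, and the group of Proposition~\ref{prop:aut_split_W5} acts transitively on such bases; hence the trisecant set $T$ is a \emph{single orbit}, irreducible and homogeneous. One explicit check that a single point of the open orbit $O_3^{(p)}$ lies in $T$ (the paper uses $(0,-4,0,0,0,1,0)$ for $p\neq2$ and $(1,0,0,1,0,0,1)$ for $p=2$, computing the intersection with the surface of Remark~\ref{rem:projection_veronese}) then forces $T=O_3^{(p)}$ exactly, so $\overline{T}=\overline{O_3^{(p)}}=Y$ set-theoretically; since $B$ is reduced, $Y$ is a reduced closed subscheme, and equality of sets between two reduced closed subschemes of $\Gr(\ker\varphi,2)$ is the theorem. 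No limits of trisecants, no inverse morphism, no stratification matching, and no orbit-by-orbit treatment of $O_2$, $O_1$, $O_1'$ is needed. Relatedly, your claim that formation of the reduced closure ``commutes with base change precisely because $B$ is reduced'' is unjustified; what the reduction to fibers actually rests on is the elementary sandwich $\overline{T}\subseteq Y$ (as $Y$ is closed and contains $T$) together with $Y=\bigcup_b Y_b=\bigcup_b\overline{T_b}\subseteq\overline{T}$ once the fiberwise set equality is known.
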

In other words, $Y$ is recovered from the corresponding symmetric bilinear form $(N_3,L,\varphi)$ as the \emph{variety of orthogonal bases}.

\subsubsection{Remarks on the proofs}

Note that the interpretation of a $V_5$-variety $Y$
as a variety of trisecant lines to a projected Veronese surface works only when $B$ is reduced, since it only describes $Y$ as a set.
Possibly, Theorem~\ref{thm:classification_W5} for reduced base schemes $B$ can be proved more geometrically by using this interpretation.
In our approach, however, we will take a more algebraic construction to deal with the case of non-reduced base schemes.
More precisely, instead of constructing $V_5$-schemes directly from symmetric bilinear forms, we will establish correspondences
\begin{enumerate}[label=(\alph*)]
    \item \label{remark:proof1} between $V_5$-schemes and nets of alternating forms, and
    \item \label{remark:proof2} between nets of alternating forms and symmetric bilinear forms. 
\end{enumerate}
The former correspondence is, in essence, obtained by constructing the relative Grassmann embedding of $V_5$-schemes $Y$ over an arbitrary base scheme (See \cite{Kuz25} for a treatment in the characteristic zero case).

More difficult—and indeed the most technical core part of our proof—is in the latter stage: the analysis of symmetric bilinear forms constructed from nets of alternating forms. 
For example, in the course of the proof, we will prove:
\begin{prop}(cf.\ Remark~\ref{remark:discriminant_square})
Let $R$ be a ring, $(R^{\oplus 5}, R^{\oplus 3}, \nu)$ a net of symmetric bilinear forms on $\Spec R$, and $(R^{\oplus 3},R,  \varphi)$ the associated symmetric bilinear form.
Then the invariant obtained by taking the discriminant of $\varphi$ is a complete square. 
\end{prop}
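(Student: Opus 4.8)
The plan is to reduce the assertion to a single universal polynomial identity and then to exhibit an explicit square root. First I would pass to the universal net. In local bases the net is given by three alternating (hence, in particular, symmetric) $5\times 5$ matrices, i.e.\ by a ring homomorphism from the polynomial ring $\Lambda := \Z[a^{(k)}_{ij}]$ on the matrix entries, and by the explicit formulas of Construction~\ref{construction:W5} both $\varphi$ and $\disc(\varphi)$ are pullbacks of universal elements of $\Lambda$. Hence it suffices to prove that $\disc(\varphi)$ is a perfect square in $\Lambda$: squareness is preserved by the base-change map $\Lambda\to R$, and the identity we seek is an identity of polynomials, so it is insensitive to the open non-degeneracy condition. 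Working over $\Z$ has the decisive advantage of treating all characteristics—most importantly $p=2$—at once, with no division by $2$. Finally, since $\Lambda$ is a unique factorization domain with $\Lambda^{\times}=\{\pm 1\}$, any element of $\Lambda$ that is a square in $\Frac(\Lambda)$ is already a square in $\Lambda$ (comparing $\pi$-adic valuations of a reduced fraction). Thus it is enough to produce $\sqrt{\disc(\varphi)}\in\Frac(\Lambda)$.

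Next I would make $\varphi$ explicit. Under the correspondence, $\varphi$ is, up to the twisting line bundle $L$, the generator of the one-dimensional kernel of the \emph{polarized Pfaffian map}
\[
W\colon S^2 N_3^{\vee}\longrightarrow U_5\otimes(\det U_5)^{-1},\qquad t\cdot t\longmapsto \Pf\bigl(A(t)\bigr),
\]
where $A(t)$ is the rank-$4$ alternating matrix attached to $t\in N_3^{\vee}$ and $\Pf(A(t))$ is its vector of $4\times 4$ sub-Pfaffians. The key point is that this polarization is defined over $\Z$ (rationally it equals $\tfrac12\,\alpha_t\wedge\alpha_t\in\wedge^4 U_5^{\vee}$), which is exactly what allows the description to survive in characteristic $2$, where $\alpha_t\wedge\alpha_t$ itself vanishes. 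In bases $W$ is a $5\times 6$ matrix, the six coordinates of $\varphi$ are its signed maximal ($5\times 5$) minors, and $\disc(\varphi)=\det\Phi$ for the symmetric $3\times 3$ matrix $\Phi$ assembled from these minors. The problem thereby becomes the purely algebraic assertion that $\det\Phi$ is a square.

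The heart of the argument is to produce the square root. For a \emph{generic} $5\times 6$ matrix the analogous determinant is not a square, so one must use that $W$ is not generic: it arises from $\wedge^2 U_5^{\vee}$, so its columns obey the Pfaffian/Plücker relations, and these force extra structure on $\Phi$. Concretely, I would exploit the syzygy $A(t)\cdot \Pf(A(t))=0$ together with the Pfaffian identities to bring $\Phi$, after a congruence over $\Frac(\Lambda)$, into a form whose determinant is manifestly a square—either a Gram matrix $C^{t}C$, giving $\det\Phi=(\det C)^2$, or a shape in which $\det\Phi$ is visibly the square of a Pfaffian-type resultant of the net. Equivalently, one names the classical invariant of the net whose square is $\disc(\varphi)$. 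The remaining scalar is then pinned down on the split net $(\cO_B^{5},\cO_B^{3},\nu_{\spl})$ of Definition-Proposition~\ref{defn-prop:splitform}, where $\Phi$ and its square root can be written out by hand.

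The main obstacle is precisely this last structural step: realizing the factorization of $\Phi$—equivalently, identifying the invariant whose square is $\disc(\varphi)$—requires genuinely manipulating the Pfaffian/Plücker relations rather than a routine determinant expansion, and it must be carried out integrally so as to remain valid in characteristic $2$, where the symmetric/alternating dichotomy and the discriminant conventions diverge. A secondary but necessary bookkeeping task is to track the twisting line bundle $L$ throughout, so that the conclusion is literally that $\disc(\varphi)$ is a complete square, and not merely a square up to a global unit.
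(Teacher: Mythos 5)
Your reduction to the universal net over $\Lambda=\Z[a_{ij},b_{ij},c_{ij}]$, your description of $\varphi$ via the integral polarization of the sub-Pfaffian map (the paper's $6\times 5$ matrix $M$), your identification of the six coordinates of $\varphi$ with the signed maximal minors $\det M_i$, and your UFD argument for passing from $\Frac(\Lambda)$ to $\Lambda$ all match the paper's setup and are sound. The problem is that the proof stops exactly where it must begin: the square root is never exhibited. Saying that one ``would exploit the syzygy $A(t)\cdot\Pf(A(t))=0$ together with the Pfaffian identities'' to bring $\Phi$ into either a Gram-matrix shape or ``a shape in which $\det\Phi$ is visibly the square of a Pfaffian-type resultant'' is a statement of intent with two unexecuted alternatives, and the fallback of ``naming the classical invariant of the net whose square is $\disc(\varphi)$'' is circular, since the existence of such an invariant is precisely what is to be proved. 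Your closing paragraph concedes that this structural step is the main obstacle; so what you have is a correct reduction together with an accurate description of the difficulty, not a proof.

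What fills this gap in the paper is a concrete construction, not a formal manipulation of Pl\"ucker relations. Writing $\widetilde\beta(X,Y)$ for the polarized vectors of $4\times4$ sub-Pfaffians and $\langle\,\xi\,|\,X\,|\,\eta\,\rangle=\sum_{i,j}\xi_i x_{ij}\eta_j$ for Ochiai's trilinear form, the paper produces (Claim~\ref{claim:invariant}, following \cite[Lemma 3.1]{Och97}) an explicit symmetric $3\times3$ matrix $P'$ with entries such as $\langle\widetilde\beta(A,A)\,|\,C\,|\,\widetilde\beta(A,B)\rangle$, and then proves (Claim~\ref{claim:discriminant_square}) the adjugate identity $P'Q_\nu=Q_\nu P'=(\det P')E$, i.e.\ that $Q_\nu$ is the adjugate of $P'$; this polynomial identity is verified either by computer algebra or, using that $\Lambda$ is reduced, by checking at geometric points, where every rank-$4$ net is split and the identity is a hand computation (Definition-Proposition~\ref{defn-prop:splitform}). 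Taking determinants of a $3\times 3$ adjugate gives $\det Q_\nu=(\det P')^{2}$, which is exactly Remark~\ref{remark:discriminant_square}. Two further remarks. First, your insistence on avoiding division by $2$ is not actually forced for the bare squareness statement: if a polynomial in $\Z[a_{ij},b_{ij},c_{ij}]$ is a square in $\Q[a_{ij},b_{ij},c_{ij}]$, Gauss's lemma makes it a square integrally, so the characteristic-zero argument via the degree-$15$ relative invariant of the prehomogeneous space $\wedge^2 k^{5}\otimes k^{3}$ (\cite[Proposition 14]{SK77}, which the paper itself cites for this purpose) would suffice --- but your proposal does not execute that route either. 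Second, the explicit entries of $P'$ are what the paper needs beyond mere squareness (they enter the proof that $\Psi'\circ\Phi'=\id$), so the invariant-theoretic construction is not an optional refinement but the actual content.
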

This proposition is connected with the relative invariant of the prehomogeneous space $\wedge^2 R^{\oplus 5} \otimes R^{\oplus3}$ and, at least in characteristic zero, it follows from the computation of the degree of the relative invariant \cite[Proposition 14]{SK77}. In fact, we require more refined versions of this proposition (see Claims \ref{claim:invariant} and \ref{claim:discriminant_square}).
The proof of Claim \ref{claim:invariant} involves the invariant theory of nets of alternating forms from \cite[Section 3]{Och97} (cf.\ \cite[Subsection 3.2]{Arithmeticinvariant2} and \cite[Section 2]{Gyo90}), while that of Claim \ref{claim:discriminant_square}, which a priori needs rather intricate computations, is done by reducing the situation to a simpler case.

\begin{rem}
\begin{enumerate}
\item For 1-dimensional linear sections of $Y$ (over an algebraically closed field),  an analogous construction as in \ref{remark:proof1} was proved in \cite[Proposition 0.2]{Mukaicurve}.
See \cite[Subsection 4.5]{Bhargava-Ho} for the discussion over a $\Z[1/30]$-scheme.
\item In this paper, we only study smooth $V_5$-schemes.
It seems an interesting problem to extend our results to $V_5$-varieties with singularities.
Such generalizations, if possible,
would be regarded as a higher dimensional generalization of Bhargava's classification of quintic rings
(see \cite{Bhargava-HighercompositionIV}), which is related to (the dual of) $2$-dimensional linear sections of $Y$.
\item Our construction of nets of alternating forms from symmetric bilinear forms in \ref{remark:proof2} can be considered as a generalization of a similar construction from conics defined over a field of characteristic zero in \cite{Muk92} (see also \cite{Dol12}).
Note that this generalization itself also requires a subtle treatment since we need to deal with arbitrary base schemes, including the characteristic two case.
\end{enumerate}
\end{rem}

\subsection{Arithmetic finiteness results}
\label{subsection:arithmetic_finiteness_results}

As arithmetic applications of our main result,
we obtain several finiteness results for $V_5$-varieties over number fields.
See Section \ref{section:arithmetic} for more details.

Recall that the Shafarevich conjecture, which was originally studied by Shafarevich, Faltings, and Zarhin for abelian varieties,
addresses the finiteness of varieties within certain classes of varieties over a number field that admit good reduction outside
a fixed finite set of finiteness places.
It has been studied for Fano varieties; for example, Scholl \cite{Scholl}, Javanpeykar-Loughran \cite{Javanpeykar-Loughran:GoodReductionFano}, Licht \cite{Licht}, and we \cite{IKTT} proved Shafarevich-type finiteness results for del Pezzo surfaces and
certain Fano threefolds,
which include the case of $V_5$-varieties
(see also references in \cite{IKTT} for results on other classes of varieties).

As an application of
Theorem~\ref{thm:classification_W5},
we explicitly calculate the number of isomorphism classes of $V_5$-varieties admitting good reduction outside a fixed finite set of places.

\begin{thm}[see Theorem~\ref{thm:explicitShafarevichconjecture}]
 Let $F$ be a number field, $S$ a finite subset of finite places of $F$, $T$ the set of finite places of $F$ dividing $2$, and $U$ the set of real places of $F$.
 We put $r \coloneqq  \#(S \cup T \cup U)$ (the union is taken within the set of all places of $F$).
 Then the number of elements of the set
\[
\bigg\{
\,Y \ \bigg| \ 
\begin{array}{l}
\text{\rm $Y$ is a $V_5$-variety over $F$ admitting} \\
\text{\rm good reduction at $\p$ for any  $\p \notin S$}
\end{array}
\bigg\}
/\simeq_F
\]
 is equal to $2^{r-1}$.
 \end{thm}

Javanpeykar--Loughran proved the finiteness of the above set, but did not calculate the cardinality explicitly
(see \cite[Proposition 4.10]{Javanpeykar-Loughran:GoodReductionFano}).
Note that the set $T$ of finite places dividing $2$ plays a special role, because every $V_5$-varieties over a $2$-adic field always admit good reduction
(see Proposition \ref{prop:goodreductioncriteriaW5odd}
 (3)).

We also prove finiteness results on integral models of $V_5$-varieties over the ring of integers.

\begin{prop}
[see Corollary \ref{cor:classification_over_z}]
There are exactly two isomorphism classes of $V_5$-schemes over $\Z$.
\end{prop}

\begin{prop}
[see 
Theorem~\ref{thm:finitenessofintegral models}]
\label{cor:rings_of_integers1}
Let $F$ be a number field, and $S$ a finite set of finite places of $F$.
Let $\cO_{F,S}$ be the ring of $S$-integers in $F$.
Then the number of elements in the set
\[
\{ \mathcal{Y} \mid  
\text{\rm $\mathcal{Y}$ is a $V_5$-scheme over $\cO_{F,S}$}
\}
/ \simeq_{\cO_{F,S}}
\]
is finite.
\end{prop}

\subsection{Organization of the paper}

This paper is organized as follows.

In Section~\ref{section:preliminaries}, we summarize basic properties of $V_5$-schemes.
We show $V_5$-schemes are relative linear sections of six-dimensional Grassmann bundles over $B$ (=Definition-Proposition~\ref{prop:relativeW5}).
Along the course of the proof, we also sketch the proof of this fact, as well as the uniqueness of a $V_5$-variety, over a fixed algebraically closed field, whose proof is missing in the literature.
In Section~\ref{section:correspondence},  we will prove the classification of $V_5$-schemes by nets of alternating forms or symmetric bilinear forms.
In Section~\ref{section:automorphisms}, we provide the description of the automorphism group schemes, and describe its action on $Y$ explicitly when $Y$ is split.
In Section~\ref{section:hilbert_scheme_of_lines}, we describe the Hilbert scheme $\Sigma(Y)$ of lines as the $\P^2$-bundle on $B$.
Then we describe the action of the automorphism group scheme on $\Sigma(Y)$ or the universal family of lines.
In Section~\ref{section:orbit}, we describe the orbit decomposition of $Y$, and discuss the geometry of lines and orbits in more detail, which implies that $Y$ is the variety of trisecant lines to the projected Veronese surface.
In Section~\ref{section:arithmetic}, we study several arithmetic results for $V_5$-varieties.
Finally, in Section~\ref{section:characteristic_two},
we briefly discuss some geometric consequences of the non-reduced automorphism group in characteristic two.
We will describe the automorphism group and its action on $Y$ (=Proposition~\ref{prop:non-reduced_action}).
Then, we will construct a specific non-reduced subgroup in the automorphism group (=Proposition~\ref{prop:non-reduced_subgroup}), and describe the quotient $Y$ by it (=Remark~\ref{rem:quotient_V10}).
Further, we show that $H^1(T_Y)$ is two-dimensional, and describe the corresponding two-dimensional deformations of $Y$ in Subsection \ref{subsection:deformation_p=2}.

\subsection{Further application to prime Fano threefolds of genus $12$}
$V_5$-varieties are related to other class of Fano threefolds called the \emph{prime Fano threefolds of genus $12$}.
In our forthcoming paper (\cite{V22}), as an application of the results in the present article, we will study prime Fano threefolds of genus $12$ in positive and mixed characteristic.
For instance, a complete classification of prime Fano threefolds of genus $12$ with infinite automorphism groups will be given there.
See \cite{V22} for details.

\subsection*{Acknowledgments}
The authors, especially the second author, wish to express their gratitude to Professor Shigeru Mukai for various insightful discussions, comments, and suggestions.
We are also grateful to Professors Adrien Dubouloz, Kento Fujita, Takashi Kishimoto, Tatsuro Kawakami, Hiromu Tanaka, Yuji Odaka, and Laurent Manivel for helpful discussions.
Along this project, we are aided by computer algebra systems like \texttt{Macaulay2} and \texttt{SageMath} for several computations and making predictions (e.g.,\ Claim~\ref{claim:discriminant_square}).

\subsection*{Notations and Convention}
\begin{itemize}
\item A \emph{variety} $X$ over a field $k$ is a geometrically integral, separated scheme $X$ of finite type over $k$.
A \emph{family of projective varieties} $X/B$ is a flat projective morphism $X \to B$ of finite type whose fibers are varieties.

\item For a family of projective varieties $X/B$, $\Pic_{X/B}$ denotes the Picard functor, and $\Aut_{X/B}$ is the relative automorphism group scheme (when it is representable).

\item For a locally free sheaf $E$ on a scheme $X$, we denote by $\P(E)=\Proj(\Sym E)$ the Grothendieck projectivization of $E$, which parametrizes rank one locally free quotients of $E$. Similarly, $\Gr(E,m)$ denotes the \emph{Grassmann scheme} that parametrizes rank $m$ locally free quotient of $E$, and $\Fl(E;a_1, \dots, a_k)$ the \emph{flag scheme} parameterizing the flags of locally free quotients of rank $a_1$, \dots, $a_k$.

\item For a locally free sheaf $E$ on a scheme $X$, $c_1(E)$ denotes the class of $\det E \in \Pic(X)$ (but with additive notation, e.g., $c_1(L_1 \otimes L_2)= c_1(L_1) + c_1(L_2)$ for line bundles $L_i$).

\item For a number field $F$ or a $p$-adic field $K$, we denote by $\cO_F$ or $\cO_K$ its ring of integers.
For a finite subset $S$ of finite places of $F$, $\cO_{F,S}$ is the ring of $S$-integers of $F$.
\item For a number field $F$ and its finite place $\p$, we denote by $F_{\p}$ the $\p$-adic completion of $F$, and $\cO_{F,\p}$ its ring of integers.
$\cO_{F,(\p)}$ denotes the localization of $\cO_F$ at $\p$.

\item 
In this paper, we only consider \emph{left} group actions.
Thus, if a group $G$ acts on a variety $X$ and $F$ is a contravariant functor, then the induced $G$-action on $F(X)$ is given by $F(g^{-1})$.
In particular, $G$ acts on the Hilbert scheme of $X$ by the left action given by $[Z] \to [gZ]$. 
\end{itemize}
Note that a version of the cohomology and base change theorem holds over an arbitrary base scheme, which we will use freely in the remaining part of the paper (see, e.g., \cite[Proposition 4.37]{FGA}).

\section{Preliminaries on $V_5$-varieties}\label{section:preliminaries}

In this section, we provide several fundamental definitions and properties of $V_5$-schemes.
In particular, we will describe $V_5$-schemes as relative linear sections of $\Gr(5,3)$-bundles (see \cite[Section 4.2]{Kuz25} for the case of characteristic zero).

Let $k$ be a field, and $\overline{k}$ an algebraic closure of $k$.
A \emph{Fano variety} $X$ over $k$ is,
by definition, a smooth projective variety over $k$ with ample anti-canonical divisor $-K_X$.
We put $X_{\overline{k}} \coloneqq X \otimes_k \overline{k}$.
The Picard rank of $X_{\overline{k}}$ is denoted by 
$\rho(X_{\overline{k}}) \coloneqq \rank \Pic(X_{\overline{k}})$,
and the index of $X_{\overline{k}}$ is defined by
\[
i(X_{\overline{k}}) \coloneqq \max\{r \in \Z \mid \text{$-K_X$ is divisible by $r$ in $\Pic(X_{\overline{k}})$}\}.
\]

\begin{defn}\label{defn:W5}
\begin{enumerate}
 \item A smooth projective variety $Y$ over $k$ is called a \emph{$V_5$-variety}, or a \emph{quintic del Pezzo threefold},
 if it is a Fano variety with $\rho(Y_{\overline k})=1$, $i(Y_{\overline k})=2$ and $(-K_Y/2)^3=5$.
 \item Let $B$ ba a scheme.
 A $B$-scheme $\pi \colon Y\to B$ is called a \emph{$V_5$-scheme} if $\pi$ is a smooth projective morphism and, for every point $b \in B$, the fiber $X_b$ over $b$ is a $V_5$-variety.
\end{enumerate}
\end{defn}

We use the following description of $V_5$-varieties over algebraically closed fields:
\begin{thm}[\cite{Fuj90,Meg98}]\label{theorem:W5_over_closed_field}
Let $k$ be an algebraically closed field, $\Gr(5,3)$ the Grassmann variety parametrizing three-dimensional quotients of a fixed five-dimensional space over $k$,
and $\Gr(5,3) \subset \P^9$ its Pl\"ucker embedding.
Then the smooth codimension $3$ linear section of $\Gr(5,3) \subset \P^9$ is a $V_5$-variety.

Conversely, every $V_5$-variety $Y$ over $k$ is obtained in this way.
Moreover, we have the following:
\begin{enumerate}
 \item\label{theorem:W5_over_closed_field1} The ample generator $H \in \Pic(Y)$ defines an embedding $Y \to \P^6$, which is the restriction of the above Pl\"ucker embedding.
 \item\label{theorem:W5_over_closed_field2} $h^0(H) =7$ and $h^i(H)=0$ for $i >0$.
 \item\label{theorem:W5_over_closed_field3} The isomorphism class of $Y$ is unique.
\end{enumerate}
\end{thm}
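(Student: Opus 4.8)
The plan is to prove the two directions by different means. For the implication that a smooth linear section is a $V_5$-variety, together with the embedding statement, I would argue by adjunction and a Lefschetz argument; the cohomology vanishing I would extract from a Koszul resolution; and for the converse and the uniqueness I would combine Fujita's theory of del Pezzo threefolds with the Buchsbaum--Eisenbud structure theorem and the prehomogeneous vector space $\wedge^2 k^5\otimes k^3$. First I would record the invariants of the ambient: the Grassmannian $\Gr(5,3)\cong\Gr(2,5)$ is a smooth sixfold with Pl\"ucker polarization $H$ satisfying $-K_{\Gr}=5H$ and $H^{6}=\deg\Gr(2,5)=5$. For a smooth codimension-three linear section $Y=\Gr\cap\P^{6}$, adjunction gives
\[
K_Y=(K_{\Gr}+3H)|_Y=-2\,H|_Y,
\]
so $-K_Y=2H|_Y$ and $(H|_Y)^3=H^6=5$. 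Weak Lefschetz (valid in every characteristic for $\ell$-adic cohomology of a linear section of dimension $\geq 3$) yields $b_2(Y)=b_2(\Gr)=1$, hence $\rho(Y)=1$ and $\Pic(Y)\cong\Z$; since $(H|_Y)^3=5$, the class $H|_Y$ is primitive (a proper divisor $A$ with $H|_Y=mA$, $m\geq2$, would force the non-integral self-intersection $A^3=5/m^3$), so $H|_Y$ generates $\Pic(Y)$ and $i(Y)=2$. Thus $Y$ is a $V_5$-variety, and $H|_Y$, being the restriction of the Pl\"ucker embedding, is very ample; this already gives the embedding assertion.

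For the cohomology statement I would twist the Koszul resolution
\[
0\to\O_{\Gr}(-3)\to\O_{\Gr}(-2)^{\oplus 3}\to\O_{\Gr}(-1)^{\oplus 3}\to\O_{\Gr}\to\O_Y\to 0
\]
by $\O_{\Gr}(1)$ and feed in the cohomology of line bundles on $\Gr(2,5)$: Kempf vanishing gives $h^0(\O_{\Gr})=1$ and $h^0(\O_{\Gr}(1))=10$ with no higher cohomology, while $\O_{\Gr}(-1)$ and $\O_{\Gr}(-2)$ are acyclic (they lie in the acyclic range $\O(-1),\dots,\O(-4)$ of $\Gr(2,5)$, which holds in every characteristic). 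The hypercohomology spectral sequence then collapses, giving $h^0(H|_Y)=10-3=7$ and $h^i(H|_Y)=0$ for $i>0$.

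For the converse I would start from an abstract $V_5$-variety $Y$ with ample generator $H$ and $-K_Y=2H$. Riemann--Roch, together with the vanishing of intermediate cohomology, gives $h^0(H)=7$, and Fujita's theory of del Pezzo threefolds \cite{Fuj90} (extended to positive characteristic in \cite{Meg98}) shows $|H|$ is very ample, embedding $Y\hookrightarrow\P^6$ as a projectively normal threefold of degree $5$ with $\omega_Y=\O_Y(-2)$. Being arithmetically Gorenstein of codimension three, the homogeneous ideal of $Y$ is, by the Buchsbaum--Eisenbud structure theorem, generated by the $4\times 4$ Pfaffians of a $5\times 5$ skew-symmetric matrix of linear forms; these Pfaffians are precisely the Pl\"ucker relations of $\Gr(2,5)$, so $Y$ is realized as a linear section $\Gr(2,5)\cap\P^6$.

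Finally, for uniqueness I would translate the embedding datum into a \emph{net of alternating forms}: the skew matrix above corresponds to a three-dimensional subspace of $\wedge^2 k^5$, and $\wedge^2 k^5\otimes k^3$ is a prehomogeneous vector space under $\GL_5\times\GL_3$ with a single dense orbit over $\bar k$, cut out by the non-vanishing of the relative invariant. Smoothness of $Y$ corresponds to lying in this open orbit, so any two $V_5$-varieties correspond to nets in the same orbit and are therefore isomorphic via an element of $\GL_5=\Aut(\Gr(2,5))$. The main obstacle, and the reason the statement needs care beyond the characteristic-zero arguments, is the positive-characteristic cohomological input: the acyclicity of the small negative twists on $\Gr(2,5)$, and—more seriously—the intermediate-cohomology vanishing underlying very ampleness and projective normality for the abstract $Y$, where Kodaira vanishing is unavailable; one must also verify that the open-orbit/smoothness dictionary persists in characteristic two.
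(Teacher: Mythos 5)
Your forward direction (adjunction, degree $5$, weak Lefschetz, primitivity of $H|_Y$) and your Koszul computation of $h^i(H)$ are correct in all characteristics — the acyclicity of $\O_{\Gr}(-1),\dots,\O_{\Gr}(-4)$ follows from the arithmetic Cohen--Macaulayness of Grassmannians, which is characteristic-free — and this matches the paper, which treats that part as in characteristic zero. Your converse via the Buchsbaum--Eisenbud structure theorem is a genuinely different and legitimate route from the paper's (which instead quotes Megyesi/Fujita to get the quadric-defined embedding $Y \subset \P^6$ and then reads off the Grassmannian section); but note that the "serious obstacle" you flag — intermediate cohomology vanishing for the abstract $Y$, needed for projective normality and the arithmetically Gorenstein property — is not actually open: the paper fills it by citing Kawakami--Tanaka's Kodaira-type vanishing for smooth Fano threefolds in positive characteristic. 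So that part of your argument is fillable by citation rather than a true gap.

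The genuine gap is in your proof of uniqueness (3), which is the crux of the theorem. You reduce uniqueness to the claim that all nets of alternating forms coming from smooth sections lie in a single $\GL_5\times\GL_3$-orbit over $\bar k$, "cut out by the non-vanishing of the relative invariant." That statement is imported from Sato--Kimura's theory of prehomogeneous vector spaces, which is a characteristic-zero theory; in characteristic $p$, and above all in characteristic $2$, the existence of a dense orbit, the behavior of the relative invariant, and the identification of the orbit complement with its zero locus all require independent proof, and you explicitly defer this ("one must also verify that the open-orbit/smoothness dictionary persists in characteristic two") without supplying it. This is exactly the case the theorem exists to cover, and it is not a formality: in characteristic $2$ the relevant stabilizer group schemes become non-reduced, the discriminant degenerates ($\det Q_\nu = F^2$ in the paper's notation), and the orbit picture on lines changes from two types to three — precisely the phenomena that break naive transfer of the characteristic-zero orbit argument. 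You also cannot shortcut this by appealing to a correspondence of $V_5$'s with nets or conics, since the paper's proof of that classification (Theorem~\ref{thm:W5description}) itself uses Theorem~\ref{theorem:W5_over_closed_field}(3). The paper's own proof of uniqueness avoids invariant theory entirely: blow up a line $\ell \subset Y$ to obtain the projection to a smooth quadric $Q^3$, so that $Y$ is determined by a twisted cubic $C \subset Q$; classify twisted cubics in smooth quadrics in arbitrary characteristic (ordinary, special, and — only in characteristic $2$ — the graph of Frobenius); and then match each pair $(Q,C)$ against the three explicitly computed types of lines on the split model $Y_{\spl}$. To repair your proposal you would need either such a geometric argument or a characteristic-free classification of rank-$4$ nets, not a citation of characteristic-zero prehomogeneous space theory.
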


\begin{rem}[Proof of the above theorem]\label{rem:W5_p=2}
Since there is no detailed proof of the above theorem (especially in characteristic $2$, cf.\ \cite[(8.19)]{Fuj90}), we sketch a proof of it.

The first part of the theorem is the same as the case of characteristic zero.
We sketch the proof of the remaining parts.
Take a $V_5$-variety $Y$ over $k$.
By the Kodaira vanishing 
(cf.\ \cite[Remark 2.8 (1)]{Kawakami-Tanaka-weak}),
we have \ref{theorem:W5_over_closed_field2}.
Then, as is detailed in \cite{Meg98}, $Y$ is a quintic del Pezzo $3$-fold in the sense of \cite{Fuj90}.
Thus $H$ embedds $Y \subset \P^6$ and $Y$ is defined by $5$ quadrics.

Take a line $l \subset Y$ (a line exists since it does on a hyperplane section).
Then the blow-up $\widetilde Y$ of $Y$ along $l$ is a Fano variety of Picard rank $2$, whose second extremal contraction $\widetilde Y \to Q$ satisfies
\begin{enumerate}
 \item $Q \subset \P^4$ is a smooth three-dimensional quadric,
 \item the contraction $\widetilde Y \to Q$ is the blow-up of $Q$ along a twisted cubic $C$ contained in $Q$,
 \item The linear span $\langle C\rangle $ of $C$ cuts out a hyperplane section $H$ of $Q$, which is the strict transform of the exceptional divisor of $\widetilde Y \to Y$.
\end{enumerate}
Conversely, a twisted cubic contained in a smooth hyperquadric determines a $V_5$-variety, and hence it is enough to classify twisted cubics $C$ in a smooth hyperquadric $Q$.
Let $H$ be the hyperplane section $\langle C\rangle \cap Q$. 
We can prove that there are three types of twisted cubics in a smooth hyperquadric:
\begin{itemize}
 \item \textbf{(Ordinary twisted cubic)} $H$ is the smooth quadric surface $\P^1\times \P^1$, and $C$ is the graph of a separable double cover $\P^1 \to \P^1$.
 \item \textbf{(Exceptional twisted cubic, $p=2$ only)} $H$ is the smooth quadric surface $\P^1\times \P^1$, and $C$ is the graph of the Frobenius morphism $\P^1 \to \P^1$.
 \item \textbf{(Special twisted cubic)} $H$ is the quadric cone, and $C$ is a twisted cubic passing through the vertex.
\end{itemize}

In the latter part of this paper (Corollary \ref{cor:lines_split_W5} and Proposition~\ref{prop:line_stab_action}), we will see that, for a specific choice $Y_{\spl}$ of a smooth codimension $3$ linear section of $\Gr(5,3) \subset \P^9$, there are three types of lines $l_1$, $l_2$ and $l_3$.
We can check that each pair $(Q,C)$ as above corresponds to one of the three types of lines $l_1$, $l_2$, and $l_3$.
Thus $Y$ is isomorphic to $Y_{\spl}$.
\end{rem}

\begin{lem}
\label{lem:relativeW5}
Let $B$ be a scheme, and $ \pi \colon Y \rightarrow B$ a $V_5$-scheme over $B$.
Then the following hold:
\begin{enumerate}
 \item The relatively ample generator $H' \in \Pic_{Y/B} (B)$ is a line bundle and relatively very ample. 
 \item $H'$ defines an embedding 
\[
Y \hookrightarrow \P_{B} (\pi_{\ast}H')
\]
into a six-dimensional projective bundle $f \colon \P_{B} (\pi_{\ast}H') \to B$.
\end{enumerate}
 \end{lem}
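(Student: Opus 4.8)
The plan is to reduce everything to the fiberwise statements recorded in Theorem~\ref{theorem:W5_over_closed_field} and then spread them out over $B$ by cohomology and base change, so that the desired map becomes the relative version of the half-anticanonical embedding $Y_b\hookrightarrow\P^6$ on each fiber. Concretely, for every geometric point $b$ the ample generator $H_b$ of $\Pic(Y_b)\cong\Z$ satisfies $-K_{Y_b}=2H_b$, is very ample, and by Theorem~\ref{theorem:W5_over_closed_field}\ref{theorem:W5_over_closed_field2} has $h^0(Y_b,H_b)=7$ and $h^i(Y_b,H_b)=0$ for $i>0$. These facts hold in all characteristics, including $p=2$, which is the only input from the positive-characteristic analysis I need here.

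First I would produce the relatively ample generator as an honest line bundle on the total space. Since each fiber is a Fano threefold with $H^1(Y_b,\O_{Y_b})=H^2(Y_b,\O_{Y_b})=0$, the relative Picard functor $\Pic_{Y/B}$ is representable and \'etale over $B$, with geometric fibers $\Z$ generated by $H_b$. The relative anticanonical line bundle $\omega_{Y/B}^{-1}$ restricts to $2H_b$, so its formal half singles out a section $H'\in\Pic_{Y/B}(B)$ characterized by $2H'=\omega_{Y/B}^{-1}$; this section is canonical because on each fiber it is the unique indivisible ample class, hence insensitive to monodromy. The main obstacle of the whole lemma lies exactly here: showing that $H'$ is represented by a genuine line bundle on $Y$, i.e. that its image under the boundary map $\Pic_{Y/B}(B)\to\Br(B)$ from the Leray spectral sequence for $\Gm$ vanishes. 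I would exploit two features: the relation $2H'=\omega_{Y/B}^{-1}$ shows the obstruction is $2$-torsion (as $\omega_{Y/B}^{-1}$ is honest), while the fact that a $V_5$-variety over any field carries a rational point means that $\pi$ admits sections \'etale-locally on $B$ by smoothness, so $H'$ is \'etale-locally honest. Combining the \'etale-local triviality with the $2$-torsion constraint kills the obstruction, and $H'$ descends to a line bundle on $Y$.

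With $H'$ in hand, the embedding itself is routine. By the vanishing $h^i(Y_b,H')=0$ for $i>0$ together with the constancy of $h^0=7$, cohomology and base change give that $\pi_*H'$ is locally free of rank $7$ with formation commuting with arbitrary base change, and that the evaluation map $\pi^*\pi_*H'\to H'$ is surjective, since $H'$ is very ample, hence globally generated, on each fiber. This produces a $B$-morphism $Y\to\P_B(\pi_*H')$ into the six-dimensional projective bundle with $\O(1)$ pulling back to $H'$.

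Finally I would check this morphism is a closed immersion, which then gives relative very ampleness of $H'$. Here I would invoke the fiberwise criterion: $\pi$ is proper, the target is projective over $B$, and on each fiber the map is the very ample embedding $Y_b\hookrightarrow\P^6$ of Theorem~\ref{theorem:W5_over_closed_field}\ref{theorem:W5_over_closed_field1}; a proper $B$-morphism that is a closed immersion on every fiber is a closed immersion. The only genuinely delicate point is the representability of $H'$ by a line bundle over a completely arbitrary base, and I would take care to phrase the Brauer-obstruction argument so that it is uniform across all characteristics.
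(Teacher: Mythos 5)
Your overall architecture --- construct $H'$ as a section of $\Pic_{Y/B}$ with $H'^{\otimes 2}\simeq\omega_{Y/B}^{-1}$, identify the Brauer class $\beta\in\Br(B)$ as the only obstruction to $H'$ being an honest line bundle, then conclude by cohomology and base change and the fiberwise criterion for closed immersions --- is exactly the paper's, and those outer steps are fine. The gap is in the step you yourself single out as the crux. You argue that $\beta$ is $2$-torsion (correct, since $\omega_{Y/B}^{-1}$ is an honest line bundle) and that $H'$ is ``\'etale-locally honest'' (also correct, but for a simpler reason than you give: $\pi$ is smooth and surjective, hence admits sections \'etale-locally; the claim that every $V_5$-variety over every field has a rational point is not needed for this, and is in any case not available at this point of the paper), and you then conclude that these two facts together kill $\beta$. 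That implication is false. \'Etale-local triviality is a property of \emph{every} Brauer class --- it is what being a class of $H^2_{\et}(B,\Gm)$, equivalently an Azumaya algebra, means --- so it adds nothing to the $2$-torsion constraint; and nonzero $2$-torsion Brauer classes certainly exist (any quaternion algebra, e.g.\ over $B=\Spec\Q$). Fiberwise rational points would not rescue the argument either: they would show at most that $\beta$ restricts to zero in $\Br(k(b))$ for every point $b\in B$, which over an arbitrary (possibly non-reduced or non-regular) base does not imply $\beta=0$.

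What is missing is a second torsion bound coprime to $2$, and this is precisely how the paper concludes: since $H^i(Y_s,H'_s)=0$ for $i>0$ and $h^0(Y_s,H'_s)=7$ on every fiber, the pushforward $\pi_*H'$ is a $\beta$-twisted locally free sheaf of rank $7$; its determinant is a $\beta^{\otimes 7}$-twisted line bundle, which forces $\beta^7=1$. Combined with $\beta^2=1$ and $\gcd(2,7)=1$, this gives $\beta=1$, so $H'$ is an actual line bundle. After that, your remaining steps (local freeness of $\pi_*H'$ of rank $7$ with formation commuting with base change, surjectivity of $\pi^*\pi_*H'\to H'$, and the fiberwise closed-immersion criterion) go through exactly as you wrote them; the proof is repaired by replacing the \'etale-local-triviality paragraph with the rank-$7$ twisted-sheaf argument.
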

 
 \begin{proof}
Let $\beta \in \Br (B)$ be the element associated with $H'$.
Note that we have
\[
H'^{\otimes 2} \simeq \omega_{Y/B}^{-1} \in \Pic_{Y/B}(B),
\]
where $\omega_{Y/B}^{-1}$ comes from the element in $\Pic (Y)$.
Therefore, we obtain $\beta^2 =1$.
For any point $s$ on $B$, we have $H^i (Y_s,H'_s) =0$ for $i>0$, and $H^0 (Y_s, H'_s) =7$.
This implies $f_\ast H'$ is a $\beta$-twisted locally free sheaf of rank $7$, and we obtain $\beta^7=1$.
This means $\beta =1$, and $H'$ is an actual line bundle.
By the cohomology and base change theorem, $H'$ is relatively very ample, and we obtain the desired embedding.
\end{proof}

\begin{defn}
The above line bundle $H'$ is uniquely determined modulo $\Pic(B)$.
If we replace $H'$ with $H' \otimes \pi^*M$ for $M \in \Pic(B)$, then the embedding $Y \hookrightarrow \P_{B} (\pi_{\ast}H')$ is identified with $Y \hookrightarrow \P_{B} (\pi_{\ast}(H' \otimes M))$ via the canonical isomorphism $\P_{B} (\pi_{\ast}(H' \otimes \pi^*M)) \simeq \P_{B} (\pi_{\ast}H' \otimes M) \simeq \P_{B} (\pi_{\ast}H')$.
We denote by $i \colon Y \to \P_B$ this canonical inclusion.
\end{defn}

Let $\Gr(5,3)$ be the six-dimensional Grassmann variety and $\Gr(5,3) \subset \P^9$  the Pl\"ucker embedding.
It is known that the twisted conormal bundle $\cC_{\Gr(5,3)/\P^9}(2)$ is the universal quotient bundle of this Grassmann variety.
Thus, for a $V_5$-variety $Y \subset \P^6$, which is a smooth linear section of $\Gr(5,3)$, the twisted conormal bundle $\cC_{Y/\P^6}(2)$ is the restriction of the universal quotient bundle.
In fact, since $Y$ is defined by $5$ quadrics in $\P^6$, $H^0(I_{Y/\P^6}(2))$ is a $5$-dimensional vector space, and the surjective evaluation map $H^0(I_{Y/\P^6}(2)) \otimes \cO_Y \to \cC_{Y/\P^6} (2)$ determines the embedding $Y \subset \Gr(5,3)$ (cf. \cite[Proposition 0.2]{Mukaicurve} and \cite[\S 4.4 and 4.5]{Bhargava-Ho} for the case of quintic elliptic curves).
The following generalizes this fact to the relative settings:
\begin{defn-prop}
\label{prop:relativeW5}
Let $B$ be a scheme, $ \pi \colon Y \rightarrow B$ a $V_5$-scheme over $B$, and $i \colon Y \to \P_B$ the relative embedding to the $\P^6$-bundle.
Let $\cC_i$ be the conormal bundle of the embedding $i$.

Set $Q_Y \coloneqq \cC_i(-K_{\pi})$ and $\cO(H) \coloneqq \det(\cC_i(-K_{\pi}))$, which gives a relative ample generator class of $\Pic_{Y/B}(B)$.
Note that these are canonically determined by the $V_5$-scheme $Y$.
Further we set $A_7 \coloneqq \pi_*(\cO(H))$ and $U_5 \coloneqq \pi_*Q_Y$.
Then the following hold:
\begin{enumerate}
\item \label{prop:relativeW51} $A_7$ is a locally free sheaf of rank $7$.
\item \label{prop:relativeW52} $U_5$ is a locally free sheaf of rank $5$.
\item \label{prop:relativeW53} The natural map $\pi^*U_5 \to Q_Y$ is surjective.
\item \label{prop:relativeW54}The above map induces a surjective map $\pi^* \wedge ^3 U_5 \to \cO(H)$.
\item \label{prop:relativeW55} The direct image of the map in \ref{prop:relativeW54} induces a surjective map $\wedge^3  U_5\to A_7$.
\item \label{prop:relativeW56} From the above maps, we have the following Cartesian diagram:
\[
 \xymatrix{
 Y \ar[r] \ar[d] & \P_B = \P_B(A_7) \ar[d] \\
 \Gr(U_5,3) \ar[r] &  \P_B(\wedge^3 U_5).
 }
\]
\end{enumerate}
By taking the dual and a twist, we have the following exact sequence with a locally free sheaf $N_3$ of rank $3$:
\[
0 \to A_7^\vee(c_1(U_5)) \to \wedge^2 U_5 \to N_3 \to 0.
\]
We call the map $\nu \colon \wedge^2 U_5 \to N_3$ \emph{the associated net of alternating forms of $Y$}.
 \end{defn-prop}

\begin{proof}
 We have already seen \ref{prop:relativeW51}.
 
For \ref{prop:relativeW52}, it is enough to check $H^i (Q_Y) =0$ ($i>0$) for a $V_5$-variety $Y$ over an algebraically closed field.
Take a $V_5$-variety $Y$ over an algebraically closed field, which is a smooth codimension $3$ linear section of $\Gr(5,3)$.
Then, by the Koszul complex $\wedge^\bullet \cO(-1)^3 \to \cO_{\Gr(5,3) } \to \cO_Y \to 0$, we are reduced to proving
\[
\begin{cases}
 H^i(Q(-m)) = 0  & (m=1,2,3), \\
 H^i(Q) = 0 & (i >0),
\end{cases}
\]
where $Q$ is the universal quotient bundle on $\Gr(5,3)$.
Consider the projectivization $\P(Q)$.
Then this variety $\P(Q)$ is the flag variety $\Fl(5;3,1)$.
By the Kodaira vanishing theorem for flag varieties (see, e.g.,  \cite[Theorem 3.1.1]{Brion-Kumar}), we have  $H^i(Q(-m)) = 0$ for $m \leq 3$ and $i>0$.
Also $H^0(Q(-m)) = 0$ for $m>0$.
Thus, the assertion follows.

By the cohomology and base change theorem, the remaining assertions follow from the case where $B$ is a spectrum of an algebraically closed field.
\end{proof}

\begin{lem}\label{lem:net_of_bivectors_from_V5}
Let the notations be as above.
Then we have:
\begin{enumerate}
 \item $c_1(A_7)  =4 c_1(U_5)$ and $c_1(N_3)=c_1(U_5)$.
 \item $\P_B(N_3) \cap \Gr(U_5,2) = \emptyset$ in $\P_B(\wedge^2 U_5)$.
\end{enumerate}
\end{lem}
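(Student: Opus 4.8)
The plan is to treat part~(1) as a Chern-class bookkeeping that collapses to a single normalization, and part~(2) as a fibrewise emptiness. For~(1), I would first observe that the defining sequence $0\to A_7^\vee(c_1(U_5))\to\wedge^2 U_5\to N_3\to 0$ already determines $c_1(N_3)$ in terms of the others: using the standard identity $c_1(\wedge^p E)=\binom{r-1}{p-1}c_1(E)$ for $E$ of rank $r$ (so $c_1(\wedge^2 U_5)=4c_1(U_5)$ and $c_1(\wedge^3 U_5)=6c_1(U_5)$), additivity of $c_1$ gives $c_1(N_3)=c_1(A_7)-3c_1(U_5)$. Hence \emph{both} assertions of~(1) are equivalent to the single relation $c_1(A_7)=4c_1(U_5)$. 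Setting $K_3\coloneqq\ker(\wedge^3 U_5\to A_7)$ (so that $c_1(A_7)=6c_1(U_5)-c_1(K_3)$ by Definition-Proposition~\ref{prop:relativeW5}), this is the same as $c_1(K_3)=2c_1(U_5)$, and the whole content of~(1) is to fix this one determinant.

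To pin it down I would compute the relative canonical bundle $\omega_{Y/B}$ in two independent ways. From the projective embedding $i\colon Y\hookrightarrow\P_B(A_7)$, the conormal sequence gives $\det\cC_i\cong i^*\omega_{\P_B(A_7)/B}\otimes\omega_{Y/B}^{-1}$ with $\omega_{\P_B(A_7)/B}\cong\cO(-7H)\otimes f^*\det A_7$; combined with the defining equality $\cO(H)=\det(\cC_i(-K_\pi))=\det\cC_i\otimes\omega_{Y/B}^{-3}$ this yields $\omega_{Y/B}^{\otimes 4}\cong\cO(-8H)\otimes\pi^*\det A_7$. From the Grassmann embedding, $Y$ is the zero scheme of the regular section of $E\coloneqq g^*(K_3^\vee)\otimes\cO_{\Gr}(1)$ on $g\colon\Gr(U_5,3)\to B$ induced by $K_3\hookrightarrow\wedge^3 U_5$; adjunction with $\omega_{\Gr(U_5,3)/B}\cong\cO_{\Gr}(-5)\otimes g^*(\det U_5)^{\otimes 3}$ gives $\omega_{Y/B}\cong\cO(-2H)\otimes\pi^*\bigl((\det U_5)^{\otimes 3}\otimes(\det K_3)^{-1}\bigr)$. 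Comparing the two, together with $c_1(A_7)=6c_1(U_5)-c_1(K_3)$, forces $3\bigl(c_1(K_3)-2c_1(U_5)\bigr)=0$. The main obstacle is precisely that these formal inputs only determine the relation up to $3$-torsion in $\Pic(B)$; the exact coefficient is not formal. To remove the torsion I would use functoriality of the construction: $(U_5,A_7,K_3)$ is built from $Y$ compatibly with base change and is associated to a single torsor under the structure group, so $\det A_7\otimes(\det U_5)^{-4}$ is a fixed determinant character; checking it on the split model $Y_{\spl}$, where $U_5$, $A_7$, $N_3$ are all trivial, shows this character is trivial, hence the relation holds exactly.

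For part~(2), the intersection $\P_B(N_3)\cap\Gr(U_5,2)$ is a closed subscheme proper over $B$, so it is empty if and only if every geometric fibre is empty; thus I may assume $B=\Spec k$ with $k=\bar k$. Under the perfect pairing $\wedge^2 U_5\otimes\wedge^3 U_5\to\det U_5$ (identifying $\wedge^3 U_5\cong\wedge^2 U_5^\vee\otimes\det U_5$, whence $K_3\leftrightarrow N_3^\vee$), a point of $\P(N_3)\cap\Gr(U_5,2)$ corresponds to a rank-$2$ (decomposable) alternating form in the net $N_3^\vee\subset\wedge^2 U_5^\vee$, equivalently to a \emph{decomposable} vector $\kappa_0\in K_3\subset\wedge^3 U_5$ spanning a $3$-dimensional $P\subset U_5$. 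I would then derive a contradiction with smoothness of $Y$: the hyperplane section $D_0\coloneqq\Gr(U_5,3)\cap\{\kappa_0=0\}$ is the special Schubert divisor $\{S\mid \dim(S\cap P)\ge 1\}$, which is singular exactly along $\{S\subset P\}\cong\P^2$, and there the differential of $\kappa_0$ vanishes, i.e. $T_xD_0=T_x\Gr(U_5,3)$. Since $Y=D_0\cap\{H_2=0\}\cap\{H_3=0\}$ for $\kappa_0,H_2,H_3$ spanning $K_3$, and two linear forms on $\P^2$ always have a common zero, $Y$ meets this singular locus; at such a point $\dim T_xY\ge 6-2=4>3$, so $Y$ is singular — contradicting that $Y$ is a $V_5$-variety. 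This argument is characteristic-free and in particular covers $p=2$; the only care needed is the convention bookkeeping of the Hodge-star duality and the standard identification of the singular locus of the Schubert divisor.
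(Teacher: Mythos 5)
Your part~(2) is correct: the reduction to geometric fibres by properness, and the Schubert-divisor argument (a rank-$2$ form in the net makes the corresponding hyperplane section of $\Gr(U_5,3)$ singular along a $\P^2$, which the two remaining hyperplanes cannot avoid since two lines in $\P^2$ meet, forcing $\dim T_xY\ge 4$) is exactly the characteristic-free argument the paper invokes by citing \cite[Lemma 3.3]{Kuznetsovspinor} and \cite[Definition-Proposition 3.11]{IKTT}. Your formal computations in part~(1) are also correct: the two expressions for $\omega_{Y/B}$ are right, and they yield $3\lambda=0$ in $\Pic(B)$, where $\lambda\coloneqq 4c_1(U_5)-c_1(A_7)=c_1(K_3)-2c_1(U_5)$. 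You also correctly diagnose that a determinant-level comparison can never do better than this $3$-torsion relation.

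The gap is the step that kills the torsion. Your torsor/character argument needs two inputs, neither of which is available here. First, it presupposes that a $V_5$-scheme over an arbitrary base is locally split in some flat topology, so that $(U_5,A_7)$ is the twist of the split data by a cocycle valued in $\Aut_{Y_{\spl}/B}$; but in this paper that statement is Lemma~\ref{lem:fppflocallysplit}, proved (and only for \emph{reduced} $B$) as a consequence of the classification theorem, which itself rests on the present lemma — so the argument is circular — and étale-local triviality genuinely fails in characteristic $2$, where $\Aut_{Y_{\spl}}$ is non-reduced and Subsection~\ref{subsection:deformation_p=2} exhibits a family with pairwise isomorphic closed fibres that is not étale-locally trivial. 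Second, even granting local splitness, ``checking on the split model'' proves nothing: $U_5$ and $A_7$ being trivial for $Y_{\spl}$ is automatic (it is the trivial torsor) and says nothing about the character $\chi\colon\Aut_{Y_{\spl}/B}\to\Gm$ through which a twisted form acquires the line bundle $\det A_7\otimes(\det U_5)^{-4}$; one would have to compute that $\chi$ is trivial, which is a real issue in characteristic $2$ where the automorphism group scheme has infinitesimal $\mu_2$-type pieces admitting nontrivial characters. The paper closes the gap with one extra observation kept at the level of bundles rather than determinants: the conormal-bundle identity for $\Gr(U_5,3)\subset\P(\wedge^3U_5)$ restricts on $Y$ to an isomorphism of rank-$3$ sheaves $Q_Y\simeq Q_Y(\pi^*\lambda)$; its determinant gives your relation $3\lambda=0$, while its pushforward by $\pi$ gives $U_5\simeq U_5(\lambda)$ and hence $5\lambda=0$, so $\lambda=0$ since $\gcd(3,5)=1$. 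Replacing your final step by this pushforward trick turns your proposal into a complete proof.
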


\begin{proof}
 It is known that $\cC_{\Gr(U_5,3)/\P(\wedge ^3U_5)} \simeq Q(p^*c_1(U_5) -2 c_1(Q))$, where $p \colon \Gr(U_5,3) \to B$ is the natural projection.
Further we have
\[
K_\pi = -2H + \pi^*(c_1(A_7)-3c_1(U_5)).
\]
Thus
\[
Q_Y=\cC_i(-K_\pi) \simeq Q_Y(\pi^*(4c_1(U_5)-c_1(A_7))).
\]
This implies $3(4c_1(U_5)-c_1(A_7))=0$, since $\rank Q_Y =3$.
By taking the direct image by $\pi$, we also have
\[
U_5 \simeq U_5(4c_1(U_5)-c_1(A_7)).
\]
Thus $5(4c_1(U_5)-c_1(A_7))=0$, and hence $4c_1(U_5)-c_1(A_7)=0$.
By the definition of $N_3$ and the fact $4c_1(U_5)=c_1(A_7)$, we have $c_1(N_3) =c_1(U_5)$.

The latter assertion follows from the same argument to \cite[Lemma 3.3]{Kuznetsovspinor} (see also \cite[Definition-Proposition 3.11]{IKTT}).
\end{proof}

\section{$V_5$-schemes, nets of alternating forms, and symmetric bilinear forms}\label{section:correspondence}

\subsection{Statement of the classification theorem}

\begin{defn}\label{defn:forms}
 Let $B$ be a scheme.
\begin{enumerate}
 \item A \emph{net of quinary alternating forms of rank $4$} is a triple
 \[ (U_5,N_3,\nu), \]
 where
 \begin{itemize}
 \item $U_5$ is a locally free sheaf of rank $5$,
 \item $N_3$ is a locally free sheaf of rank $3$,
 \item $\nu \colon \wedge^2 U_5 \rightarrow N_3$ is a surjective map such that, for any open subset $U \subset B$ and any locally free quotient $N_3 \to M$ of rank $1$ on $U$, the corresponding map
 \[
 U_5 \to U_5^\vee \otimes M
 \]
 has rank $4$, i.e.,\ the cokernel of this map is locally free of rank $1$.
\end{itemize}
 \item A \emph{non-degenerate ternary symmetric bilinear form} is a triple
 \[ (N_3,L,\varphi), \]
 where
  \begin{itemize}
 \item $N_3$ is a locally free sheaf of rank $3$,
 \item $L$ is a line bundle,
 \item $\varphi \colon S^2 N_3 \rightarrow L$ is a surjective map whose corresponding map
 \[
 N_3 \to N_3^\vee \otimes L
 \]
 is an isomorphism.
\end{itemize}
 \item We say $(N_{3,1}, L_1, \varphi_1)$ and $(N_{3,2}, L_2, \varphi_2)$ are $B$-\emph{similar} (denoted as $(N_{3,1}, L_1, \varphi_1) \sim_B (N_{3,2}, L_2, \varphi_2)$) when there exist an invertible sheaf $M$ on $B$, isomorphisms $f \colon N_{3,1} \simeq N_{3,2} \otimes M$ and $g \colon L_1 \simeq L_2 \otimes M^{\otimes 2}$ such that
\[
g \circ \varphi_1 = \varphi_{2,M} \circ S^2f,
\]
where, $\varphi_{2,M}$ is the morphism
\[
S^2 (N_{3,2} \otimes M) \rightarrow L_2 \otimes M^{\otimes 2}
\]
induced by $\varphi_2$.
$B$-similarity for nets of alternating forms is defined similarly. 
\end{enumerate}
\end{defn}

\begin{rem}\label{remark:maximal_rank}
Let $U_5$ and $N_3$ be locally free sheaves of rank $5$ and $3$ respectively, and $\nu \colon \wedge^2 U_5 \rightarrow N_3$ a surjective map.

Note that, on the projectivization $g \colon \P_B(N_3) \to B$, we have the universal rank  $1$ locally free quotient $g^*N_3 \to \cO(\xi)$.
Thus, the composite $g^*(\wedge^2 U_5) \to g^* N_3 \to \cO(\xi)$ represents the alternating forms belonging to the net.

Note also that the projectivization $\P_B(\wedge^2 U_5)$ parametrizes (non-zero) alternating forms on $U_5$.
Since ranks of alternating forms are even, a general point corresponds to an alternating form of rank $4$.
The Grassmann variety $\Gr(U_5,2) \subset \P_B(\wedge^2 U_5)$ parametrizes alternating forms of rank $2$. 
Therefore, the following conditions are equivalent:
\begin{enumerate}
 \item $\nu \colon \wedge^2 U_5 \rightarrow N_3$ is a net of alternating form of rank 4.
 \item For any point $b \in B$, the corresponding map $\nu_b \colon \wedge^2 U_{5,b} \rightarrow N_{3,b}$ is of rank 4 as a net of alternating forms over the residue field $k(b)$.
 \item $\P_B(N_3) \cap \Gr(U_5,2) = \emptyset$ in $\P_B(\wedge^2 U_5)$.
 \item On $\P(N_3)$, the induced alternating form $g^*(\wedge^2U_5) \to \cO(\xi)$ is of rank $4$, i.e.,\ the cokernel of the induced map $g^* U_5 \to g^*U_5^\vee \otimes \cO(\xi) $ is locally free of rank $1$.
\end{enumerate}
\end{rem}

The following theorem is one of the main theorems of this paper.

\begin{thm}[Classification of $V_5$-schemes]
\label{thm:W5description}
Let $B$ be a scheme.
Then we have natural bijections between the following sets:

\begin{itemize}
\item \textbf{$V_5$-schemes:}
\[
\sV\coloneqq \big\{ \,
f \colon X \rightarrow B \ \big| \ \text{\rm $X$ is a $V_5$-scheme over $B$}
\, \big\}/ \simeq_B.
\]

\item 
\textbf{Normalized nets of quinary alternating forms:}
\[
\sA\coloneqq
\bigg\{
(U_5,N_3,\nu) \ \bigg| \ 
\begin{array}{l}
\text{\rm $(U_5,N_3,\nu)$ is a net of quinary alternating} \\
\text{\rm forms of rank $4$ with $c_1(U_5)=c_1(N_3) =0$} \\
\end{array}
\bigg\}/\sim_B.
\]

 \item 
 \textbf{Nets of quinary alternating forms:}
\[
\sA'\coloneqq
\bigg\{
(U_5,N_3,\nu) \ \bigg| \ 
\begin{array}{l}
\text{\rm $(U_5,N_3,\nu)$ is a net of quinary alternating} \\
\text{\rm forms of rank $4$ over $B$}
\end{array}
\bigg\}/\sim_B.
\]

\item 
\textbf{Normalized ternary symmetric bilinear forms:}
\[
\sS \coloneqq 
\bigg\{
(N_3,L,\varphi) \ \bigg| \  
\begin{array}{l}
\text{\rm $(N_3,L,\varphi)$ is a non-degenerate ternary symmetric} \\
\text{\rm bilinear form with $c_1(N_3)=c_1(L)=0$} \\
\end{array}
\bigg\}/\sim_B.
\]

\item 
\textbf{Ternary symmetric bilinear forms:}
\[
\sS' \coloneqq 
\bigg\{
(N_3,L,\varphi) \ \bigg| \ 
\begin{array}{l}
\text{\rm $(N_3,L,\varphi)$ is a non-degenerate ternary} \\
\text{\rm symmetric bilinear form}
\end{array}
\bigg\}/\sim_B.
\]
\end{itemize}
\end{thm}

\begin{rem}
When $(N_{3,1}, L_1, \varphi_1)$ and $(N_{3,2}, L_2, \varphi_2)$ are in $\sS$, then these are $B$-similar if and only if there exist isomorphisms $f \colon N_{3,1} \simeq N_{3,2}$ and $g \colon L_1 \simeq L_2$ such that
\[
g \circ \varphi_1 = \varphi_{2} \circ S^2f.
\]
The same also holds for nets of alternating forms. 
\end{rem}

\begin{rem}
\label{rem:stacky}
By taking projectivization, we have
\[\sS' \simeq
\overline{\sS'} \coloneqq 
\bigg\{
\P(L) \subset \P(S^2N_3)
\ \bigg| \ 
\begin{array}{l}
\text{\rm $(N_3,L,\varphi) $ is a non-degenerate} \\
\text{\rm ternary symmetric bilinear form}
\end{array}
\bigg\}/ \simeq_B.
\]
and
\[
\sA' \simeq \overline{\sA'}\coloneqq
\bigg\{
\P(N_3) \subset \P(\wedge^2 U_5) \ \bigg| \ 
\begin{array}{l}
\text{\rm $(U_5,N_3,\nu)$ is a net of quinary} \\
\text{\rm alternating forms of rank $4$}
\end{array}
\bigg\}/ \simeq_B.
\]
Here, we say $\P(L_1) \subset \P(S^2N_{3,1})$ and $\P(L_2) \subset \P(S^2N_{3,2})$ are $B$-\emph{isomorphic} when there exist isomorphisms $\P(N_{3,1}) \simeq \P(N_{3,2})$ and $\P(L_1) \simeq \P(L_2)$ which induces an isomorphism $[\P(L_1) \subset \P(S^2N_{3,1})] \simeq [\P(L_2) \subset \P(S^2N_{3,2})]$.
$B$-isomorphisms for nets of alternating forms are defined similarly.

The proof below indeed shows that $\sV$, $\overline {\sA'}$, and $\overline{\sS'}$ are isomorphic to each other as moduli stacks. 
\end{rem}

\subsection{Immediate consequences of the classification theorem}

As corollaries of Theorem \ref{thm:W5description}, we can classify $V_5$-schemes over $\Z$ and $V_5$-varieties over finite fields.
\begin{cor}
\label{cor:classification_over_z}
There are exactly two isomorphism classes of $V_5$-schemes over $\Z$.
\end{cor}

\begin{proof}
By \cite[Chapter 2, Theorem 2.2]{Milnor-Husemoller},
every non-degenerate symmetric bilinear form over $\Z$ is a direct sum of
copies of $\langle 1 \rangle$ and $\langle -1 \rangle$,
hence one of the following forms
\[
\langle 1 \rangle^{\oplus 3}, \qquad
\langle 1 \rangle^{\oplus 2} \oplus \langle -1 \rangle, \qquad
\langle 1 \rangle \oplus \langle -1     \rangle^{\oplus 2}, \qquad
\langle -1 \rangle^{\oplus 3}.
\]
Here, $\langle 1 \rangle^{\oplus 3}$, $\langle -1 \rangle^{\oplus 3}$
are $\Z$-similar, and
$\langle 1 \rangle^{\oplus 2} \oplus \langle -1 \rangle$,
$\langle 1 \rangle^{\oplus 2} \oplus \langle -1 \rangle$
are $\Z$-similar.
Moreover, $\langle 1 \rangle^{\oplus 3}$ is not $\Z$-similar to
$\langle 1 \rangle^{\oplus 2} \oplus \langle -1 \rangle$
because the former is (positive) definite, but the latter is indefinite.
The assertion follows from Theorem \ref{thm:W5description}.
\end{proof}

\begin{rem}
Corollary \ref{cor:classification_over_z}
is in contrast to the fact that
a smooth conic over $\Z$ is unique.
Symmetric bilinear forms and quadratic forms behave differently over $\Z$.
The above proof shows there exists two $\Z$-similarity classes of non-degenerate ternary symmetric bilinear forms over $\Z$.
On the other hand, there exists a unique $\Z$-similarity class of ternary quadratic forms over $\Z$ defining smooth conics,
i.e.,\ $q(x,y,z) = x^2 + yz$.
\end{rem}

\begin{prop}
\label{prop:char2}
Let $k$ be a perfect field of characteristic $2$.
Then, there is a unique $k$-isomorphism class of $V_5$-varieties over $k$.
In particular, every $V_5$-variety over $k$ is isomorphic to the split $V_5$-variety $Y_{\spl,k}$ defined later in Definition-Proposition \ref{defn-prop:splitform}.
\end{prop}

\begin{proof}
By Theorem \ref{thm:W5description},
it is enough to prove that, for every positive odd integer $n \geq 1$, there is a unique $k$-isomorphism class of non-degenerate $n$-dimensional symmetric bilinear forms
over $k$.
By \cite[p.62]{Milnor1971},
every non-degenerate $n$-dimensional  symmetric bilinear form over $k$ is isomorphic to
$\langle a_1 \rangle \oplus \cdots \oplus \langle a_r \rangle \oplus H^{\oplus s}$
with $r + 2s = n$ for some $a_1,\ldots,a_r \in k^{\times}$.
Here $H$ is a hyperbolic plane.
Since $k$ is perfect of characteristic $2$, every $a_i$ is a square in $k^{\times}$,
and it is isomorphic to
$\langle 1 \rangle^{\oplus r} \oplus H^{\oplus s}$ over $k$.
Since $n$ is odd, we have $r \geq 1$.
By the same calculation as in the proof of \cite[Chapter 2, Theorem 2.2]{Milnor-Husemoller},
we have an isomorphism
$\langle 1 \rangle^{\oplus r} \oplus H^{\oplus s} \simeq \langle 1 \rangle^{\oplus n}$ over $k$;
its $k$-isomorphism class is unique.
We note that
$\langle 1 \rangle = \langle -1 \rangle$
since the characteristic of $k$ is $2$ here.
\end{proof}

\begin{cor}
\label{cor:W5eqn}
Let $k$ be an algebraically closed field or a finite field.
Then, there is a unique $k$-isomorphism class of $V_5$-varieties over $k$.
In particular, every $V_5$-variety over $k$ is isomorphic to the split $V_5$-variety $Y_{\spl,k}$ defined later in Definition-Proposition \ref{defn-prop:splitform}.
\end{cor}

\begin{proof}
Every ternary quadratic form over $k$ is isotropic (see \cite[Lemma 12.3.5]{Voight21}).
Hence the assertion follows from Theorem \ref{thm:W5description} 
if $k$ is algebraically closed
or $k$ is a finite field of odd characteristic (see also Proposition \ref{prop:field}).
If $k$ is a finite field of even characteristic, it follows from Proposition \ref{prop:char2}.
\end{proof}

\subsection{The construction}
In the rest of this section, we will prove Theorem~\ref{thm:W5description}.
Along the course of the proof, we will describe one model $Y_{\spl, \Z}$ of $V_5$-schemes over $\Z$, which plays important roles not only in the proof of the theorem, but also in the rest of this paper.
In this subsection, we explain the constructions of the bijections among the above sets.
\begin{construction}\label{construction:W5}
\hfill
\begin{enumerate}
 \item \label{construction:W5_1}
($\sV \to \sA'$)
 For a $V_5$-scheme $Y/B \in \sV$, we have the associated net of quinary alternating forms as in Definition-Proposition~\ref{prop:relativeW5}.
 This net of alternating forms has rank $4$ by Lemma~\ref{lem:net_of_bivectors_from_V5} and Remark~\ref{remark:maximal_rank}.
 \item \label{construction:W5_2} ($\sA' \to \sV$)
 A net of quinary alternating forms $\wedge^2 U_5\to N_3$ defines a subvariety $Y \subset \Gr(U_5,3)$ as the relative linear section.
 The rank condition ensures that $Y$ is a $V_5$-scheme (See Lemma~\ref{lemma:correspondence_V5_net_of_bivectors}).
 \item \label{construction:W5_3}($\sA \simeq \sA'$) We will prove the natural inclusion map $\sA \to \sA'$ is an isomorphism (Lemma~\ref{lemma:A_A'}).
 \item \label{construction:W5_4} ($\sS \simeq \sS'$) The natural inclusion map $\sS \to \sS'$ is an isomorphism (Lemma~\ref{lemma:S_S'}).

 \item \label{construction:W5_5} ($\Phi' \colon \sA' \to \sS'$ and $\Phi \colon \sA \to \sS$)
 We have the following natural transformation between the functors $\wedge^4$ and $S^2 \wedge^2$:
 \[
 a\wedge b \wedge c \wedge d \mapsto (a \wedge b)(c \wedge d)-(a \wedge c)(b \wedge d)+(a \wedge d)(b \wedge c).
 \]
 By composing this transformation with the symmetric power of $\nu \colon \wedge^2 U_5 \to N_3$, we have the following composite of the two maps:
 \[
G_\nu \colon \wedge^4 U_5 \to S^2 (\wedge^2 U_5) \to S^2 N_3.
 \]
 We will prove that the cokernel of $G_\nu$ is a line bundle $L$ (with $c_1(L) = 4(c_1(N_3) -c_1(U_5))$).
 Moreover the symmetric bilinear form $\varphi_\nu \colon S^2N_3 \to L$ is non-degenerate (Corollary~\ref{cor:correspondence}).
 
 \item \label{construction:W5_6} ($\Psi' \colon \sS' \to \sA'$ and $ \Psi \colon \sS \to \sA$)
 Let $\varphi \colon S^2N_3 \to L$ be a non-degenerate symmetric bilinear form, and $K$ the kernel of $\varphi$.
We denote by $\mu_\varphi \colon N_3 \to N_3^\vee \otimes L$ the map induced from $\varphi$.

For two symmetric bilinear forms $p$, $r \in \Hom(S^2N_3, L) \simeq (S^2N_3)^\vee \otimes L$, we have the following composite of maps:
\[
\mu_p  \circ \mu_\varphi^{-1} \circ  \mu_r  \in \Hom(N_3  , N_3^\vee \otimes L) \simeq N_3^\vee \otimes N_3^\vee \otimes L.
\]
We denote by $F_\varphi(p,r) \in \wedge^2 (N_3^\vee) \otimes L$ the image of this element in $\wedge^2 (N_3^\vee) \otimes L$.
When $p=r$, we have $F_\varphi(p,p)=0$ since $p$ and $\varphi$ are symmetric.
Thus, we have the induced map
\[
F_\varphi \colon \wedge^2((S^2N_3)^\vee \otimes L) \to \wedge^2 (N_3^\vee) \otimes L.
\]
Moreover, if one of the forms $p$ or $r$ is isomorphic to $\varphi$, then $F_\varphi(p,r)=0$.
Thus, we have the induced map
\[
\nu_\varphi \colon \wedge^2 (K^\vee\otimes L) \to \wedge^2 (N_3^\vee) \otimes L,
\]
which gives a net of alternating forms $(K^\vee \otimes L, \wedge^2 (N_3^\vee) \otimes L, \nu_\varphi)$ (Corollary~\ref{cor:correspondence}).
\end{enumerate}
\end{construction}

\begin{rem}
It is not hard to prove that $\sA'$ is isomorphic to
 \[
 \bigg\{
(U_5,N_3,\nu) 
\ \bigg| \ 
\begin{array}{l}
\text{\rm $(U_5,N_3,\nu)$ is a net of quinary alternating forms of} \\
\text{\rm rank $4$ such that $c_1(U_5) = c_1(N_3)$ is $2$-torsion} \\
\end{array}
\bigg\}/ \sim_B.
\]
To deal with the $2$-torsion element $c_1(U_5) = c_1(N_3)$, we will check that an invariant of nets of quinary alternating forms is a complete square (Remark~\ref{remark:discriminant_square}).
\end{rem}

\subsection{Split $V_5$-schemes}

\begin{defn-prop}
\label{defn-prop:splitform}
Let $B$ be a scheme,  $U_5 =\cO_B^{\oplus 5}$ a free $\cO_B$-module of rank $5$ with standard basis $\{e_{1},e_{2},e_{3},e_{4},e_{5}\}$, and $N_3= \cO_B^{\oplus 3}$ a free $\cO_B$-module of rank $3$ with standard basis $\{\alpha,\beta,\gamma\}$.
\begin{enumerate}
 \item The net of alternating form  $\nu_{\spl} \colon \wedge^2 U_5 \to N_3$ that corresponds to
 \[
\begin{pmatrix}
 0&0&0&0&0\\
 0&0&0&0&-1\\
 0&0&0&1&0\\
 0&0&-1&0&0\\
 0&1&0&0&0\\
\end{pmatrix},
\begin{pmatrix}
 0&0&0&-1&0\\
 0&0&1&0&0\\
 0&-1&0&0&0\\
 1&0&0&0&0\\
 0&0&0&0&0\\
\end{pmatrix},
\begin{pmatrix}
 0&0&0&0&-1\\
 0&0&0&1&0\\
 0&0&0&0&0\\
 0&-1&0&0&0\\
 1&0&0&0&0\\
\end{pmatrix}
 \]
 is called \emph{split}.
In other words, $\nu_{\spl}$ sends $e_2 \wedge e_5 \mapsto -\alpha$, $e_3 \wedge e_4 \mapsto \alpha$, $e_1 \wedge e_4 \mapsto -\beta$, $e_2 \wedge e_3 \mapsto \beta$, $e_1 \wedge e_5 \mapsto -\gamma$, $e_2 \wedge e_4 \mapsto \gamma$, and $e_i\wedge e_j \mapsto 0$ ($i+j \neq 5$, $6$, $7$).
\item The symmetric bilinear form  $\varphi_{\spl} \colon S^2 N_3 \to \cO_B$
 that corresponds to the matrix
\begin{equation}
\label{eqn:splitmatrix}
\begin{pmatrix}
 0&-1&0\\
 -1&0&0\\
 0&0&1
\end{pmatrix}
\end{equation}
is called \emph{split}.
Thus $\varphi_{\spl}$ sends $\alpha\beta \mapsto -1$, $\gamma^2 \mapsto 1$ and other monomials to $0$.
 \item The above two forms correspond to each other by Construction~\ref{construction:W5}, and define a $V_5$-scheme $Y_{\spl}$ over $\Spec R$.
The embedding $Y_{\spl} \subset \P^6_B$ is defined by
\begin{equation}
\label{eqn:W5}
\begin{cases}
a_0 a_4 - a_1 a_3 + a_2^2,\\
a_0 a_5 - a_1 a_4 + a_2 a_3,\\
a_0 a_6 - a_2 a_4 + a_3^2,\\
a_1 a_6 - a_2 a_5 + a_3 a_4,\\
a_2 a_6 - a_3 a_5 + a_4^2,
\end{cases}
\end{equation}
in $\P^6_B$ with coordinate $a_0, a_1, \dots, a_6$.
\end{enumerate}
In particular, if $k$ is an algebraically closed field, then a $V_5$-variety over $k$ is isomorphic to $Y_{\spl}$.
\end{defn-prop}
\begin{proof}
By the definition, we see  $\nu_{\spl} \in \sA$ and $\varphi_{\spl} \in \sS$.
The assertions follow from a straightforward calculation as follows:
The map $G_{\nu_{\spl}}$ sends
\begin{align*}
& 2 \wedge 3 \wedge 4 \wedge 5 \mapsto -\alpha^2,\\
&1 \wedge  3 \wedge 4 \wedge 5 \mapsto -\gamma\alpha, \\
&1 \wedge 2 \wedge 4 \wedge 5 \mapsto -\alpha\beta-\gamma^2,\\
&1 \wedge 2 \wedge 3 \wedge  5 \mapsto -\beta\gamma,\\
&1 \wedge 2 \wedge 3 \wedge 4  \mapsto -\beta^2,
\end{align*}
and hence the cokernel corresponds to $\varphi_{\spl}$.
Conversely, we identify the cokernel of $\varphi_{\spl}^\vee$ with $U_5$ by
\begin{equation}\label{eq:cokernel_q_dual}
\begin{aligned}
& (\alpha^2)^\vee \mapsto -e_1,\\
& (\gamma\alpha)^\vee \mapsto e_2,\\
& (\gamma^2)^\vee \mapsto -e_3,\\
& (\alpha\beta)^\vee \mapsto -e_3,\\
& (\beta\gamma)^\vee \mapsto e_4,\\
& (\beta^2)^\vee \mapsto -e_5.
\end{aligned}
\end{equation}
Then the map $\nu_{\varphi_{\spl}}$ followed by the isomorphism $\wedge^2 N_3^{\vee} \simeq N_3$ is $\nu_{\spl}$ (see Claim~\ref{claim:Theta}).

The last assertion follows from the fact that $\Gr(U_5,3) = \Gr(U_5^\vee,2)$ is defined by $5$ pfaffians
\begin{equation}
\label{eqn:Gr}
\begin{cases}
 b_{12} b_{34} - b_{13} b_{24} + b_{14}b_{23}, \\
 b_{12} b_{35} - b_{13} b_{25} + b_{15}b_{34}, \\
 b_{12} b_{45} - b_{14} b_{25} + b_{15}b_{24}, \\
 b_{13} b_{45} - b_{14} b_{35} + b_{15}b_{34}, \\
 b_{23} b_{45} - b_{24} b_{35} + b_{25}b_{34},
\end{cases}
\end{equation}
and that $\nu_{\spl}$ defines three equations $b_{14}=b_{23}$, $b_{15}=b_{24}$, $b_{25}=b_{34}$
(Set $a_{i+j-3} \coloneqq \overline{b_{ij}}$).
Here $\Gr(U_5,3) = \Gr(U_5^\vee,2)$ is embedded into $\P(\wedge^3U_5) =\P(\wedge^2 U_5^\vee)$ with the coordinates given by (the dual of) the basis $\{e_i \wedge e_j\}$ of $\wedge^2 U_5$.
\end{proof}

\begin{rem}
\label{rem:coordinatechange}
If we denote the coordinates in \cite[p.\ 505]{Mukai-Umemura} (written as $a_0, \ldots, a_6$) by $b_0, \ldots, b_6$, then the relation to our coordinates is given by the following:
\[
\trans{(a_0, \ldots, a_6)} =  \diag (8,24,12,8,6,6,1) \trans{(b_0, \ldots, b_6)}.
\]
\end{rem}

\begin{rem}
\label{remark:splitW5}
As we saw in Corollary \ref{cor:W5eqn}, there is a $V_5$-scheme $Y'_{\Z}$ over $\Z$ which is not split, and corresponds to the symmetric bilinear form $\langle 1 \rangle^{\oplus 3}$.
The reason why we say $Y_{\spl}$ is split is that the symmetric bilinear form $\varphi_{\spl}$ corresponds to a split conic $V(z^2-2xy) \subset \P^1_{\Q}$ on the generic fiber $\Q$.
This fact is reflected in the fact that ``the locus of special lines on $Y_{\Q}$'' is a split conic over $\Q$ (see Theorem \ref{thm:linep>2}).
\end{rem}

As a corollary, we have the following:
\begin{cor}\label{cor:correspondence}
Let $\nu \colon \wedge^2 U_5 \to N_3$ be a net of quinary alternating forms of rank $4$, and $G_\nu$ the map as in Construction~\ref{construction:W5}~\ref{construction:W5_5}.
Then the cokernel of $G_{\nu}$ is a line bundle, and the induced symmetric bilinear form $\varphi_{\nu}$ is non-degenerate.

Conversely, if $\varphi \colon S^2N_3 \to L$ is a non-degenerate symmetric bilinear form, then the map $\nu_{\varphi}$ as in Construction~\ref{construction:W5}~~\ref{construction:W5_6} defines a net of alternating forms of rank $4$.
\end{cor}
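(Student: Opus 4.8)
The plan is to reduce both assertions to the explicit split computations recorded in Definition-Proposition~\ref{defn-prop:splitform}. The point is that all of the relevant properties are fiberwise. For a map $\phi\colon E\to F$ of locally free $\cO_B$-modules with $\rank E\le \rank F$, the cokernel of $\phi$ is a locally free sheaf of rank $\rank F-\rank E$ if and only if $\phi$ is injective on every fiber $E\otimes k(b)\to F\otimes k(b)$; non-degeneracy of a symmetric bilinear form is the statement that $N_3\to N_3^\vee\otimes L$ is an isomorphism, which can be tested fiberwise as the non-vanishing of a determinant; and the rank-$4$ condition is fiberwise by Remark~\ref{remark:maximal_rank}. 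Since the formations of $G_\nu$, $\varphi_\nu$ and $\nu_\varphi$ all commute with base change, I would first pass to a point $b\in B$ and then to $\overline{k(b)}$, reducing everything to the case $B=\Spec k$ with $k$ algebraically closed.

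Over an algebraically closed field the two kinds of data become unique up to similarity. For the forward direction, the rank-$4$ condition $\P(N_3)\cap\Gr(U_5,2)=\emptyset$ cuts out the open $\GL(U_5)\times\GL(N_3)$-orbit in the space $\Hom(\wedge^2U_5,N_3)$ of nets, a prehomogeneous vector space; hence any rank-$4$ net over $k$ is $\GL(U_5)\times\GL(N_3)$-equivalent to $\nu_{\spl}$ (cf.\ \cite{SK77,Och97}). For the backward direction, every non-degenerate ternary symmetric bilinear form over $k$ is similar to $\varphi_{\spl}$: in characteristic $\neq 2$ this is diagonalization, and in characteristic $2$ it is the uniqueness of the non-degenerate (non-alternating) symmetric bilinear form of odd rank over $k$ (cf.\ Corollary~\ref{cor:W5eqn}).

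Because $\nu\mapsto(G_\nu,\varphi_\nu)$ and $\varphi\mapsto\nu_\varphi$ are built from natural transformations of Schur functors, they are functorial in the input and therefore compatible with the equivalences above; a similarity $\nu\sim_k\nu_{\spl}$ (resp.\ $\varphi\sim_k\varphi_{\spl}$) carries the associated data to that of the split form. It thus suffices to check the two assertions for the split forms, and this is exactly Definition-Proposition~\ref{defn-prop:splitform}: there $G_{\nu_{\spl}}$ sends the five standard basis vectors of $\wedge^4U_5$ to $-\alpha^2$, $-\gamma\alpha$, $-\alpha\beta-\gamma^2$, $-\beta\gamma$, $-\beta^2$, which are linearly independent in $S^2N_3$ over any field; hence $G_{\nu_{\spl}}$ is fiberwise injective, its cokernel is a line bundle, and the induced form is $\varphi_{\spl}$, whose matrix \eqref{eqn:splitmatrix} is invertible and hence non-degenerate. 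Conversely, $\nu_{\varphi_{\spl}}=\nu_{\spl}$ is a net of rank $4$ by the same Definition-Proposition.

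I expect the main obstacle to be the reduction to the split case over an algebraically closed field, that is, the two uniqueness inputs. On the net side one must identify the rank-$4$ nets with the single open orbit of the relevant prehomogeneous vector space — equivalently, recognize $\nu_{\spl}$ as a generic net — and on the form side one must invoke the classification of non-degenerate ternary symmetric bilinear forms including characteristic $2$, where symmetric bilinear forms behave differently from quadratic forms. Granting these, the remaining ingredients are the characteristic-independent linear algebra for the split forms and the routine base-change compatibility of the constructions.
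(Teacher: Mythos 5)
Your overall architecture is exactly that of the paper: the paper's proof is precisely ``reduce to geometric fibers, then invoke Definition-Proposition~\ref{defn-prop:splitform}'', and your fiberwise criteria for the cokernel and for non-degeneracy, the base-change compatibility of $G_\nu$, $\varphi_\nu$, $\nu_\varphi$, and the explicit split computations are a correct unpacking of that reduction. The forms-side uniqueness input is also fine in every characteristic: your odd-rank, non-alternating argument in characteristic $2$ is the right one.

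The gap is on the nets side, at the very point you flagged as the main obstacle but then took as granted. Your justification that every rank-$4$ net over an algebraically closed field is $\GL(U_5)\times\GL(N_3)$-equivalent to $\nu_{\spl}$ appeals to prehomogeneous vector space theory (Sato--Kimura, Ochiai). That theory is characteristic-zero: it identifies the open orbit and the singular set of $\wedge^2 k^5\otimes k^3$ over $\C$, and neither citation establishes the orbit structure over an algebraically closed field of characteristic $2$ --- which is precisely the delicate case of this paper (Remark~\ref{rem:W5_p=2} stresses that even the basic characteristic-$2$ facts lack proofs in the literature). So, as written, the forward direction of the corollary is unproven in characteristic $2$ (and, strictly, in any positive characteristic). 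The repair is to derive the net-side uniqueness geometrically, which is what the paper's terse proof implicitly does through the final clause of Definition-Proposition~\ref{defn-prop:splitform}: a rank-$4$ net over an algebraically closed field cuts out a smooth $V_5$-variety $Y\subset\Gr(U_5,3)$ (the direction of Lemma~\ref{lemma:correspondence_V5_net_of_bivectors} whose proof is independent of this corollary); $Y\simeq Y_{\spl}$ by Theorem~\ref{theorem:W5_over_closed_field}; and the net is recovered canonically from $Y$ by Definition-Proposition~\ref{prop:relativeW5}, so $\nu\sim\nu_{\spl}$. With that substitution your proof goes through verbatim; with the PV citation alone it does not cover the characteristic-$2$ case that the statement is chiefly about.
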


\begin{proof}
By taking geometric fibers, we may reduce to the case where $B$ is the spectrum of an algebraically closed field.
Then the assertions follow from Definition-Proposition~\ref{defn-prop:splitform}. 
\end{proof}

\subsection{Proof of Theorem~\ref{thm:W5description}}

\begin{lem}\label{lemma:correspondence_V5_net_of_bivectors}
Constructions~\ref{construction:W5}~\ref{construction:W5_1} and \ref{construction:W5_2} are mutually inverse to each other.
\end{lem}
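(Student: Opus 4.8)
The plan is to prove that the two constructions in Construction~\ref{construction:W5}~\ref{construction:W5_1} (sending a $V_5$-scheme $Y/B$ to its associated net of alternating forms $\nu \colon \wedge^2 U_5 \to N_3$) and \ref{construction:W5_2} (sending a net to the relative linear section $Y \subset \Gr(U_5,3)$) are mutually inverse. Since both constructions are compatible with base change and the objects involved are determined by fiberwise data together with natural maps of locally free sheaves, the strategy is to verify the two composites are the identity using the universal properties already established in Definition-Proposition~\ref{prop:relativeW5}.

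\textbf{Starting from a net.} First I would take a net $(U_5, N_3, \nu) \in \sA'$ and form the relative linear section $Y \subset \Gr(U_5,3)$ as in \ref{construction:W5_2}. Recall from Definition-Proposition~\ref{prop:relativeW5}\ref{prop:relativeW56} that $\Gr(U_5,3)$ sits in $\P_B(\wedge^3 U_5)$, and that the net $\nu$—equivalently, via the exact sequence $0 \to A_7^\vee(c_1(U_5)) \to \wedge^2 U_5 \to N_3 \to 0$, the inclusion $\P_B(N_3) \subset \P_B(\wedge^2 U_5)$—cuts out $Y$ as the intersection $\Gr(U_5,3) \cap \P_B(A_7)$. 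The rank-$4$ condition, via Remark~\ref{remark:maximal_rank} (the equivalence $\P_B(N_3) \cap \Gr(U_5,2) = \emptyset$), guarantees that each geometric fiber is a smooth codimension-$3$ linear section, hence a $V_5$-variety by Theorem~\ref{theorem:W5_over_closed_field}; this is what makes $Y$ a genuine $V_5$-scheme. I then need to recover $\nu$ from $Y$: by Definition-Proposition~\ref{prop:relativeW5}, the conormal bundle data of the embedding $Y \subset \P_B$ reconstructs $U_5 = \pi_* Q_Y$ and $N_3$ together with the map $\nu$. The key point is that the universal quotient bundle of $\Gr(U_5,3)$ restricts to $Q_Y$ on $Y$, so $\pi_* Q_Y$ recovers $U_5$ and the surjection $\pi^* \wedge^3 U_5 \to \cO(H)$ recovers the embedding into the Grassmann bundle; dualizing and twisting returns the original $\nu$ up to the $B$-similarity that defines $\sim_B$.

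\textbf{Starting from a $V_5$-scheme.} Conversely, given $Y/B \in \sV$, Construction~\ref{construction:W5}~\ref{construction:W5_1} produces its net via Definition-Proposition~\ref{prop:relativeW5}, and the Cartesian diagram \ref{prop:relativeW56} exhibits $Y$ as precisely the relative linear section associated to that net. Thus forming the linear section of the associated net returns $Y$. The essential ingredient here is that $H' = \cO(H)$ is the relatively ample generator (Lemma~\ref{lem:relativeW5}) and embeds $Y$ into $\P_B(A_7)$ canonically, so no choices are lost: the embedding $i \colon Y \to \P_B$ is intrinsic to $Y$, and the Grassmann factorization is forced by the fact that the twisted conormal bundle is the restriction of the universal quotient bundle. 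Both round-trips therefore reduce to the statement that $Y$ is the scheme-theoretic intersection of $\Gr(U_5,3)$ with the sub-bundle $\P_B(A_7) \subset \P_B(\wedge^3 U_5)$, which is exactly diagram \ref{prop:relativeW56}.

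\textbf{The main obstacle} I anticipate is bookkeeping the line-bundle twists and verifying that the two constructions match \emph{on the nose as $B$-similarity classes} rather than merely fiberwise. The subtlety is that $H'$ is only determined modulo $\Pic(B)$ and $U_5$, $N_3$ carry first Chern classes that must be tracked (cf.\ Lemma~\ref{lem:net_of_bivectors_from_V5}, giving $c_1(A_7) = 4c_1(U_5)$ and $c_1(N_3) = c_1(U_5)$). I would handle this by working throughout with the intrinsic objects $Q_Y = \cC_i(-K_\pi)$ and $\cO(H) = \det Q_Y$, which are canonically attached to $Y$, so that the twisting ambiguity is absorbed precisely into the equivalence relation $\sim_B$. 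The scheme-theoretic (as opposed to set-theoretic) identity of the two subvarieties of $\P_B(\wedge^3 U_5)$ then follows because both are defined by the same ideal sheaf—the Pfaffians cutting out $\Gr(U_5,3)$ together with the three linear forms coming from the kernel $A_7^\vee(c_1(U_5)) \subset \wedge^2 U_5$—so that the identification is an equality of closed subschemes, not just of their reductions.
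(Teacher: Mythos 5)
Your proposal is correct and takes essentially the same approach as the paper's proof: both directions reduce to Definition-Proposition~\ref{prop:relativeW5} (in particular the Cartesian diagram~\ref{prop:relativeW56} and the recovery $U_5 = \pi_*Q_Y$), with the rank-$4$ condition via Lemma~\ref{lem:net_of_bivectors_from_V5} and Remark~\ref{remark:maximal_rank} ensuring the relative linear section is a $V_5$-scheme. The paper's proof is merely terser, outsourcing the smoothness/flatness of the rank-$4$ linear section to cited references, which you argue inline at a comparable level of rigor.
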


\begin{proof}
This follows from the constructions.
Note that, by Lemma~\ref{lem:net_of_bivectors_from_V5} and Remark~\ref{remark:maximal_rank}, the associated net of alternating forms for a $V_5$-scheme has rank $4$.
Conversely, a net of quinary alternating forms of rank $4$ defines a $V_5$-scheme by the same argument to \cite[Lemma 3.3]{Kuznetsovspinor} and \cite[Definition-Proposition 3.11]{IKTT}.
\end{proof}

\begin{lem}\label{lemma:S_S'}
For an element $(N_3,L,\varphi) \in \sS'$, we have $3 c_1(L) = 2 c_1(N_3)$.
Moreover $\sS \simeq \sS'$.
\end{lem}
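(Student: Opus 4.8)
The plan is to prove the numerical identity first and then use it to normalize an arbitrary form by a single explicit twist, reducing the second assertion to a purely formal bookkeeping.

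For the identity, I would start from the defining non-degeneracy: the form $(N_3,L,\varphi)$ induces an isomorphism $\mu_\varphi \colon N_3 \xrightarrow{\ \sim\ } N_3^\vee \otimes L$. Taking first Chern classes (i.e.\ determinants) of this isomorphism gives $c_1(N_3) = c_1(N_3^\vee \otimes L)$, and since $N_3^\vee \otimes L$ has rank $3$ we have $c_1(N_3^\vee \otimes L) = -c_1(N_3) + 3c_1(L)$. Rearranging yields $2c_1(N_3) = 3c_1(L)$, which is the first assertion.

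For the bijection $\sS \simeq \sS'$, I would analyze the natural inclusion $\sS \to \sS'$. For \textbf{surjectivity}, given $(N_3,L,\varphi) \in \sS'$ I would set $M \coloneqq (\det N_3)\otimes L^{-1}$, so that $c_1(M) = c_1(N_3) - c_1(L)$, and twist. By the definition of $\sim_B$, twisting produces the $B$-similar form with underlying sheaves $N_3 \otimes M^{-1}$ and $L \otimes M^{\otimes -2}$, whose Chern classes are $c_1(N_3) - 3c_1(M)$ and $c_1(L) - 2c_1(M)$. Substituting $c_1(M)$ and invoking $2c_1(N_3) = 3c_1(L)$ from the first part, a direct computation shows both vanish, so every class in $\sS'$ has a normalized representative. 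For \textbf{injectivity}, I would take two normalized forms that become $B$-similar in $\sS'$ via some line bundle $M$ with isomorphisms $f \colon N_{3,1} \simeq N_{3,2} \otimes M$ and $g \colon L_1 \simeq L_2 \otimes M^{\otimes 2}$; applying $c_1$ and using that all four underlying Chern classes vanish forces $3c_1(M) = 0$ and $2c_1(M) = 0$, hence $c_1(M) = 0$. Since $M$ is a line bundle, $c_1(M) = 0$ means $M \simeq \cO_B$, so the two forms are similar via the trivial twist, i.e.\ they already coincide as elements of $\sS$.

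The computations here are entirely formal, and I do not expect a genuine obstacle. The only points demanding slight care are matching the twisting convention in the definition of $\sim_B$ (tracking whether $M$ or $M^{-1}$ and $M^{\otimes 2}$ or $M^{\otimes -2}$ appears) and the final observation that vanishing of $c_1$ of a \emph{line} bundle in $\Pic(B)$ is equivalent to its triviality—this is precisely what upgrades the numerical normalization into an honest isomorphism and makes injectivity hold.
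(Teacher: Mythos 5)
Your proof is correct and takes essentially the same route as the paper: the identity $2c_1(N_3)=3c_1(L)$ is obtained by taking determinants of the isomorphism $\mu_\varphi \colon N_3 \to N_3^\vee \otimes L$, and your normalizing twist $M^{-1} = L \otimes \det N_3^\vee$ is exactly the twist $\varphi_{L\otimes \det N_3^\vee}$ used in the paper. The only (harmless) difference is that you also verify injectivity explicitly; the paper leaves this implicit since $\sS$ and $\sS'$ are quotients by the same similarity relation, and your argument amounts to the remark following Theorem~\ref{thm:W5description} that similarity between normalized forms can always be realized with the trivial twist.
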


\begin{proof}
Since $\varphi$ is non-degenerate, the map $\mu_\varphi \colon N_3 \to N_3^\vee \otimes L$ is an isomorphism.
By taking the determinant, we have an isomorphism $\det \mu_\varphi \colon \det N_3 \to \det (N_3^\vee \otimes L)$, which implies $3 c_1(L) = 2 c_1(N_3)$.
Then $\varphi_{L\otimes \det N_3 ^\vee}$ gives an element in $\sS$, which is similar to $\varphi$. 
\end{proof}

\begin{lem}\label{lemma:symmetric_form}
 $\Phi' \circ \Psi' = \id $ and  $\Phi \circ \Psi = \id$.
\end{lem}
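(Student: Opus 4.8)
The plan is to prove $\Phi' \circ \Psi' = \id$ on $\sS'$; the normalized identity $\Phi \circ \Psi = \id$ then follows by restriction along the isomorphisms $\sS \simeq \sS'$ (Lemma~\ref{lemma:S_S'}) and $\sA \simeq \sA'$ (Lemma~\ref{lemma:A_A'}), which are compatible with the two constructions because the latter are natural under twisting by line bundles. So fix a non-degenerate ternary symmetric bilinear form $\varphi \colon S^2 N_3 \to L$ with kernel $K \subset S^2 N_3$. Unwinding Construction~\ref{construction:W5}~\ref{construction:W5_6} and \ref{construction:W5_5}, the composite $\Phi' \circ \Psi'$ sends $\varphi$ to the symmetric bilinear form $\varphi_{\nu_\varphi} \colon S^2 N_3' \to L'$ attached to the net $\nu_\varphi$, where $N_3' = \wedge^2 N_3^\vee \otimes L$. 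First I would record the canonical isomorphism
\[
N_3' = \wedge^2 N_3^\vee \otimes L \simeq N_3 \otimes M^{-1}, \qquad M \coloneqq \det N_3 \otimes L^{-1},
\]
coming from $\wedge^2 N_3^\vee \simeq N_3 \otimes (\det N_3)^{-1}$. Together with the induced isomorphism on the target line bundles, this provides canonical candidates $f$ and $g$ for a $B$-similarity $(N_3', L', \varphi_{\nu_\varphi}) \sim_B (N_3, L, \varphi)$, and it remains only to check that $f$ and $g$ intertwine $\varphi_{\nu_\varphi}$ and $\varphi$.

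Both $\Phi'$ and $\Psi'$ are assembled from the natural transformations $G_\nu$ and $F_\varphi$ of Construction~\ref{construction:W5}, hence commute with arbitrary base change and are equivariant for the natural $\GL(N_3) \times \Gm$-action; the candidate isomorphisms $f$, $g$ above are canonical, and therefore equivariant as well. Consequently the desired identity is local on $B$, and after trivializing $N_3$ and $L$ it becomes a single identity of symmetric matrices whose entries are universal polynomials, with $\Z$-coefficients, in the entries $a_{ij}$ of the matrix of $\varphi$ and in $\disc(\varphi)^{-1}$. In other words, it suffices to verify the identity for the universal form over $R_0 \coloneqq \Z[a_{ij}][\disc^{-1}]$ (the rank-$4$ condition, hence well-definedness of $\coker G_{\nu_\varphi}$ as a line bundle, is guaranteed over $R_0$ by Corollary~\ref{cor:correspondence}). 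Since $R_0$ is a $\Z$-torsion-free domain, the injection $R_0 \hookrightarrow R_0 \otimes_\Z \Q = \Q[a_{ij}][\disc^{-1}]$ shows that a polynomial identity with $\Z$-coefficients holds over $R_0$ as soon as it holds in characteristic zero.

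In characteristic zero the identity follows from equivariance together with the split computation. Over an algebraically closed field of characteristic zero, every non-degenerate ternary symmetric bilinear form is $\GL(N_3) \times \Gm$-equivalent to the split form $\varphi_{\spl}$, so the non-degenerate forms constitute a single orbit. The discrepancy between $\varphi_{\nu_\varphi}$ and $\varphi$, transported by the canonical $f$, $g$, is an equivariant polynomial section of the bundle of symmetric forms; by Definition-Proposition~\ref{defn-prop:splitform} — which records exactly the matching of $\coker G_{\nu_{\spl}}$ with $\varphi_{\spl}$, via the identification~\eqref{eq:cokernel_q_dual} and Claim~\ref{claim:Theta} — this section vanishes at the point $\varphi_{\spl}$. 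By equivariance it then vanishes on the whole orbit, i.e.\ identically over $\overline\Q$. Hence the associated $\Z[\disc^{-1}]$-polynomials coincide over $\Q$, and therefore integrally; reducing modulo an arbitrary prime, the identity persists in every characteristic, including characteristic two.

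The genuinely computational input is thus confined to the single split point, where it has already been carried out in Definition-Proposition~\ref{defn-prop:splitform}; everything else is the formal machinery of base change, equivariance, and integrality. I expect the main obstacle to be the bookkeeping of the canonical twist $M = \det N_3 \otimes L^{-1}$ and of signs — in particular, checking that $f$ and $g$ are precisely the maps under which the split computation is phrased — rather than any new invariant-theoretic difficulty, since, unlike the opposite composite, this direction never invokes the discriminant-square analysis of Claims~\ref{claim:invariant} and \ref{claim:discriminant_square}.
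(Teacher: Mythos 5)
Your proof is correct, but it takes a genuinely different route through the core verification than the paper does. The paper also reduces, by naturality, to the case $B=\Spec(R)$ with $N_3$, $L$ free, but then it simply carries out the computation for a \emph{general} symmetric matrix $Q=(q_{ij})$ over an arbitrary commutative ring: it writes $F_\varphi(P\wedge R)=\Alt(PQ^{-1}R)$, forms the element $A(P_1\wedge P_2\wedge P_3\wedge P_4)$, and checks by hand that the transported form $\varphi_{L\otimes\det N_3^\vee}$ kills it, which gives the commutative diagram (up to a global unit) directly over every $R$ at once. You instead confine the computation to the single split point: reduce to the universal ring $R_0=\Z[a_{ij}][\disc^{-1}]$, pass to characteristic zero via $\Z$-torsion-freeness, and use transitivity of the $\GL_3$-action on non-degenerate ternary forms over $\overline\Q$ together with equivariance of the constructions to propagate the split-point identity of Definition-Proposition~\ref{defn-prop:splitform} to the whole orbit, hence (by reducedness/density) to the universal identity. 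Notably, this specialization technique is exactly the one the paper itself deploys for the \emph{other} composite, in the second proof of Claim~\ref{claim:discriminant_square} inside Lemma~\ref{lemma:net_of_altaernating_forms}; so your argument is very much in the spirit of the paper, just applied to Lemma~\ref{lemma:symmetric_form} where the paper chose brute force. What each buys: the paper's computation is self-contained and needs no density or integrality apparatus; yours eliminates the general-matrix manipulation entirely, at the price of the formal reduction machinery and of the identification bookkeeping you flag. On that last point, one small suggestion: the cleanest way to make your ``discrepancy'' an honest equivariant section, immune to the ``up to automorphism of $\cO_B$'' ambiguity in the target line bundle, is to take it to be the composite $\varphi_{L\otimes\det N_3^\vee}\circ S^2(\text{canonical iso})\circ G_{\nu_\varphi}$ on $\wedge^4((S^2N_3)^\vee\otimes L)$ (a map between free modules, given by universal polynomials); its vanishing is equivalent to the kernel containment that yields the similarity, it is insensitive to unit-rescalings of the identifications, and its vanishing at the split point is exactly what Definition-Proposition~\ref{defn-prop:splitform} records.
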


\begin{proof}
 Let $(N_3,L,\varphi) \in \sS'$ be a non-degenerate symmetric bilinear form.
 Then we have a net of alternating forms $\nu_\varphi \colon \wedge^2 (K^\vee \otimes L) \to \wedge^2 (N_3^\vee) \otimes L$ as in Construction~\ref{construction:W5}.
 By Lemma~\ref{lemma:S_S'}, this is an element in $\sA$.
 By applying Construction~\ref{construction:W5}, we have $\varphi_{\nu_\varphi} \colon S^2( \wedge^2 (N_3^\vee) \otimes L) \to \cO$, which is in $\sS$.
 We prove that $\varphi_{L\otimes \det N_3 ^\vee}$ is isomorphic to $\varphi_{\nu_\varphi}$.
 More precisely, we will show that the following is a commutative diagram (up to an automorphism of upright $\cO_B$):
\[
\xymatrix{
 S^2( \wedge^2 (N_3^\vee) \otimes L)\ar[d] \ar[r]^-{\varphi_{\nu_\varphi}} & \cO \ar[d]\\
S^2(N_3 \otimes L \otimes \det N_3^\vee) \ar[r]^-{\varphi_{L\otimes \det N_3^\vee}} & L \otimes   (L \otimes \det N_3^\vee)^{\otimes 2},
}
\]
where the left vertical arrow is given by the natural identification $N_3 \otimes L \otimes \det N_3^\vee \simeq  \wedge^2 (N_3^\vee) \otimes L$ and
the right vertical arrow is induced from $\det \mu_\varphi \colon \det N_3 \to \det (N_3^\vee \otimes L)$.

Since every map is naturally constructed,  we are reduced to proving that the above diagram commutes when $B = \Spec (R)$ for a commutative ring $R$, $N_3=\cO_B^{\oplus 3}$ and $L=\cO_B$.
This can be checked directly as follows:

Let $\alpha$, $\beta$, $\gamma$ be the standard basis of $N_3 = \cO_B^{\oplus 3}$.
Take a symmetric matrix $Q = (q_{ij})$ corresponding to the map $\mu_\varphi$, and write $Q^{-1} = (p_{ij})$.
Then the map
\[
F_\varphi \colon \wedge^2((S^2N_3)^\vee \otimes L) \to \wedge^2 (N_3^\vee) \otimes L
\]
sends the bivector $P \wedge R$ of symmetric matrices $P$ and $R$ to an alternating matrix $\Alt(P Q^{-1} R)\coloneqq P Q^{-1} R - \trans (P Q^{-1} R)$.

Then, by a similar construction to Construction~\ref{construction:W5}~\ref{construction:W5_5}, the map $F_\varphi$ induces a map
\[
\wedge^4 ((S^2N_3)^\vee \otimes L) \to S^2 (\wedge^2 (N_3^\vee) \otimes L)
\]
whose cokernel is isomorphic to $\varphi_{\nu_\varphi}$.
By the definition, this map sends the $4$-vector $P_1 \wedge P_2 \wedge P_3 \wedge P_4$ of symmetric matrices $P_i$ to
\begin{align*}
 &A(P_1 \wedge P_2 \wedge P_3 \wedge P_4) \\
 &=\Alt(P_1Q^{-1}P_2) \Alt(P_3Q^{-1}P_4)-\Alt(P_1Q^{-1}P_3) \Alt(P_2Q^{-1}P_4)+\Alt(P_1Q^{-1}P_4) \Alt(P_2Q^{-1}P_3)
\end{align*}
Here $\Alt(M)\Alt( N)$ is regarded as the image $\Alt(M)\otimes \Alt( N)$ by the map  $ (\wedge^2 (N_3^\vee) \otimes L)^{\otimes 2} \to S^2 (\wedge^2 (N_3^\vee) \otimes L)$.

Note that the map $S^2 (\wedge^2 (N_3^\vee) \otimes L) \to S^2(N_3 \otimes L \otimes \det N_3^\vee) \simeq S^2N_3 \to \cO$
sends $\Alt(M)\Alt( N)$ to
\begin{align*}
& \varphi((\Alt(M)_{23}\alpha + \Alt(M)_{31}\beta +\Alt(M)_{12}\gamma)(\Alt(N)_{23}\alpha + \Alt(N)_{31}\beta +\Alt(N)_{12}\gamma)) \\
=& \Alt(M)_{23}\Alt(N)_{23}q_{11} + \Alt(M)_{31}\Alt(N)_{23}q_{21} +\Alt(M)_{12}\Alt(N)_{23}q_{31} \\
+& \Alt(M)_{23}\Alt(N)_{31}q_{12}+ \Alt(M)_{31}\Alt(N)_{31}q_{22} +\Alt(M)_{12}\Alt(N)_{31}q_{32}\\
+& \Alt(M)_{23}\Alt(N)_{12}q_{13} + \Alt(M)_{31}\Alt(N)_{12}q_{23} +\Alt(M)_{12}\Alt(N)_{12}q_{33}.
\end{align*}
We can check directly that this map sends $A(P_1 \wedge P_2 \wedge P_3 \wedge P_4)$ to $0$, and the assertion follows.
\end{proof}

\begin{lem}\label{lemma:net_of_altaernating_forms}
 $\Psi' \circ \Phi' = \id $ and  $\Psi \circ \Phi = \id$.
\end{lem}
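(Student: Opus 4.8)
By Lemma~\ref{lemma:symmetric_form} we already have $\Phi\circ\Psi=\id$, so the remaining content is the reverse identity $\Psi\circ\Phi=\id$; together the two give the mutually inverse bijections of Theorem~\ref{thm:W5description}, and by Lemmas~\ref{lemma:A_A'} and~\ref{lemma:S_S'} it suffices to treat the normalized case $\Psi\circ\Phi=\id$ on $\sA$. The plan is to exhibit, for a net $\nu\colon\wedge^2U_5\to N_3$ in $\sA$, a \emph{canonical} isomorphism recovering $\nu$ from $\varphi_\nu=\Phi(\nu)$. The structural input is that $G_\nu$ is an isomorphism onto $\ker\varphi_\nu$: by Corollary~\ref{cor:correspondence}, $L=\coker G_\nu$ is a line bundle, so the exact sequence $\wedge^4U_5\xrightarrow{G_\nu}S^2N_3\xrightarrow{\varphi_\nu}L\to0$ identifies the image of $G_\nu$ with $K=\ker\varphi_\nu$, which is locally free of rank $5$. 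Since $\wedge^4U_5$ is also locally free of rank $5$, the surjection $G_\nu\colon\wedge^4U_5\twoheadrightarrow K$ between locally free sheaves of equal rank is an isomorphism (a local splitting shows its determinant is a unit).

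Consequently $\wedge^4U_5\simeq K$ canonically, whence $K^\vee\simeq U_5\otimes\det U_5^\vee$, so that $K^\vee\otimes L\simeq U_5$ in $\sA$ (where $c_1(U_5)=0$ and $c_1(L)=4(c_1(N_3)-c_1(U_5))=0$). Likewise the rank-three duality $\wedge^2N_3^\vee\simeq N_3\otimes\det N_3^\vee$ gives $\wedge^2N_3^\vee\otimes L\simeq N_3$. Under these two canonical identifications, $\Psi(\Phi(\nu))=\nu_{\varphi_\nu}$ becomes a morphism $\wedge^2U_5\to N_3$, and the task is to prove that it equals $\nu$, which then yields the required $B$-similarity.

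For the comparison I would argue by naturality and reduction to the split form. Both $\nu$ and the transported $\nu_{\varphi_\nu}$ arise from constructions that are functorial in $B$ and equivariant for the $\GL(U_5)\times\GL(N_3)$-action, and, once $U_5$ and $N_3$ are trivialized, are given by universal formulas with coefficients in $\Z$ after inverting the non-degeneracy invariant of $\varphi_\nu$. Hence it suffices to verify the identity for the universal net, which lives over a nonempty open $\mathcal P$ of an affine space over $\Z$; since $\nu_{\spl}$ has rank $4$ in every characteristic (Definition-Proposition~\ref{defn-prop:splitform}), $\mathcal P$ is integral and flat over $\Z$, with generic point $\eta$ of characteristic $0$. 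As $\mathcal P$ is integral, equality of the two bundle morphisms may be checked after restriction to $\eta$, and such an equality then propagates to all of $\mathcal P$, including its characteristic-two points, so that the identity holds over an arbitrary base. Finally, over $\overline{k(\eta)}$ the net is split, since the $V_5$-variety over an algebraically closed field is unique (Theorem~\ref{theorem:W5_over_closed_field}), hence a $\GL\times\GL$-translate of $\nu_{\spl}$; by equivariance together with the explicit computation for the split form (Definition-Proposition~\ref{defn-prop:splitform} and Claim~\ref{claim:Theta}, where precisely $\nu_{\varphi_{\spl}}=\nu_{\spl}$ is verified), the two morphisms agree at $\eta$, completing the argument.

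The main obstacle is this last comparison of the two net maps: one must keep track of all the canonical identifications (the isomorphism $G_\nu$, the rank-three duality, and the $c_1$-bookkeeping that keeps everything normalized) so that ``$\nu_{\varphi_\nu}=\nu$'' is a genuine equality of morphisms rather than a mere coincidence of isomorphism classes, and then rigorously justify the descent to the split case — in particular the characteristic-two statement, which is invisible at the characteristic-zero generic point and must instead be deduced from the reducedness and $\Z$-flatness of the universal parameter scheme. An alternative to the equivariance argument is a direct universal matrix computation in the spirit of the proof of Lemma~\ref{lemma:symmetric_form}, but the reduction to the split form is considerably cleaner.
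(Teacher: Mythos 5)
Your first two paragraphs are sound: $G_\nu$ is indeed an isomorphism onto $\ker\varphi_\nu$, and this identification is also how the paper sets up the comparison. The gap is in your third paragraph, in the claim that under the canonical identifications the transported net $\nu_{\varphi_\nu}$ \emph{equals} $\nu$, so that equality can be checked at the characteristic-zero generic point of the universal parameter space. That equality is false, and the failure is exactly where the real work of this lemma lies. In the trivialized universal setting, writing $Q_\nu$ for the matrix of $\varphi_\nu$ produced by the construction, the map built directly from $\nu$ (the paper's $\Theta'$, which has polynomial entries) and the map built from $\nu_{\varphi_\nu}$ (the paper's $\Theta(Q_\nu^{-1})$, whose entries are rational) differ by a factor $F$ of multidegree $(5,5,5)$ satisfying $F^2=\det Q_\nu$: one has $\Theta'=F\cdot\Theta(Q_\nu^{-1})$ (Claims~\ref{claim:invariant} and \ref{claim:discriminant_square}). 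This factor equals $1$ at the split net with the standard bases --- which is why the verification in Definition-Proposition~\ref{defn-prop:splitform} comes out on the nose --- but it is a non-constant unit on the parameter space, so the identity you want to propagate from the split point fails at the generic point; the locus where your two morphisms coincide is the hypersurface cut out by $F=1$, not all of $\mathcal{P}$. Equivariance does not rescue this: the two maps have different targets, $N_3\otimes\det N_3^\vee\otimes L$ versus $N_3\otimes(\det U_5^\vee\otimes L)^{\otimes 2}$, and the identification of these targets needed to compare them is not $\GL(U_5)\times\GL(N_3)$-equivariant --- $F$ is precisely the relative invariant, with nontrivial character, measuring this discrepancy. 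So checking at the split point (or on its $\GL\times\GL$-orbit) proves nothing about the universal net.

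What your proposal is missing is therefore the technical core of the paper's argument: (i) that $\Theta'$ can be written as $\Theta(P')$ for a \emph{symmetric polynomial} matrix $P'$, which uses Ochiai's invariant theory of nets of alternating forms (Claim~\ref{claim:invariant}); and (ii) that $P'Q_\nu=F\cdot E$ with $F=\det P'$, equivalently that the adjugate of $P'$ is $Q_\nu$, so that the discriminant $\det Q_\nu=F^2$ is a perfect square (Claim~\ref{claim:discriminant_square}). Only with $F$ in hand can one define the correction isomorphism $H_\nu$ (locally multiplication by $F$) that actually witnesses the similarity $\Psi(\Phi(\nu))\sim_B\nu$; globally, $F$ is what trivializes the line bundle $6c_1(U_5)-5c_1(N_3)$ (Remark~\ref{remark:discriminant_square}), and this cannot be sidestepped by noting $c_1=0$ in the normalized case, since vanishing of $c_1$ gives no canonical trivialization. (Your reduction to $\sA$ via Lemma~\ref{lemma:A_A'} is also circular as the paper is organized: that lemma is proved using Remark~\ref{remark:discriminant_square}, i.e.\ using Claim~\ref{claim:discriminant_square}.) Ironically, your reduction strategy --- pass to the reduced, $\Z$-flat universal parameter space and check at points where the net becomes split --- is exactly how the paper proves Claim~\ref{claim:discriminant_square}; but it must be applied to the correct identity, the one involving $F$, not to the unqualified equality you assert.
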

The proof of this lemma is the main technical part of the proof and is divided into several claims.
 Take a net of alternating forms $(U_5,N_3,\nu) \in \sA'$.
 Then we have a non-degenerate symmetric bilinear form $(N_3, L, \varphi_\nu)$ as in Construction~\ref{construction:W5}, where $L$ is the cokernel of the map $\wedge^4 U_5 \to S^2N_3$.

From $\varphi_\nu$, we have the net of alternating forms $\nu_{\varphi_\nu} \colon \wedge^2 ( (\wedge^4 U_5 ) ^\vee \otimes L) \to \wedge^2 (N_3^\vee) \otimes L$ as in Construction~\ref{construction:W5}.
We will show that $\nu_{\varphi_\nu}$ is isomorphic to $\nu_{\det U_5^\vee \otimes L}$.
More precisely, we will show that there is a map
\[
H_\nu \colon N_3 \otimes \det N_3^\vee \otimes L \to N_3 \otimes(\det U_5^\vee \otimes L)^{\otimes 2}
\]
that makes the following diagram commutative:
\[
\xymatrix{
\wedge^2 ( (\wedge^4 U_5 ) ^\vee \otimes L) \ar[d] \ar[r]^-{\nu_{\varphi_\nu}} & \wedge^2 (N_3^\vee) \otimes L \simeq N_3 \otimes \det N_3^\vee \otimes L\ar[d]^{H_\nu}\\
\wedge^2(U_5 \otimes \det U_5^\vee \otimes L) \ar[r]^-{\nu_{\det U_5^\vee \otimes L}} & N_3 \otimes(\det U_5^\vee \otimes L)^{\otimes 2},
}
\]
where the left vertical arrow is given by the natural identification $(\wedge^4 U_5 ) ^\vee \simeq U_5 \otimes \det U_5^\vee$.
Note that, since $G_\nu^\vee$ is surjective, it is enough to prove that this diagram commutes after composing with $ \wedge^2 (G_\nu^\vee \otimes \id_L)$, i.e.,\ the following commutes:
\begin{equation}\label{eq:net_of_bivectors}
  \xymatrix@C+0.6pc{
\wedge^2((S^2N_3)^\vee \otimes L) \ar[d]^-= \ar[r]^-{ \wedge^2 (G_\nu^\vee \otimes \id_L)} &\wedge^2 ( (\wedge^4 U_5 ) ^\vee \otimes L) \ar[r]^-{\nu_{\varphi_\nu}} & \wedge^2 (N_3^\vee) \otimes L \simeq N_3 \otimes \det N_3^\vee \otimes L\ar[d]^{H_\nu}\\
\wedge^2((S^2N_3)^\vee \otimes L) \ar[r]^-{ \wedge^2 (G_\nu^\vee \otimes \id_L)} &\wedge^2 ( (\wedge^4 U_5 ) ^\vee \otimes L )\simeq \wedge^2(U_5 \otimes \det U_5^\vee \otimes L) \ar[r]^-{\nu_{\det U_5^\vee \otimes L}} & N_3 \otimes(\det U_5^\vee \otimes L)^{\otimes 2}.
}
\end{equation}
Note also that, since the horizontal arrows are surjective, $H_\nu$ is unique if it exists.
Thus we are reduced to proving the assertion when $B=\Spec(R)$ for a commutative ring $R$, $U_5 = \cO_B^{\oplus 5}$ and  $N_3 = \cO_B^{\oplus 3}$.

Let $e_i$ be the standard basis of $U_5 = \cO_B^{\oplus 5}$, and $\alpha$, $\beta$, $\gamma$ that of $N_3$.
The net of alternating forms $\nu$ is given by three alternating matrix $A=(a_{ij})$, $B=(b_{ij})$ and $C=(c_{ij})$.
Define  $6 \times 5$ matrix $M=(m_{ij})$ whose $j$-th column is
\[
\begin{pmatrix}
 \Pf (A_j) \\
 \Pf(B_j + C_j) -\Pf(B_j) - \Pf(C_j)\\
  \Pf (B_j) \\
 \Pf(C_j + A_j) -\Pf(C_j) - \Pf(A_j)\\
  \Pf (C_j) \\
 \Pf(A_j + B_j) -\Pf(A_j) - \Pf(B_j)
\end{pmatrix}.
\]
Here $A_j$, $B_j$ and $C_j$ are principal submatrices of $A$, $B$ and $C$ respectively.
Further, we define a symmetric matrix $Q_\nu$ as
\[
Q_\nu \coloneqq
\begin{pmatrix}
 \det M_1 & - \det M_6& - \det M_4\\
 - \det M_6 &  \det M_3& -\det M_2\\
-\det M_4 & -\det M_2&  \det M_5\\
\end{pmatrix},
\]
where $M_i$ is a submatrix of $M$ obtained by removing $i$-th row.

In the following,  for a free module $V = R^{\oplus m}$ with standard basis $\{v_i\}$, we denote by $\omega_{V}$ the volume form with respect to this basis, and we take the basis $\wedge^2 \{v_i\} \coloneqq \{v_i \wedge v_j \}_{i <j}$ of $\wedge^2 V$ with the index set $\{ (i,j) \mid i<j \}$.

\begin{claim}\label{claim:Theta}
The matrix
$Q_{\nu}$ is invertible. Moreover, if we write $Q_{\nu}^{-1} = (p_{ij})$, then
the composite of the upper horizontal maps in \eqref{eq:net_of_bivectors}
 corresponds to
 \setcounter{MaxMatrixCols}{15}
\begin{align*}
& \Theta(p_{ij}) \coloneqq\\
&\begin{pmatrix}
0&0&0&0&0 &-p_{22}&p_{13}&p_{33}&-p_{12}&p_{12}&p_{23}&0&0&-p_{11}&-p_{13} \\
-p_{12}&0&-p_{11}&-p_{13}&0&0&p_{23}&0&p_{22}&0&0&0&-p_{33}&p_{12}&p_{23}\\
p_{13}&p_{12}&0&0&p_{11}&0&-p_{33}&0&-p_{23}&-p_{23}&0&-p_{22}&0&p_{13}&0
\end{pmatrix}
\end{align*}
with respect to the bases
 $\wedge^2 \{(\alpha^2)^\vee, (\beta \gamma)^\vee, (\beta^2)^\vee, (\gamma \alpha)^\vee, (\gamma^2)^\vee,(\alpha \beta)^\vee\}$ and $\{\alpha \otimes \omega_{N_3}^\vee,\beta \otimes \omega_{N_3}^\vee ,\gamma \otimes \omega_{N_3}^\vee \}$.
\end{claim}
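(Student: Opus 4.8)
The plan is to do everything in the free case $B=\Spec R$, $U_5=\cO_B^{\oplus5}$, $N_3=\cO_B^{\oplus3}$ to which the proof of Lemma~\ref{lemma:net_of_altaernating_forms} has already reduced, and to work with explicit matrices throughout. First I would identify the $6\times5$ matrix $M$ as the matrix of $G_\nu\colon\wedge^4 U_5\to S^2N_3$. Writing $\nu(e_i\wedge e_j)=a_{ij}\alpha+b_{ij}\beta+c_{ij}\gamma$ and setting $f_j\coloneqq e_1\wedge\cdots\wedge\widehat{e_j}\wedge\cdots\wedge e_5$, I evaluate $G_\nu(f_j)$: applying the natural transformation $\wedge^4\to S^2\wedge^2$ of Construction~\ref{construction:W5} and then $S^2\nu$, the coefficient of $\alpha^2$ is the Pfaffian $\Pf(A_j)$ of the principal $4\times4$ submatrix of $A=(a_{ij})$, while the coefficient of the mixed monomial $\alpha\beta$ is the polarized Pfaffian $\Pf(A_j+B_j)-\Pf(A_j)-\Pf(B_j)$, and similarly for the remaining monomials. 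Since the polarization of a quadratic expression is integral, these identities hold in every characteristic, and this is exactly what makes the columns of $M$ equal to the images $G_\nu(f_j)$ in the basis $\{\alpha^2,\beta\gamma,\beta^2,\gamma\alpha,\gamma^2,\alpha\beta\}$ of $S^2N_3$.

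Next I would compute the cokernel $L=\coker G_\nu$ together with $\varphi_\nu\colon S^2N_3\to L$. As $M$ has rank $5$, the cokernel of $M\colon\cO_B^{\oplus5}\to\cO_B^{\oplus6}$ is trivialized by the vector of signed maximal minors, so after this trivialization $L\simeq\cO_B$ and $\varphi_\nu$ sends the $i$-th basis monomial to $(-1)^{i+1}\det M_i$. Reading off the Gram matrix of the associated map $\mu_{\varphi_\nu}\colon N_3\to N_3^\vee\otimes L$ in the basis $\{\alpha,\beta,\gamma\}$ — the diagonal entries being $\varphi_\nu(\alpha^2),\varphi_\nu(\beta^2),\varphi_\nu(\gamma^2)$ and the off-diagonal ones $\varphi_\nu(\beta\gamma),\varphi_\nu(\gamma\alpha),\varphi_\nu(\alpha\beta)$ — reproduces precisely the matrix $Q_\nu$. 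Thus $Q_\nu$ is the matrix of $\mu_{\varphi_\nu}$, so its invertibility is exactly the non-degeneracy of $\varphi_\nu$ established in Corollary~\ref{cor:correspondence}; this gives the first assertion and makes $(p_{ij})=Q_\nu^{-1}$ well defined.

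The second assertion I would reduce to the coordinate formula for $F_\varphi$ already obtained inside the proof of Lemma~\ref{lemma:symmetric_form}. Because $\coker G_\nu$ is a line bundle, the subsheaf $K\coloneqq\image G_\nu=\ker\varphi_\nu$ is a rank-$5$ subbundle and $G_\nu\colon\wedge^4 U_5\to K$ is an isomorphism (a surjection between locally free sheaves of equal rank, hence an isomorphism by taking determinants). Consequently, under the identification $(\wedge^4 U_5)^\vee\simeq K^\vee$, the dual $G_\nu^\vee\colon(S^2N_3)^\vee\to(\wedge^4 U_5)^\vee$ becomes the canonical surjection $(S^2N_3)^\vee\to K^\vee$; since $\nu_{\varphi_\nu}$ is by construction the factorization of $F_{\varphi_\nu}$ through $\wedge^2(K^\vee\otimes L)$, the composite of the two upper horizontal maps in \eqref{eq:net_of_bivectors} is nothing but $F_{\varphi_\nu}$ itself. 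By Lemma~\ref{lemma:symmetric_form} this map sends a bivector $P\wedge R$ of symmetric matrices to $\Alt(PQ_\nu^{-1}R)$, and the dual basis $\{(\alpha^2)^\vee,(\beta\gamma)^\vee,(\beta^2)^\vee,(\gamma\alpha)^\vee,(\gamma^2)^\vee,(\alpha\beta)^\vee\}$ corresponds to the symmetric matrices $E_{11},\ E_{23}+E_{32},\ E_{22},\ E_{13}+E_{31},\ E_{33},\ E_{12}+E_{21}$.

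It then remains to evaluate $\Alt(P_iQ_\nu^{-1}P_j)$ for the fifteen pairs $i<j$, where $P_1,\dots,P_6$ denote these six matrices, and to translate each resulting alternating matrix $W$ into $W_{23}\alpha+W_{31}\beta+W_{12}\gamma\in N_3\otimes\det N_3^\vee$ via the isomorphism $\wedge^2N_3^\vee\simeq N_3\otimes\det N_3^\vee$. Because the $P_i$ are extremely sparse, each product $P_iQ_\nu^{-1}P_j$ has at most two nonzero entries and the computation is immediate; for instance $P_1Q_\nu^{-1}P_6=p_{12}E_{11}+p_{11}E_{12}$, whose alternization contributes $p_{11}\gamma$, matching the column of $\Theta$ indexed by $(1,6)$. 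Assembling the fifteen columns reproduces $\Theta(p_{ij})$, using $p_{ij}=p_{ji}$. The conceptual heart of the argument is the identification of $M$ with $G_\nu$ and of $Q_\nu$ with the Gram matrix of $\varphi_\nu$ with the correct signs, which must be handled with care in characteristic $2$, where the polarization terms cannot be halved; the fifteen-fold verification against $\Theta$ is mechanical, but the sign bookkeeping — in the minors defining $Q_\nu$, in $\Alt$, and in the isomorphism $\wedge^2N_3^\vee\simeq N_3\otimes\det N_3^\vee$ — is the main place where errors can creep in.
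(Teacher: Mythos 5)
Your proposal is correct and takes essentially the same route as the paper's own proof: identify $M$ as the matrix of $G_\nu$ (via the integrality of the polarized Pfaffians), identify the vector of signed maximal minors as the kernel of $\trans M$ so that $Q_\nu$ is the Gram matrix of $\varphi_\nu$ (invertible by Corollary~\ref{cor:correspondence}), recognize the composite of the upper horizontal maps as $F_{\varphi_\nu}$, and evaluate $\Alt(P_iQ_\nu^{-1}P_j)$ column by column against $\Theta(p_{ij})$. The details you supply — the rank-$5$ subbundle argument for $\nu_{\varphi_\nu}\circ\wedge^2(G_\nu^\vee\otimes\id_L)=F_{\varphi_\nu}$, the identification of the dual monomial basis with the sparse symmetric matrices, and the sample column $(1,6)$ — are precisely the steps the paper compresses into ``by the definition'' and ``by direct computation.''
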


\begin{proof}

By the definition, the map $G_\nu \colon \wedge^4 U_5 \to S^2N_3$ with respect to the bases
$\{ e_2\wedge e_3\wedge e_ 4\wedge e_5, e_1 \wedge e_3\wedge e_ 4\wedge e_5,e_1 \wedge e_2 \wedge e_ 4\wedge e_5,e_1 \wedge e_2\wedge e_3 \wedge e_5,e_1 \wedge e_2\wedge e_3\wedge e_ 4 \}$
and
$\{\alpha^2, \beta \gamma, \beta^2, \gamma \alpha, \gamma^2,\alpha \beta\}$
is given by the $6 \times 5$ matrix $M=(m_{ij})$.
Thus the dual map $ G_\nu ^\vee$ is given by $\trans M$, and its kernel is spanned by a vector $((-1)^{i+1}\det M_i)$.
(Here we identify $L=\cO$.)
This says that the symmetric bilinear form $\varphi_\nu$ corresponds to the symmetric matrix $Q_\nu$.

By the definition, $\nu_{\varphi_\nu} \circ \wedge^2 (G_\nu^\vee \otimes \id_L) = F_{\varphi_\nu}$.
Thus the map $\nu_{\varphi_\nu} \circ \wedge^2 (G_\nu^\vee \otimes \id_L) = F_{\varphi_\nu}$ sends a bivector $P \wedge R$ of symmetric matrices $P$ and $R$ to $\Alt(P Q_\nu^{-1} R)$, which is identified with
\[
(\Alt(P Q_\nu^{-1} R)_{23} \alpha +\Alt(P Q_\nu^{-1} R)_{31} \beta +\Alt(P Q_\nu^{-1} R)_{12} \gamma) \otimes (\omega_{N_3})^{\vee}  \in N_3 \otimes \det N_3^ \vee \otimes L.
\]
Thus, by direct computation, we see that this map corresponds to the matrix $\Theta(p_{ij})$.
 \end{proof}

\begin{claim}\label{claim:Theta'}
The map $\wedge^2((S^2N_3)^\vee \otimes L) \to N_3 \otimes(\det U_5^\vee \otimes L)^{\otimes 2}$ corresponds to the matrix whose $(i,j)$-th column ($i<j$) is
\[
\Theta'=
\begin{pmatrix}
 \sum _{k<l}  (-1)^{k+l} (m_{ik}m_{jl} - m_{il}m_{jk}) a_{kl}\\
 \sum _{k<l}  (-1)^{k+l} (m_{ik}m_{jl} - m_{il}m_{jk}) b_{kl}\\
 \sum _{k<l}  (-1)^{k+l} (m_{ik}m_{jl} - m_{il}m_{jk}) c_{kl}
\end{pmatrix}
\]
with respect to the bases $\wedge^2 \{(\alpha^2)^\vee, (\beta \gamma)^\vee, (\beta^2)^\vee, (\gamma \alpha)^\vee, (\gamma^2)^\vee,(\alpha \beta)^\vee\}$ and $\{\alpha \otimes (\omega_{U_5}^\vee)^{\otimes 2},\beta \otimes (\omega_{U_5}^\vee)^{\otimes 2} ,\gamma \otimes (\omega_{U_5}^\vee)^{\otimes 2} \}$.
\end{claim}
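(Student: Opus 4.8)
The plan is to unwind the lower horizontal composite of \eqref{eq:net_of_bivectors} on the basis bivectors $s_i^\vee\wedge s_j^\vee$, where $s_1,\dots,s_6$ denotes the basis $\alpha^2,\beta\gamma,\beta^2,\gamma\alpha,\gamma^2,\alpha\beta$ of $S^2N_3$, and to check that the resulting expression is exactly the column $\Theta'$. Writing $f_1,\dots,f_5$ for the basis $e_2\wedge e_3\wedge e_4\wedge e_5,\dots,e_1\wedge e_2\wedge e_3\wedge e_4$ of $\wedge^4 U_5$ used in Claim~\ref{claim:Theta}, I recall that there $G_\nu$ was computed to have matrix $M=(m_{kl})$, so that $G_\nu^\vee$ is given by $\trans{M}$, i.e.\ $G_\nu^\vee(s_i^\vee)=\sum_l m_{il}f_l^\vee$. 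Hence the first arrow sends
\[
s_i^\vee\wedge s_j^\vee \longmapsto \sum_{k<l}(m_{ik}m_{jl}-m_{il}m_{jk})\,f_k^\vee\wedge f_l^\vee.
\]

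Next I would make the identification $(\wedge^4 U_5)^\vee\simeq U_5\otimes\det U_5^\vee$ explicit on this basis. Since $f_l=(-1)^{l-1}\iota_{e_l^\vee}\omega_{U_5}$ under the contraction isomorphism $\wedge^4 U_5\simeq U_5^\vee\otimes\det U_5$, dualizing gives $f_l^\vee\mapsto(-1)^{l-1}e_l\otimes\omega_{U_5}^\vee$, and therefore
\[
f_k^\vee\wedge f_l^\vee \longmapsto (-1)^{k+l}\,(e_k\otimes\omega_{U_5}^\vee)\wedge(e_l\otimes\omega_{U_5}^\vee).
\]
Finally, $\nu_{\det U_5^\vee\otimes L}$ is by construction the twist of the original net $\nu$, so it sends $(e_k\otimes\omega_{U_5}^\vee)\wedge(e_l\otimes\omega_{U_5}^\vee)$ to $\nu(e_k\wedge e_l)\otimes(\omega_{U_5}^\vee)^{\otimes2}=(a_{kl}\alpha+b_{kl}\beta+c_{kl}\gamma)\otimes(\omega_{U_5}^\vee)^{\otimes2}$. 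Composing the three steps, the image of $s_i^\vee\wedge s_j^\vee$ becomes
\[
\sum_{k<l}(-1)^{k+l}(m_{ik}m_{jl}-m_{il}m_{jk})(a_{kl}\alpha+b_{kl}\beta+c_{kl}\gamma)\otimes(\omega_{U_5}^\vee)^{\otimes2},
\]
whose coefficient vector in the basis $\{\alpha\otimes(\omega_{U_5}^\vee)^{\otimes2},\beta\otimes(\omega_{U_5}^\vee)^{\otimes2},\gamma\otimes(\omega_{U_5}^\vee)^{\otimes2}\}$ is precisely the $(i,j)$-column $\Theta'$.

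The computation is essentially formal once the maps are unwound; the one place demanding care---and the main obstacle---is the sign bookkeeping in the identification $(\wedge^4 U_5)^\vee\simeq U_5\otimes\det U_5^\vee$, since it is exactly the factor $(-1)^{l-1}$ there that produces the $(-1)^{k+l}$ appearing in $\Theta'$. I would therefore fix the contraction convention once and for all, chosen so as to be compatible with the sign $(-1)^{i+1}\det M_i$ that spans $\ker G_\nu^\vee$ in Claim~\ref{claim:Theta}, and then simply verify the three displayed steps; no genuinely new input beyond Claim~\ref{claim:Theta} is required.
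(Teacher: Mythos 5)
Your proposal is correct and follows essentially the same route as the paper's proof: both use Claim~\ref{claim:Theta} to identify $G_\nu^\vee$ with $\trans M$, take $\wedge^2$ to get the $2\times 2$ minors, track the sign $(-1)^{l-1}$ in the identification $(\wedge^4 U_5)^\vee \simeq U_5 \otimes \det U_5^\vee$ (the paper states this as the dual basis corresponding to $\{e_1,-e_2,e_3,-e_4,e_5\}$, which is exactly your contraction computation), and then compose with the twisted net $\nu_{\det U_5^\vee \otimes L}$. Your sign bookkeeping $(-1)^{k-1}(-1)^{l-1}=(-1)^{k+l}$ reproduces the paper's matrix verbatim.
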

\begin{proof}
Recall that $G_\nu^\vee$ corresponds to the matrix $\trans M$.
Thus the map $\wedge^2 (G_\nu^\vee \otimes \id_L)$ corresponds to the matrix $\wedge^2 (\trans M)$, which is given by the $2 \times 2$ minor determinants of $M$.
Note that the dual basis of
\[
\{ e_2\wedge e_3\wedge e_ 4\wedge e_5, e_1 \wedge e_3\wedge e_ 4\wedge e_5,e_1 \wedge e_2 \wedge e_ 4\wedge e_5,e_1 \wedge e_2\wedge e_3 \wedge e_5,e_1 \wedge e_2\wedge e_3\wedge e_ 4 \}
\]
corresponds to the basis $\{e_1, -e_2, e_3, -e_4, e_5\}$ of $U_5$.
Thus the composite 
\[
\wedge^2((S^2N_3)^\vee \otimes L) \to \wedge^2 ( (\wedge^4 U_5 ) ^\vee \otimes L)  \to \wedge^2(U_5 \otimes \det U_5^\vee \otimes L)
\]
corresponds to the matrix
\[
( (-1)^{k+l} (m_{ik}m_{jl} - m_{il}m_{jk}) )_{((k<l),(i<j))}
\]
with respect to the basis $\wedge^2 \{(\alpha^2)^\vee, (\beta \gamma)^\vee, (\beta^2)^\vee, (\gamma \alpha)^\vee, (\gamma^2)^\vee,(\alpha \beta)^\vee\}$ and $\wedge^2\{e_1\otimes \omega_{U_5}^\vee,e_2\otimes \omega_{U_5}^\vee,e_3\otimes \omega_{U_5}^\vee,e_4\otimes \omega_{U_5}^\vee,e_5\otimes \omega_{U_5}^\vee\}$.
Further, if we compose this with $\nu_{\det U_5^\vee \otimes L}$, we have the map $\wedge^2((S^2N_3)^\vee \otimes L) \to N_3 \otimes(\det U_5^\vee \otimes L)^{\otimes 2}$ that corresponds to the matrix $\Theta'$.
\end{proof}

By using the invariant theory for nets of alternating forms  (\cite[]{SK77}, \cite[Section 2]{Gyo90} \cite[Section 3]{Och97}, \cite[Subsection 3.2]{Arithmeticinvariant2}), we have the next claim:
\begin{claim}\label{claim:invariant}
The following hold:
\begin{enumerate}
 \item \label{claim:invariant1}$\Theta ' = \Theta (p'_{ij})$ for a symmetric matrix $P' =(p'_{ij})$.
 \item \label{claim:invariant2}$\det P' = F(a_{ij}, b_{ij}, c_{ij})$ for a polynomial $F(a_{ij}, b_{ij}, c_{ij})$ of multi-degree $(5,5,5)$.

\end{enumerate}
\end{claim}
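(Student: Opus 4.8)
The plan is to work universally over the polynomial ring $R_0 = \Z[a_{ij}, b_{ij}, c_{ij}]$, treating the entries of the three alternating matrices $A$, $B$, $C$ as indeterminates, so that $M$, $Q_\nu$, $\Theta$, and $\Theta'$ all have entries in $R_0$ or its fraction field. Since the formation of $\Theta$ and $\Theta'$ commutes with base change (the reduction to $B=\Spec R$ with free modules is already in place), it suffices to verify both assertions over $R_0$; any polynomial identity established over $\Q$, or after inverting a nonzero integer, then propagates to $R_0$ by comparing coefficients. Throughout I would exploit that the entire construction $\nu \mapsto G_\nu \mapsto (\Theta,\Theta')$ is equivariant for the natural action of $\GL(U_5)\times\GL(N_3)$ on the space of nets $\wedge^2 U_5^\vee \otimes N_3$, where the source $\wedge^2((S^2N_3)^\vee)$ and target $N_3\otimes\det N_3^\vee$ of both $\Theta$ and $\Theta'$ carry the induced $\GL(N_3)$-representations.

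For assertion \ref{claim:invariant1}, the point is that $\nu\mapsto\Theta'$ is a $\GL(U_5)\times\GL(N_3)$-covariant valued in $\Hom\bigl(\wedge^2((S^2N_3)^\vee),\, N_3\otimes\det N_3^\vee\bigr)$, exactly as is $\nu\mapsto\Theta$. By construction the image of the linear map $\Theta(-)$ is the distinguished $\GL(N_3)$-subspace parametrized by symmetric forms $P\in S^2N_3$ (twisted by a line), namely the symmetric summand singled out in Claim~\ref{claim:Theta}. First I would invoke the invariant theory of nets of alternating forms of \cite[Section~3]{Och97} (cf.\ \cite[Section~2]{Gyo90} and \cite[Subsection~3.2]{Arithmeticinvariant2}) to see that every covariant of the relevant weight factors through this symmetric summand; this forces $\Theta'=\Theta(P')$ for a uniquely determined symmetric matrix $P'=(p'_{ij})$, and by the explicit formula of Claim~\ref{claim:Theta'} its entries are polynomials of degree $5$ in $(a_{ij},b_{ij},c_{ij})$, since each $m_{ij}$ is homogeneous of degree $2$. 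Alternatively, one can prove $\Theta'=\Theta(P')$ by hand, checking that $\Theta'$ has the zero pattern of $\Theta$ and that the entries forced to coincide do coincide; this reduces to Pl\"ucker- and Pfaffian-type identities among the minors $m_{ik}m_{jl}-m_{il}m_{jk}$ of $M$.

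For assertion \ref{claim:invariant2}, note first that $\det P'$ is homogeneous of total degree $15$, being the determinant of a $3\times 3$ matrix whose entries are the degree-$5$ polynomials $p'_{ij}$. Because $P'$ is a $\GL(U_5)\times\GL(N_3)$-covariant valued in a line-bundle twist of $S^2N_3$, its determinant is a relative invariant of the prehomogeneous space, i.e.\ $\det P'(g\cdot\nu)=\chi(g)\,\det P'(\nu)$ for a character $\chi$ of $\GL(U_5)\times\GL(N_3)$. Restricting $\chi$ to the diagonal torus $\diag(t_1,t_2,t_3)\subset\GL(N_3)$, under which a monomial of multidegree $(d_A,d_B,d_C)$ has weight $t_1^{d_A}t_2^{d_B}t_3^{d_C}$, and using that every character of $\GL(N_3)$ is a power of the determinant, we obtain weight $(t_1t_2t_3)^{k}$ and hence $d_A=d_B=d_C=k$; total degree $15$ forces $k=5$. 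Thus $\det P'$ has multidegree $(5,5,5)$, and this argument is insensitive to the characteristic. Finally $\det P'\not\equiv 0$, since at the split net $\nu_{\spl}$ the associated form $\varphi_\nu=\varphi_{\spl}$ is non-degenerate by Definition-Proposition~\ref{defn-prop:splitform}, so $P'$ is there invertible. Setting $F\coloneqq\det P'$ proves \ref{claim:invariant2}; by \cite[Proposition~14]{SK77} this $F$ is, up to scalar, the fundamental relative invariant of the space, in accordance with its degree $15$.

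The main obstacle is assertion \ref{claim:invariant1}: a priori $\Theta'$ is an arbitrary $3\times 15$ matrix of quintics, and the real content is that it lies in the small symmetric subspace cut out by $\Theta(-)$. The conceptual resolution is the covariant classification of \cite{Och97}, but transporting it to arbitrary (in particular characteristic-two) bases requires care, since the cited invariant-theoretic results are stated in characteristic zero. I would therefore carry out the identification at the level of the universal net over $\Z$ (comparing with the characteristic-zero statement after inverting nothing beyond what the coefficient comparison forces) and then descend the resulting polynomial identities to all of $R_0$, using that the covariant $\nu\mapsto\Theta'$ is defined integrally; the split net of Definition-Proposition~\ref{defn-prop:splitform} provides the base point at which non-degeneracy, and hence $\det P'\neq 0$, can be checked explicitly.
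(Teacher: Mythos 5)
Your proposal is correct in substance. For part \ref{claim:invariant1} it takes essentially the paper's route: both you and the paper reduce to the universal net over $\Z[a_{ij},b_{ij},c_{ij}]$ and defer the actual identities to the invariant theory of nets of alternating forms in \cite{Och97}. The paper, however, makes this deferral concrete: it introduces the integral normalization $\widetilde\beta_{j}(X,X)=\Pf(X_j)$, $\widetilde\beta_j(X,Y)=\Pf((X+Y)_j)-\Pf(X_j)-\Pf(Y_j)$ of Ochiai's $\beta_j$ (the hidden division by $2$ here is precisely what makes the identities usable in characteristic two), rewrites the columns of $M$ and of $\Theta'$ as trilinear forms $\langle m^{(i)}|\,X\,|m^{(j)}\rangle$, and then records $P'$ explicitly, with entries of the form $\langle\widetilde\beta(\cdot,\cdot)|\,\cdot\,|\widetilde\beta(\cdot,\cdot)\rangle$, citing the proof of \cite[Lemma 3.1]{Och97} for the identities. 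By contrast, your primary mechanism for \ref{claim:invariant1} --- ``every covariant of the relevant weight factors through the symmetric summand'' --- is stronger than anything you can cite: in characteristic zero the target $\Hom(\wedge^2((S^2N_3)^\vee),N_3\otimes\det N_3^\vee)$ has three irreducible $\GL(N_3)$-summands with the symmetric one occurring with multiplicity one, but multiplicity one in the \emph{target} does not by itself rule out degree-$5$ covariants valued in the other two summands; that vanishing is exactly the content one must extract from Ochiai's explicit formulas (or verify by hand, as in your alternative route). Your fallback is nevertheless the correct repair and matches the paper's logic: establish the identity over $\Q$, note that $P'$ automatically has integral entries because each $p'_{ij}$ is, up to sign, an entry of $\Theta'$, and compare coefficients over $\Z$ to get the identity in every characteristic. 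For part \ref{claim:invariant2} your argument is genuinely different from the paper's and arguably more robust: the paper reads the multidegree off the explicit trilinear-form entries of $P'$ (whence it calls \ref{claim:invariant2} ``trivial''), whereas you deduce it from covariance --- $\det P'$ is a relative invariant, every character of $\GL_3$ is a power of $\det$, so the torus $\diag(t_1,t_2,t_3)$ forces every multihomogeneous component to have multidegree $(k,k,k)$, and total degree $15$ gives $k=5$. This is basis-free and characteristic-independent, and it is insensitive to the exact shape of $P'$; note that the individual entries of $P'$ have \emph{unequal} multidegrees (e.g.\ $(3,1,1)$ for $p'_{11}$), so the paper's route really does require the explicit formulas, while yours quietly absorbs any cancellation among off-multidegree terms. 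The price is that the explicit $P'$ is wanted later anyway (it is what makes the proof of Claim~\ref{claim:discriminant_square} checkable), so the paper's extra work is not wasted. Your non-vanishing check of $\det P'$ at the split net of Definition-Proposition~\ref{defn-prop:splitform} is a sensible addition that the paper leaves implicit.
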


\begin{proof}
The claim \ref{claim:invariant1} basically follows from the proof of \cite[Lemma~3.1]{Och97} (cf.\ \cite{SK77}, \cite{Arithmeticinvariant2}, \cite{Gyo90}).
We recall the notations there first. 
For a $5 \times 5 $ matrix $X$, we denote by $X_{j}$  the $4 \times 4$ matrix given by deleting the $j$-th row and the $j$-th column, where $j \in \{ 1, \dots , 5 \}$. 
We then define $\widetilde\beta_{j} (X,X)$ and $\widetilde\beta_j (X,Y)$ for  $j \in \{ 1, \dots , 5 \}$ and alternating $5 \times 5$ matrices $X \neq Y$ by 
\begin{equation*}
\begin{split}
(-1)^{j-1} \widetilde\beta_{j} (X, X) 
 &=  \text{Pf} \, (X_j ) ,  \\
(-1)^{j-1} \widetilde\beta_{j} (X, Y) 
 &= \text{Pf} \, ((X + Y)_j) - \text{Pf}\, (X_j) -\text{Pf} \, (Y_j), 
\end{split}
\end{equation*}
where $\text{Pf} (X)$ denotes the Pfaffian of an alternating matrix $X$ as before.
The $\beta_j$ in \cite[Section 3]{Och97} is related to our $\widetilde\beta_j$ as follows:
\[
\beta_j (X,X) = 2 \widetilde\beta_j (X,X), \quad \beta_j (X,Y) = \widetilde\beta_j (X,Y).
\]
We denote by $\widetilde\beta (X,X)$ the vector whose components are $\widetilde\beta_1 (X,X), \widetilde\beta_2 (X,X), \dots , \widetilde\beta_5 (X,X)$; similarly, by $\widetilde\beta (X, Y)$ the vector with components $\widetilde\beta_1 (X,Y), \widetilde\beta_2 (X,Y) , \dots , \widetilde\beta_5 (X,Y)$. 

With the above notations, the $j$-th column of the $6 \times 5$ matrix $M = (m_{ij} )$ before is written as
\[
\begin{pmatrix}
 (-1)^{j-1} \widetilde\beta_{j} (A ,A) \\
 (-1)^{j-1} \widetilde\beta_{j} (B , C)  \\
 (-1)^{j-1} \widetilde\beta_{j} (B ,B) \\
 (-1)^{j-1} \widetilde\beta_{j} (C , A)  \\
 (-1)^{j-1} \widetilde\beta_{j} (C , C) \\
 (-1)^{j-1} \widetilde\beta_{j} (A , B)  \\
\end{pmatrix}.
\]

In addition, Ochiai \cite[Section 3]{Och97} introduces a trilinear form 
 $$ \langle \xi | X | \eta \rangle := \sum_{i, j =1}^{5} \xi_i x_{ij} \eta_{j} $$
for vectors $\xi =(\xi_i)_{1\leq i \leq 5}, \eta =(\eta_j)_{1\leq j \leq 5}$ and an alternating $5 \times 5$ matrix $X =(x_{ij})$. 
Since, for example, the first row of the $(i,j)$-th column of $\Theta'$ is 
$$ \sum_{k < l} (-1)^{k+l} (m_{ik} m_{jl} - m_{il}m_{jk}) a_{kl} =\sum_{k, l =0}^{5} (-1)^{k+l} (m_{ik} m_{jl}) a_{kl} $$ 
(as $A=(a_{kl})$ 
 is an alternating matrix), 
the $(i,j)$-th column $(1 \leq i<j \leq 6)$ of the $3 \times 15$ matrix $\Theta'$ is written in terms of this trilinear form as  
\[
\begin{pmatrix}
 \langle m^{(i)} | \, A \, | \, m^{(j)} \rangle \\
 \langle m^{(i)} | \,  B \, |\,  m^{(j)} \rangle \\
 \langle m^{(i)} | \, C \, |\,  m^{(j)} \rangle \\
\end{pmatrix}, 
\]
where $m^{(i)}$ and $m^{(j)}$ denote the $i$-th and $j$-th row vectors of the matrix $M$, respectively. 
For example, the first column (i.e.,\ $(i,j)=(1,2)$) of $\Theta'$ is:
\[ 
\begin{pmatrix}
 \langle \widetilde\beta (A,A) | \,  A  \,| \, \widetilde\beta (B,C) \rangle \\
 \langle \widetilde\beta (A,A)  |\,  B \, |\,  \widetilde\beta (B,C) \rangle \\
 \langle \widetilde\beta (A,A)  |\,  C \, |\,  \widetilde\beta (B,C)  \rangle \\
\end{pmatrix} .
\]
Then, the claim (1) follows from the proof of formulas in \cite[Lemma~3.1]{Och97}. 
In fact, the symmetric matrix $P'$ is written as
\[
P' = 
\begin{pmatrix}
 \langle \widetilde\beta (A,A) | \,  C  \,| \, \widetilde\beta (A,B) \rangle & \langle \widetilde\beta (A,A) | \,  C  \,| \, \widetilde\beta (B,B) \rangle  &\langle \widetilde\beta (C,C) | \,  B  \,| \, \widetilde\beta (A,A) \rangle \\
\langle \widetilde\beta (A,A) | \,  C  \,| \, \widetilde\beta (B,B) \rangle &  \langle \widetilde\beta (B,B) | \,  C  \,| \, \widetilde\beta (B,C) \rangle  &\langle \widetilde\beta (B,B) | \,  A  \,| \, \widetilde\beta (C,C) \rangle \\
 \langle \widetilde\beta (C,C) | \,  B  \,| \, \widetilde\beta (A,A) \rangle & \langle \widetilde\beta (B,B) | \,  A  \,| \, \widetilde\beta (C,C) \rangle  &  \langle \widetilde\beta (C,C) | \,  B  \,| \, \widetilde\beta (C,A) \rangle \\
\end{pmatrix}.
\]

The claim \ref{claim:invariant2} is then trivial.
\end{proof}

The next claim is a main technical part of the proof:
\begin{claim}\label{claim:discriminant_square}
Let $P'$ be the symmetric matrix in the previous claim, and set $F(a_{ij}, b_{ij}, c_{ij}) = \det P'$.
Then we have
\[
P' Q_{\nu} = Q_\nu P'
= F(a_{ij}, b_{ij}, c_{ij}) E
= \begin{pmatrix}
F(a_{ij}, b_{ij}, c_{ij}) & 0 &0 \\ 0 & F(a_{ij}, b_{ij}, c_{ij}) &0 \\ 0&0 & F(a_{ij}, b_{ij}, c_{ij})
\end{pmatrix},
\]
where $E$ is the identity matrix of size $3$.
\end{claim}

\begin{proof}
The claim can be deduced by observing that the adjugate matrix $\widetilde{P'}$ of $P'$ is $Q_\nu$.
Note that, since the constructions of $P'$ and $Q_{\nu}$ are compatible with any base change, we may assume that our base ring $R$ is the (reduced) polynomial ring $\Z[a_{ij},b_{ij},c_{ij}]$ with indeterminates $a_{ij}$, $b_{ij}$, $c_{ij}$.
Now we prove $\widetilde{P'} = Q_\nu$ in two ways.
The first proof is more direct, but relies on computer calculations:
By using a computer algebra system (e.g.,\ \texttt{Macaulay2} or \texttt{SageMath}), we can check directly the equation $\widetilde{P'} = Q_\nu$.
The second does not rely on computer calculations, but is slightly indirect:
Since $R = \Z[a_{ij},b_{ij},c_{ij}]$ is reduced, we are reduced to proving the equation $\widetilde{P'} = Q_\nu$ when $R = \Spec (k)$ for an algebraically closed filed $k$ (by looking at the geometric points).
If $R = \Spec (k)$, then any net of alternating forms is isomorphic to a split form, and the assertion can be checked directly (cf.\ Definition-Proposition \ref{defn-prop:splitform}).
Thus, the claim follows.
\end{proof}

\begin{proof}[Proof of Lemma~\ref{lemma:net_of_altaernating_forms}]
As we have seen before, it is reduced to proving the assertion for the local case.
 Let $H_{\nu} \colon N_3 \otimes \det N_3^\vee \otimes L \to N_3 \otimes(\det U_5^\vee \otimes L)^{\otimes 2}$ be tha map corresponding to the matrix $F(a_{ij}, b_{ij}, c_{ij}) E$ with respect to the bases 
$\{\alpha \otimes \omega_{N_3}^\vee,\beta \otimes \omega_{N_3}^\vee ,\gamma \otimes \omega_{N_3}^\vee \}$ and  $\{\alpha \otimes (\omega_{U_5}^\vee)^{\otimes 2},\beta \otimes (\omega_{U_5}^\vee)^{\otimes 2} ,\gamma \otimes (\omega_{U_5}^\vee)^{\otimes 2} \}$.
Then the assertion follows from Claims~\ref{claim:invariant} and \ref{claim:discriminant_square}.
\end{proof}

\begin{rem}\label{remark:discriminant_square}
 By Claim \ref{claim:discriminant_square}, we have
 \[
 \det Q_\nu = (F(a_{ij}, b_{ij}, c_{ij}))^2,
 \]
 and hence $\det Q_\nu$ is a complete square.
 
 In the global setting, 
$\det \mu_{\varphi_{\nu}} \colon \det N_3 \to \det (N_3^\vee \otimes L)$ gives a trivialization of $12 c_1(U_5) - 10 c_1(N_3)$, while the above invariant $F$ trivializes $6 c_1(U_5) - 5 c_1(N_3)$.
\end{rem}

\begin{lem}\label{lemma:A_A'}
 $\sA \simeq \sA'$.
\end{lem}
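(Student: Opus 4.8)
The plan is to deduce $\sA \simeq \sA'$ formally from the correspondences already in hand, rather than normalizing nets of alternating forms directly. By Lemmas~\ref{lemma:symmetric_form} and \ref{lemma:net_of_altaernating_forms}, the assignment $\nu \mapsto (N_3,L,\varphi_\nu)$ of Construction~\ref{construction:W5}~\ref{construction:W5_5} defines bijections $\Phi \colon \sA \to \sS$ and $\Phi' \colon \sA' \to \sS'$, with inverses $\Psi$ and $\Psi'$. First I would record that these fit into the commutative square
\[
\begin{CD}
\sA @>{\Phi}>> \sS\\
@V{\iota_{\sA}}VV @VV{\iota_{\sS}}V\\
\sA' @>{\Phi'}>> \sS'
\end{CD}
\]
whose vertical maps are the natural inclusions, and then invoke Lemma~\ref{lemma:S_S'}, which asserts exactly that $\iota_{\sS}$ is a bijection. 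Since $\Phi$ and $\Phi'$ are bijections, the resulting identity $\iota_{\sA} = (\Phi')^{-1} \circ \iota_{\sS} \circ \Phi$ exhibits $\iota_{\sA}$ as a composite of bijections, which is the assertion.

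The commutativity of the square is essentially a tautology, because $\Phi$ is literally the restriction of $\Phi'$: the cokernel $L = \coker G_\nu$ and the induced form $\varphi_\nu$ depend only on $\nu$ and not on the normalization $c_1(U_5)=c_1(N_3)=0$. The only point that needs a line of verification is that $\Phi$ lands in the \emph{normalized} set $\sS$ and not merely in $\sS'$; this is a short Chern-class check, since for a normalized net one has $c_1(N_3)=0$ and hence $c_1(L)=4(c_1(N_3)-c_1(U_5))=0$ by Construction~\ref{construction:W5}~\ref{construction:W5_5}. Dually one checks $\Psi$ carries $\sS$ into $\sA$, so both horizontal arrows restrict correctly and the diagram makes sense.

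I expect no real obstacle at this stage, because the genuine content has already been absorbed into Lemma~\ref{lemma:net_of_altaernating_forms}: the fact that $\Phi'$ is a bijection on the un-normalized set rests on Claim~\ref{claim:discriminant_square}, the perfect-square property of the discriminant invariant, which is precisely what rules out a spurious torsion discrepancy between $c_1(U_5)$ and $c_1(N_3)$. Should one instead want a proof bypassing the symmetric-form detour, the same input suffices directly: by Remark~\ref{remark:discriminant_square} the invariant $F$ trivializes $6c_1(U_5)-5c_1(N_3)$, so this class vanishes, and twisting by $M=\det U_5 \otimes (\det N_3)^\vee$ (for which $c_1(M)=c_1(U_5)-c_1(N_3)$) simultaneously normalizes $c_1(U_5)$ and $c_1(N_3)$ to zero, giving essential surjectivity; and if two normalized nets are $\sA'$-similar through a line bundle $M$, then $M^{\otimes 5}\simeq M^{\otimes 6}\simeq \cO_B$ forces $M\simeq \cO_B$, so the similarity is an isomorphism and injectivity follows.
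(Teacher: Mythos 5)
Your proposal is correct. Your primary argument, though, takes a genuinely different route from the paper's: you deduce the lemma formally from the commutative square $\Phi' \circ \iota_{\sA} = \iota_{\sS} \circ \Phi$, using that $\Phi$ and $\Phi'$ are bijections (Lemmas~\ref{lemma:symmetric_form} and \ref{lemma:net_of_altaernating_forms}) and that $\iota_{\sS}$ is one (Lemma~\ref{lemma:S_S'}). This is non-circular, since those lemmas precede this one in the paper and their proofs do not invoke it, and the two side checks you flag (that $\Phi$ carries $\sA$ into $\sS$ via $c_1(L)=4(c_1(N_3)-c_1(U_5))$, and dually for $\Psi$) are exactly what makes the square meaningful. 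The paper instead argues directly in one step: injectivity of $\iota_{\sA}$ is automatic because $\sA$ and $\sA'$ are quotients of nested sets by the \emph{same} relation $\sim_B$, and surjectivity follows since Remark~\ref{remark:discriminant_square} gives $6c_1(U_5)=5c_1(N_3)$, so twisting by $\det U_5\otimes\det N_3^{\vee}$ normalizes any net --- which is essentially your closing ``bypass'' argument. (Your additional verification there, that a similarity between two normalized nets forces the twisting bundle $M$ to be trivial, is correct but unnecessary for injectivity; it instead re-proves, for nets, the unnumbered remark following Theorem~\ref{thm:W5description}.) As you observe yourself, both routes ultimately rest on Claim~\ref{claim:discriminant_square}: your formal route buys economy, requiring nothing beyond the previously proved lemmas, while the paper's direct route exhibits the concrete mechanism --- the perfect-square invariant trivializing $6c_1(U_5)-5c_1(N_3)$ --- without detouring through $\sS$ and $\sS'$.
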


\begin{proof}
We prove that  any  element $(U_5,N_3,\nu) \in \sA'$ is similar to an element in $\sA$.
By Remark~\ref{remark:discriminant_square}, we have $6 c_1(U_5) = 5 c_1(N_3)$.
Then $\nu_{\det U_5 \otimes \det N_3 ^\vee}$ gives an element in $\sA$, which is similar to $\nu$. 
\end{proof}

\section{Automorphism groups of $V_5$-schemes}\label{section:automorphisms}

 Let $Y/B$ be a $V_5$-scheme and $(N_3,L,\varphi)$ the symmetric bilinear form corresponding to $Y$.
 This defines a point $[\varphi] \in \P(S^2N_3)$.
By Theorem~\ref{thm:W5description}, the automorphism group scheme $\Aut_{Y/B}$ is isomorphic to the stabilizer group scheme of $[\varphi] \in \P(S^2N_3)$ in $\Aut_{\P(N_3)/B}$, i.e.,\ $\Aut_{Y/B}$ is isomorphic to the subgroup scheme in $\Aut_{\P(N_3)/B}$ preserving the the symmetric bilinear form.
For the split $V_5$-scheme, we have the following:
\begin{prop}\label{prop:aut_split_W5}
Let $Y$ be the split $V_5$-scheme.
Then the following hold:
\begin{enumerate}
 \item 
 The above embedding 
\[\Aut_{Y/B} \subset \PGL_{3,B} = (\Proj \Z[a_{\alpha\alpha}, a_{\alpha\beta}, a_{\alpha\gamma}, a_{\beta\alpha}, a_{\beta\beta}, a_{\beta\gamma}, a_{\gamma\alpha}, a_{\gamma\beta}, a_{\gamma\gamma}] \setminus (\det (a_{\bullet,\bullet})=0) ) \times _{\Spec \Z} B
\]
is defined by the equations
\begin{equation}\label{eq:aut_Y_B_split}
\begin{cases}
a_{\alpha\gamma}^2-2a_{\alpha\alpha}a_{\alpha\beta}=0,\\
a_{\beta\gamma}^2-2a_{\beta\alpha}a_{\beta\beta}=0,\\
a_{\alpha\gamma}a_{\gamma\gamma}-a_{\alpha\alpha}a_{\gamma\beta}-a_{\gamma\alpha}a_{\alpha\beta}=0,\\
a_{\beta\gamma}a_{\gamma\gamma}-a_{\beta\alpha}a_{\gamma\beta}-a_{\gamma\alpha}a_{\beta\beta}=0,\\
a_{\alpha\gamma}a_{\beta\gamma}-a_{\alpha\alpha}a_{\beta\beta}-a_{\beta\alpha}a_{\alpha\beta}=-a_{\gamma\gamma}^2+2a_{\gamma\alpha}a_{\gamma\beta}.
\end{cases}
\end{equation}

\item Over $B= \Spec \Z[1/2]$, we have $\Aut_{Y/B} \simeq \PGL_{2,B}$. More precisely, we have the following morphism 
\[
\sigma = \sigma_{\Z[\frac{1}{2}]} \colon
\PGL_{2, \Z[\frac{1}{2}]}
\simeq \Aut_{Y_{\Z[\frac{1}{2}]}/\Z[\frac{1}{2}]}
\hookrightarrow \PGL_{7,\Z[\frac{1}{2}]}
\]
that sends
$
\begin{pmatrix}
a & b \\
c & d \\
\end{pmatrix}
$
to
\begin{equation}
\label{eqn:actionPGL2}
\begin{psmallmatrix}
a^6 & 2a^5b & 10a^4b^2 & 20a^3b^3 & 20a^2b^4 & 8ab^5 & 8b^6 \\
3a^5c &  5a^4bc + a^5d & 20a^3b^2c + 10a^4bd & 30a^2b^3c + 30a^3b^2d & 20ab^4c + 40a^2b^3d & 4b^5c + 20ab^4d & 24b^5d \\
\frac{3}{2}a^4c^2 & 2a^3bc^2 + a^4cd & 6a^2b^2c^2 + 8a^3bcd + a^4d^2 & 6ab^3c^2 + 18a^2b^2cd + 6a^3bd^2 & 2b^4c^2 + 16ab^3cd +12 a^2b^2d^2 & 4b^4cd + 8ab^3d^2 & 12b^4d^2\\
a^3c^3 & a^2bc^3 + a^3c^2d & 2ab^2c^3 + 6a^2bc^2d +2a^3cd^2 &  b^3c^3 + 9ab^2c^2d + 9a^2bcd^2 + a^3d^3 & 4b^3c^2d + 12ab^2cd^2 + 4a^2bd^3 & 4b^3cd^2 + 4ab^2d^3 & 8b^3d^3 \\
\frac{3}{4}a^2c^4 & \frac{1}{2}abc^4 + a^2c^3d & \frac{1}{2}b^2c^4 + 4abc^3d + 3a^2c^2d^2 & 3b^2c^3d + 9abc^2d^2 + 3a^2cd^3 & 6b^2c^2d^2 + 8abcd^3 + a^2d^4 & 4b^2cd^3 + 2abd^4 & 6b^2d^4 \\
\frac{3}{4}ac^5 & \frac{1}{4}bc^5 + \frac{5}{4}ac^4d & \frac{5}{2}bc^4 + 5ac^3d^2 & \frac{15}{2}bc^3d^2 + \frac{15}{2}ac^2d^3 & 10bc^2d^3 + 5acd^4 & 5bcd^4 + ad^5 & 6bd^5 \\
\frac{1}{8}c^6 & \frac{1}{4}c^5d & \frac{5}{4}c^4d^2 &  \frac{5}{2}c^3d^3 & \frac{5}{2}c^2d^4 & cd^5 & d^6 \\
\end{psmallmatrix}
\end{equation}
We denote the induced action of $\PGL_{2, \Z[\frac{1}{2}]}$ on $Y_{\Z[\frac{1}{2}]}$ by $\sigma$ too.

\item 
Over $B = \Spec \F_2$, $\Aut_{Y/B}$ is \emph{non-reduced} and we have $\Aut_{Y/B,\red} \simeq \SL_{2,\F_2}$.
More precisely, we have the following morphism
\[\sigma' \colon \SL_{2, \F_2} \simeq \Aut_{Y/B,\red} \rightarrow \PGL_{7,\F_2} = \Aut_{\P^6_{\F_2}}
\]
given by
\begin{equation}
\label{eqn:sigma'}
\begin{pmatrix}
a & b \\
c & d \\
\end{pmatrix}
\mapsto 
\begin{pmatrix}
a^3 & 0 &a^2b &0 & ab^2 &0 &b^3\\
0&a^2&0&ab&0&b^2&0\\
a^2c&0&a^2d&0&b^2c&0&b^2d\\
0&0&0&1&0&0&0\\
ac^2&0&bc^2&0&ad^2&0&bd^2\\
0&c^2&0&cd&0&d^2&0\\
c^3&0&c^2d&0&cd^2&0&d^3\\
\end{pmatrix}.
\end{equation}
We denote the induced action of $\SL_{2,\F_2}$ on $Y_{\F_2}$ by $\sigma'$ too.
See Section~\ref{section:characteristic_two} for the description of the non-reduced group scheme $\Aut_{Y/B}$.
\end{enumerate}
\end{prop}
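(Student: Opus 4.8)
The plan is to use the identification recalled just before the statement: by Theorem~\ref{thm:W5description}, $\Aut_{Y/B}$ is the stabilizer in $\PGL_{3,B}=\Aut_{\P(N_3)/B}$ of the split form $\varphi_{\spl}$, with Gram matrix $M$ as in \eqref{eqn:splitmatrix}. Writing a general element as $g=(a_{\bullet\bullet})$ in the basis $\{\alpha,\beta,\gamma\}$ of $N_3$, the stabilizer condition is that $g$ preserves $\varphi_{\spl}$ up to a scalar; concretely $g\,M\,\trans{g}=\lambda M$ for a unit $\lambda$. Computing the six entries of the symmetric matrix $g M \trans{g}$ gives part~(1) directly: the $(1,1)$-, $(2,2)$-, $(1,3)$- and $(2,3)$-entries reproduce the first four equations of \eqref{eq:aut_Y_B_split}, while the $(3,3)$- and $(1,2)$-entries each express $\lambda$, namely $\lambda=a_{\gamma\gamma}^2-2a_{\gamma\alpha}a_{\gamma\beta}$ and $\lambda=a_{\alpha\alpha}a_{\beta\beta}+a_{\beta\alpha}a_{\alpha\beta}-a_{\alpha\gamma}a_{\beta\gamma}$; eliminating $\lambda$ between these two yields the fifth equation. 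This computation is valid over any base.

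For part~(2), over $\Z[\tfrac12]$ I would identify $\varphi_{\spl}$, after a change of basis of $N_3$, with the discriminant form on the space $\Sym^2 W$ of binary quadratic forms, $W$ being the standard rank-$2$ module. The natural $\Sym^2$-action of $\PGL_2$ on $\P(\Sym^2 W)=\P(N_3)$ preserves the discriminant conic, hence factors through the subscheme cut out by \eqref{eq:aut_Y_B_split}; this defines $\sigma$, and it is a closed immersion because the representation is faithful modulo the center. To see that $\sigma$ is an isomorphism onto $\Aut_{Y/B}$ I would argue fiberwise: over any field of characteristic $\neq 2$ the stabilizer of a smooth conic in $\PGL_3$ is the smooth, connected, $3$-dimensional group isomorphic to $\PGL_2$, so the closed immersion $\sigma$ is an isomorphism on every geometric fiber; since both group schemes are flat and of finite presentation over $\Z[\tfrac12]$, it is an isomorphism. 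For the explicit matrix \eqref{eqn:actionPGL2}, the induced action on $A_7=\pi_\ast\cO(H)$ is the $7$-dimensional representation $\Sym^6 W$, and \eqref{eqn:actionPGL2} is the matrix of $\Sym^6(g)$ expressed in the basis $a_0,\dots,a_6$ fixed by \eqref{eqn:W5}; the fractional entries record the normalization relating this geometric basis to the monomial basis of $\Sym^6 W$ (cf.\ Remark~\ref{rem:coordinatechange}). In practice the quickest rigorous confirmation is to check directly that \eqref{eqn:actionPGL2} maps the ideal generated by \eqref{eqn:W5} into itself, so that $\sigma$ acts on $Y_{\spl}\subset\P^6_B$.

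For part~(3) I would specialize \eqref{eq:aut_Y_B_split} to $B=\Spec\F_2$. Since $2=0$, the first two equations become $a_{\alpha\gamma}^2=0$ and $a_{\beta\gamma}^2=0$, which are visibly non-reduced, already exhibiting the non-reducedness of $\Aut_{Y/B}$. On the reduced subscheme $a_{\alpha\gamma}=a_{\beta\gamma}=0$, and the third and fourth equations become $\left(\begin{smallmatrix}a_{\alpha\alpha}&a_{\alpha\beta}\\ a_{\beta\alpha}&a_{\beta\beta}\end{smallmatrix}\right)\left(\begin{smallmatrix}a_{\gamma\beta}\\ a_{\gamma\alpha}\end{smallmatrix}\right)=0$; as the upper-left block is invertible (because $g\in\GL_3$), this forces $a_{\gamma\alpha}=a_{\gamma\beta}=0$, so $g$ is block diagonal and fixes $\langle\gamma\rangle$, while the fifth equation reads $\det\!\left(\begin{smallmatrix}a_{\alpha\alpha}&a_{\alpha\beta}\\ a_{\beta\alpha}&a_{\beta\beta}\end{smallmatrix}\right)=a_{\gamma\gamma}^2$. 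Rescaling in $\PGL_3$ identifies this reduced group with $\SL_2$ acting as $\Sp_2$ on $\langle\alpha,\beta\rangle$ and fixing $\gamma$, i.e.\ $\Aut_{Y/B,\red}\simeq\SL_{2,\F_2}$. Finally, the induced action on $A_7$ decomposes in characteristic $2$ as $\Sym^3 W\oplus\Sym^2 W$, which accounts for the block structure of \eqref{eqn:sigma'} (coordinates $1,3,5,7$ versus $2,4,6$); this is confirmed by checking that \eqref{eqn:sigma'} preserves \eqref{eqn:W5} over $\F_2$, the relation $ad-bc=1$ producing the middle entry $1$.

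I expect the main obstacle to be not the group-theoretic identifications, which are formal, but two more delicate points: first, upgrading the fiberwise statement to scheme-theoretic surjectivity of $\sigma$ over the non-field base $\Z[\tfrac12]$; and second, the bookkeeping needed to pin down the explicit representation matrices \eqref{eqn:actionPGL2} and \eqref{eqn:sigma'} with their precise normalizations, which is where the real computational effort and the risk of sign or scaling errors lie.
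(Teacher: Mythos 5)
Your part (1) is essentially the paper's own argument: the paper also identifies $\Aut_{Y/B}$ with the subgroup scheme of $\PGL_{3,B}$ preserving $[\varphi_{\spl}]\in\P(S^2N_3)$ and reads off \eqref{eq:aut_Y_B_split} from the Gram-matrix identity, and your computation with $gM\trans{g}=\lambda M$ reproduces exactly those five equations (the transpose convention is harmless here because this particular $M$ satisfies $M^2=I$, so both conventions cut out the same subscheme). Where you genuinely diverge is in how the $7\times 7$ matrices are obtained: the paper parametrizes the conic $z^2-2xy=0$ by $(p,q)\mapsto(2p^2,q^2,-2pq)$ to get the explicit $\PGL_2\to\PGL_3$ of \eqref{eq:PGL2_PGL3} (resp.\ uses the block inclusion \eqref{eq:SL2_PGL3} when $2=0$), transports this through \eqref{eq:cokernel_q_dual} to a representation on $U_5$ preserving $\ker\nu_{\spl}$, and then computes the induced action on the Pl\"ucker coordinates of $\Gr(U_5^\vee,2)$ restricted to $Y_{\spl}$; this derivation produces \eqref{eqn:actionPGL2} and \eqref{eqn:sigma'} already as the composite $\PGL_2\simeq\Aut_{Y/B}\hookrightarrow\PGL_7$, so no separate matching step is required.

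Two steps in your write-up are genuine gaps. First, in part (3), non-reducedness is \emph{not} visible from the equations $a_{\alpha\gamma}^2=0$, $a_{\beta\gamma}^2=0$ alone: these are perfectly compatible with $a_{\alpha\gamma}$, $a_{\beta\gamma}$ themselves lying in the defining ideal, in which case the scheme would be reduced. You must exhibit a nonzero nilpotent, e.g.\ by checking that
\[
\begin{pmatrix} 1&0&\epsilon\\ 0&1&0\\ 0&\epsilon&1\end{pmatrix},\qquad \epsilon^2=0,
\]
satisfies all five equations over $\F_2[\epsilon]/(\epsilon^2)$ while not factoring through the locus $a_{\alpha\gamma}=0$ (this is exactly what the explicit description in Section~\ref{section:characteristic_two}, solving $a_{\alpha\gamma},a_{\beta\gamma}$ in terms of $a_{\gamma\alpha},a_{\gamma\beta}$, provides). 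Second, verifying that \eqref{eqn:actionPGL2} (resp.\ \eqref{eqn:sigma'}) maps the ideal of \eqref{eqn:W5} into itself only shows that the matrix defines \emph{some} homomorphism $\tau\colon\PGL_{2,\Z[\frac12]}\to\Aut_{Y/B}$ (resp.\ $\SL_{2,\F_2}\to\Aut_{Y/B,\red}$); it does not yet identify $\tau$ with the composite of your abstract isomorphism with the canonical embedding into $\PGL_7$. To close this you need to check that $\tau$ has trivial scheme-theoretic kernel (in particular that it does not factor through a Frobenius kernel on odd-characteristic fibers, e.g.\ because entries such as $2a^5b$ are not $p$-th powers) and then argue that an injective homomorphism between smooth connected $3$-dimensional group schemes is fiberwise surjective, hence an isomorphism by the fibrewise criterion; only then does the displayed matrix realize the asserted $\sigma$, resp.\ $\sigma'$. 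Both repairs are short, but as written these conclusions do not follow from what you have proved.
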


\begin{proof}
Consider the action of $\PGL_{3,B}$ on $\P(N_3)=\P^2$ given by $(A,x) \mapsto A x$, which induces an action of $\PGL_{3,B}$ on $\P(S^2N_3)$.
Then we see that equations~\eqref{eq:aut_Y_B_split} define the automorphism group scheme preserving $[\varphi_{\spl}] \in \P(S^2N_3)$, and the first assertion follows by Theorem \ref{thm:W5description} and Remark \ref{rem:stacky}.

When $2$ is invertible, then symmetric bilinear forms are equivalent to quadratic forms, and thus $\Aut_{Y/B} \simeq \PGL_{2,B}$.
On $\P^2=\P(N_3)$, the corresponding conic is defined by $-2xy+z^2 =0$ with coordinates $(x,y,z)$ with respect to the basis $\{\alpha,\beta,\gamma\}$.
This conic is the image of the map $\P^1 \to \P^2$ sending
\[
(p,q) \mapsto (2p^2,q^2,-2pq).
\]
Then the action of $\PGL_2$ on $\P^1$ given by 
$(A,\trans(p,q)) \mapsto A \trans(p,q)$
induces a map $\PGL_2 \to \PGL_3$ given by
\begin{equation}\label{eq:PGL2_PGL3}
 \begin{pmatrix}
 a&b\\c&d
\end{pmatrix}
\mapsto
\begin{pmatrix}
 a^2&2b^2&-2ab\\
 \frac{1}{2}c^2&d^2&-cd\\
 -ac&-2bd&ad+bc
\end{pmatrix}.
\end{equation}
Via the map \eqref{eq:cokernel_q_dual}, we have the induced projective representation of $\PGL_2$ on $U_5$, which preserves the kernel of $\nu_{\spl}$.
Note that that the coordinates on $Y_{\spl}$ in \eqref{eqn:W5} is induced from the coordinates of $\Gr(U_5,3)=\Gr(U_5^\vee,2)$ with respect to the basis $\{e_i\wedge e_j\}$.
By a direct computation, we see that the induced action of $\PGL_2$ on $Y_{\spl}$ is given by \eqref{eqn:actionPGL2}.

When $2=0$, then the equation \eqref{eq:aut_Y_B_split} says that the image $\Aut_{Y/B,\red} \subset \PGL_3$ is isomorphic to $\SL_2$
 given by the inclusion $\SL_{2} \to \PGL_3$ sending
\begin{equation}
 \begin{pmatrix}\label{eq:SL2_PGL3}
 a&b\\c&d
\end{pmatrix}
\mapsto
\begin{pmatrix}
 a&b&0\\
 c&d&0\\
 0&0&1
\end{pmatrix}.
\end{equation}
Via the map \eqref{eq:cokernel_q_dual}, we have the induced action of $\SL_2$ on $U_5$, which preserves the kernel of $\nu_{\spl}$.
By a direct computation again, we see that the induced action of $\SL_2$ on $Y_{\spl}$ is given by \eqref{eqn:sigma'}.
\end{proof}

\begin{rem}
The action given in \eqref{eqn:actionPGL2} is compatible with the classical description in \cite[p.\ 505]{Mukai-Umemura} with respect to Remark \ref{rem:coordinatechange}. 
\end{rem}
{
\begin{rem}
Theorem \ref{thm:W5description} and Proposition \ref{prop:aut_split_W5} shows that the minimum $N$ satisfying the statement of \cite[Lemma 3.14]{Javanpeykar-Loughran:GoodReductionFano} is $2$.
\end{rem}
}
\begin{rem}
We put $\cO_{K} \coloneqq \Z[\sqrt{2}]$, and $K \coloneqq \Frac \cO_{K}$.
Let 
\[
c \colon \PGL_{2, \cO_K[\frac{1}{\sqrt 2}]} \simeq \PGL_{2, \cO_K[\frac{1}{\sqrt 2}]}
\]
be the conjugation by the element
\[
\begin{pmatrix}
1 &0\\
0 & \sqrt{2}\\
\end{pmatrix}.
\]
We can show that the composite of $c$ and \eqref{eq:PGL2_PGL3}
extends to $\PGL_{2, \cO_K} \rightarrow \PGL_{3, \cO_K} $ which factors through $\Aut_{Y_{\cO_K}/\cO_K} \subset \PGL_{3, \cO_K}$.
This induces a morphism
\begin{equation}
\label{eqn:ramifiedsigma}
\widetilde{\sigma}\colon  \PGL_{2, \cO_K} \rightarrow \Aut_{Y_{\cO_K}/\cO_K} \subset \PGL_{7, \cO_K}.
\end{equation}
On the special fiber over $\F_2$, the map
\[
\widetilde{\sigma}_{\F_2} \colon \PGL_{2, \F_2} \rightarrow \PGL_{7, \F_2}
\]
factors as
\begin{equation}
\label{eqn:factorsigma}
\PGL_{2, \F_2} = \PSL_{2, \F_2} \xrightarrow{F} \SL_{2,\F_2} \xrightarrow{\sigma'} \PGL_{7, \F_2},
\end{equation}
where $F$ is the morphism induced by the Frobenius morphism
\[
\SL_{2,\F_2} \xrightarrow{F} \SL_{2,\F_2}.
\]
\end{rem}

\section{Hilbert scheme of lines}\label{section:hilbert_scheme_of_lines}

In this section, we describe the Hilbert scheme of lines on $V_5$-schemes as the projective bundle $\P(N_3)$.
Then we will classify the lines on the split $V_5$-varieties, which will be used to describe the orbit decomposition of $V_5$.
See also \cite{FN89} and \cite[Section 2]{Ili94} for the results over $\C$.

\subsection{Relative Hilbert scheme of lines}

\begin{defn}
\label{defn:Sigma}
Let $Y/B$ be a $V_{5}$-scheme.
We denote the relative Hilbert scheme of lines on $Y$ over $B$ by
$\Sigma (Y/B)$, and the universal family of lines over $\Sigma(Y/B)$ by $\mathcal{U}_{Y/B}$.
We denote by
\[
\xymatrix{
\mathcal{U}_{Y/B}  \ar[d]^-{q} \ar[r]^-{p} & Y \\
\Sigma(Y/B)
}
\]
the natural projections.
\end{defn}

Let $Y/B$ be a $V_5$-scheme, $(U_5,N_3,\nu)$ the net of alternating forms, and $(N_3,L,\varphi)$ the corresponding symmetric bilinear form.
Recall that we have an exact sequence
\[
0 \to \wedge^4 U_5 \to S^2N_3 \to L \to 0.
\]
Since $\varphi$ is non-degenerate, the projection from $[\varphi] \in \P(S^2N_3)$ embeds the Veronese surface $\P(N_3) \subset \P(S^2N_3)$ into $\P(N_3) \subset \P(\ker \varphi) \simeq \P(\wedge^4 U_5)$.
Note that the secant variety of $\P(N_3) \subset \P(S^2N_3)$ is the divisor of the degenerate symmetric bilinear forms in $\P(S^2N_3)$, and hence the projection  $\P(N_3) \to  \P(\wedge^4 U_5)$ is an embedding.
Since the automorphism group scheme $\Aut_{Y/B}$ is isomorphic to the stabilizer group scheme of $[\varphi] \in \P(S^2N_3)$ in $\Aut_{\P(N_3)/B}$, we have the $\Aut_{Y/B}$-equivariant embedding
\begin{equation}
 \P(N_3) \subset  \P(\wedge^4 U_5).
\end{equation}

Recall that $Y/B$ is contained in $\Gr(U_5,3)$ and that the Hilbert scheme of lines on $\Gr(U_5,3)$ is naturally isomorphic to $\Fl(U_5;4,2)$.
Thus, we have an embedding $\Sigma(Y/B) \subset \Fl(U_5;4,2)$.
By composing with the projection $\Fl(U_5;4,2) \to \Gr(U_5,4) \simeq \P(\wedge^4 U_5)$, we obtain a map
\begin{equation}
\Sigma(Y/B) \to  \P(\wedge^4 U_5),
\end{equation}
which is also $\Aut_{Y/B}$-equivariant.
In this subsection, we prove:
\begin{prop}\label{proposition:Hilb_of_lines}
$ \P(N_3) \subset  \P(\wedge^4 U_5)$ and $\Sigma(Y/B) \to  \P(\wedge^4 U_5)$ are $\Aut_{Y/B}$-equivariantly isomorphic to each other.
\end{prop}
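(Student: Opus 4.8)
The plan is to reduce the statement to the construction of a single natural $B$-morphism. Since $\Sigma(Y/B)$, the projective bundle $\P(N_3)$, the two maps into $\P(\wedge^4U_5)$, and the $\Aut_{Y/B}$-actions are all built functorially out of the net $(U_5,N_3,\nu)$ and the symmetric form $(N_3,L,\varphi)$, it suffices to produce a $B$-isomorphism $h\colon\P(N_3)\xrightarrow{\sim}\Sigma(Y/B)$ whose composite with the given map $j\colon\Sigma(Y/B)\to\P(\wedge^4U_5)$ is the projected Veronese embedding $\iota\colon\P(N_3)\hookrightarrow\P(\wedge^4U_5)$; the $\Aut_{Y/B}$-equivariance is then automatic from naturality. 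As every construction commutes with arbitrary base change, the essential content is a fiberwise assertion over a field, which I will settle on the split model, followed by an upgrade to arbitrary $B$.

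I would first set up the dictionary between lines and radicals. Identifying $\Gr(U_5,3)$ with the Grassmannian of rank-$2$ subbundles $K\subset U_5$, a line on $\Gr(U_5,3)$ is a flag $V_1\subset V_3$ of ranks $1$ and $3$, the line being the pencil $\{K\mid V_1\subset K\subset V_3\}$, and the map to $\Gr(U_5,4)\simeq\P(\wedge^4U_5)$ records $V_1$. Writing the net as a triple of alternating forms $\omega_1,\omega_2,\omega_3$, a point $K$ lies on $Y$ exactly when $\omega_i|_K=0$ for all $i$, and a short computation shows the pencil lies in $Y$ if and only if $\omega_i(V_1,V_3)=0$ for all $i$. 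For such a flag, the three functionals $\omega_i(v,-)$ (with $v$ spanning $V_1$) annihilate the rank-$3$ space $V_3$, hence lie in the rank-$2$ bundle $V_3^{\perp}\subset U_5^\vee$ and so are dependent: there is a nonzero $c\in N_3^\vee$ with $\omega_c(v,-)=0$, where $\omega_c=\sum_i c_i\omega_i$. Because the net has \emph{rank $4$}, every nonzero $\omega_c$ has a one-dimensional radical, forcing $V_1=\mathrm{rad}(\omega_c)$, while $V_3$ is forced to be $\ker\big(\nu(v\wedge-)\colon U_5\to N_3\big)$, which the rank-$4$ hypothesis makes locally free of rank $3$. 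Globalizing the flag $V_1\subset V_3$ over $\P(N_3^\vee)$ then yields a $B$-morphism $\P(N_3^\vee)\to\Sigma(Y/B)$ whose composite to $\P(\wedge^4U_5)$ is the radical map $[c]\mapsto[\mathrm{rad}(\omega_c)]$.

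Next I would identify this radical map with the projected Veronese. For a rank-$4$ alternating $5\times5$ form $\Omega_c=c_1A+c_2B+c_3C$, the radical is spanned by the sub-Pfaffian vector $\big((-1)^{j}\Pf((\Omega_c)_j)\big)_j$, whose entries are precisely the quadratic-in-$c$ combinations recorded by the matrix $M$ of $G_\nu$ from the proof of Lemma~\ref{lemma:net_of_altaernating_forms}. Thus the radical map factors as the degree-$2$ Veronese $\P(N_3^\vee)\hookrightarrow\P(S^2N_3^\vee)$ followed by the linear map determined by $M$, that is, by $G_\nu^\vee$; tracing through the identifications (as is transparent on the split model) exhibits it as the projection of the Veronese from the center dual to $\coker G_\nu=L$, which under $\mu_\varphi$ is the point $[\varphi]$. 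Combining with the isomorphism $\P(N_3)\simeq\P(N_3^\vee)$ induced by $\mu_\varphi$, this is exactly the projected Veronese $\iota$, and produces the desired natural map $h\colon\P(N_3)\to\Sigma(Y/B)$ with $j\circ h=\iota$ a closed immersion.

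Finally I would prove $h$ is an isomorphism. Both $\P(N_3)$ and $\Sigma(Y/B)$ are flat and of finite presentation over $B$ (for $\Sigma(Y/B)$ via cohomology and base change together with constancy of the Hilbert polynomial of the fibers), so by the fiberwise-isomorphism criterion it suffices to check $h$ over each geometric point of $B$. There $Y$ is split (Definition-Proposition~\ref{defn-prop:splitform}), the lines are classified explicitly (Corollary~\ref{cor:lines_split_W5}) so that $\Sigma(Y)\simeq\P^2$, and $h$ is a bijective morphism $\P^2\to\P^2$ which is unramified because $\iota=j\circ h$ is an immersion, hence an isomorphism; equivariance and the compatibility with the two maps into $\P(\wedge^4U_5)$ follow from the naturality of $h$. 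I expect the main obstacle to be not the field case (which is the split-model linear algebra already in hand) but the promotion to \emph{arbitrary}, possibly non-reduced, $B$: namely establishing flatness and finite presentation of $\Sigma(Y/B)$ and the precise matching of scheme structures needed to apply the fiberwise criterion, together with the careful bookkeeping of the $N_3\leftrightarrow N_3^\vee$ duality via $\mu_\varphi$.
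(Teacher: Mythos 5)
Your overall route parallels the paper's: the paper likewise builds a natural $B$-morphism $\P(N_3)\to\Sigma(Y/B)$ out of the radical of the universal alternating form $\wedge^2 g^*U_5\to\cO(\xi)$ on $\P(N_3)$, shows its composite with $\Sigma(Y/B)\to\P(\wedge^4U_5)$ is the projected Veronese, and gets equivariance from naturality. The genuine gap is in your endgame. Over a geometric point, ``bijective and unramified'' only makes $h_s$ a surjective closed immersion (unramified plus universally injective gives a monomorphism, and a proper monomorphism is a closed immersion); this yields $\Sigma(Y_s)_{\red}\simeq\P^2$ but does not exclude nilpotents in $\Sigma(Y_s)$. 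Your dictionary controls $k$-points of the Hilbert scheme, not its scheme structure, and the fiberwise criterion needs $h_s$ to be an isomorphism of schemes. This is exactly what the third claim in the paper's proof supplies: $\Sigma(Y/B)\to B$ is \emph{smooth} of relative dimension two, proved by unobstructedness of lines (for a line on an index-$2$ Fano threefold, $N_{l/Y}\simeq\cO\oplus\cO$ or $\cO(1)\oplus\cO(-1)$, so $h^1(N_{l/Y})=0$ and $h^0(N_{l/Y})=2$ — including for the exceptional lines in characteristic $2$). Without this, or an equivalent tangent-space bound at every point of $\Sigma(Y_s)$, your argument does not close. Compounding this, your appeal to Corollary~\ref{cor:lines_split_W5} is circular: that corollary presupposes the identification $\Sigma(Y)\simeq\P(N_3)$ furnished by the very proposition being proved (your own first-paragraph dictionary can replace it set-theoretically, but not scheme-theoretically). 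Finally, deducing flatness of $\Sigma(Y/B)$ from ``constancy of the Hilbert polynomial'' is not valid over a non-reduced base; fortunately it is also unnecessary, since the fiberwise criterion only needs the source $\P(N_3)$ to be flat and the target to be locally of finite presentation.

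A secondary, fixable gap is your assertion that the rank-$4$ hypothesis makes $V_3=\ker\bigl(\nu(v\wedge-)\colon U_5\to N_3\bigr)$ locally free of rank $3$. This amounts to saying that no two linearly independent members of the net share the radical $\langle v\rangle$; otherwise the functionals $\omega_i(v,-)$ span at most a line and the kernel has rank at least $4$. The proof needs an argument: a pencil of forms with common radical $\langle v\rangle$ descends to a pencil of nondegenerate alternating forms on $U_5/\langle v\rangle$, whose Pfaffian would be a nowhere-vanishing binary quadratic form, impossible over an algebraically closed field. The paper sidesteps this issue entirely: it only uses the radical line bundle $M$ (a bundle by the sub-Pfaffians and rank $4$), forms the $\P^3$-bundle $\P(\wedge^3C_4)$ of $2$-planes containing $M$, and shows $\P(\wedge^3C_4)\cap Y$ is a relative line by a diagram chase giving relative dimension at least one together with the fact that a $V_5$-variety contains no linear planes.
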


Here we provide the construction of the isomorphism (see also \cite[Section 2]{Ili94} for the results over $\C$):
Let $g \colon \P(N_3) \to B$ be the projection, and $\cO(\xi)$ the tautological quotient line bundle on $\P(N_3)$.
On $\P(N_3)$, we have the universal surjective map
\[
g^*N_3 \to \cO(\xi).
\]
By composing this with the map $g^*\nu \colon g^* (\wedge^2 U_5) \to g^* N_3$, we have an alternating form
\[
\wedge ^2 g^{\ast} U_5 \to \cO(\xi).
\]
Since $\nu$ is a net of alternating forms of rank $4$, the form $\wedge ^2 g^* U_5 \to \cO(\xi)$ is also of rank $4$.
Thus, we have the exact sequence
\[
0 \to M  \to  g^*U_5 \to g^*U_5^\vee \otimes \cO(\xi ) \to M^\vee \otimes \cO(\xi) \to 0,
\]
where $M$ is a line bundle.
Denote by $C_4$ the image of the map $g^*U_5 \to g^*U_5^\vee \otimes \cO(\xi)$, which is a locally free sheaf of rank $4$.
By taking $\wedge^3$ of the map $g^*U_5 \to C_4$, we have the following sequence
\[
0 \to \wedge^2 C_4 \otimes M \to \wedge^3 (g^*U_5) \to \wedge^3C_4 \to 0,
\]
and a $\P^3$-bundle $\P( \wedge^3C_4) \subset \Gr(g^* U_5,3) \subset \P(\wedge^3 (g^*U_5))$.
The proposition follows from:
\begin{claim}
The following hold:
 \begin{enumerate}
 \item The intersection $\P( \wedge^3C_4) \cap (Y\times_B \P(N_3)) \subset \Gr(g^*U_5,3)$ is a relative line over $\P(N_3)$ in $Y$.
 \item We have the induced map $\P(N_3) \to \Sigma(Y/B)$. The composite of this map with $ \Sigma(Y/B) \to \P(\wedge^4U_5)$ is the projected Veronese surface $\P(N_3) \subset \P(\wedge^4 U_5)$.
 \item $\Sigma(Y/B) \to B$ is a smooth morphism of relative dimension $2$ with connected fibers.
\end{enumerate}
\end{claim}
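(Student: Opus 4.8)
The plan is to verify the three assertions by reducing the linear-algebraic and fibrewise statements to geometric fibres, where the net is split (over an algebraically closed field a rank-$4$ net is similar to the split one, by Definition-Proposition~\ref{defn-prop:splitform} together with Theorem~\ref{thm:W5description}), and to use that the sheaves $M$, $C_4$, $\wedge^3 C_4$ and the incidence locus all commute with base change, so that rank and surjectivity statements may be checked on fibres by Nakayama. Throughout I work with the universal surjection $g^*N_3\to\cO(\xi)$ on $\P(N_3)$: a point $[\lambda]\in\P(N_3)$ is a rank-$1$ quotient $N_3\to\kappa$, and $\omega_\lambda\colon\wedge^2U_5\to\kappa$ denotes the associated alternating form of rank $4$, with radical the line $M_\lambda=M|_{[\lambda]}\subset U_5$ and coimage $C_4=U_5/M_\lambda$.

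For (1), I first describe the intersection set-theoretically. A point of $\P(\wedge^3C_4)$ over $[\lambda]$ is a rank-$3$ quotient $U_5\to Q_3$ whose kernel $K$ (a rank-$2$ subspace) contains $M_\lambda$, and such a $K$ lies on $Y$ iff $\nu(\wedge^2K)=0$ in $N_3$. Writing $K=\langle M_\lambda,v\rangle$, this is the single linear condition $\phi_\lambda(v)\coloneqq\nu(M_\lambda\wedge v)=0$; since $M_\lambda$ is the radical of $\omega_\lambda$, the map $\phi_\lambda$ kills $M_\lambda$ and its $\kappa$-component vanishes, so it factors through a map $C_4\to K_\lambda$, where $K_\lambda=\ker(N_3\to\kappa)$ has rank $2$. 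The key point is that $\phi_\lambda\colon C_4\to K_\lambda$ is surjective; I verify this on the split form, conclude it on every geometric fibre, and hence obtain a surjection of vector bundles $C_4\to K$ on $\P(N_3)$ by Nakayama. Its kernel $D_2$ is a rank-$2$ subbundle, the preimage $U_3\subset g^*U_5$ of $D_2$ is a rank-$3$ subbundle containing $M$, and the incidence locus is identified with $\P(U_3/M)=\P(D_2)$, a $\P^1$-bundle over $\P(N_3)$ whose fibre over $[\lambda]$ is the line $\{K\mid M_\lambda\subset K\subset U_3\}$ in $Y$. This proves (1).

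For (2), the $\P^1$-bundle $\mathcal{L}=\P(D_2)\subset Y\times_B\P(N_3)$ is a family of lines in $Y$ flat over $\P(N_3)$, so the universal property of the Hilbert scheme yields a morphism $\P(N_3)\to\Sigma(Y/B)$. To compute its composite with $\Sigma(Y/B)\to\Gr(U_5,4)=\P(\wedge^4U_5)$, note that all rank-$3$ quotients in a fibre $\mathcal{L}_{[\lambda]}$ factor through the common rank-$4$ quotient $U_5\to C_4$ with kernel $M_\lambda$; hence the flag attached to this line maps to $[C_4]\in\Gr(U_5,4)$, i.e. to the Pl\"ucker class of the corank-$1$ image of $\omega_\lambda$. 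It remains to identify this with the projected Veronese $\P(N_3)\subset\P(\ker\varphi)=\P(\wedge^4U_5)$. Both maps are canonical and compatible with base change, so I verify the identity on the split form over $\Z$ (a reduced, indeed integral, base) where $\omega_\lambda$, $C_4$ and the Veronese image are explicit, and then base change to arbitrary $B$. Since the projected Veronese is a closed immersion and $\Sigma(Y/B)\to\P(\wedge^4U_5)$ is separated, the morphism $\P(N_3)\to\Sigma(Y/B)$ is itself a closed immersion.

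For (3), I compute the Hilbert scheme on geometric fibres, where $Y=Y_{\spl}$. For a line $\ell\subset Y_{\bar b}$ the Zariski tangent space and obstruction of $\Sigma$ at $[\ell]$ are $H^0(\ell,N_{\ell/Y})$ and $H^1(\ell,N_{\ell/Y})$; using the classification of lines (ordinary, special, and—when $\chara k=2$—exceptional) I show that $N_{\ell/Y}$ is $\cO_\ell^{\oplus2}$ or $\cO_\ell(1)\oplus\cO_\ell(-1)$ in every case, so that $h^0(N_{\ell/Y})=2$ and $h^1(N_{\ell/Y})=0$. Hence $\Sigma(Y/B)\to B$ is smooth of relative dimension $2$. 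For connectedness, every line $\ell\subset Y_{\bar b}$ determines, via its rank-$4$ quotient $U_5\to Q_4$ with rank-$1$ kernel, a point of $\P(N_3)$ mapping to it, so $\P(N_3)\to\Sigma$ is surjective on geometric points; together with the closed immersion of (2) and the reducedness of the smooth fibres, each $\Sigma_{\bar b}$ is the connected surface $\P^2$. Finally the closed immersion $\P(N_3)\hookrightarrow\Sigma(Y/B)$ of (2) is, on each geometric fibre, a closed immersion of smooth connected surfaces of equal dimension, hence an isomorphism; being a fibrewise isomorphism of closed immersions with both sides flat over $B$, it is an isomorphism, which proves Proposition~\ref{proposition:Hilb_of_lines}. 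The main obstacle is the characteristic-$2$ analysis: both the surjectivity of $\phi_\lambda$ in (1) and the vanishing $h^1(N_{\ell/Y})=0$ for the exceptional (Frobenius) line in (3) lie exactly where the theory of symmetric bilinear forms degenerates, and it is here that reduction to the explicit split model $Y_{\spl}$ is indispensable.
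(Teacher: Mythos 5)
Parts (1) and (3) of your proposal are sound, and (1) is a genuinely different route from the paper's: surjectivity of the bundle map $\phi\colon C_4\to M^\vee\otimes\ker(g^*N_3\to\cO(\xi))$ is a pointwise condition on fibres, so Nakayama legitimately promotes the split-model computation to arbitrary $B$ (non-reduced included), and your identification of the intersection as the sub-$\P^1$-bundle $\P(D_2)$ is a valid, more explicit alternative to the paper's argument, which instead shows the intersection has relative dimension $\geq 1$ and then invokes the fact that a $V_5$-variety contains no linear $\P^2$ or $\P^3$. Your treatment of (3) (unobstructedness $h^1(N_{\ell/Y})=0$ for every line, including the exceptional one in characteristic two, checked on the split model) is essentially the paper's suggested route.

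The gap is in (2). What must be proved there is an equality of two morphisms $\P(N_3)\to\P(\wedge^4U_5)$, and unlike surjectivity of a bundle map, this is \emph{not} a condition that can be checked on geometric points: over a non-reduced base two morphisms can agree on the reduction and still differ (the discrepancy lives in the nilradical). Nor can you ``verify on the split form over $\Z$ and base change'': an arbitrary net over an arbitrary $B$ is not, even locally, a pullback of the split net over $\Z$. The paper establishes fppf-local splitness only over \emph{reduced} bases (Lemma~\ref{lem:fppflocallysplit}), and the proof of that lemma uses $\Sigma(Y/B)\simeq\P(N_3)$ and the trisecant description, i.e.\ precisely the statements you are in the middle of proving — so invoking it here would be circular, and handling non-reduced $B$ is the declared reason for the paper's algebraic approach. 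The paper's own proof of (2) avoids all of this by a universal sheaf-level argument: the kernel of $g^*\wedge^4U_5\to\wedge^4C_4$ is $\wedge^3C_4\otimes M$, and the composite $\wedge^3C_4\otimes M\hookrightarrow g^*\wedge^4U_5\xrightarrow{G_\nu}g^*(S^2N_3)\to\cO(2\xi)$ vanishes identically \emph{by construction}, since each term of $G_\nu(m\wedge b\wedge c\wedge d)$ with $m$ a local section of $M$ contains a factor $\omega(m\wedge-)=0$, $M$ being the radical of the universal form $\omega$ on $\P(N_3)$. Hence the Veronese quotient factors through $\wedge^4C_4$ and induces an isomorphism $\wedge^4C_4\simeq\cO(2\xi)$ compatible with both quotient maps, which gives the equality of the two classifying morphisms over any base. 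You need this identity of sheaf maps (or an equivalent universal argument) in place of your reduction; once (2) is repaired, your concluding step — a fibrewise isomorphism between proper $B$-flat schemes is an isomorphism — goes through.
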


\begin{proof}
Consider the following diagram with two short exact sequences:
\[
\xymatrix{
0 \ar[r] & \cO(-\xi) \ar[r] \ar@{-->}[d]  & g^*N_3^\vee \ar[r] \ar[d] & T_g(-\xi) \ar[r] \ar@{-->}[d] &0\\
0 \ar[r] & \wedge^2C_4 \otimes M \ar[r] & g^*(\wedge^3 U_5) \ar[r] & \wedge^3C_4 \ar[r] &0.
}
\]
By the construction, the composite $\cO(-\xi) \to \wedge^3C_4$ is a zero map.
Thus, the map $g^*N_3 ^\vee \to \wedge^3 C_4$ factors $T_g(-\xi)$, and hence the intersection $\P( \wedge^3C_4) \cap Y$ is of relative dimension at least one.
Then the first assertion follows from the fact that any $V_5$-variety does not contain $\P^3$ nor $\P^2$ linearly.

Next, consider the following diagram with two short exact sequences:
\[
\xymatrix{
0 \ar[r] & \wedge^3C_4 \otimes M \ar[r] \ar@{-->}[d] & g^*\wedge^4 U_5 \ar[r] \ar[d] & \wedge^4C_4 \ar[r] \ar@{-->}[d] &0\\
0 \ar[r] & \bullet  \ar[r] & g^*(S^2N_3) \ar[r]& \cO(2\xi) \ar[r] &0,\\
}
\]
where the first horizontal sequence is induced from the fourth exterior power of $U_5 \to C_4$, and
the second horizontal sequence is induced from the second symmetric power of $g^*N_3 \to \cO(\xi)$.
By the construction, the induced map $\wedge^3C_4 \otimes M  \to \cO(2\xi)$ is a zero map, which induces an isomorphism $\wedge^4C_4 \to \cO(2\xi)$.
Thus, the second assertion holds.

The last assertion can be proved by looking carefully at the explicit example $Y_{\spl}$.
Alternatively, arguing as in \cite[Proposition 2.2.8]{Kuznetsov-Prokhorov-Shramov}, we see that $\Sigma(Y/B)$ is smooth of relative dimension two, and that lines are unobstructed.
Then the assertion follows from that for the characteristic zero case.
\end{proof}

\begin{rem}\label{rem:projection_veronese}
By \eqref{eq:cokernel_q_dual}, we see that the map $\P(N_3)=\P^2 \to \P(\wedge^4U_5) \simeq \P(U_5^\vee) =\P^4$ for $Y_{\spl}$ is given by
\[
(x,y,z) \mapsto (-x^2,zx,-z^2-xy,yz,-y^2).
\]
\end{rem}

\subsection{Classification of lines on split $V_5$-varieties}

\begin{defn}\label{def:lines_split_W5}
 Let $Y_{\spl}$ be the split $V_5$-variety over an algebraically closed field $k$.
 Then $\Sigma (Y) \simeq \P(N_3) = \P^2$.
 We will denote by $l_{(x,y,z)}$ the line corresponding to the point $(x,y,z) \in \P^2$, and by $G_{l_{(x,y,z)}}$ the stabilizer group (scheme) of the point $[l_{(x,y,z)}] \in \Sigma(Y)$ with respect to the $\Aut(Y)_{\red}$-action.
\end{defn}

\begin{cor}\label{cor:lines_split_W5}
 Let $Y$ be the (split) $V_5$-variety over an algebraically closed field $k$ of characteristic $p \geq 0$.
 We identify $\Sigma(Y)$ to $\P(N_3)$ as in Proposition~\ref{proposition:Hilb_of_lines}.
 Then the orbit decomposition of $\Sigma(Y)$ with respect to $\Aut(Y)_{\red}$ is given as follows:
 \begin{enumerate}
 \item If $p \neq 2$, there are two orbits
 \[
 V(z^2-2xy) \sqcup (\P(N_3) \setminus V(z^2-2xy)).
 \]
 Moreover, the stabilizer groups of the following points are given as follows: 
 \begin{enumerate}
  \item For the point $(0,1,0) \in  V(z^2-2xy)$, the stabilizer group is isomorphic to
  \[
 G_{l_{(0,1,0)}} =   \left\{
\begin{pmatrix}
 a & 0\\c&d
\end{pmatrix} \in \PGL_{2,k}
 \right\}.
  \]
    \item For the point $(0,0,1) \in \P(N_3) \setminus V(z^2-2xy)$, the stabilizer group is isomorphic to
  \[
  G_{l_{(0,0,1)}} =
  \left\{
\begin{pmatrix}
 a & 0\\0&d
\end{pmatrix} \in \PGL_{2,k} 
 \right\} \bigsqcup   \left\{
\begin{pmatrix}
 0 & b\\c&0
\end{pmatrix} \in \PGL_{2,k}
 \right\}.
  \]
\end{enumerate}
 \item If $p =2$, there are three orbits $\{(0,0,1)\}$, $V(z)$ and $\P(N_3) \setminus (V(z) \cup \{(0,0,1)\})$.
 Moreover, the stabilizer groups of the following points are given as follows: 
 \begin{enumerate}
 \item For the point $(0,0,1) \in \P(N_3)$, the stabilizer group $G_{l_{(0,0,1)}}$ is $\Aut(Y)_{\red} \simeq \SL_{2,k}$.
  \item For the point $(0,1,0) \in  V(z)$, the stabilizer group is isomorphic to
  \[
  G_{l_{(0,1,0)}}=
  \left\{
\begin{pmatrix}
 a & 0\\c&1/a
\end{pmatrix} \in \SL_{2,k}
 \right\}.
  \]
  \item For the point $(1,1,1) \in \P(N_3)\setminus V(z)$, the stabilizer group is isomorphic to
  \[
  G_{l_{(1,1,1)}}=
  \left\{
\begin{pmatrix}
 1+c & c \\ c &1+c
\end{pmatrix} \in \SL_{2,k} 
 \right\},
  \]
  which is isomorphic to $\G_a$.
\end{enumerate}
\end{enumerate}
In each case, the orbit contains a rational point over the prime field.
Thus, the above statements hold also over any field $k$.
\end{cor}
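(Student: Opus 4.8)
The plan is to transport the problem, via the $\Aut_{Y}$-equivariant isomorphism $\Sigma(Y) \simeq \P(N_3) = \P^2$ of Proposition~\ref{proposition:Hilb_of_lines}, into a purely group-theoretic computation of orbits and stabilizers for the explicit action of $\Aut(Y)_{\red}$ on $\P^2$ recorded in Proposition~\ref{prop:aut_split_W5}. Since $\Aut_{Y} \subset \PGL(N_3)$ acts on $\P(N_3)$ through its defining embedding, the action on $\Sigma(Y)$ is the linear action on $\P^2$ induced by the map $\PGL_2 \to \PGL_3$ of \eqref{eq:PGL2_PGL3} when $p \neq 2$, and by the map $\SL_2 \to \PGL_3$ of \eqref{eq:SL2_PGL3} when $p = 2$. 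I will treat the two characteristic regimes separately.

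For $p \neq 2$, I would observe that \eqref{eq:PGL2_PGL3} exhibits the action on $\P^2 = \P(N_3)$ as the projectivized second symmetric power $\P(\Sym^2 V)$ of the standard $\PGL_2$-representation $V$, under which the conic $V(z^2 - 2xy)$ is exactly the locus of rank-one (perfect-square) binary forms, that is, the image of the $\PGL_2$-equivariant Veronese $\P^1 \to \P^2$, $(p,q)\mapsto(2p^2,q^2,-2pq)$. The conic is then a single orbit because $\PGL_2$ is transitive on $\P^1$, and its complement (rank-two forms, equivalently unordered pairs of distinct points of $\P^1$) is a single orbit because $\PGL_2$ is $2$-transitive on $\P^1$. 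To identify the stabilizers I would trace the chosen representatives back through the Veronese: the point $(0,1,0)$ is the image of $[0:1]\in\P^1$, so $G_{l_{(0,1,0)}}$ is the point-stabilizer $\{b=0\}$; while $(0,0,1)$ corresponds to the unordered pair of roots $\{[1:0],[0:1]\}$, so $G_{l_{(0,0,1)}}$ is the stabilizer of this pair, namely the normalizer of the diagonal torus, i.e. the diagonal matrices together with the anti-diagonal ones.

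For $p = 2$, the map \eqref{eq:SL2_PGL3} is block-diagonal: $\SL_2$ acts by the standard representation on the coordinates $(x,y)$ and fixes $z$. Decomposing $\P^2$ according to $z = 0$ or $z \neq 0$, I would compute directly that there are three orbits: the fixed point $(0,0,1)$; the line $V(z) \simeq \P^1$, a single orbit since $\SL_2$ is transitive on $\P^1$; and the complement $\{z\neq 0,\ (x,y)\neq 0\}$, which after normalizing $z=1$ (legitimate since $z$ is fixed pointwise) is identified with $k^2\setminus\{0\}$, on which $\SL_2$ acts transitively. The stabilizers are read off by solving the fixed-point equations: $(0,0,1)$ is fixed by all of $\SL_2$; the stabilizer of $(0:1:0)$ forces $b=0$, giving the lower-triangular subgroup with $d = a^{-1}$; and for $(1:1:1)$ the conditions $a+b=c+d=1$ together with $\det = 1$ cut out, in characteristic two, the subgroup $\left\{\begin{psmallmatrix}1+c & c\\ c & 1+c\end{psmallmatrix}\right\}$, which one checks is a homomorphic image of $\G_a$ via $c \mapsto \begin{psmallmatrix}1+c & c\\ c & 1+c\end{psmallmatrix}$.

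Finally, each listed representative — $(0,1,0)$, $(0,0,1)$, $(1,1,1)$ — has coordinates in the prime field, and the split form together with its group action is already defined over the prime field; hence each orbit stratum is Galois-stable and defined over the prime field, so the orbit decomposition and the stabilizer descriptions descend from $\overline{k}$ to an arbitrary field $k$. The only genuinely delicate points are bookkeeping ones — correctly matching representatives to their stabilizers through the Veronese identification when $p \neq 2$, and carrying out the characteristic-two fixed-point computation that produces the $\G_a$ stabilizer — rather than any conceptual obstacle, since transitivity of $\PGL_2$ (respectively $\SL_2$) on the relevant configuration spaces does all the real work.
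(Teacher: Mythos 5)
Your proposal is correct and takes essentially the same route as the paper: the paper's proof of this corollary is exactly the direct computation with the embeddings \eqref{eq:PGL2_PGL3} and \eqref{eq:SL2_PGL3}, whose equivariance with respect to the Veronese map $(p,q)\mapsto(2p^2,q^2,-2pq)$ is already set up in the proof of Proposition~\ref{prop:aut_split_W5}. Your organization of the $p\neq 2$ case via $\P(S^2V)$ and $2$-transitivity of $\PGL_2$, and the characteristic-two fixed-point computations yielding the $\G_a$ stabilizer, are precisely the ``direct computations'' the paper leaves implicit, and they check out.
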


\begin{proof}
 This follows from direct computations using \eqref{eq:PGL2_PGL3} and \eqref{eq:SL2_PGL3}. 
\end{proof}

\begin{defn}
Let $Y$ be a $V_5$-variety over an algebraically closed field $k$, which is isomorphic to the split $V_5$-variety.
The Hilbert scheme of lines $\Sigma(Y) \simeq \P^2$ admits the decomposition
\[
\Sigma(Y) \simeq \P^2 
=
\begin{cases}
\text{(open orbit)} \sqcup V(z^2-2xy) & (p\neq 2),\\
\text{(open orbit)} \sqcup V(z) \sqcup \{(0,0,1)\} & (p=2).
\end{cases}
\]
A line that lies in the open orbit is called \emph{ordinary}, whereas a line in the one-dimensional orbit is referred to as \emph{special}.
The line corresponding to the point $(0,0,1) \in \P^2$ with $p=2$ is called \emph{exceptional}.

More generally, a line $l$ in a $V_5$-variety over a field $k$ is called \emph{ordinary} if $l$ corresponds to an ordinary line over $\overline k$.
Exceptional or special lines are defined similarly.
\end{defn}

The next proposition describes the above lines and the action of the above stabilizer groups on the corresponding lines.
\begin{prop}\label{prop:line_stab_action}
 Let $Y_{\spl}$ be the split $V_5$-variety over an algebraically closed field $k$ of characteristic $p \geq 0$.
\begin{enumerate}
 \item Assume $p \neq 2$.
\begin{enumerate}
 \item  We have
 \[
 l_{(0,1,0)} = \{(0,0,0,0,0,s,t)\} \subset Y_{\spl} \subset \P^6.
 \]
 Further, $ G_{l_{(0,1,0)}} \subset \PGL_{2,k}$  acts as 
\[
\begin{pmatrix}
 a & 0\\c&d
\end{pmatrix} \cdot (0,0,0,0,0,s,t) = (0,0,0,0,0,as,cs+dt).
\]
This action fixes the point $(0,0,0,0,0,0,1)$, and acts transitively on the complement.

 \item We have
 \[
 l_{(0,0,1)} = \{(0,s,0,0,0,t,0)\} \subset Y_{\spl} \subset \P^6.
 \]
 Further, $ G_{l_{(0,0,1)}} \subset \PGL_{2,k}$  acts as 
\[
\begin{pmatrix}
 a & 0\\0&d
\end{pmatrix} \cdot (0,s,0,0,0,t,0) = (0,a^4s,0,0,0,d^4t,0)
\]
and
\[
\begin{pmatrix}
 0 & b\\c&0
\end{pmatrix} \cdot (0,s,0,0,0,t,0) = (0,4b^4t,0,0,0,\dfrac{1}{4}c^4s,0).
\]
This action fixes the two point sets $\{(0,1,0,0,0,0,0),(0,0,0,0,0,1,0)\}$, and acts transitively on the complement.

\end{enumerate}
 
 \item Assume $p = 2$.
\begin{enumerate}
 \item We have
 \[
 l_{(0,0,1)} = \{(0,s,0,0,0,t,0)\} \subset Y_{\spl} \subset \P^6.
 \]
 Further, $ G_{l_{(0,0,1)}} = \SL_{2,k}$  acts transitively as 
\[
\begin{pmatrix}
 a & b\\c&d
\end{pmatrix} \cdot (0,s,0,0,0,t,0) = (0,as+bt,0,0,0,cs+dt,0).
\]
 \item We have
 \[
 l_{(0,1,0)} = \{(0,0,0,0,0,s,t)\} \subset Y_{\spl} \subset \P^6.
 \]
 Further, $ G_{l_{(0,1,0)}}  \subset \SL_{2,k}$  acts as 
\[
\begin{pmatrix}
 a & 0\\c&1/a
\end{pmatrix} \cdot (0,0,0,0,0,s,t) = (0,0,0,0,0,\frac{1}{a^2} s,\frac{1}{a^3}t).
\]
This action fixes the two point $(0,0,0,0,0,1,0)$ and $(0,0,0,0,0,0,1)$, and acts transitively on the complement.

 \item We have
 \[
 l_{(1,1,1)} = \{(s,t,t,s+t,t,t,s)\} \subset Y_{\spl} \subset \P^6.
 \]
 Further, $ G_{l_{(1,1,1)}}  \subset \SL_{2,k}$  acts on the line $(s,t,t,s+t,t,t,s)$ via
\[
\begin{pmatrix}
 1+c & c\\c&1+c
\end{pmatrix} \cdot (s,t) = ((c^2+c+1)s+(c^2+c)t, (c^2+c)s+(c^2+c+1)t).
\]
This action fixes the point $(1,1)$, and acts transitively on the complement.
\end{enumerate}
\end{enumerate}
\end{prop}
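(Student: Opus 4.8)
The statement is a list of explicit coordinate computations, and my plan is to verify each item directly, reducing the workload by equivariance. First, by the $\Aut_{Y/k,\red}$-equivariance of the isomorphism $\Sigma(Y)\simeq\P(N_3)$ established in Proposition~\ref{proposition:Hilb_of_lines}, it suffices to treat one representative point per orbit, and I would take precisely the representatives already singled out in Corollary~\ref{cor:lines_split_W5}, namely $(0,1,0)$, $(0,0,1)$ (and $(1,1,1)$ when $p=2$). For each such point $(x,y,z)$ I would first exhibit the claimed parametrized line $l\subset\P^6$ and check that $l\subset Y_{\spl}$ by substituting its parametrization into the five quadrics~\eqref{eqn:W5}; since each parametrization is linear in $(s,t)$, this simultaneously confirms that $l$ is genuinely a line.

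The next step is to confirm that the line I have written down really is $l_{(x,y,z)}$, i.e.\ that it corresponds to the point $(x,y,z)$ under the identification $\Sigma(Y)\simeq\P(N_3)$. For this I would pass to Pl\"ucker coordinates via $a_{i+j-3}=\overline{b_{ij}}$, factor the resulting pencil of bivectors in $\wedge^2U_5^\vee$ as $(\text{fixed covector})\wedge(\text{moving covector})$, and read off the common vertex, i.e.\ the image of $l$ under the projection $\Sigma(Y)\subset\Fl(U_5;4,2)\to\Gr(U_5,4)=\P(\wedge^4U_5)$. By Proposition~\ref{proposition:Hilb_of_lines} this map is the projected Veronese embedding, whose formula is recorded in Remark~\ref{rem:projection_veronese}; since that embedding is injective, matching the common vertex against $(-x^2,zx,-z^2-xy,yz,-y^2)$ pins down $l=l_{(x,y,z)}$. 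For instance, for $(0,1,0)$ the pencil is $(se_3^\ast+te_4^\ast)\wedge e_5^\ast$ with vertex $[e_5^\ast]=[(0,0,0,0,-1)]$, exactly the Veronese image of $(0,1,0)$; the remaining representatives are analogous.

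For the action, the stabilizer group $G_{l_{(x,y,z)}}$ is already determined in Corollary~\ref{cor:lines_split_W5}, so it remains to compute how its elements move the points of $l$. Here I would apply the explicit representation matrices~\eqref{eqn:actionPGL2} (when $p\neq2$) or~\eqref{eqn:sigma'} (when $p=2$) to the column vector of the parametrization, noting that only the two columns indexed by the nonzero coordinates of $l$ contribute; after clearing the common projective scalar, this yields the stated fractional-linear action on $(s:t)$, from which the claimed fixed points and transitivity are read off immediately. The necessary ingredients on the $\PGL_2$- resp.\ $\SL_2$-action on $\P(N_3)$ and on $U_5$ are supplied by~\eqref{eq:PGL2_PGL3}, \eqref{eq:SL2_PGL3}, and~\eqref{eq:cokernel_q_dual}.

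The main obstacle is the characteristic-two bookkeeping, together with keeping track of signs and projective scalars. In that case the $\SL_2$-action decomposes~\eqref{eqn:sigma'} into a $\mathrm{Sym}^3$-type block on the coordinates $a_0,a_2,a_4,a_6$ and a $\mathrm{Sym}^2$-type block on $a_1,a_3,a_5$, and on the exceptional line $l_{(0,0,1)}=\{(0,s,0,0,0,t,0)\}$ the induced action on $(s:t)$ is the \emph{Frobenius-twisted} one $(s:t)\mapsto(a^2s+b^2t:c^2s+d^2t)$ (reflecting that this line is the graph of Frobenius); nevertheless transitivity persists since Frobenius is bijective on $\overline{k}$-points, so the substantive conclusion is unaffected. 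I expect the special line $l_{(1,1,1)}\cong\G_a$ to require the most care, as its parametrization meets both blocks and the group element $\left(\begin{smallmatrix}1+c&c\\c&1+c\end{smallmatrix}\right)$ produces the quadratic coefficients $c^2+c$ and $c^2+c+1$ appearing in the statement; all of this is routine but must be carried out attentively.
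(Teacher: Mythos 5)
Your proposal is correct and is, at its core, the same argument as the paper's: work with the representative lines and stabilizers already fixed in Corollary~\ref{cor:lines_split_W5}, identify each line explicitly, then read off the action from the matrices \eqref{eqn:actionPGL2} and \eqref{eqn:sigma'}. The only methodological difference is in how you pin down $l_{(x,y,z)}$. The paper computes \emph{forward} from the point $(x,y,z)$: it takes the corresponding member of the net $\nu_{\spl}$, forms the cokernel $C_4$ of $U_5 \to U_5^\vee$, and determines which Pl\"ucker coordinates survive in $\wedge^3 C_4$, which hands it the line. You compute \emph{backward}: you write down the candidate line, check it lies on $Y_{\spl}$ via \eqref{eqn:W5}, extract its vertex in $\P(\wedge^4 U_5)$, and match against the projected Veronese formula of Remark~\ref{rem:projection_veronese}, using that this map is an embedding (so injective) by Proposition~\ref{proposition:Hilb_of_lines}. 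Both are legitimate; yours avoids redoing the cokernel computation case by case, at the mild cost of needing the answer in advance, which the statement supplies.

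The point worth emphasizing is your characteristic-two observation on the exceptional line: you are right, and the formula printed in part (2)(a) of the statement is not. The vector $(0,s,0,0,0,t,0)$ meets only the second and sixth rows and columns of \eqref{eqn:sigma'}, whose relevant entries are $a^2,b^2$ and $c^2,d^2$, so the action is $(0,a^2s+b^2t,0,0,0,c^2s+d^2t,0)$, i.e.\ the Frobenius twist, not $(0,as+bt,0,0,0,cs+dt,0)$. This is corroborated internally by the paper itself: in Proposition~\ref{prop:char2act}(3) the map $\nu'|_{E}\colon (x,y)\mapsto(0,x^2,0,0,0,y^2,0)$ is $\sigma'$-equivariant for the standard $\SL_2$-action on $(x,y)$, which forces exactly the twisted action on the line coordinates $(s,t)=(x^2,y^2)$. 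As you note, since Frobenius is surjective on $\SL_{2}(k)$-points for $k$ algebraically closed, the two formulas give the same abstract group action and the substantive claim (transitivity) is untouched; but the paper's own proof, which is precisely "direct computation using Proposition~\ref{prop:aut_split_W5}," yields your formula rather than the printed one. Your remaining spot checks — the lower-triangular action in (1)(a), the fourth powers in (1)(b) after clearing the scalar $ad$, the invariance of $\{a_3=0\}$ inside the $\mathrm{Sym}^2$-block, and the coefficients $c^2+c$, $c^2+c+1$ on $l_{(1,1,1)}$ — are all consistent with \eqref{eqn:actionPGL2}, \eqref{eqn:sigma'}, and the paper's statement.
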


\begin{proof}
 These follow from direct computations using Proposition~\ref{prop:aut_split_W5} and Corollary~\ref{cor:lines_split_W5}.
Here, we provide the proof only for the first case.
 
 The point $(0,1,0) \in \P(N_3)$ corresponds to the alternating form given by
 \[
 B=\begin{pmatrix}
 0&0&0&-1&0\\
 0&0&1&0&0\\
 0&-1&0&0&0\\
 1&0&0&0&0\\
 0&0&0&0&0\\
\end{pmatrix}.
\]
Thus the cokernel $C_4$ of the map $U_5 \to U_5 ^\vee$ is identified with $U_5 / \langle \trans(0,0,0,0,1) \rangle$.
This implies that the map $\wedge ^3 U_5 \to \wedge^3 C_4$ sends the vectors $e_1 \wedge e_2 \wedge e_3$, $e_1 \wedge e_2 \wedge e_4$, $e_1 \wedge e_3 \wedge e_4$ and $e_2 \wedge e_3 \wedge e_4$ to a basis of $\wedge^3 C_4$, and the other vectors $e_i\wedge e_j\wedge e_k$ to $0$.
Furthermore, the remaining alternating forms corresponding to
  \[
\begin{pmatrix}
 0&0&0&0&0\\
 0&0&0&0&-1\\
 0&0&0&1&0\\
 0&0&-1&0&0\\
 0&1&0&0&0\\
\end{pmatrix},
\begin{pmatrix}
 0&0&0&0&-1\\
 0&0&0&1&0\\
 0&0&0&0&0\\
 0&-1&0&0&0\\
 1&0&0&0&0\\
\end{pmatrix}
 \]
 sends $e_1 \wedge e_3 \wedge e_4$ and $e_2 \wedge e_3 \wedge e_4$ to $0$.
The remaining trivectors $e_1 \wedge e_2 \wedge e_3$ and $e_1 \wedge e_2 \wedge e_4$ define a line 
$(b_{12}, \dots, b_{34}, b_{35}, b_{45})=(0,\dots,0,s,t) \subset \Gr(U_5,3) =\Gr(U_5^\vee,2)$ 
with respect to the Pl\"ucker coordinates on $\Gr(U_5^\vee,2)$ corresponding to the basis $\{e_i\wedge e_j\}$.
 This line corresponds to the line $(0,0,0,0,0,s,t) \subset Y_{\spl}$.
 The assertion on the action of $G_{l_{(0,1,0)}}$ follows from Proposition~\ref{prop:aut_split_W5}.
\end{proof}

\section{Orbit decomposition of $V_5$-varieties}\label{section:orbit}
In this section, we will describe the orbit decomposition of the split $V_5$-variety, and discuss the relations between lines and orbits.
For the original treatment in characteristic zero, see \cite{Mukai-Umemura}, \cite{FN89}, \cite{Ili94} (see also \cite[Section 5]{Kuznetsov-Prokhorov-Shramov}).

\subsection{Orbit decomposition in characteristic $p \neq 2$}
\begin{prop}
\label{prop:generalW5}
Let $Y= Y_{\spl} \subset \P^6$ be the split $V_5$-variety over an algebraically closed field $k$ of characteristic $p \neq 2$.
Then the following hold.
\begin{enumerate}

\item 
The orbit decomposition of $Y$ with respect to $\sigma$ is given by 
\[
O_3^{(p)} \sqcup O_2^{(p)} \sqcup O_1^{(p)},
\]
where $O_{i}^{(p)}$ are $i$-dimensional orbits.
Moreover, the Zariski closure of $O_2^{(p)} \subset Y$ is $O_{2}^{(p)} \sqcup O_{1}^{(p)}$.
\item Let $D$ be the divisor on $Y_{\spl}$ with the equation
\begin{equation}
\label{eqn:D}
    5a_2a_4-4a_1a_5+27a_0a_6,
\end{equation}
which is defined over $Y_{\spl}/\Z$.
Then $O_2^{(p)} \sqcup O_1^{(p)}= D$, which is a prime divisor on $Y_{\spl}/k$.
\item 
Let $\nu$ be the morphism
\[
\P^1_{\Z[\frac{1}{2}]} \times_{\Z[\frac{1}{2}]} \P^1_{\Z[\frac{1}{2}]} \rightarrow D_{\Z[\frac{1}{2}]} 
\]
defined by
\begin{align*}
& ((\alpha, \gamma), (\beta, \delta))\\ 
&\mapsto
(8\alpha\beta^5 ,4\beta^5\gamma + 20\alpha\beta^4\delta,4\beta^4\gamma\delta + 8\alpha\beta^3\delta^2, 4\beta^3\gamma\delta^2 + 4\alpha\beta^2\delta^3, 4\beta^2\gamma\delta^3 + 2\alpha\beta\delta^4 , 5\beta\gamma\delta^4 + \alpha\delta^5 , \gamma\delta^5).
\end{align*}
Then $\nu$ is a $\PGL_{2, \Z[\frac{1}{2}]}$-equivariant morphism, and
$\nu_{k}$ is normalization.
Here, the action of $\PGL_{2, \Z[\frac{1}{2}]}$ on the left-hand side is given by the product of the natural action on $\P^1_{\Z[\frac{1}{2}]}$, i.e.,\ given by
\[
\begin{pmatrix}
    a & b \\
    c & d
\end{pmatrix}
\colon \begin{pmatrix}
    \alpha \\
    \beta
\end{pmatrix}
\mapsto
\begin{pmatrix}
    a & b \\
    c & d
\end{pmatrix}
\begin{pmatrix}
    \alpha\\
    \beta
\end{pmatrix}.
\]
\end{enumerate}

Moreover, each orbit contains a rational point over the prime field.
More precisely,
\begin{align*}
& (0,1,0,0,0,1,0) \in O_3^{(p)}, \\
& (0,0,0,0,0,1,0) \in O_2^{(p)}, \\
& (0,0,0,0,0,0,1) \in O_1^{(p)}.
\end{align*}
 Thus, the above statements hold also over any field $k$ of characteristic $p \neq 2$.
\end{prop}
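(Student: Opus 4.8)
The plan is to realise $Y_{\spl}\subset\P^6$ as the closure of a single $\PGL_2$-orbit inside $\P^6=\P(\Sym^6 k^2)$, the space of binary sextics, with $\sigma$ the induced action \eqref{eqn:actionPGL2} (this is the classical Mukai--Umemura description; the dictionary with the monomial basis is Remark~\ref{rem:coordinatechange}). First I would reduce to $k=\overline k$: since $\sigma$, the quadric $D$ of \eqref{eqn:D}, and $\nu$ are all defined over $\Z$, and the three listed representatives have entries in $\{0,1\}$, the decomposition, the divisor $D$, and the map $\nu$ will descend to the prime field and then to any $k$ of characteristic $\neq 2$, which yields the final sentence. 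Throughout, $e_6=(0,0,0,0,0,1,0)$ and $e_7=(0,0,0,0,0,0,1)$ denote the representatives of the putative orbits $O_2$ and $O_1$.

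For part (1) I would read the stabilisers directly off \eqref{eqn:actionPGL2}. Applying $\sigma(g)$ to $e_7$ and to $e_6$ just returns the last two columns of \eqref{eqn:actionPGL2}: the stabiliser of $[e_7]$ is the lower-triangular Borel $\{b=0\}$, so $O_1$ is $1$-dimensional and equals the rational normal sextic curve of sixth powers $\{[\ell^6]\}$; the stabiliser of $[e_6]$ is the diagonal torus $\{b=c=0\}$, so $O_2$ is $2$-dimensional. The point $(0,1,0,0,0,1,0)$ has finite stabiliser, hence lies in a $3$-dimensional orbit $O_3$, which—being a $3$-dimensional orbit in the irreducible $3$-fold $Y$—is dense and open. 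This gives the three orbits with the asserted dimensions; the closure relation $\overline{O_2}=O_2\sqcup O_1$ will drop out of the analysis of $\nu$.

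The crucial observation for parts (2)--(3) is that the displayed formula for $\nu$ is exactly $\nu\big((\alpha{:}\gamma),(\beta{:}\delta)\big)=\sigma\!\left(\begin{smallmatrix}\alpha&\beta\\ \gamma&\delta\end{smallmatrix}\right)\cdot e_6$; that is, $\nu$ extends the orbit map $g\mapsto\sigma(g)\cdot[e_6]$ across the locus $\det=0$. Equivariance is then immediate, because the diagonal action on $\P^1\times\P^1$ corresponds to left multiplication of $\left(\begin{smallmatrix}\alpha&\beta\\ \gamma&\delta\end{smallmatrix}\right)$ and $\sigma$ is a homomorphism. Since $\det=0$ is precisely the diagonal $\Delta=\{P=Q\}$, the open set $(\P^1\times\P^1)\setminus\Delta\cong\PGL_2/T$ maps isomorphically onto $O_2$ (the stabilisers being smooth for $p\neq 2$), while a short computation shows $\nu|_\Delta$ is the sextic Veronese $\P^1\xrightarrow{\sim}O_1$. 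Hence $\image(\nu)=O_2\sqcup O_1=\overline{O_2}$, proving the closure relation, and $\nu$ is a proper, bijective, birational morphism from the smooth surface $\P^1\times\P^1$; being finite birational from a normal source, it is the normalisation ($\overline{O_2}$ in fact fails to be normal along $O_1$). For the link with $D$ one checks that $D$ is $\sigma$-invariant (up to scalar it is the degree-two invariant of the binary sextic), that it vanishes at $e_6$ and $e_7$, and that $D(0,1,0,0,0,1,0)=-4$—which is nonzero exactly because $p\neq 2$; thus $V(D)\cap Y$ is a divisor containing $\overline{O_2}$ and disjoint from $O_3$.

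The main obstacle is the \emph{exhaustion}: that $O_3,O_2,O_1$ are all the orbits, equivalently $V(D)\cap Y=\overline{O_2}$ as a reduced prime divisor and $Y\setminus V(D)=O_3$ is a single orbit. The inclusions $\overline{O_2}\subseteq V(D)\cap Y$ and $O_3\subseteq Y\setminus V(D)$ are already in hand; the content is that no further orbit intervenes. I would settle this in one of two ways. (i) Verify that $I_Y+(D)$ is prime of dimension two (a finite computation, done with \texttt{Macaulay2}/\texttt{SageMath} as in the acknowledgments): then $V(D)\cap Y$ is irreducible, forcing $V(D)\cap Y=\image(\nu)=\overline{O_2}$ and hence $Y\setminus V(D)=O_3$ by density of $O_3$, which simultaneously gives the primeness claimed in (2). (ii) Argue on the sextic side that the closure of the dense orbit is stratified only by the root types $(1^6),(5,1),(6)$ for $p\neq 2$—its boundary is the irreducible divisor $\overline{O_2}$ of $(5,1)$-forms, degenerating to the $(6)$-forms $O_1$—which is the characteristic-zero statement and survives once one checks the relevant root configuration stays reduced in characteristic $p\neq 2$. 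Either route also supplies reducedness, since $\nu$ is birational onto $\overline{O_2}$. Finally, the $\Z[\tfrac12]$-rationality of $\nu$ together with the geometric integrality of its image and the prime-field representatives of the $O_i$ yields the descent to arbitrary $k$ of characteristic $\neq 2$.
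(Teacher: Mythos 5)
Your skeleton parallels the paper's proof in its easy parts: the paper also obtains the three representatives with their stabilizers from the explicit action \eqref{eqn:actionPGL2}, also observes that $\nu$ is the extension of the orbit map $g\mapsto\sigma(g)\cdot(0,0,0,0,0,1,0)$ (whence equivariance), and also deduces that $\nu_k$ is the normalization from finiteness plus birationality (the paper checks the scheme-theoretic fiber over $(0,0,0,0,0,1,0)$ is a single point). But the step you yourself flag as the main obstacle --- exhaustion --- is where your proposal has a genuine gap, and it is exactly the step the paper handles by a different idea. The paper's argument is: $Y$ is covered by lines (it lifts to characteristic zero); by Corollary~\ref{cor:lines_split_W5} there are precisely two $\Aut(Y)_{\red}$-orbits of lines, and by Proposition~\ref{prop:line_stab_action} the stabilizer of each line acts on that line fixing an explicit finite set and transitively on its complement; hence \emph{every} point of $Y$ is conjugate to one of the three listed points. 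Neither of your two routes replaces this. In route (i), even granting the (unperformed) computer verification that $I_Y+(D)$ is prime, the inference ``$V(D)\cap Y=\overline{O_2^{(p)}}$ and $O_3^{(p)}$ dense, hence $Y\setminus V(D)=O_3^{(p)}$'' is not valid: $Y\setminus V(D)$ is merely an invariant affine open set containing the open orbit, and nothing in your argument excludes a $\PGL_2$-fixed point there (ruling these out needs a representation-theoretic computation in each characteristic $3,5,7$) or an additional two-dimensional orbit $O_x$ with $\overline{O_x}=O_x\sqcup O_1^{(p)}$; such an $O_x$ would be closed in the affine chart, would meet $V(D)$ only along $O_1^{(p)}$, and is perfectly consistent with the irreducibility of $V(D)\cap Y$. (One-dimensional extra orbits are easy to exclude --- they are complete, hence must meet the ample divisor $V(D)$ --- but that argument does not touch the two-dimensional case.) Route (ii) is not an argument at all in positive characteristic: the claim that the characteristic-zero stratification ``survives'' is precisely what must be proved, and the paper's own remark after this proposition shows that the naive sextic picture genuinely degenerates, e.g.\ in characteristic $3$ the closed orbit $O_1^{(3)}$ is a \emph{conic}, not a sextic curve.

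This last point also undermines your framing. The dictionary with the monomial basis of binary sextics via Remark~\ref{rem:coordinatechange} does not exist in characteristic $3$, since $\diag(8,24,12,8,6,6,1)$ has the entry $24\equiv 0 \pmod 3$ and is singular there; the action $\sigma$ is a $\Z[\tfrac12]$-form of $\Sym^6$ whose reduction mod small primes is \emph{not} the monomial-basis representation. Concretely, your claims that ``$\nu|_\Delta$ is the sextic Veronese $\P^1\xrightarrow{\sim}O_1$'' and that ``the stabilisers are smooth for $p\neq 2$'' both fail in characteristic $3$: the scheme-theoretic stabilizer of $(0,\dots,0,1)$ is cut out by $b^3=0$ (non-reduced, with reduced part the Borel), and $\nu|_\Delta$ is purely inseparable of degree $3$ onto a conic. (Bijectivity of $\nu$, and hence the normalization statement, can still be salvaged because purely inseparable morphisms are bijective on points, and the stabilizer of $(0,0,0,0,0,1,0)$ does turn out to be reduced in all $p\neq2$ --- but these require computations in the paper's coordinates, not appeals to the characteristic-zero sextic geometry.) So the proposal is incomplete: the orbit-exhaustion step is missing, and the substitute mechanisms proposed either have a logical gap or rest on an identification that breaks in one of the characteristics the proposition must cover.
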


\begin{proof}
Note that $Y$ is covered by lines (since it lifts to characteristic zero).
 By Corollary~\ref{cor:lines_split_W5} and Proposition~\ref{prop:line_stab_action}, we see that an orbit contains at least one of the three points
 \begin{align*}
& (0,1,0,0,0,1,0),\\
& (0,0,0,0,0,1,0),\\
& (0,0,0,0,0,0,1).
\end{align*}
By Proposition~\ref{prop:aut_split_W5}, we see that the three points define three different orbits.
Moreover, by Proposition~\ref{prop:aut_split_W5} again, we see that the orbit map $\PGL_{2,\Z[\frac{1}{2}]} \simeq \Aut_{Y/\Z[\frac{1}{2}]} \to \Aut_{Y/\Z[\frac{1}{2}]} \cdot (0,0,0,0,0,1,0)$ is given by
\begin{align*}
& \begin{pmatrix}
 a&b\\c&d
\end{pmatrix} \mapsto \\
&
(8ab^5 ,4b^5c + 20ab^4d,4b^4cd + 8ab^3d^2, 4b^3cd^2 + 4ab^2d^3, 4b^2cd^3 + 2abd^4 , 5bcd^4 + ad^5 , cd^5).
\end{align*}
Thus the map $\nu$ is $\PGL_{2,\Z[\frac{1}{2}]}$-equivalent.
Moreover, it is finite and birational, since the scheme theoretic fiber of $(0,0,0,0,0,1,0)$ consists of a point. 
Thus, $\nu$ is a normalization.
Also, it is easy to check that the closure of the two-dimensional orbit is $D = O_2^{(p)} \sqcup O_1^{(p)}$ (cf.\ the description in \cite[Subsection 5.1]{Kuznetsov-Prokhorov-Shramov}).
\end{proof}

\begin{rem}
When characteristic $p=3$, then the orbit map $\PGL_2 \to O_1^{(3)}$ of the point $(0,0,0,0,0,0,1)$ sends
\[
\begin{pmatrix}
    a&b\\c&d
\end{pmatrix}
\mapsto
(-b^6,0,0,-b^3d^3,0,0,d^6),
\]
whose image is a conic.
The blow-up of $Y$ along this conic is a Fano variety of No.\ 2.22 in \cite{Mori-Mukaiclassification}, \cite{Tanaka3}.
The automorphism group of a Fano variety in this class has dimension at most one in characteristic zero \cite{PCS19}, while the above example in characteristic $3$ has $\PGL_2$ as its automorphism group.
The other extremal contraction of this Fano variety is the blow-up of $\P^3$ along a special quartic rational curve in characteristic $3$ that is the image of $\P^1_{(s,t)}$ by the map
\[
(s,t) \mapsto (s^4,s^3t,st^3,t^4).
\]
Note that, in characteristic $3$, the linear system $\langle s^4,s^3t,st^3,t^4 \rangle \subset H^0(\P^1,\cO_{\P^1}(4))$ is preserved by $\PGL_2$.
\end{rem}

\subsection{Orbit decomposition in characteristic $p = 2$}
Next, we proceed to show the orbit decomposition in characteristic two.
Note that the Zariski closure $D$ of $O_2^{(0)} \sqcup O_1^{(0)} \subset Y_{\spl, \Z}$ is defined by the equation
\[
5a_2a_4-4a_1a_5+27a_0a_6.
\]
The divisor $D$ for any $p\neq 2$ is a prime divisor on $Y_{\spl}$.
On the other hand, the divisor $D$ is a double divisor on $Y_{\spl}$ defined by $a_3^2 =0$ when $p=2$.
\begin{prop}
\label{prop:char2act}
Let $Y = Y_{\spl}$ be the split $V_5$-variety over an algebraically closed field $k$ of characteristic $p=2$.
Let $D$ be the closed subscheme of $Y$ given as above.
Then the following hold.
\begin{enumerate}

\item \label{prop:char2act1}
The orbit decomposition of $Y$ with respect to $\sigma'$ is given by
\[
Y = O_{3}^{(2)} \sqcup O_2^{(2)} \sqcup O_1^{(2)} \sqcup O_1^{(2)'},
\]
where $O_i^{(2)}$ and $O_i^{(2)'}$ are $i$-dimensional orbits, $O_{1}^{(2)}$ is a twisted cubic, and $O_{1}^{(2)'}$ is the exceptional line $l_{(0,0,1)}$.

\item \label{prop:char2act2}
We have $D_{k, \red} = O_2^{(2)} \sqcup O_1^{(2)} \sqcup O_1^{(2)'}$.

\item \label{prop:char2act3}
Let $\nu'$ be the morphism 
\[
\mathrm{Bl}_{(0:0:1)} \P^2 \rightarrow (D_{\F_2})_{\red}
\]
defined by 
\[
\P^2 \dashrightarrow D_{k, \red}, (x,y,z) \mapsto (x^3 , x^2 z, x^2y,0,xy^2,y^2z,y^3).
\]
Then $\nu'$ is $\sigma'$-equivariant, and $\nu'$ is the normalization.
Here, the action of $\SL_{2,k}$ on the domain is induced from \eqref{eq:SL2_PGL3}.
In particular, $O_1^{(2)}$ is
the image of 
\[
\nu'|_{\P^1} \colon \P^1 = \{z =0 \} \rightarrow D_{k, \red}, (x:y) \mapsto (x^3,0,x^2y,0,xy^2,0,y^3),
\]
which is a twisted cubic, 
and $O_{1}^{(2)'}$ is the image of
\[
\nu'|_{E} \colon \P^1 \rightarrow D_{k,\red}, (x,y) \mapsto (0,x^2,0,0,0,y^2,0),
\]
which is a line $l_{(0,0,1)}$.
Moreover, $\nu'$ is a bijection.
 \end{enumerate}
Each orbit contains a rational point over $\F_2$.
 More precisely, we have
 \begin{align*}
&(1,0,0,1,0,0,1) \in O_3^{(2)},\\
&(0,0,0,0,0,1,1) \in O_2^{(2)},\\
&(0,0,0,0,0,0,1) \in O_1^{(2)},\\
&(0,0,0,0,0,1,0) \in O_1^{(2)'}.
\end{align*}
Hence, the above statements hold also over any field $k$ of characteristic two.
\end{prop}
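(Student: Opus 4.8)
The plan is to follow the strategy of the proof of Proposition~\ref{prop:generalW5}, replacing the $\PGL_2$-action by the $\SL_{2,k}=\Aut_{Y/k,\red}$-action $\sigma'$ of \eqref{eqn:sigma'} and keeping careful track of the features peculiar to characteristic two. First I would record that $Y$ is covered by lines (as in characteristic zero, since $Y_{\spl}$ lifts to characteristic zero), so every closed point lies on some line. By Corollary~\ref{cor:lines_split_W5}~(2) every line is $\Aut_{Y,\red}$-equivalent to one of the three standard lines $l_{(0,0,1)}$, $l_{(0,1,0)}$, $l_{(1,1,1)}$, and by Proposition~\ref{prop:line_stab_action}~(2) the action of the relevant stabilizer on each standard line is explicit. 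Reading off the stabilizer-orbits on these lines produces a finite list of candidate representatives, namely $(0,0,0,0,0,1,0)$, $(0,0,0,0,0,0,1)$, $(0,0,0,0,0,1,1)$, $(1,1,1,0,1,1,1)$ and $(1,0,0,1,0,0,1)$; thus every point of $Y$ is $\sigma'$-equivalent to one of these five.

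The heart of part (1) is then to compute the five orbit maps $g\mapsto \sigma'(g)\cdot P$ from \eqref{eqn:sigma'}, which collapse the list to the four claimed orbits. The decisive computations are as follows. The orbit map of $(0,0,0,0,0,0,1)$ is homogeneous of degree $3$ in the second column $(b,d)$ of $g$ and parametrizes a twisted cubic, giving the one-dimensional orbit $O_1^{(2)}$; the orbit map of $(0,0,0,0,0,1,0)$ factors through the Frobenius $(b:d)\mapsto(b^2:d^2)$ with image the line $l_{(0,0,1)}$, giving $O_1^{(2)'}$; the orbit map of $(0,0,0,0,0,1,1)$ has coordinates of mixed degrees in $(b,d)$, hence does not factor through $\P^1$ and is generically injective, giving the two-dimensional orbit $O_2^{(2)}$; evaluating this last map at $(b,d)=(1,1)$ yields $(1,1,1,0,1,1,1)$, so that the fourth candidate lies in $O_2^{(2)}$ and drops out; finally the orbit map of $(1,0,0,1,0,0,1)$ depends on all entries of $g$ and is three-dimensional, giving the open orbit $O_3^{(2)}$. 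Disjointness is verified by separating the orbits with the coordinate $a_3$ (nonzero precisely on $O_3^{(2)}$) and, inside $\{a_3=0\}$, by the vanishing patterns of the remaining coordinates.

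For part (2), I would observe that in characteristic two the specialization of $5a_2a_4-4a_1a_5+27a_0a_6$ is $a_2a_4+a_0a_6$, and the third relation of \eqref{eqn:W5} gives $a_2a_4+a_0a_6=a_3^2$ on $Y$; hence $D_{\red}=\{a_3=0\}\cap Y=Y\setminus O_3^{(2)}=O_2^{(2)}\sqcup O_1^{(2)}\sqcup O_1^{(2)'}$. For part (3), the rational map $\nu'$ is defined away from its unique base point $(0:0:1)$, so blowing up $(0:0:1)$ resolves it to a morphism from $\mathrm{Bl}_{(0:0:1)}\P^2$; since the $\SL_2$-action \eqref{eq:SL2_PGL3} fixes $(0:0:1)$ it lifts to the blow-up, and $\sigma'$-equivariance of $\nu'$ is precisely the orbit-map computation above, with the chart $\{z\neq0\}$, the line $\{z=0\}$ and the exceptional curve $E$ mapping onto $O_2^{(2)}$, onto the twisted cubic $O_1^{(2)}$ and onto $O_1^{(2)'}=l_{(0,0,1)}$ respectively. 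Because $D_{\red}$ is the irreducible closure of the two-dimensional orbit $O_2^{(2)}$ and $\nu'$ is a proper morphism from a smooth (hence normal) surface that is birational onto $D_{\red}$ (it recovers $(x,y)=(a_0/a_1,\,a_6/a_5)$ on the dense locus), $\nu'$ is the normalization. Its bijectivity is then checked stratum by stratum, the only subtle point being that on $E$ the map is the purely inseparable Frobenius $(\xi:\eta)\mapsto(\xi^2:\eta^2)$, which is a bijection though not an isomorphism. Finally, the four listed representatives are $\F_2$-rational and the entire construction is defined over $\F_2$, so the decomposition descends to any field of characteristic two, exactly as in Proposition~\ref{prop:generalW5}.

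The main obstacle will be the bookkeeping in the orbit-map stage: one must simultaneously collapse the five candidates to four orbits (the coincidence $(1,1,1,0,1,1,1)\in O_2^{(2)}$) and correctly recognize the two one-dimensional orbits, where the characteristic-two Frobenius both makes the open stratum of the special line sweep out a two-dimensional orbit and makes $\nu'|_E$ bijective but inseparable. Disentangling these Frobenius phenomena—rather than the formal conclusion that a finite birational morphism from a normal surface is the normalization—is the delicate part.
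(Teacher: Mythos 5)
Your overall strategy is the same as the paper's: cover $Y$ by lines, use Corollary~\ref{cor:lines_split_W5} and Proposition~\ref{prop:line_stab_action} to reduce to finitely many candidate points, compute the orbit maps from \eqref{eqn:sigma'}, identify $D_{k,\red}=Y\cap\{a_3=0\}$ via the third quadric of \eqref{eqn:W5}, and analyze $\nu'$ on the blow-up. Parts (1) and (2) are correct, and in one respect more careful than the paper's own write-up: you explicitly collapse the fifth candidate $(1,1,1,0,1,1,1)$ (the fixed point of $G_{l_{(1,1,1)}}$) into $O_2^{(2)}$ by evaluating the orbit map $(b,d)\mapsto(b^3,b^2,b^2d,0,bd^2,d^2,d^3)$ at $(b,d)=(1,1)$, whereas the paper lists only four candidates and leaves this step implicit. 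The use of the invariance of $a_3$ (row four of \eqref{eqn:sigma'}) to separate $O_3^{(2)}$ from the strata inside $D_{\red}$ is also sound.

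The gap is in part (3). You conclude that $\nu'$ is the normalization because it is ``a proper morphism from a smooth (hence normal) surface that is birational onto $D_{\red}$.'' That inference is false: properness plus birationality from a normal source does not characterize the normalization --- the blow-down $\mathrm{Bl}_{p}\P^2\to\P^2$ is proper, birational, with smooth source, yet is not the normalization of $\P^2$. What is needed is \emph{finiteness} of $\nu'$, and since your source is itself a blow-up, the danger is exactly that $\nu'$ might contract the exceptional curve $E$, in which case $\mathrm{Bl}_{(0:0:1)}\P^2$ would not be the normalization of $D_{\red}$. This is why the paper first checks by direct computation that $\nu'$ is a \emph{finite} morphism (in particular that $E$ maps onto the line $l_{(0,0,1)}$ via Frobenius rather than to a point), and only then combines finiteness with birationality to conclude it is the normalization. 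Your own stratum-by-stratum bijectivity check contains precisely the missing fact, but it is placed after the normalization claim instead of feeding into it; indeed, your closing remark even states the correct principle (``finite birational morphism from a normal surface''), while the body of the argument substitutes ``proper'' for ``finite.'' The repair is a reordering: bijectivity (or mere quasi-finiteness, i.e.\ no contracted curves) together with properness gives finiteness, and then finite $+$ birational $+$ normal source yields the normalization.
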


\begin{proof}
Note again that $Y$ is covered by lines.
By Corollary~\ref{cor:lines_split_W5} and Proposition~\ref{prop:line_stab_action}, we see that an orbit contains at least one of the four points
\begin{align*}
&(1,0,0,1,0,0,1),\\
&(0,0,0,0,0,1,1),\\
&(0,0,0,0,0,0,1),\\
&(0,0,0,0,0,1,0).
\end{align*}
By Proposition~\ref{prop:aut_split_W5}, we see that these four points define four different orbits $O_{3}^{(2)} \sqcup O_2^{(2)} \sqcup O_1^{(2)} \sqcup O_1^{(2)'}$.

By seeing $5$-th column and $6$-th column of (\ref{eqn:sigma'}) we can show the following:
$O_{1}^{(2)'}$ is the line $l_{(0,0,1)}$.
Also, $O_1^{(2)}$ is a twisted cubic defined by
\begin{equation*}
    \begin{gathered}
        \P^1_{\F_2} \rightarrow Y_{\F_2} \subset \P^6_{\F_2},\\
        (s,t) \mapsto (s^3,0,s^2t,0,st^2,0,t^3).
    \end{gathered}
\end{equation*}
Moreover $D_{\F_2, \red}$ is $O_2^{(2)} \sqcup O_1^{(2)} \sqcup O_1^{(2)'}$.

Next, we show \ref{prop:char2act3}.
By the direct computation, $\nu'$ is a well-defined finite morphism whose restrictions to $\{z=0\}$ and $E$ are as in \ref{prop:char2act3}.
Also, it is $\sigma'$-equivariant.
Moreover, we can check that $\nu'$ is birational since the scheme theoretic fiber of the point $(0,0,0,0,0,1,1)$ consists of a point $(0,1,1)$.
Thus, $\nu'$ is a normalization.
Since we have
\[
\nu' (\mathrm{Bl}_{(0:0:1)} \P^2_{\F_{2}} \setminus (\{z=0\} \cup E)) \subset O_2^{(2)},
\]
and $O_2^{(2)} \subset D_{\F_2, \red}$ is smooth, combining with the description of $\nu'|_{\{z=0\}}$ and $\nu'|_{E}$, $\nu'$ is a bijection.
\end{proof}

\begin{rem}
\label{rem:normalization}
Set $\cO_{K} \coloneqq \Z[\sqrt{2}]$, and $K \coloneqq \Frac \cO_{K}$.
Let 
\[
\nu\colon \P^1_{\cO_K[\frac{1}{2}]} \times_{\cO_K[\frac{1}{2}]} \P^1_{\cO_K[\frac{1}{2}]} \rightarrow D_{\cO_K[\frac{1}{2}]}
\]
be the base change of the morphism defined in Proposition \ref{prop:generalW5}.
After taking a conjugation
$
c \colon \PGL_{2, \cO_K[\frac{1}{\sqrt 2}]} \simeq \PGL_{2, \cO_K[\frac{1}{\sqrt 2}]}
$
by the element
\[
\begin{pmatrix}
1 &0\\
0 & \sqrt{2}\\
\end{pmatrix},
\]
we can show that $\nu$ can be extended to a morphism
$
\widetilde{\nu} \colon \mathcal{P} \rightarrow D,
$
where $\mathcal{P}$ is a smooth projective scheme over $\cO_K$ such that
\[
\mathcal{P}_{\cO_K[\frac{1}{2}]} \simeq \P^1_{\cO_K[\frac{1}{2}]} \times_{\cO_K[\frac{1}{2}]} \P^1_{\cO_K[\frac{1}{2}]},
\]
$\mathcal{P}_{\F_{2}}$ is isomorphic to the Hilzebruch surface $\Sigma_{2}$ over $\F_2$, and $\widetilde{\nu}_{\F_2}$ factors through $\nu'$ via the unramified covering
\[
\Sigma_{2} \rightarrow \Sigma_{1}
\]
induced by the Frobenius morphism on the base scheme $\P^1_{\F_{2}}$.
In this sense, $\nu$ and $\nu'$ are related.
\end{rem}

\begin{rem}
The blow-up of $Y$ along the twisted cubic $O_1^{(2)}$ is a Fano variety of No.\ 2.20 in \cite{Mori-Mukaiclassification}, \cite{Tanaka3}.
Similarly to the case of characteristic three,
the automorphism group of a Fano variety in this class has dimension at most one in characteristic zero \cite{PCS19}, while this example has $\SL_2$ as its automorphism group.
The other extremal contraction of this Fano variety is a conic bundle over $\P^2$ whose discriminant divisor is a triple line.

The blow-up of $Y$ along the line $O_1^{(2)'}$ is a Fano variety of No.\ 2.26 in \cite{Mori-Mukaiclassification}, \cite{Tanaka3}.
The automorphism group of a Fano variety in this class has dimension at most two in characteristic zero \cite{PCS19}, while this example also has $\SL_2$ as its automorphism group.
The other extremal contraction of this Fano variety is the blow-up of the three-dimensional quadric $Q$ along the inseparable ordinary twisted cubic as in Remark~\ref{rem:W5_p=2}.
\end{rem}

\subsection{Lines and orbits on $V_5$-varieties in characteristic $p\neq 2$}

\begin{thm}
\label{thm:linep>2}
Let $k$ be an algebraically closed field of characteristic $p \neq 2$.
Let $Y$, $D$, $\nu$, $O_i^{(p)}$ and $\sigma$ be as in Theorem~\ref{prop:generalW5}.
Let $L \subset Y$ be a line.
Then the following hold.
\begin{enumerate}
\item \label{thm:linep>2_1}
Suppose $L \not\subset D$.
Then $L$ is an ordinary line.
Moreover, the scheme theoretic intersection $L \cap D$ consists of two distinct points $\nu(P,Q)$ and $\nu(Q,P)$ ($P\neq Q$) in $ O_{2}^{(p)} (k)$, and we have a bijection
\[
\begin{gathered}
\{[L] \in \Sigma(Y) (k) \mid L \not\subset D\} \simeq \{\textup{distinct two points $\nu(P,Q)$ and $\nu(Q,P)$ in $O_{2}^{(p)} (k)$}\},\\
[L] \mapsto L \cap D.
\end{gathered}
\]

\item \label{thm:linep>2_2}
Suppose $L \subset D$. Then $L$ is a special line.
Moreover, the set theoretic intersection $L \cap O_1^{(p)}$  is a point of $O_{1}^{(p)} (k)$, and we have a bijection 
\[
\begin{gathered}
\{[L] \in \Sigma(Y) (k) \mid L \subset D\}\simeq O_{1}^{(p)} (k), \\
L \mapsto (L \cap O_{1}^{(p)})_{\mathrm{red}}.
\end{gathered}
\]

\item \label{thm:linep>2_3}
Consider the $\PGL_2$-equivariant isomorphism $S^2\P^1_{[X,Y]} \to \P^2 =\P(N_3)$ given by
\[
(\alpha,\gamma), (\beta, \delta) \mapsto (2 \alpha \beta , \gamma \delta, -(\alpha \delta + \beta \gamma)),
\]
whose converse is 
given by
\begin{eqnarray*}
(x , y , z) \mapsto 
V(y X^2+ zXY + \frac{1}{2}xY^2).
\end{eqnarray*}
Then the composite 
\[
S^2 \P^1_{[X:Y]} \simeq \P^2  \rightarrow \Sigma (Y)
\]
sends
\[
(P, Q) \mapsto \textup{the line passing throuhgh $\nu(P, Q)$ and $\nu (Q, P)$}.
\]
Note that, when $P =Q$, the line on $Y$ passing through $\nu (P,Q)$ is unique.

\item \label{thm:linep>2_4}The locus of lines as in (2) in $\P^2 \simeq S^2\P^1 \simeq \Sigma (Y)$ is equal to the conic
 \begin{equation}
 \label{eqn:speciallinelocus}
 V(z^2-2xy) \subset \P^2_{[x:y:z]},
\end{equation}
which is the set of double points in $S^2\P^1$.
\end{enumerate}
\end{thm}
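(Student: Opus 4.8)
The strategy is to exploit $\PGL_2$-equivariance and reduce every assertion to a computation on the representative lines provided by Corollary~\ref{cor:lines_split_W5} and Proposition~\ref{prop:line_stab_action}. Since $\Sigma(Y)\simeq\P^2$ is the disjoint union of the open orbit of ordinary lines and the conic $V(z^2-2xy)$ of special lines, each line is of exactly one of these two types, so it suffices to treat the ordinary representative $l_{(0,0,1)}=\{(0,s,0,0,0,t,0)\}$ and the special representative $l_{(0,1,0)}=\{(0,0,0,0,0,s,t)\}$ and then propagate by equivariance. The observation that ties the whole argument together is that the orbit map $\PGL_2\to O_2^{(p)}$ of the point $(0,0,0,0,0,1,0)$ computed in Proposition~\ref{prop:generalW5} is exactly $\nu$: one has $\nu((a,c),(b,d))=\begin{psmallmatrix}a&b\\c&d\end{psmallmatrix}\cdot(0,0,0,0,0,1,0)$. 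Thus $\nu$ carries the off-diagonal, where $P\neq Q$ and the matrix lies in $\PGL_2$, onto $O_2^{(p)}$, and the diagonal $\Delta$ onto $O_1^{(p)}$.

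I would first settle the dichotomy and the intersection statements. Substituting $l_{(0,0,1)}$ into the defining equation \eqref{eqn:D} of $D$ yields $-4st$, so $l_{(0,0,1)}\not\subset D$ and $l_{(0,0,1)}\cap D=\{(0,1,0,0,0,0,0),(0,0,0,0,0,1,0)\}$ is two reduced points; a direct check identifies these with $\nu((0,1),(1,0))$ and $\nu((1,0),(0,1))$, i.e.\ with $\nu(Q,P)$ and $\nu(P,Q)$ for $P=(1,0)$, $Q=(0,1)$, both lying in $O_2^{(p)}$ by the previous paragraph. This proves \ref{thm:linep>2_1} for the representative, and equivariance of $\nu$ upgrades it to the general ordinary line. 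For \ref{thm:linep>2_2}, substituting $l_{(0,1,0)}$ into \eqref{eqn:D} gives $0$, so $l_{(0,1,0)}\subset D$; by Proposition~\ref{prop:line_stab_action} its stabilizer fixes $(0,0,0,0,0,0,1)$ and is transitive on the complement, and since $(0,0,0,0,0,0,1)\in O_1^{(p)}$ while the remaining points lie in $O_2^{(p)}$, we obtain $l_{(0,1,0)}\cap O_1^{(p)}=\{(0,0,0,0,0,0,1)\}$. The asserted bijection then follows because the special lines and $O_1^{(p)}$ are each a single $\PGL_2$-orbit isomorphic to $\P^1$ and the stabilizer $G_{l_{(0,1,0)}}$ equals the stabilizer of the point $(0,0,0,0,0,0,1)\in O_1^{(p)}$.

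Parts \ref{thm:linep>2_3} and \ref{thm:linep>2_4} are then proved together. The displayed map $S^2\P^1\to\P^2$ is the standard $\PGL_2$-equivariant isomorphism sending an unordered pair to the binary quadratic form vanishing on it, with the stated inverse, so it remains to verify the geometric description on one distinct pair and one double point. The pair $\{(1,0),(0,1)\}$ maps to $(0,0,-1)=(0,0,1)$, hence to $l_{(0,0,1)}$, which by the computation above passes through $\nu(P,Q)$ and $\nu(Q,P)$; for a double point $\{(1,0),(1,0)\}$ one checks $\nu((1,0),(1,0))=(1,0,0,0,0,0,0)\in O_1^{(p)}$, and the unique line through it is the corresponding special line, which settles \ref{thm:linep>2_3} by equivariance. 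Finally, the diagonal $((\alpha,\gamma),(\alpha,\gamma))$ maps to $(2\alpha^2,\gamma^2,-2\alpha\gamma)$, which satisfies $z^2-2xy=0$, so the double points map isomorphically onto $V(z^2-2xy)$; by Corollary~\ref{cor:lines_split_W5} this conic is precisely the locus of special lines, giving \ref{thm:linep>2_4}.

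The main obstacle is not any single computation but the bookkeeping needed to turn the representative calculations into honest bijections. Concretely, one must (i) control the scheme-theoretic intersection $L\cap D$ globally and verify it is two distinct reduced points for every ordinary $L$, (ii) keep track of the swap involution on $\P^1\times\P^1$, which is a genuine $\Z/2$ not contained in the $\PGL_2$-action, so that the two intersection points are correctly paired as $\nu(P,Q)$ and $\nu(Q,P)$, and (iii) check that $\nu$ is birational onto $D$ with $\nu^{-1}(O_1^{(p)})=\Delta$, so that off-diagonal pairs land in $O_2^{(p)}$ and the bijections in \ref{thm:linep>2_1} and \ref{thm:linep>2_2} have the claimed image. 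Once these structural facts are in place, equivariance reduces each numerical verification to the explicit representatives above.
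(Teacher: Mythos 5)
Your proposal is correct and follows essentially the same route as the paper's proof: reduce to the representative lines $l_{(0,0,1)}$ and $l_{(0,1,0)}$ via Corollary~\ref{cor:lines_split_W5}, compute their intersections with $D$ and $O_1^{(p)}$ explicitly, and propagate everything by $\PGL_2$-equivariance of $\nu$, with parts \ref{thm:linep>2_3} and \ref{thm:linep>2_4} following formally. The only (harmless) local difference is that you clinch the bijections by comparing stabilizers, using that $\nu$ is the orbit map of the point $(0,0,0,0,0,1,0)$, whereas the paper instead checks directly from the equations \eqref{eqn:W5} that the line through a point of $O_1^{(p)}$ is unique and deduces the bijection from that; both mechanisms are valid, and the facts about $\nu$ you rely on (birationality, $\nu^{-1}(O_1^{(p)})=\Delta$) are supplied by Proposition~\ref{prop:generalW5}.
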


\begin{proof}
First, we shall show \ref{thm:linep>2_1}.
By Corollary~\ref{cor:lines_split_W5}, we may assume $L=l_{(0,0,1)}$.
In this case, $L \cap W$ consists of two points
\[
\nu ((1,0), (0,1))=(0,1,0,0,0,0,0), \quad  \nu ((0,1), (1,0)) = (0,0,0,0,0,1,0).
\]
Since $\nu$ is $\PGL_{2,k}$-equivariant and we have
\[
\P^1 \times \P^1 \setminus \Delta (\P^1) = \PGL_{2,k} ((1,0), (0,1)),
\]
the remaining part of \ref{thm:linep>2_1} follows.

Next, we shall show \ref{thm:linep>2_2}.
By Corollary~\ref{cor:lines_split_W5}, we may assume that $L$ is equal to $l_{(0,1,0)}$.
In this case, the scheme theoretic intersection $L \cap O_1^{(p)}$ is a double point supported on
\[
  (0,0,0,0,0,0,1).
 \]
 Moreover, by using the explicit equation \eqref{eqn:W5}, the line on $Y$ passing through $(0,0,0,0,0,0,1)$ is unique.
 Therefore, the remaining part of \ref{thm:linep>2_2} follows.

Next, we show \ref{thm:linep>2_3}.
We identify $\P^2$ with $S^2 \P^1$ by the isomorphism in the statement.
Then the given maps are $\PGL_2$-equivariant, and hence the assertion follows from \ref{thm:linep>2_1} and \ref{thm:linep>2_2}.
Now \ref{thm:linep>2_4} follows immediately.
\end{proof}

\begin{prop}
\label{prop:numberoflinep>2}
In the setting of Theorem \ref{thm:linep>2}, the following hold.
\begin{enumerate}
\item \label{prop:numberoflinep>2_1}
Let $P$ be a point in $O_{3}^{(p)}(k)$.
Then the lines passing through $P$ are three ordinary lines.
\item  \label{prop:numberoflinep>2_2}
Let $P$ be a point in $O_{2}^{(p)}(k)$.
Then the lines passing through $P$ are one ordinary line and one special line.
\item  \label{prop:numberoflinep>2_3}
Let $P$ be a point in $O_{1}^{(p)}(k)$.
Then the line passing through $P$ is unique, which is a special line.
\item \label{prop:numberoflinep>2_4}
The ordinary lines $L$ (resp.\ special lines) are characterized by the following property:
\begin{itemize}
    \item 
    There (resp.\ do not) exist distinct lines $L_1$ and $L_2$ such that $L \cap L_1 \cap L_2$ consists of a 1 point.
\end{itemize}
\end{enumerate}
\end{prop}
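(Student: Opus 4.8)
The plan is to reduce the entire statement, by equivariance, to the three rational representatives
$P_3=(0,1,0,0,0,1,0)\in O_3^{(p)}$, $P_2=(0,0,0,0,0,1,0)\in O_2^{(p)}$ and $P_1=(0,0,0,0,0,0,1)\in O_1^{(p)}$ produced in Proposition~\ref{prop:generalW5}. Since $\Aut_{Y}\simeq\PGL_2$ acts transitively on each orbit and acts on $\Sigma(Y)$ compatibly with the incidence relation $P\in L$ (Proposition~\ref{prop:aut_split_W5}), both the number of lines through a point and the type (ordinary, special) of each of them depend only on the orbit of the point. Hence it suffices to analyze the lines through $P_1,P_2,P_3$. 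Assertion \ref{prop:numberoflinep>2_3} is then immediate: the proof of Theorem~\ref{thm:linep>2}\ref{thm:linep>2_2} already shows that the unique line of $Y$ through $P_1$ is $l_{(0,1,0)}$, a special line.

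For the ordinary lines I would use the bijection of Theorem~\ref{thm:linep>2}\ref{thm:linep>2_1}: an ordinary line $L$ is recovered from $L\cap D=\{\nu(P,Q),\nu(Q,P)\}\subset O_2^{(p)}$. For $P_2$, any ordinary line $L$ with $P_2\in L$ satisfies $P_2\in L\cap D$, so $P_2$ must be one of these two points; as $P_2=\nu((0,1),(1,0))$ lies in $O_2^{(p)}$, over which the correspondence of Theorem~\ref{thm:linep>2}\ref{thm:linep>2_1} is injective, this pins down $\{P,Q\}=\{(0,1),(1,0)\}$ and hence exactly one ordinary line through $P_2$. The complementary special line through $P_2$ is exhibited directly: $l_{(0,1,0)}=\{(0,0,0,0,0,s,t)\}$ passes through $P_2$ (take $(s,t)=(1,0)$), and a short computation with the $\Stab(P_2)$-action and the map $\nu$ shows it is the only special line through $P_2$; this proves \ref{prop:numberoflinep>2_2}. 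For $P_3\notin D$ every line through $P_3$ is automatically ordinary (special lines lie in $D$), so the count is the number of points of the fibre $p^{-1}(P_3)$ of $p\colon\mathcal U_{Y/B}\to Y$. Because $\Sigma(Y_{\spl}/\Z)=\P^2_\Z$, the family $\mathcal U_{Y/B}\to\P^2_\Z$ is a $\P^1$-bundle and $p$ is a generically finite dominant morphism of smooth threefolds defined over $\Z$; its degree is locally constant on $\Spec\Z$ and equals the classical value $3$ computed in characteristic zero. Confirming that the fibre over $P_3$ is reduced (three genuinely distinct ordinary lines) for every $p\neq2$, via the explicit equations~\eqref{eqn:W5} and the action~\eqref{eq:PGL2_PGL3}, then yields \ref{prop:numberoflinep>2_1}.

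Finally, \ref{prop:numberoflinep>2_4} follows formally from \ref{prop:numberoflinep>2_1}--\ref{prop:numberoflinep>2_3} together with the fact that two distinct lines in $\P^6$ meet in at most one point. If $L$ is ordinary, choose $P\in L\cap O_3^{(p)}=L\setminus D$; by \ref{prop:numberoflinep>2_1} there are exactly three distinct lines $L,L_1,L_2$ through $P$, and since $L\cap L_1=\{P\}$ we get $L\cap L_1\cap L_2=\{P\}$. Conversely, suppose $L$ is special, so $L\subset D$ and, by Theorem~\ref{thm:linep>2}\ref{thm:linep>2_2}, $L$ meets $O_1^{(p)}$ in a single point with all other points of $L$ lying in $O_2^{(p)}$. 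By \ref{prop:numberoflinep>2_3} the $O_1^{(p)}$-point of $L$ lies on only the line $L$, and by \ref{prop:numberoflinep>2_2} every $O_2^{(p)}$-point of $L$ lies on exactly two lines, one of which is the special line $L$ itself. Hence no point of $L$ lies on three pairwise distinct lines of $Y$, so there cannot exist two further lines $L_1\neq L_2$, both distinct from $L$, with $L\cap L_1\cap L_2$ a single point.

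The main obstacle is the exact, characteristic-uniform count of \emph{distinct} lines through the representatives of $O_3^{(p)}$ and $O_2^{(p)}$: ensuring that the length-$3$ fibre of $p$ over a point of $O_3^{(p)}$ is reduced, and that there is precisely one—rather than several—special line through a point of $O_2^{(p)}$. Both are settled in principle by the explicit equations~\eqref{eqn:W5} and the group actions, but one must verify that no lines coincide or acquire multiplicity in small positive characteristic (e.g.\ $p=3,5$), where the characteristic-zero degeneration argument alone does not suffice.
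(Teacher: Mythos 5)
Your skeleton coincides with the paper's proof: reduce by $\Aut_Y$-equivariance to the rational orbit representatives, determine the lines through each one explicitly, and deduce \ref{prop:numberoflinep>2_4} formally from \ref{prop:numberoflinep>2_1}--\ref{prop:numberoflinep>2_3} together with the fact that two distinct lines meet in at most one point (your argument for \ref{prop:numberoflinep>2_4} is essentially verbatim what the paper intends). The genuine differences are in the counts. Your use of the bijection of Theorem~\ref{thm:linep>2}\ref{thm:linep>2_1} to produce exactly one ordinary line through an $O_2^{(p)}$-point is a clean alternative to the paper, which instead just solves the equations \eqref{eqn:W5} at $(0,0,0,0,0,1,0)$ and finds the two lines $l_{(0,0,1)}$ and $l_{(0,1,0)}$; note this single computation also disposes of the special-line uniqueness that you only sketch via the $\Stab(P_2)$-action and $\nu$. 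By contrast, the degree-constancy argument over $\Spec\Z$ in \ref{prop:numberoflinep>2_1} buys you nothing: since you concede that reducedness of the fibre must be checked from \eqref{eqn:W5} anyway, that explicit enumeration already gives the count, and the specialization argument (which needs surjectivity of $p\colon\mathcal{U}_{Y/B}\to Y$ and generic finiteness in characteristic $p$) can be deleted. Finally, what you call the ``main obstacle'' is precisely the content of the paper's proof, and it is milder than you fear: through the representative $(0,-4,0,0,0,1,0)\in O_3^{(p)}$ the equations \eqref{eqn:W5} yield exactly the three lines $l_{(0,0,1)}$ and $l_{(2,\mp1,0)}$ (the latter given by \eqref{eqn:line15conj}), and these are pairwise distinct whenever $2\neq 0$; the computation is uniform in $p\neq 2$, so no separate treatment of small characteristics such as $p=3,5$ is needed. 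In short, your plan is correct and executable, but the two finite computations you defer are exactly the substance of the paper's argument.
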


\begin{proof}
To prove \ref{prop:numberoflinep>2_1}, we may assume that 
\[
P= (0,-4,0,0,0,1,0).
\]
Using equations (\ref{eqn:W5}), we can show that the lines passing through $P$ are the following three lines.
 \begin{itemize}
     \item 
     The line $l_{(0,0,1)}$.
     \item 
    The lines $l_{(2,\mp1,0)}$ defined by
     \begin{equation}
     \label{eqn:line15conj}
         \begin{gathered}
             \P^1 \rightarrow Y \subset \P^6,\\
 (s, t) \mapsto (8s, -4t, \pm 4s, 0, -2s, t, \mp s).
         \end{gathered}
     \end{equation}
 \end{itemize}
Thus, \ref{prop:numberoflinep>2_1} follows.

To prove \ref{prop:numberoflinep>2_2}, we may assume that
\[
P = (0,0,0,0,0,1,0),
\]
By using the explicit equation (\ref{eqn:W5}), we can show that the lines on $Y$ passing through $P$ are the following two lines.
\begin{itemize}
 \item 
 The line $l_{(0,0,1)}$,
 \item 
 The line $l_{(0,1,0)}$.
\end{itemize}
Thus, \ref{prop:numberoflinep>2_2} follows.

\ref{prop:numberoflinep>2_3} follows from Theorem \ref{thm:linep>2}.
To elaborate further, we may assume that
\[
P= (0,0,0,0,0,0,1),
\]
and then the lines passing through $P$ is the line $l_{(0,1,0)}$.
The first statement of \ref{prop:numberoflinep>2_4} follows from \ref{prop:numberoflinep>2_1}--\ref{prop:numberoflinep>2_3}. 
The remaining statement follows from Theorem \ref{thm:linep>2}.
\end{proof}

\begin{rem}
\label{rem:lineclassification}
The point $(0,-4,0,0,0,1,0)$ corresponds to $[\phi_6]$ in \cite[Subsection 5.1]{Kuznetsov-Prokhorov-Shramov}.
\end{rem}

\subsection{Lines and orbits on $V_5$-varieties in characteristic $p=2$}

\begin{thm}

\label{thm:linep=2}
Let $k$ be an algebraically closed field of characteristic $2$.
Let $Y$, $D$, $\nu'$, $O_i^{(2)}$, $O_1^{(2)'}$ and $\sigma'$ be as in Theorem \ref{prop:char2act}.
Let $L \subset Y$ be a line.
Then the following hold.
\begin{enumerate}
\item \label{thm:linep=2_1}
Suppose that $L \not\subset D_{\red}$.
 Then $L$ is ordinary.
Moreover, the intersection $L \cap D_{\red}$ consists of a single point in $ O_{2}^{(2)} (k)$, and we have a bijection 
\[
\begin{gathered}
\{[L] \in \Sigma(Y) (k) \mid L \not\subset D \}\simeq O_2^{(2)} (k), \\
L \mapsto  (L \cap D_{\red}).
\end{gathered}
\]
\item \label{thm:linep=2_2}
Suppose that $L \subset D$. 
Then one of the following holds.
\begin{enumerate}
\item \label{thm:linep=2_2a}
The line $L$ is special.
Moreover, the intersection $L \cap O_{1}^{(2)}$ consists of a single point, and we have a bijection 
\[
\begin{gathered}
\{[L] \in \Sigma(Y)  (k) \mid \textup{line as in (a)}\}\simeq  O_{1}^{(2)} \\
L \mapsto  (L \cap O_{1}^{(2)}).
\end{gathered}
\]
\item \label{thm:linep=2_2b}
The line $L$ is the exceptional line $O_{1}^{(2)'} = l_{(0,0,1)}$.
\end{enumerate}
\end{enumerate}
The isomorphism $\P(N_3) = \P^2 \simeq \Sigma (Y)$ is compatible with the following map:
\[
(x , y , z) \mapsto 
\begin{cases}
\textup{The line $\ni \nu' (x, y , z)$ with $L \not\subset D_{\red}$} & \textup{if $z \neq 0$ and $(x, y)\neq 0$,}\\
\textup{The line $\ni \nu' (x,y,z)$} & \textup{if $z =0$,}\\
\textup{The line $l_{(0,0,1)}$} & \textup{if $(x,y,z) = (0,0,1)$}. 
\end{cases}
\]
\end{thm}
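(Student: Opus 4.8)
The plan is to mimic the proof of Theorem~\ref{thm:linep>2}: reduce to the three orbit representatives via $\sigma'$-equivariance, and then make each intersection explicit. First I would record that on $Y_{\spl}$ the relation $a_0a_6-a_2a_4+a_3^2$ from \eqref{eqn:W5} reads $a_3^2=a_2a_4+a_0a_6$ in characteristic $2$, so that $D=V(a_3^2)$ and $D_{\red}=V(a_3)$. By Corollary~\ref{cor:lines_split_W5}, every line $L\subset Y$ is $\sigma'$-equivalent to exactly one of $l_{(1,1,1)}$, $l_{(0,1,0)}$, $l_{(0,0,1)}$ (ordinary, special, exceptional, respectively). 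Since $D_{\red}$, the orbits $O_2^{(2)}$, $O_1^{(2)}$, $O_1^{(2)'}$, and the incidence maps $L\mapsto L\cap D_{\red}$ and $L\mapsto L\cap O_1^{(2)}$ are all $\sigma'$-equivariant, it suffices to verify the set-theoretic statements on these representatives and then propagate them by transitivity. Observe already that the special representative $l_{(0,1,0)}$ and the exceptional one $l_{(0,0,1)}$ both satisfy $a_3\equiv0$ and so lie in $D_{\red}$, whereas $l_{(1,1,1)}$ does not; hence $L\not\subset D_{\red}$ characterizes ordinary lines, which yields the opening sentence of part~\ref{thm:linep=2_1} and the dichotomy in part~\ref{thm:linep=2_2}.

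For the explicit step I would use the parametrizations of Proposition~\ref{prop:line_stab_action}. For $L=l_{(1,1,1)}=\{(s,t,t,s+t,t,t,s)\}$ the form $a_3$ restricts to the linear form $s+t$, so $L\cap D_{\red}$ is the single reduced point $(1,1,1,0,1,1,1)$; comparing its coordinates against the orbit representatives of Proposition~\ref{prop:char2act} (it is neither on the twisted cubic $O_1^{(2)}$, where $a_1=0$, nor on $l_{(0,0,1)}=O_1^{(2)'}$, where $a_0=0$) places it in $O_2^{(2)}$, giving part~\ref{thm:linep=2_1} on the representative. For $L=l_{(0,1,0)}=\{(0,0,0,0,0,s,t)\}$ one has $L\subset D_{\red}$, and intersecting with the twisted cubic $(s^3,0,s^2t,0,st^2,0,t^3)$ forces $s=0$, so $L\cap O_1^{(2)}$ is the single point $(0,0,0,0,0,0,1)$, giving case~\ref{thm:linep=2_2a}. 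Finally $l_{(0,0,1)}$ is by definition $O_1^{(2)'}$, which is case~\ref{thm:linep=2_2b}.

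To upgrade these pointwise facts to the asserted bijections I would argue by equivariance between homogeneous spaces. The ordinary lines form the open $\SL_2$-orbit of $\Sigma(Y)=\P(N_3)$, and $O_2^{(2)}$ is a single $2$-dimensional orbit; the equivariant map $L\mapsto L\cap D_{\red}$ carries the representative into $O_2^{(2)}$, hence surjects onto it, and injectivity reduces to checking that the two stabilizers coincide, i.e. that $\Stab((1,1,1,0,1,1,1))=G_{l_{(1,1,1)}}\simeq\G_a$, a direct computation with \eqref{eqn:sigma'}. The same template handles case~\ref{thm:linep=2_2a}: both the special lines $V(z)$ and the twisted cubic $O_1^{(2)}$ are $\SL_2$-homogeneous $\P^1$'s, and the equivariant map $L\mapsto L\cap O_1^{(2)}$ is bijective on $k$-points, possibly as a Frobenius-type, inseparable map. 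The main obstacle is exactly this characteristic-two degeneration: unlike the $p\neq2$ case of part~\ref{thm:linep>2_1} of Theorem~\ref{thm:linep>2}, the scheme-theoretic intersection $L\cap D=L\cap V(a_3^2)$ has length two but is supported at a single point (the two points $\nu(P,Q),\nu(Q,P)$ have collided), so bijectivity of $L\mapsto L\cap D_{\red}$ cannot be read off from a naive degree count and must instead be secured through the stabilizer computation, equivalently by verifying that through a general point of $O_2^{(2)}$ there passes a unique line not contained in $D_{\red}$.

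For the concluding compatibility statement I would combine the explicit maps above with the identification $\P(N_3)\simeq\Sigma(Y)$ of Proposition~\ref{proposition:Hilb_of_lines}. Identifying the source $\P^2$ of $\nu'$ with $\P(N_3)$ through the common $\SL_2$-action \eqref{eq:SL2_PGL3}, it suffices, once more by equivariance, to match the piecewise-defined map on one representative per orbit: on the open part $(1,1,1)\mapsto l_{(1,1,1)}$ (indeed $\nu'(1,1,1)=(1,1,1,0,1,1,1)$, whose unique ordinary line is $l_{(1,1,1)}$ by part~\ref{thm:linep=2_1}); on $V(z)$ the point $(0,1,0)\mapsto l_{(0,1,0)}$ through $\nu'(0,1,0)=(0,0,0,0,0,0,1)\in O_1^{(2)}$; and $(0,0,1)\mapsto l_{(0,0,1)}$. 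Since $\nu'$ is $\sigma'$-equivariant and each orbit is swept out by a single representative, agreement on representatives forces agreement everywhere, which is the claimed compatibility.
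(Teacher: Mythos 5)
Your proposal is correct and follows essentially the same route as the paper: reduce to the orbit representatives $l_{(1,1,1)}$, $l_{(0,1,0)}$, $l_{(0,0,1)}$ via Corollary~\ref{cor:lines_split_W5}, compute the intersections with $D_{\red}$ and $O_1^{(2)}$ explicitly on those representatives, and propagate the statements by $\sigma'$-equivariance. The only cosmetic difference is that you secure injectivity of $L \mapsto L \cap D_{\red}$ (resp. $L \mapsto L \cap O_1^{(2)}$) by a stabilizer comparison, whereas the paper checks that the intersection point lies on a unique line of the relevant type---two verifications you yourself note are equivalent.
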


\begin{proof}
First, we shall show \ref{thm:linep=2_1}.
By Corollary~\ref{cor:lines_split_W5}, we may assume that $L$ is $l_{(1,1,1)}$.
In this case, $L \cap D_{\red}$ consists of a point
\[
\nu' (1,1,1) = (1,1,1,0,1,1,1).
\]
Also, we can check that $L$ is the only line that is not contained in $D$ and passing through $(1,1,1,0,1,1,1)$.
Therefore, the remaining part of \ref{thm:linep=2_1} follows.

Next, we shall show \ref{thm:linep=2_2}.
By Corollary~\ref{cor:lines_split_W5}, we may assume that $L$ is equal to $l_{(0,1,0)}$.
In this case, the scheme theoretic intersection $L \cap O_1^{(2)}$ is a single point
\[
\nu' (0,1,0) = (0,0,0,0,0,0,1).
\]
Conversely, we can check that $L$ is the only line on $Y$ passing through $\nu'(0,1,0)$.
Therefore, the remaining part of \ref{thm:linep=2_2} follows.

The final statement follows easily.
\end{proof}

\begin{prop}
\label{prop:numberoflinep=2}
In the setting of Theorem \ref{thm:linep=2}, the following hold.
\begin{enumerate}
    \item \label{prop:numberoflinep=2_1}
    Let $P$ be a point in $O_3^{(2)}$. 
    Then the lines passing through $P$ are three ordinary lines.
    \item \label{prop:numberoflinep=2_2}
    Let $P$ be a point in $O_2^{(2)}$.
    Then the lines passing through $P$ are one ordinary line and one special line.
    \item\label{prop:numberoflinep=2_3}
    Let $P$ be a point in $O_1^{(2)}$.
    Then the line passing through $P$ is unique, which is special. 
    \item \label{prop:numberoflinep=2_4}
    Let $P$ be a point in $O_1^{(2)'}$.
    Then the lines passing through $P$ are one special line and one exceptional line.
    \item \label{prop:numberoflinep=2_5}
    The ordinary lines $L$ 
    are characterized by the following property:
\begin{itemize}
    \item 
    There exist distinct lines $L_1$ and $L_2$ such that $L \cap L_1 \cap L_2$ consists of a single point.
\end{itemize}
The special lines $L$ are characterized by the following property:
\begin{itemize}
\item 
    there exist a $P \in Y(k)$ such that the lines passing through $P$ are only one line $L$.
\end{itemize}
The exceptional line $L$ is characterized by the property not satisfying the above conditions for ordinary and special lines.

\end{enumerate}
\end{prop}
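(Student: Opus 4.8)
The plan is to follow the strategy of Proposition~\ref{prop:numberoflinep>2}, reducing each of the assertions \ref{prop:numberoflinep=2_1}--\ref{prop:numberoflinep=2_4} to a computation at a single $\F_2$-rational representative of the relevant orbit, using the $\sigma'$-equivariance established in Proposition~\ref{prop:char2act}. Concretely, I would take the representatives
\[
(1,0,0,1,0,0,1) \in O_3^{(2)}, \quad (0,0,0,0,0,1,1) \in O_2^{(2)}, \quad (0,0,0,0,0,0,1) \in O_1^{(2)}, \quad (0,0,0,0,0,1,0) \in O_1^{(2)'}
\]
listed there, and for each one determine explicitly all lines of $Y$ through that point.

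The enumeration of lines through a fixed point $P \in Y(k)$ I would carry out using the five quadrics \eqref{eqn:W5}: writing a line through $P$ as $\{sP + tv\}$, the vanishing of each quadric $q_i$ along the line forces $q_i(v)=0$ together with the polar condition $b_i(P,v)=0$, where $b_i(x,y)=q_i(x+y)-q_i(x)-q_i(y)$ is the associated bilinear form. Thus the lines through $P$ correspond to the points $v$ of the intersection of $Y$ with the embedded tangent space $\bigcap_{i} V(b_i(P,\,\cdot\,))$; this is a finite, explicit system over $\F_2$ whose solutions can be read off directly. Each resulting line is then matched with its point of $\P(N_3)=\P^2$ under the identification of Proposition~\ref{proposition:Hilb_of_lines}, and classified as ordinary, special, or exceptional according to whether this point lies in the open orbit, in $V(z)$, or equals $(0,0,1)$, via Corollary~\ref{cor:lines_split_W5} and Theorem~\ref{thm:linep=2}. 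For \ref{prop:numberoflinep=2_4} one notes that the chosen representative already lies on the exceptional line $O_1^{(2)'}=l_{(0,0,1)}$ itself, so that line is automatically one of the two lines through $P$, and the computation need only exhibit the remaining special line.

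Finally, I would deduce \ref{prop:numberoflinep=2_5} formally from \ref{prop:numberoflinep=2_1}--\ref{prop:numberoflinep=2_4}. For an ordinary line $L$, a general point lies in $O_3^{(2)}$ (since by Theorem~\ref{thm:linep=2} the intersection $L\cap D_{\red}$ is a single point of $O_2^{(2)}$, the rest of $L$ being in $O_3^{(2)}$); by \ref{prop:numberoflinep=2_1} there are exactly three distinct lines through such a point, which meet pairwise only there, so $L$ together with the other two has triple intersection a single point. Conversely, if $L,L_1,L_2$ are three distinct lines meeting in one point, that point carries at least three lines and hence lies in $O_3^{(2)}$ by \ref{prop:numberoflinep=2_1}--\ref{prop:numberoflinep=2_4}, forcing $L$ ordinary. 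For a special line $L$, its intersection with $O_1^{(2)}$ supplied by Theorem~\ref{thm:linep=2} is, by \ref{prop:numberoflinep=2_3}, a point through which $L$ is the unique line; conversely a point carrying a unique line must lie in $O_1^{(2)}$ by \ref{prop:numberoflinep=2_1}, \ref{prop:numberoflinep=2_2} and \ref{prop:numberoflinep=2_4}, and that line is special by \ref{prop:numberoflinep=2_3}. Since every point of $O_1^{(2)'}$ carries exactly two lines by \ref{prop:numberoflinep=2_4}, the exceptional line satisfies neither characterization.

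The main obstacle I expect is the bookkeeping in the line enumeration in characteristic two: because $D$ is here the non-reduced divisor $V(a_3^2)$, the polar conditions degenerate and scheme-theoretic intersection multiplicities must be handled with care, in contrast to the $p\neq 2$ case where $L\cap D$ was a clean pair of distinct points. Verifying that the enumerated lines are exactly those predicted by the orbit count—especially distinguishing the special line from the exceptional line through a point of $O_1^{(2)'}$—is where the characteristic-two subtleties concentrate.
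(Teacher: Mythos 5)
Your proposal is correct and follows essentially the same route as the paper: reduce to the four $\F_2$-rational orbit representatives via $\sigma'$-equivariance, enumerate the lines through each by direct computation with the quadrics \eqref{eqn:W5} (your polar-condition setup is just a systematic organization of the paper's ``direct calculation''), identify each line as $l_{(x,y,z)}$ and read off its type from the orbit decomposition of $\Sigma(Y)\simeq\P^2$, then deduce \ref{prop:numberoflinep=2_5} formally from \ref{prop:numberoflinep=2_1}--\ref{prop:numberoflinep=2_4}. Your write-up of \ref{prop:numberoflinep=2_5} is in fact more explicit than the paper's one-line deduction, and the counting argument you give there is sound.
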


\begin{proof}
To prove \ref{prop:numberoflinep=2_1}, we may assume that
\[
P = (1,0,0,1,0,0,1).
\]
By direct calculation using the explicit equation (\ref{eqn:W5}), we can show that the lines on $Y$ passing through $P$ are the following lines.
\begin{itemize}
\item 
The line $l_{(1,1,1)}$.
\item 
The line $l_{(\omega^2,\omega,1)}$, which is 
defined by
\begin{equation}
\label{eqn:lineomega}
\begin{gathered}
    \P^1 \rightarrow Y \subset \P^6,\\
    (s,t) \mapsto (s,t, t\omega^2 , s+t\omega , t, t \omega^2, s)
\end{gathered}
\end{equation}
\item 
The line $l_{(\omega,\omega^2,1)}$, which is 
defined by
\begin{equation}
\label{eqn:lineomega2}
\begin{gathered}
    \P^1 \rightarrow Y \subset \P^6,\\
    (s,t) \mapsto (s,t, t\omega , s+t\omega^2 , t, t \omega, s)
\end{gathered}
\end{equation}
\end{itemize}
Here, $\omega$ is a fixed primitive cube root of $1$.
Thus, the assertion follows.

To prove \ref{prop:numberoflinep=2_2}, we may assume that
\[
P = (0,0,0,0,0,1,1).
\]
By using the explicit equations (\ref{eqn:W5}), we can show that the lines passing through $\nu' (0,1,1)$ are the following lines.
\begin{itemize}
    \item 
    The line $l_{(0,1,0)}$.
    \item 
    The line $l_{(0,1,1)}$ defined by
    \begin{equation}
    \label{eqn:ssssts+t}
        \begin{gathered}
            \P^1 \rightarrow Y \subset \P^6, \\
            (s,t) \mapsto (0,s,s,s,s,t,s+t).
        \end{gathered}
    \end{equation}
\end{itemize}
The former (resp.\ the latter) is of type Theorem \ref{thm:linep=2} \ref{thm:linep=2_2}\ref{thm:linep=2_2a} (resp.\ \ref{thm:linep=2_1}), and the assertion follows.

To prove \ref{prop:numberoflinep=2_3}, we may assume that
\[
P = (0,0,0,0,0,0,1),
\]
and then we can check that $l_{(0,1,0)}$ is the only line on $Y$ passing through $P= \nu'(0,1,0)$.

Next, we show \ref{prop:numberoflinep=2_4}.
We may assume that 
\[
P = (0,0,0,0,0,1,0).
\]
In this case, by direct calculation using (\ref{eqn:W5}), we can show that the lines passing through $P$ are the following two lines $l_{(0,1,0)}$ and $l_{(0,0,1)}$.

Finally, we show \ref{prop:numberoflinep=2_5}.
The characterization of lines follow from \ref{prop:numberoflinep=2_1}--\ref{prop:numberoflinep=2_4}, and the remaining statements follow from Theorem \ref{thm:linep=2}.
\end{proof}

\subsection{The variety of trisecant lines to the projected Veronese surface}
Recall that, over $\C$, a $V_5$-variety $Y$ is described as the variety of trisecant lines to a projected Veronese surface, or similarly, $Y$ is the variety of sums of powers to a conic \cite{Muk92,Ili94}:
Let $\P^2 \subset \P^5$ be the Veronese surface, and take a projection $\P^2 \subset \P^4$ from a general point in $\P^5$.
Then
\[
\{ l \subset \P^4 \mid \text{$l$ is a trisecant line to $\P^2 \subset \P^4$}\} \subset \Gr(5,2)
\]
(with its reduced structure) is a $V_5$-variety (see also \cite{DK19} for a similar result over non-closed fields of characteristic zero).
Here, we briefly mention that an analogous statement also holds over \emph{reduced} base scheme $B$.
Note that, due to the geometric nature of this description, we can formulate this only for a reduced scheme $B$.

\begin{thm}\label{thm:trisecant}
 Let $(N_3,L,\varphi)$ be a non-degenerate ternary symmetric bilinear form over a \emph{reduced} scheme $B$, and 
 $\P(N_3) \subset \P(\ker \varphi)$ be the projected Veronese surface.
 Then the closure of the following set
\begin{equation}\label{eq:trisecant}
 \{ [L] \in \Gr(\ker \varphi, 2) \mid \text{$\P(L) \cap \P(N_3)$ consists of three points}\} \subset \Gr(\ker \varphi, 2)
\end{equation}
 with its reduced structure gives a $V_5$-scheme over $B$.
 Moreover, the correspondence is compatible with Theorem~\ref{thm:W5description}.
\end{thm}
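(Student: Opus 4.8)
The plan is to deduce the theorem from the classification in Theorem~\ref{thm:W5description} together with the orbit-and-line analysis already carried out, treating the trisecant description as a set-theoretic, hence reduced and fibrewise, statement. Write $Y$ for the $V_5$-scheme attached to $(N_3,L,\varphi)$ by Theorem~\ref{thm:W5description}; it is reduced, being smooth over the reduced scheme $B$. Using the exact sequence $0\to\wedge^4U_5\to S^2N_3\to L\to 0$ of Section~\ref{section:hilbert_scheme_of_lines} and the normalizing twist (so that $\det U_5\simeq\cO_B$ and $\ker\varphi\simeq\wedge^4U_5\simeq U_5^\vee$), the duality $\Gr(U_5,3)=\Gr(U_5^\vee,2)=\Gr(\ker\varphi,2)$ places both $Y$ and the locus \eqref{eq:trisecant} inside one Grassmann bundle, and identifies $\P(N_3)\subset\P(\ker\varphi)$ with the projected Veronese surface of Proposition~\ref{proposition:Hilb_of_lines} (given explicitly on $Y_{\spl}$ in Remark~\ref{rem:projection_veronese}). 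Write $T_0$ for the locus \eqref{eq:trisecant} and $T\coloneqq\overline{T_0}$ with its reduced structure.

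First I would reduce the desired identity $T=Y$ to two fibrewise assertions over the geometric points $\bar b\to B$: (i) on geometric points $T_0\subseteq Y$, and (ii) the open orbit $O_3^{(p)}\subseteq Y_{\bar b}$ is contained in $(T_0)_{\bar b}$. Granting these, (i) gives $T=\overline{T_0}\subseteq Y$ since $Y$ is closed; and as $Y\to B$ is flat and proper with geometrically integral fibres, the closed subset $T_{\bar b}\subseteq Y_{\bar b}$ contains the dense open $O_3^{(p)}$ by (ii), so $T_{\bar b}=Y_{\bar b}$ for every $\bar b$. Comparing fibres then yields $T=Y$ as reduced closed subschemes, and the compatibility with Theorem~\ref{thm:W5description} is automatic because $Y$ was taken to be the scheme attached to $(N_3,L,\varphi)$. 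This reduces everything to the split fibre $Y_{\spl}$ over an algebraically closed field in each characteristic.

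The geometric heart of the argument is an incidence correspondence on that fibre: for $y\in Y\subset\Gr(\ker\varphi,2)$ with associated line $\ell_y=\P(L_y)\subset\P(\ker\varphi)$, and for $[M]\in\Sigma(Y)=\P(N_3)$ with projected-Veronese image $v_M\in\P(\ker\varphi)$, the line $M\subset Y$ passes through $y$ if and only if $v_M\in\ell_y$. This is essentially built into the construction of $\Sigma(Y)\to\P(\wedge^4U_5)$ in Proposition~\ref{proposition:Hilb_of_lines}: a line on $Y\subset\Gr(U_5,3)$ is a point of $\Fl(U_5;4,2)$, its image $v_M$ in $\Gr(U_5,4)=\P(\wedge^4U_5)$ being the rank-$4$ term of the flag, while a point $y$ of this line is a rank-$3$ quotient refining it, so that the standard point-line incidence in the Grassmannian $\Gr(U_5,3)$ translates ``$y\in M$'' into ``$v_M\in\ell_y$''. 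Consequently the points of $\ell_y\cap\P(N_3)$ are exactly the images of the lines of $Y$ through $y$, and in particular their number agrees.

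Combining this incidence with the line counts of Propositions~\ref{prop:numberoflinep>2} and \ref{prop:numberoflinep=2} identifies $T_0$ with the open orbit and yields (i) and (ii): for $y\in O_3^{(p)}$ there are three lines through $y$, so $\ell_y$ meets $\P(N_3)$ in three distinct points and is a trisecant, giving $O_3^{(p)}\subseteq T_0$; for $y$ in a lower-dimensional orbit the count drops below three and $\ell_y$ fails to be a trisecant; and conversely a trisecant $\ell$ meets $\P(N_3)$ in three points, hence determines three lines of $Y$ whose classes in $\Sigma(Y)$ are collinear, which by the explicit descriptions of lines (Theorems~\ref{thm:linep>2} and \ref{thm:linep=2}) are concurrent at a unique $y\in O_3^{(p)}$ with $\ell=\ell_y$. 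The hard part will be the characteristic-two fibre: there the squaring map degenerates, so that the squares span only a plane and the Veronese surface becomes Frobenius-twisted, and there is an extra orbit $O_1^{(2)'}$ with its exceptional line. One must check, using Propositions~\ref{prop:char2act} and \ref{prop:numberoflinep=2}, that the set-theoretic trisecant locus is still precisely the open orbit $O_3^{(2)}$ and that the three intersection points genuinely lie on the degenerate projected Veronese surface, so that (i) and (ii) continue to hold.
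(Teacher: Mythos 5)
Your overall architecture—reduce to a set-theoretic statement on geometric fibres, then prove (i) $T_0\subseteq Y$ and (ii) $O_3^{(p)}\subseteq T_0$—matches the paper's reduction, and your proof of (ii) is essentially sound, with one caveat: the biconditional ``$y\in M\Leftrightarrow v_M\in\ell_y$'' is \emph{not} the standard point--line incidence of $\Gr(U_5,3)$. In the ambient Grassmannian, $v_M\in\ell_y$ only says that the rank-one term of the flag of $M$ lies in $K_2^{(y)}$; one still needs $K_2^{(y)}$ inside the rank-three term. What rescues the converse (and you need it even for (ii), to get \emph{exactly} three intersection points, and for the lower orbits) is the Claim in the proof of Proposition~\ref{proposition:Hilb_of_lines}, that $Y$ meets the $\P^3$ of all $2$-planes containing $v_M$ precisely in the line $M$. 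The genuine gap is step (i). Given an abstract trisecant $\ell$, you pass to the three lines $M_1,M_2,M_3$ of $Y$ determined by $\ell\cap\P(N_3)$ and assert that they ``are concurrent at a unique $y\in O_3^{(p)}$ with $\ell=\ell_y$,'' citing Theorems~\ref{thm:linep>2} and \ref{thm:linep=2}. Those theorems parametrize lines by their intersections with the divisor $D$ and contain no such concurrency statement; worse, the concurrency \emph{is} the assertion to be proved: by the incidence correspondence, two distinct lines $M_1,M_2\subset Y$ meet exactly when the secant $\langle v_{M_1},v_{M_2}\rangle$ equals $\ell_y$ for some $y\in Y$, so claiming $M_1,M_2,M_3$ concurrent is claiming $[\ell]\in Y$. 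The incidence machinery cannot supply this, because it always starts from a point of $Y$, and for an abstract trisecant no such point is yet available. (Also, ``classes in $\Sigma(Y)$ are collinear'' is not the relevant condition: the collinearity is of the Veronese images in $\P(\ker\varphi)$, not of points of $\Sigma(Y)\simeq\P^2$.)

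The paper closes exactly this gap with an idea absent from your proposal: the three points of $\ell\cap\P(N_3)$ correspond to a basis of $N_3$ diagonalizing $\varphi$, and conversely; since $\Aut_{Y}$ is the stabilizer of $[\varphi]$ in $\PGL(N_3)$ (Theorem~\ref{thm:W5description}, Proposition~\ref{prop:aut_split_W5}) and acts transitively over $\overline{k}$ on such bases up to scaling, the trisecant locus $T_0$ is a single $\Aut_Y$-orbit, hence irreducible and homogeneous. Once a single point of $T_0$ is known to lie in $Y$---the paper checks this explicitly at $(0,-4,0,0,0,1,0)$, resp.\ $(1,0,0,1,0,0,1)$, and your (ii) would do just as well---orbits being equal or disjoint forces $T_0=O_3^{(p)}$, which gives (i) and the theorem. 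You would need to add this transitivity argument (or an equivalent irreducibility statement for $T_0$) for your step (i) to go through. Finally, your worry that in characteristic two ``the squares span only a plane and the Veronese surface becomes Frobenius-twisted'' is unfounded: the Veronese here sends a point $[p]$ to the rank-one \emph{symmetric bilinear form} $S^2p$, and such forms span all of $(S^2N_3)^\vee$ in every characteristic (cf.\ the explicit formula in Remark~\ref{rem:projection_veronese}); the degeneration you describe occurs for quadratic forms, which is precisely the distinction the paper is careful to maintain.
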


\begin{proof}
Let $Y$ be the $V_5$-scheme corresponding to $\varphi$ as in Theorem~\ref{thm:W5description}.
Then $Y$ is a reduced (since $B$ is reduced), and is also contained in $\Gr(\ker \varphi,2)$.
Thus, it is enough to see that $Y$ is the closure of \eqref{eq:trisecant} as a set.
 By taking base change over geometric points, we are reduced to proving this for the symmetric bilinear form $\varphi_{\spl}$ over an algebraically closed field $k$.
 Note that a point in \eqref{eq:trisecant}, or the triple of intersection points $\P(L) \cap \P(N_3)$, corresponds to a basis of $N_3$ such that the form $\varphi_{\spl}$ is diagonalized with respect to it.
 Since any two such bases are translated to each other by the group as in Proposition~\ref{prop:aut_split_W5},
 we see that \eqref{eq:trisecant} is irreducible and homogeneous.
Therefore, it is enough to show, for a general point $p \in Y_{\spl}$, the line corresponding to $p$ is in  \eqref{eq:trisecant}.
This has been essentially done in the proof of Propositions~\ref{prop:numberoflinep>2} and \ref{prop:numberoflinep=2}.
The details are as follows.

Assume that the characteristic of $k$ is not $2$.
Take a point $(0,-4,0,0,0,1,0) \in Y_{\spl}$.
This is the point in $\Gr(U_5^\vee,2)$ corresponding to the two-dimensional subspace in $U_5 = k^5$ spanned by vectors
\[
\begin{pmatrix}
 0\\0\\2\\0\\0
\end{pmatrix},
\begin{pmatrix}
 4\\0\\0\\0\\1
\end{pmatrix}.
\]
This defines a line $\{ (4s,0,2t,0,s)\} \subset \P(U_5^\vee)$.
By Remark~\ref{rem:projection_veronese}, we can show that this line meets the projected Veronese surface with three points
\[
(0,0,1,0,0), \quad (4,0,-2,0,1), \quad (4,0,2,0,1).
\]
(These points correspond to the three lines $l_{(0,0,1)}$, $l_{(2,\mp 1,0)}$.)

Assume that the characteristic of $k$ is $2$, and take a point $(1,0,0,1,0,0,1) \in Y_{\spl}$.
This is the point in $\Gr(U_5^\vee,2)$ corresponding to the two-dimensional subspace spanned by
\[
\begin{pmatrix}
 1\\0\\0\\1\\0
\end{pmatrix},
\begin{pmatrix}
 0\\1\\0\\0\\1
\end{pmatrix}.
\]
This defines a line $\{ (s,t,0,s,t)\} \subset \P(U_5^\vee)$.
By Remark~\ref{rem:projection_veronese}, we can show that this line intersects with the projected Veronese surface with three points
\[
(1,1,0,1,1), \quad (\omega^2,\omega,0,\omega^2,\omega), \quad (\omega,\omega^2,0,\omega,\omega^2).
\]
(These points correspond to the three lines $l_{(1,1,1)}$, $l_{(\omega,\omega^2,1)}$, $l_{(\omega^2,\omega,1)}$.)
\end{proof}

\begin{rem}
Here we briefly describe the relation between the trisecant lines to $\P^2 \subset \P^4$, and the orbit decomposition of $Y$.
\begin{enumerate}
 \item Suppose that the characteristic of $k$ is not $2$.
 Then $\P(N_3)$ is decomposed into two orbits; the conic $C$ defined by $\varphi$, and the open orbit.
\begin{enumerate}
 \item A general point $P \in O_3^{(p)}$ corresponds to a trisecant line $l$ such that the intersection $l \cap \P(N_3)$ consists of three points in the open orbit.
\item A point $P \in O_2^{(p)}$ corresponds to a trisecant line $l$ such that the intersection $l \cap \P(N_3)$ consists of a double point whose support is contained in $C$, and  a single point in the open orbit.
\item A point $P \in O_1^{(p)}$ corresponds to a trisecant line $l$ such that the intersection $l \cap \P(N_3)$ consists of a triple point whose support is contained in $C$.
\end{enumerate}
 \item Suppose that the characteristic of $k$ is $2$.
 Note that $\P(N_3)$ is decomposed into three orbits; the point $\{(0,0,1)\}$, the line $\{z=0\}$, and the open orbit.
\begin{enumerate}
 \item A general point $P \in O_3^{(2)}$ corresponds to a trisecant line $l$ such that the intersection $l \cap \P(N_3)$ consists of three points contained in the open orbit.
\item A point $P \in O_2^{(2)}$ corresponds to a trisecant line $l$ such that the intersection $l \cap \P(N_3)$ consists of a double point whose support is contained in the line $\{z=0\}$, and a single point not contained in the line $\{z=0\}$.
\item A point $P \in O_1^{(2)}$ corresponds to a trisecant line $l$ such that the intersection $l \cap \P(N_3)$ consists of a triple point whose support is contained in the line $\{z=0\}$.
\item A point $P \in O_1^{(2)'}$ corresponds to a trisecant line $l$ such that the intersection $l \cap \P(N_3)$ consists of a double point whose support is the point $(0,0,1)$, and a single point in the line $\{z=0\}$.
\end{enumerate}
\end{enumerate}
\end{rem}

\section{Arithmetic applications}
\label{section:arithmetic}

In this section, we give several arithmetic applications of our classification theorem.

\subsection{Good reduction of $V_5$-varieties}

First, we prove a useful proposition that guarantees the irrelevance of completion when using the term “good reduction” for the varieties of interest in this paper.
Note that the second Betti number $b_2(Y)$ is one for a $V_5$-variety $Y$ (since $Y_{\spl}$ lifts to characteristic zero).

\begin{prop}
\label{prop:grcompleteFanobetti1}
Let $R$ be an excellent discrete valuation ring, $K$ the fraction field of $R$, and $k$ the residue field of $R$. 
We denote the completion of $R$ by $\widehat{R}$ and the fraction field of $\widehat{R}$ by $\widehat{K}$.
Let $X$ be a (smooth) Fano variety over $K$ with the second Betti number $b_2 (X) =1$ (e.g., a $V_5$-variety).
Suppose that there exists a smooth projective scheme $\widehat{\mathcal{X}}$ over $\widehat{R}$ such that $\widehat{\mathcal{X}}_{\widehat{K}} \simeq X_{\widehat{K}}$.
Then there exists a unique smooth projective scheme $\mathcal{X}$ over $R$ with $\mathcal{X}_K \simeq X$ such that $\mathcal{X}_{\widehat{R}}$ is isomorphic to $\widehat{\mathcal{X}}$.
Moreover, the special fiber $\mathcal{X}_k = \widehat{\mathcal{X}}_k$ is a smooth Fano variety over $k$.
\end{prop}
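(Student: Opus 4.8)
The plan is to realize both $X$ and $\widehat{\mathcal X}$ inside a common projective space over $R$ by gluing their canonical pluri-anticanonical lattices of global sections, and then to define $\mathcal X$ as a schematic closure; the hypothesis $b_2(X)=1$ is used to make the anticanonical polarization intrinsic and of Picard rank one, which both supplies a canonical embedding and forces uniqueness. As a first step I would show directly that $\widehat{\mathcal X}_k$ is Fano. Since $\widehat{\mathcal X}\to\Spec\widehat R$ is smooth and projective, the top self-intersection $(-K)^{\dim}$ is constant on fibers and hence positive on $\widehat{\mathcal X}_k$; by smooth proper base change $b_2(\widehat{\mathcal X}_k)=b_2(X_{\widehat K})=b_2(X)=1$, so the N\'eron--Severi group of $\widehat{\mathcal X}_k$ has rank one; and $-K_{\widehat{\mathcal X}_k}$ is effective by upper semicontinuity of $h^0$ applied to $\omega^{-1}_{\widehat{\mathcal X}/\widehat R}$. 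An effective class of positive top self-intersection on a rank-one N\'eron--Severi lattice lies in the ample cone, so $\widehat{\mathcal X}_k$ is Fano. Consequently $\omega^{-1}_{\widehat{\mathcal X}/\widehat R}$ is relatively ample, and I fix $m\gg 0$ so that $\omega^{-m}$ is relatively very ample with vanishing higher cohomology on every fiber.

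Next I would carry out the lattice gluing. By cohomology and base change, $\widehat N:=H^0(\widehat{\mathcal X},\omega^{-m}_{\widehat{\mathcal X}/\widehat R})$ is a finite free $\widehat R$-module whose formation commutes with base change, while $V:=H^0(X,\omega^{-m}_{X/K})$ is a finite-dimensional $K$-vector space. Because $\omega$ is intrinsic, the given isomorphism over $\widehat K$ identifies $\widehat N\otimes_{\widehat R}\widehat K$ with $V\otimes_K\widehat K$ canonically; inside this common $\widehat K$-space I set $M:=V\cap\widehat N$. Using $R=K\cap\widehat R$ inside $\widehat K$ (valid for a DVR and its completion), one checks that $M$ is a finite free $R$-module with $M\otimes_R K=V$ and $M\otimes_R\widehat R=\widehat N$. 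A choice of $R$-basis of $M$ then yields compatible closed embeddings $X\hookrightarrow\P(M)_K=\P^N_K$ and $\widehat{\mathcal X}\hookrightarrow\P(M)_{\widehat R}=\P^N_{\widehat R}$ that agree over $\widehat K$.

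With this in hand I would define $\mathcal X\subset\P^N_R$ to be the schematic closure of $X$; it is flat and projective over $R$ with $\mathcal X_K=X$. Since schematic closure commutes with the flat base change $R\to\widehat R$, and since $\widehat{\mathcal X}$, being $\widehat R$-flat, equals the schematic closure of its own generic fiber $X_{\widehat K}$, one obtains $\mathcal X_{\widehat R}=\widehat{\mathcal X}$. Smoothness and properness of $\mathcal X\to\Spec R$ then descend from those of $\widehat{\mathcal X}\to\Spec\widehat R$ by faithfully flat descent along $R\to\widehat R$, and $\mathcal X_k=\widehat{\mathcal X}_k$ because the residue fields agree; in particular $\mathcal X_k$ is the Fano variety found above. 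Excellence of $R$ enters here to guarantee that $\widehat R$ is a DVR and that $X_{\widehat K}$ remains a Fano variety, so that the hypotheses transfer between the two bases.

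Finally, for uniqueness, any smooth projective model $\mathcal X'/R$ with $\mathcal X'_K\simeq X$ and $\mathcal X'_{\widehat R}\simeq\widehat{\mathcal X}$ has, by cohomology and base change, $H^0(\mathcal X',\omega^{-m})$ equal to a finite free $R$-lattice in $V$ whose completion is $\widehat N$; uniqueness of such a lattice in the DVR forces it to be $M$, so $\mathcal X'$ is anticanonically the same schematic closure as $\mathcal X$, whence $\mathcal X'\simeq\mathcal X$. The main technical obstacle I anticipate lies in the lattice step in positive and mixed characteristic: securing a single $m$ making $\omega^{-m}$ relatively very ample with vanishing higher cohomology on all fibers, so that cohomology and base change yields the clean lattice picture, since Kodaira-type vanishing is unavailable. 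Uniform Serre vanishing over the Noetherian base $\Spec\widehat R$ suffices in general, and for $V_5$-schemes the explicit vanishing in Theorem~\ref{theorem:W5_over_closed_field} can be invoked directly.
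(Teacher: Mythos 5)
Your proof is correct in substance, but it takes a genuinely different route from the paper's. Both arguments begin the same way: smooth proper base change gives $b_2(\widehat{\mathcal{X}}_k)=b_2(X_{\widehat{K}})=b_2(X)=1$, and the rank-one N\'eron--Severi group together with positivity of the top self-intersection of $-K$ forces $-K_{\widehat{\mathcal{X}}_k}$ to be ample. From there, the paper simply invokes \cite[Proposition 3.12]{Achinger-Youcis}, a Beauville--Laszlo-type gluing/algebraization theorem, to obtain a unique smooth proper \emph{algebraic space} $\mathcal{X}$ over $R$ with the prescribed generic fiber and completion, and then upgrades it to a projective scheme by noting that $\omega_{\mathcal{X}/R}^{-1}$ is ample on both fibers, hence relatively ample. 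You instead bypass the algebraic-space machinery: you use the polarization from the outset, descend the anticanonical section lattice via $M=V\cap\widehat{N}$ (valid for a DVR, since $R=K\cap\widehat{R}$ and lattice descent along $R\to\widehat{R}$ holds), and realize $\mathcal{X}$ as a schematic closure in $\P^N_R$, with compatibility of scheme-theoretic image and flat base change doing the gluing. Your route is self-contained and makes uniqueness concrete (uniqueness of the lattice); the paper's route is shorter and applies before any polarization is chosen, at the cost of citing the descent result for algebraic spaces.

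Two points in your write-up need minor care. First, in the Fano-ness argument, effectivity should be extracted from $\omega^{-m}$ for $m\gg 0$ rather than from $\omega^{-1}$: semicontinuity gives $h^0(\omega_{\widehat{\mathcal{X}}_k}^{-m})\geq h^0(\omega_{X_{\widehat{K}}}^{-m})$, and the right-hand side is positive for large $m$ because $-K_X$ is ample, whereas nonvanishing of $h^0(-K)$ itself for Fano varieties in positive characteristic is not something you may assume; the argument is otherwise unchanged. Second, in the uniqueness step, your claim that $H^0(\mathcal{X}',\omega^{-m})$ completes to $\widehat{N}$ \emph{inside} $V\otimes_K\widehat{K}$ tacitly assumes that the isomorphisms $\mathcal{X}'_K\simeq X$ and $\mathcal{X}'_{\widehat{R}}\simeq\widehat{\mathcal{X}}$ are compatible with the given identification over $\widehat{K}$; with purely abstract isomorphisms the lattice you obtain is $\theta^*\widehat{N}$ for some automorphism $\theta$ of $X_{\widehat{K}}$, and uniqueness can genuinely fail when $\theta$ does not factor through automorphisms defined over $K$ or over $\widehat{R}$. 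This compatible sense of uniqueness is exactly what the gluing theorem cited in the paper delivers, so your argument is fine once that compatibility is stated explicitly.
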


\begin{proof}
Since we have $b_2 (\widehat{\mathcal{X}}_k) = b_2 (X_{\widehat{K}})=b_2 (X) =1$, the divisor $-K_{\widehat{\mathcal{X}}_k}$ is ample, i.e.,\ $\widehat{\mathcal{X}}_k$ is a smooth Fano variety.
Therefore, it is sufficient to show the statements other than the last one.
By \cite[Proposition 3.12]{Achinger-Youcis}, there exists a unique smooth proper algebraic space $\pi \colon \mathcal{X} \rightarrow R$ satisfying the conditions in the statement. 
It suffices to show that $\mathcal{X}$ is a smooth projective scheme over $R$.
Let $\omega_{\mathcal{X}/R}$ be $\wedge^n\Omega_{\mathcal{X}/R}$, which is a locally free sheaf on $\mathcal{X}$.
Then $\omega_{\mathcal{X}/R}^{-1}$ is a locally free sheaf that is ample on the generic and the special fiber.
Therefore, $\mathcal{X}$ is a scheme and $\omega_{\mathcal{X}/R}^{-1}$ is ample over $R$. It finishes the proof.
\end{proof}

Next, we define the notion of good reduction for $V_5$-varieties.

\begin{defn}
\label{defn:W5goodreduction}
Let $R$ be a discrete valuation ring, and $K$ the fraction field.
Let $Y$ be a $V_5$-variety over $K$.
We say \emph{$Y$ admits good reduction at $R$} when
there exists a $V_5$-scheme $\mathcal{Y}$ over $R$ such that $\mathcal{Y}_K \simeq Y$.
When $Y_{\Frac \widehat{R}}$ admits good reduction at $\widehat{R}$, we say \emph{$Y$ admits good reduction at $\widehat{R}$} simply.
\end{defn}

\begin{rem}
\label{rem:W5goodreduction}
Let $R, K$ be as in Definition \ref{defn:W5goodreduction} and $k$ the residue field of $R$.
Let $Y$ be a $V_5$-variety over $K$.
Suppose that there exists a smooth projective scheme $\mathcal{Y}$ over $R$ with $\mathcal{Y}_K \simeq Y$.
Then $\mathcal{Y}$ is a $V_5$-scheme over $R$ and hence $Y$ admits good reduction at $R$.
Indeed, by the smooth proper base change theorems, $b_2 (\mathcal{Y}_{k})=1$ and hence $\mathcal{Y}_{k}$ is a Fano variety of Picard rank 1 whose index is divisible by $2$.
Since $(-K_{\mathcal{Y}_{k}})^3 =40$, the index is equal to 2. Therefore, $\mathcal{Y}$ is a $V_5$-scheme over $R$. 
\end{rem}

We study the good reduction of $V_5$-varieties over $p$-adic fields.
When $p$ is odd, $V_5$-varieties share similar properties with Brauer--Severi varieties (e.g.\ conics),
but they behave differently when $p = 2$;
it turns out that every $V_5$-variety over a $2$-adic field has good reduction.
More precisely, we prove the following results.

\begin{prop}
\label{prop:goodreductioncriteriaW5odd}
Let $p$ be a prime number.
Let $K$ be a $p$-adic field (i.e.,\ a finite extension of $\Q_p$), and $\cO_K$ the integer ring of $K$.
\begin{enumerate}
\item There are exactly two $K$-isomorphism classes of $V_5$-varieties over $K$.
\item Assume $p \neq 2$. Then a $V_5$-variety $Y$ over $K$ admits good reduction at $\cO_K$
if and only if $Y$ is a split $V_5$-variety (i.e.,\ $Y \simeq Y_{\spl, K})$.
\item Assume $p = 2$. Then every $V_5$-variety $Y$ over $K$ admits good reduction at $\cO_K$.
\end{enumerate}
\end{prop}

\begin{proof}
(1)
By Theorem \ref{thm:W5description}, the assertion follows from the fact that
there is a unique $K$-similarity class of non-split ternary symmetric bilinear forms over $K$
(see \cite[Theorem 12.3.4]{Voight21}).

(2)
Since the split $V_5$-variety $Y_{\spl,K}$ is defined over $\Z$,
it admits good reduction at $R$.
Conversely, assume that $Y$ admits good reduction at $R$.
There exists a $V_5$-scheme $\mathcal{Y}$ over $\cO_K$ with $\mathcal{Y}_K \simeq Y$.
Let $s$ be the non-degenerate symmetric bilinear form on $\cO_K$ corresponding to $\mathcal{Y}$ by Theorem \ref{thm:W5description}.
Since $s$ is split (see \cite[Proposition 12.3.8]{Voight21}).
we see that $\mathcal{Y}$ is a split $V_5$-scheme, and so is $Y$.

(3)
It is enough to show that every non-split $V_5$-variety admits good reduction at $\cO_K$.

Recall that the Hilbert symbol gives a non-degenerate symmetric bilinear form
\[
  (,)_K \colon K^{\times}/K^{\times 2} \times K^{\times}/K^{\times 2} \to \{ \pm 1 \}.
\]
By \cite[Chapter II, Proposition 5.7]{Neukirch:Algebraicnumbertheory},
we have an isomorphism
\[ K^{\times} \, \simeq \, \Z \oplus \Z/(q-1)\Z \oplus \Z/2^r\Z \oplus \Z_2^{\oplus [K : \Q_2]}, \]
where $q$ denotes the number of elements in the residue field of $K$.
Here $\Z/(q-1)\Z \oplus \Z/2^r\Z$ is isomorphic to the group of torsion elements in $\cO_K^{\times}$.
Since $\{ \pm 1 \} \subset \cO_K^{\times}$, we have $r \geq 1$.
From this, we see that $K^{\times}/K^{\times 2}$ is a $([K : \Q_2] + 2)$-dimensional vector space over $\F_2$,
and $\cO_K^{\times}/\cO_K^{\times 2}$ is a codimension $1$ subspace.
Since the Hilbert symbol $(,)_K$ is non-degenerate and $\dim_{\F_2} ([K : \Q_2] + 2) \geq 3$,
the restriction of $(,)_K$ to $\cO_K^{\times}/\cO_K^{\times 2}$ is non-trivial.
Therefore, there exist $a,b \in \cO^{\times}$ such that $(a,b)_K = -1$.

Let $s$ be the non-degenerate ternary symmetric bilinear form over $K$
defined by the matrix
$\begin{pmatrix}a&0&0 \\ 0& b&0 \\ 0& 0& -1 \end{pmatrix}$.
Since $(a,b)_K = -1$,
the equation $a x^2 + b y^2 = 1$ has no non-trivial solution in $K$.
Hence $s$ is not split over $K$.

Let $Y$ be a non-split $V_5$-variety over $K$.
By (1), $Y$ is isomorphic to the $V_5$-variety over $K$ corresponding to $s$ by Theorem \ref{thm:W5description}.
Since $a,b \in \cO^{\times}$, the symmetric bilinear form $s$ over $K$ extends to
a non-degenerate ternary symmetric bilinear form over $\cO_K$.
Therefore, by Theorem \ref{thm:W5description},
we obtain a $V_5$-scheme $\mathcal{Y}$ over $\cO_K$ with $\mathcal{Y}_K \simeq Y$ as desired.
\end{proof}

\begin{rem}
In the proof of (3), we use the fact that the restriction of $(,)_K$ to $\cO_K^{\times}/\cO_K^{\times 2}$
is non-trivial for a $2$-adic field $K$.
On the other hand, when $p$ is odd,
the restriction of $(,)_K$ to $\cO_K^{\times}/\cO_K^{\times 2}$
is trivial for any $p$-adic field $K$
(see \cite[(12.4.9)]{Voight21}).
\end{rem}

\subsection{Explicit Shafarevich conjecture for $V_5$-varieties}

Recall that a \emph{Brauer--Severi variety} (e.g.\ a conic) over a number field $F$
admits good reduction at a finite place $\p$ if and only if
the associated central simple algebra is split at $\p$.
It is well-known that this leads to the explicit version of the Shafarevich conjecture for Brauer--Severi varieties.
In this section, we discuss analogous results for $V_5$-varieties.

Let $F$ be a finite field, $\cO_F$ the ring of integers of $F$.
For a finite place $\p$ of $F$, $\cO_{F,\p}$ denotes the localization at $\p$.
We say a $V_5$-variety over $F$ \emph{admits good reduction at $\p$} if it admits good reduction
at $\cO_{F,\p}$ (see Definition \ref{defn:W5goodreduction}).
By Proposition \ref{defn:W5goodreduction},
this property does not change if we replace $K$ by the completion $K_{\p}$,
i.e.,\ $Y$ admits good reduction at $\p$ if and only if
$Y_{K_{\p}}$ admits good reduction at
the completion $\widehat{\cO}_{F,\p}$ of $\cO_{F,\p}$.

For a number field $F$ and a finite set $S$ of finite places of $F$,
the set of $F$-isomorphism classes of $V_5$-varieties over $F$ which admit good reduction at every finite place outside $S$ is written as
\[
    \Shaf_{F,S} \coloneqq \left\{
    \, Y \left|
    \begin{array}{l}
      \text{\rm $Y$ is a $V_5$-variety over $F$ that admits} \\
      \text{\rm good reduction at $\p$ for every  $\p \notin S$}
    \end{array}
    \right\}\right.
    /\simeq_F.
\]
Note that we do not fix an integral model in the definition of this set.
Javanpeykar--Loughran proved that $\Shaf_{F,S}$ is a finite set,
but they did not give calculate $\# \Shaf_{F,S}$ explicitly
(see \cite[Proposition 4.10]{Javanpeykar-Loughran:GoodReductionFano}).
As an application of our results in this paper,
we calculate the cardinality $\# \Shaf_{F,S}$ explicitly.

\begin{thm}
\label{thm:explicitShafarevichconjecture}
Let $F$ be a number field, and $S$ a finite set of finite places of $F$.
Let $T$ be the set of finite places of $F$ dividing $2$, and $U$ the set of real places of $F$.
We put $r \coloneqq  \#(S \cup T \cup U)$ (the union is taken within the set of all places of $F$).
Then we have $\# \Shaf_{F,S} = 2^{r-1}$.
\end{thm}

\begin{proof}
We put $G_{F} := \Gal (\overline{F}/F)$.
Let $Y \in \Shaf$ and 
\[
\alpha_Y \in H^1 (G_F, \PGL_{2,F}(\overline{F})) \simeq \Br (F)[2]
\]
the corresponding class.
Recall that we have the exact sequence
\begin{equation}
\label{eqn:Brauer}
0 \rightarrow \Br (F) \rightarrow  \prod_{v : \text{place of} \ F} \Br (F_{v}) \rightarrow \Q/\Z \rightarrow 0,
\end{equation}
where $v$ varies in all the places of $F$.
We denote the image of $\alpha_Y$ in the middle term by $(\alpha_{Y,v})_{v}$.
Note that for a non-Archimedean place $v$, $\alpha_{Y,v}=0$ if and only if $Y_{F_v}$ is split.
Hence for any non-Archimedean place $v$ with $v \notin S \cup T$, we have $\alpha_{Y,v} =0$ by Proposition \ref{prop:goodreductioncriteriaW5odd}.
Hence, we obtain an injection
\begin{equation*}
\begin{aligned}
\Shaf &\rightarrow
\left.\left
\{  
(\alpha_v)_v \in
\prod_{v \in S \cup T \cup U} (\frac{1}{2}\Z/\Z) \, 
\right| \, \sum \alpha_v  =0
\right\},\\
&Y \mapsto (\alpha_{Y,v})_v,
\end{aligned}
\end{equation*}
by (\ref{eqn:Brauer}).
Note that the cardinality of the right-hand side is equal to $2^{r-1}$.
Hence, it suffices to show the surjectivity.
For any element $(\alpha_v)_v$ of the right-hand side, by the exact sequence (\ref{eqn:Brauer}), there exists a $V_5$-variety $Y'$ over $F$ whose corresponding class $\alpha_{Y'}$ satisfying 
\[
\alpha_{Y',v} = 
\begin{cases}
\alpha_{v} & \textup{if $v \in S \cup T \cup U$}, \\
0 & \textup{otherwise}.
\end{cases}
\]
Then $Y' \in \Shaf$ by Propositions \ref{prop:goodreductioncriteriaW5odd}. 
It finishes the proof.
\end{proof}

\subsection{Brauer--Severi curves}

As a corollary of Theorem \ref{thm:W5description}, we can classify $V_5$-varieties over a field of characteristic different from two as follows.

\begin{prop}
\label{prop:field}
Let $k$ be a field of characteristic different from two.
Then there are natural bijections between the following sets:
\begin{enumerate}
\item The set of $k$-isomorphism classes of $V_5$-varieties over $k$.

\item The set of $k$-similarity classes of non-degenerate ternary symmetric bilinear forms over $k$.

\item The set of $k$-similarity classes of non-degenerate ternary quadratic forms over $k$.

\item The set of $k$-isomorphism classes of smooth conics over $k$.

\item The set of $k$-isomorphism classes of quaternion algebras over $k$.
\end{enumerate}
\end{prop}

\begin{proof}
Since $2$ is invertible in $k$,
the existence of natural bijections between (2), (3), (4) is well-known.
A bijection between (3) and (5) is given by the even Clifford algebra and the reduced norm
(see \cite[Theorem 5.1.1]{Voight21} for example).
Hence the assertion follows from Theorem \ref{thm:W5description}.
\end{proof}

The above result can be generalized to arbitrary schemes over $\Z[1/2]$.

\begin{defn}
Let $B$ be a scheme, and $f \colon C \rightarrow B$ a smooth projective morphism.
We say $f$ is a \emph{Brauer-Severi curve} over $B$ when any geometric fiber over $s$ is isomorphic to $\P^1_{k(s)}$.
\end{defn}

\begin{prop}
\label{prop:W5vsP1}
Let $B$ be a scheme with $2 \in \cO_{B}^{\times}$.
Then there is a natural bijection between
\[
\{\textup{Brauer--Severi curves over $B$}
\}/\simeq_B
\]
and
\[
\{\textup{$V_5$-schemes over $B$}
\}/\simeq_B.
\]
Under the correspondence, $\P^1_B$ corresponds to the split $V_5$-scheme $Y \subset \P^6_B$ over $B$.
\end{prop}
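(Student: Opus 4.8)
The plan is to reduce everything to the classification Theorem~\ref{thm:W5description} and then, under the hypothesis $2\in\cO_B^\times$, to recognize the class of non-degenerate ternary symmetric bilinear forms over $B$ as the class of smooth relative conics, i.e.\ of Brauer--Severi curves. By Theorem~\ref{thm:W5description} the set $\{V_5\text{-schemes over }B\}/\simeq_B$ is naturally identified with $\sS'$, the set of non-degenerate ternary symmetric bilinear forms $(N_3,L,\varphi)$ up to $B$-similarity. Since $2$ is invertible, $\varphi$ is equivalent to its associated quadratic form $q(v)=\varphi(v,v)$, with non-degeneracy preserved; thus it suffices to produce a natural bijection between $\sS'$ and isomorphism classes of Brauer--Severi curves, matching $\varphi_{\spl}$ with $\P^1_B$.

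First I would define the forward map $(N_3,L,\varphi)\mapsto C_\varphi$, where $C_\varphi\subset\P(N_3)$ is the relative conic defined by $q$, the same conic that appears in Proposition~\ref{prop:aut_split_W5} and in the proof of Proposition~\ref{prop:goodreductioncriteriaW5odd}. On each geometric fibre, non-degeneracy of $q$ with $2$ invertible makes $C_\varphi$ a smooth plane conic, hence $\simeq\P^1$; so $C_\varphi\to B$ is smooth and proper with $\P^1$-fibres, i.e.\ a Brauer--Severi curve. To check it descends to similarity classes, observe that an isomorphism $f\colon N_{3,1}\simeq N_{3,2}\otimes M$ induces the canonical isomorphism $\P(N_{3,1})\simeq\P(N_{3,2})$, under which the compatibility $g\circ\varphi_1=\varphi_{2,M}\circ S^2f$ identifies the two vanishing loci; hence $C_{\varphi_1}\simeq C_{\varphi_2}$ and the map is well defined on $\sS'$.

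Next I would construct the inverse via the relative anticanonical embedding. Given a Brauer--Severi curve $g\colon C\to B$, the sheaf $\omega_{C/B}^{-1}$ is relatively very ample of fibrewise degree $2$, and by cohomology and base change $g_*\omega_{C/B}^{-1}$ is locally free of rank $3$ with formation commuting with base change; I would define $N_3$ so that $\P(N_3)$ is this ambient $\P^2$-bundle. The anticanonical morphism embeds $C$ as a conic in $\P(N_3)$, and by adjunction together with the fibrewise vanishing $H^0(\cO(-1))=H^1(\cO(-1))=0$ one checks that $g_*\omega_{C/B}^{-1}$ recovers the linear forms on $\P(N_3)$, so the ambient bundle is intrinsic to $C$. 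The ideal of $C$ is generated fibrewise by a single quadric, yielding a surjection $S^2N_3\to L$, that is the form $\varphi$, well defined up to a line-bundle twist—precisely the similarity relation. I expect this backward direction, and the bookkeeping of the twists $L$ and $M$ showing the two maps are mutually inverse up to (and only up to) $B$-similarity, to be the main obstacle; the essential content is that a smooth conic determines, and is determined by, its defining quadratic form modulo scaling.

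Finally, bijectivity can be confirmed cohomologically as a cross-check: both sides are forms of their split models (for $V_5$-schemes this follows from the classification, since non-degenerate symmetric bilinear forms with $2$ invertible are \'etale-locally split), hence classified in the fppf topology by $H^1$ of the relevant automorphism group scheme, and by Proposition~\ref{prop:aut_split_W5} one has $\Aut_{Y_{\spl}/B}\simeq\PGL_{2,B}\simeq\Aut_{\P^1_B/B}$ over any $B$ with $2\in\cO_B^\times$, the identification being induced by one and the same $\PGL_2$-action; so the two classifying sets agree. For the distinguished objects, $\varphi_{\spl}$ has associated quadratic form $z^2-2xy$, whose conic $V(z^2-2xy)\subset\P(N_3)$ carries the section $(1:0:0)$ and is therefore $\simeq\P^1_B$ (cf.\ Remark~\ref{remark:splitW5}); conversely $\P^1_B$, the split Brauer--Severi curve, embeds anticanonically as this split conic. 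Hence $\P^1_B$ corresponds to $Y_{\spl}$, and naturality of the bijection follows since every step commutes with base change.
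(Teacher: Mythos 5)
Your proposal is correct and takes essentially the same route as the paper: both identify Brauer--Severi curves with non-degenerate ternary quadratic (hence, as $2$ is invertible, symmetric bilinear) forms via the relative anticanonical embedding of the curve as a conic in a $\P^2$-bundle, with the degree-two part of its ideal sheaf recovering the form, and then conclude by Theorem~\ref{thm:W5description}. The only cosmetic differences are that the paper records the form as a locally split inclusion $L \to S^2 N_3$ (the conic's equation) rather than your surjection $S^2 N_3 \to L$ --- a dualization that is harmless once $2$ is invertible and non-degeneracy holds --- and that your fppf-cohomological cross-check via $\Aut_{Y_{\spl}/B}\simeq \PGL_{2,B}$ is a valid but unnecessary supplement.
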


\begin{proof}
For a Brauer--Severi curve $f \colon C\rightarrow B$,
$-K_{C/B}$ is a very ample line bundle, and we have an embedding
\[
C \hookrightarrow \P_B( f_\ast (\cO(-K_{C/B}))).
\]
Let $\mathcal{I}_{C}$ be the ideal sheaf of $C$ via the embedding, and $\pi \colon \P_B( f_\ast (\cO(-K_{C/B}))) \rightarrow B$ the natural projection.
Then 
\[
\pi_{\ast} (\mathcal{I}_{C}(2)) \subset \pi_{\ast} \cO(2) \simeq S^2 (f_{\ast} \cO(-K_{C/B}))
\]
defines a ternary quadratic form over $B$.
Here, a \emph{ternary quadratic form} over $B$ is, by definition, a triple $(N_3,L,q)$ of a locally free sheaf $N_3$ of rank $3$, a line bundle $L$, and a locally direct summand morphism $q \colon L \to S^2N_3$.
Since $C$ is a relative conic in $\P_B( f_\ast (\cO(-K_{C/B})))$, the quadratic form is non-degenerate, i.e.,\ the associated symmetric bilinear form is non-degenerate.
This correspondence induces a bijection between
\[
\{\textup{Brauer--Severi curves over $B$}
\}/\simeq_B
\]
and
\[
\{\textup{non-degenerate ternary quadratic forms over $B$}
\}/\textup{$B$-similar}.
\]
Since $2 \in \cO_{B}^{\times}$, we obtain the desired bijection by Theorem \ref{thm:W5description}.
\end{proof}

\subsection{Finiteness of integral models}
 
The following lemma refines \cite[Lemma 3.15]{Javanpeykar-Loughran:GoodReductionFano}.

\begin{lem}
\label{lem:fppflocallysplit}
Let $B$ be a reduced scheme, and $\mathcal{Y}$ a $V_5$-scheme over $B$.
Then $\mathcal{Y}$ is split fppf locally, i.e.,\ there exists a flat surjective locally of finite presentation morphism $B' \rightarrow B$ such that $\mathcal{Y}_{B'}$ is a split $V_5$-scheme over $B'$.
\end{lem}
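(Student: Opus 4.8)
The plan is to use Theorem~\ref{thm:W5description} to turn the statement into one about symmetric bilinear forms, and then to produce the fppf cover as an explicit orbit map that diagonalizes the form. By Theorem~\ref{thm:W5description}, $\mathcal{Y}$ corresponds to a non-degenerate ternary symmetric bilinear form $(N_3,L,\varphi)$, and $\mathcal{Y}_{B'}$ is split over a base change $B'\to B$ exactly when the pulled-back form becomes similar to $(\cO_B^{\oplus 3},\cO_B,\varphi_{\spl})$. Since $N_3$ and $L$ are locally free, after passing to a Zariski-open cover of $B$ (which is already an fppf cover) I may assume $N_3=\cO_B^{\oplus 3}$ and $L=\cO_B$, so that $\varphi$ is given by a symmetric $3\times 3$ matrix $Q$ with $\det Q\in\cO_B^\times$. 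Equivalently, $\varphi$ is classified by a morphism $B\to V$, where $V\subset\mathbb{A}^6_{\Z}$ is the open subscheme of symmetric $3\times 3$ matrices of invertible determinant.

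Next I would consider the orbit morphism $\rho\colon \GL_{3,\Z}\to V$, $A\mapsto \trans{A}\,Q_{\spl}\,A$, and prove it is faithfully flat and of finite presentation. Granting this, the base change $B'\coloneqq B\times_V \GL_{3,\Z}\to B$ is again faithfully flat and of finite presentation, and over $B'$ the tautological point gives $A$ with $\trans{A}\,Q_{\spl}\,A=Q_{B'}$; hence $\varphi_{B'}$ is isometric to $\varphi_{\spl}$ and $\mathcal{Y}_{B'}$ is split, as desired. Finite presentation of $\rho$ is clear. For surjectivity, note that over any geometric point of $\Z$ every non-degenerate ternary symmetric bilinear form is isometric to $Q_{\spl}$: a $V_5$-variety over an algebraically closed field is isomorphic to $Y_{\spl}$ (Definition-Proposition~\ref{defn-prop:splitform}), so by Theorem~\ref{thm:W5description} all such forms are similar, hence (scalars being squares over an algebraically closed field) isometric. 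Thus every geometric point of $V$ lies in the image of $\rho$, so $\rho$ is surjective.

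For flatness I would invoke the fibrewise (``miracle flatness'') criterion. The scheme $\GL_{3,\Z}$ is smooth over $\Z$, hence regular and Cohen--Macaulay of dimension $10$, while $V$ is smooth over $\Z$, hence regular of dimension $7$; both are irreducible. Each non-empty geometric fibre of $\rho$ is a coset of the orthogonal group scheme $\mathrm{O}(Q_{\spl})=\{A\mid \trans{A}\,Q_{\spl}\,A=Q_{\spl}\}$, so it is equidimensional, and since $10-7=3$ it suffices to check that this fibre has dimension $3$ in every residue characteristic; the criterion then yields that $\rho$ is flat.

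The main obstacle is exactly this constancy of the fibre dimension across characteristics, the delicate case being characteristic $2$, where $\Aut_{Y_{\spl}}$ is non-reduced (Proposition~\ref{prop:aut_split_W5}). The key observation is that the group governing the fibres is $\mathrm{O}(Q_{\spl})$, which fixes the form \emph{exactly} rather than up to a scalar, so the non-reducedness---which appears only after quotienting by scalars to land in $\PGL_3$---does not intervene. A direct computation in characteristic $2$, where $\varphi_{\spl}$ is the hyperbolic plane together with $\langle 1\rangle$, shows that any isometry must fix the third coordinate and restrict to $\SL_2$ on the hyperbolic plane, giving $\mathrm{O}(Q_{\spl})_{\F_2}\cong\SL_{2,\F_2}$, which is smooth of dimension $3$; in characteristic $\neq 2$ one has the usual $\mathrm{O}_3$ of dimension $3$. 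Hence the fibre dimension is constantly $3$, $\rho$ is faithfully flat and finitely presented, and pulling back along $B\to V$ produces the required fppf cover splitting $\mathcal{Y}$. (I note that reducedness of $B$ is not in fact used in this approach.)
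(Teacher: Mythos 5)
Your proposal takes a genuinely different route from the paper's proof. The paper argues geometrically: Zariski-locally it produces a section of $\mathcal{Y}$ passing through the open orbit of every geometric fiber (using $\Sigma(\mathcal{Y}/B)\simeq\P(N_3)$ and Theorems~\ref{thm:linep>2} and \ref{thm:linep=2}), interprets that section as an orthogonal basis of $\varphi$ via the trisecant description of Theorem~\ref{thm:trisecant} --- this is exactly where reducedness of $B$ enters --- and then adjoins square roots to reach $\diag(-1,1,1)$, which an explicit change of basis carries to $\varphi_{\spl}$. You instead pull back a single universal fppf cover, the orbit map $\rho\colon\GL_{3,\Z}\to V$, $A\mapsto\trans{A}\,Q_{\spl}\,A$, proving flatness by miracle flatness and surjectivity by reducing to the uniqueness of $V_5$-varieties over algebraically closed fields (similarity plus the fact that scalars are squares gives isometry, as you say; there is no circularity, since that uniqueness is established before this lemma). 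This buys something real: your argument never uses reducedness of $B$, so it proves the statement over arbitrary base schemes, which the paper's proof does not.

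There is, however, one incorrect claim at the crucial characteristic-$2$ step, though it is repairable. Your ``key observation'' --- that the non-reducedness of $\Aut_{Y_{\spl}/\F_2}$ is an artifact of quotienting by scalars, so that $\mathrm{O}(Q_{\spl})$ is smooth with $\mathrm{O}(Q_{\spl})_{\F_2}\cong\SL_{2,\F_2}$ --- is false: the honest isometry group scheme is itself non-reduced in characteristic $2$. Writing $A=\begin{pmatrix}a&b&e_1\\ c&d&e_2\\ f_1&f_2&m\end{pmatrix}$, the diagonal entries of the equation $\trans{A}\,Q_{\spl}\,A=Q_{\spl}$ over $\F_2$ read $f_1^2=0$, $f_2^2=0$, $m^2=1$, so the coordinate ring has nonzero nilpotents ($f_1$, $f_2$, $m+1$); equivalently, $\mathrm{Lie}\,\mathrm{O}(Q_{\spl})=\{X\mid Q_{\spl}X\ \text{symmetric}\}$ has dimension $6$ while the group has dimension $3$. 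This is the same phenomenon described in Section~\ref{section:characteristic_two}, and it does not disappear before projectivizing. What saves your proof is that miracle flatness needs only the Krull dimension of the fibers, and dimension is insensitive to nilpotents: over a field of characteristic $2$ the equations above force, on the reduced subscheme, $f_1=f_2=0$ and $m=1$, whence $e_1=e_2=0$ (the relevant $2\times2$ system has unit determinant) and $ad+bc=1$, so $\mathrm{O}(Q_{\spl})_{\red}\simeq\SL_2$ and every nonempty fiber of $\rho$ has dimension exactly $3$ in all characteristics. That is precisely what your computation of field-valued isometries establishes, so the argument goes through once ``smooth, isomorphic to $\SL_{2,\F_2}$'' is replaced by ``of dimension $3$, with reduced subgroup $\SL_{2,\F_2}$.''
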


\begin{proof}
Let $(N_3, L, \varphi)$ be the non-degenerate ternary symmetric bilinear form corresponding to $\mathcal{Y}$.
By taking a Zariski localization, we may assume that $B = \Spec R$ is affine and $N_3$ and $L$ are free.
First, we show that $\varphi$ is diagonalizable Zariski locally.
Since $\Sigma (\mathcal{Y}/B)  \simeq \P (N_3)$, Zariski locally, $\mathcal{Y}$ admits an ordinary line over $B$.
Therefore, by Theorems \ref{thm:linep>2} and \ref{thm:linep=2}, we may take a section $P$ of $\mathcal{Y}$ over $B$ such that $P_{s} \in \mathcal{Y}_{s}$ is contained in an open orbit for any geometric point $s$ on $B$.
By the proof of Theorem \ref{thm:trisecant}, this section corresponds to an orthogonal basis of $\varphi$ over $B$.

By adjoining suitable square roots to $R$, we may assume that $\varphi$ is given by the diagonal matrix $A:=\diag (-1,1,1)$.
We put
\[
P :=
\begin{pmatrix}
    1 & 1 & 1\\
    0 & 1 & 1\\
    1 & 0 & 1
\end{pmatrix}.
\]
Then ${}^t\!P A P$ is equal to the matrix (\ref{eqn:splitmatrix}). It finishes the proof.
\end{proof}

\begin{rem}
In the setting of Lemma \ref{lem:fppflocallysplit}, when $2 \in \cO_{B}^{\times}$, $\mathcal{Y}$ is split \'{e}tale locally (this was proved in \cite[Lemma 3.15]{Javanpeykar-Loughran:GoodReductionFano} after inverting $N\gg0$).
This can be deduced from the proof above or, alternatively, from Proposition \ref{prop:W5vsP1}.
\end{rem}

The following is a strong form of the Shafarevich conjecture, more precisely, the finiteness of models over a fixed ring of $S$-integers, which was proved after inverting $N\gg0$ in the proof of \cite[Proposition 4.10]{Javanpeykar-Loughran:GoodReductionFano}.

\begin{thm}
\label{thm:finitenessofintegral models}
Let $F$ be a number field, and $S$ a finite set of finite places of $F$.
Then the set 
\[
\{
V_{5}\textup{-schemes over $\cO_{F,S}$}
\}
/ \simeq_{\cO_{F,S}}
\]
is a finite set.
\end{thm}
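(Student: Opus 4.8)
The plan is to translate the statement into the arithmetic of symmetric bilinear forms and then reduce it to classical finiteness theorems. By Theorem~\ref{thm:W5description}, the set in question is in natural bijection with the set of non-degenerate ternary symmetric bilinear forms $(N_3,L,\varphi)$ over $R\coloneqq \cO_{F,S}$ modulo similarity. Since $R$ is a Dedekind domain, $N_3$ and $L$ are projective of ranks $3$ and $1$, and because the ideal class group $\Pic(R)$ is finite, there are only finitely many possibilities for the underlying pair $(N_3,L)$ up to isomorphism. After fixing such a pair, it remains to bound the number of admissible $\varphi$ modulo the automorphisms of $(N_3,L)$ and the similarity twists; equivalently, the number of isomorphism classes of unimodular rank $3$ symmetric bilinear forms over $R$.

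First I would pin down the discriminant. Non-degeneracy means $\mu_\varphi\colon N_3\to N_3^\vee\otimes L$ is an isomorphism, so $\det\mu_\varphi$ trivialises $(\det N_3)^{\otimes(-2)}\otimes L^{\otimes 3}$ and, once local bases are chosen, produces a well-defined discriminant class $\disc(\varphi)\in R^\times/(R^\times)^2$. By the Dirichlet unit theorem $R^\times$ is finitely generated, hence $R^\times/(R^\times)^2$ is finite; thus $\varphi$ has only finitely many possible discriminants, and I may fix one of them throughout.

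With $(N_3,L)$ and the discriminant fixed, the remaining finiteness is genus-theoretic. The genus of $\varphi$ is recorded by its localisations at all places of $F$. At a non-dyadic finite prime $\p\notin S$ (so $R_\p=\cO_{F,\p}$ with $2$ invertible), a unimodular rank $3$ symmetric bilinear form is determined up to isomorphism by its discriminant, which is globally fixed; hence the local class is forced at all such places. Genuine freedom occurs only at the finitely many dyadic primes $\p\mid 2$ with $\p\notin S$ and at the real places in $U$, and at each of these there are only finitely many local isomorphism classes of rank $3$ forms (for the dyadic primes this uses the finiteness in the local theory of forms over dyadic rings). Consequently $\varphi$ lies in one of finitely many genera.

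The main obstacle, and the only genuinely non-formal input, is the finiteness of the number of isomorphism classes \emph{within} a single genus. This is exactly the finiteness of class numbers for the orthogonal group $O(\varphi)$ over $F$, which follows from the theorem of Borel and Harish-Chandra on arithmetic subgroups of reductive groups over number fields. Assembling the four finiteness statements — finitely many module types $(N_3,L)$, finitely many discriminants, finitely many genera, and finitely many classes in each genus — yields the theorem. As a cross-check one may instead argue through the generic fibre: any $V_5$-scheme over $\cO_{F,S}$ has generic fibre in the finite set $\Shaf$ of Theorem~\ref{thm:explicitShafarevichconjecture}, the integral model is rigid away from $2$ by Proposition~\ref{prop:goodreductioncriteriaW5odd}, and the residual ambiguity concentrated at the dyadic primes is again controlled by the same class-number finiteness.
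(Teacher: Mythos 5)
Your proof is correct, but it follows a genuinely different route from the paper's. The paper's argument is cohomological and very short: by Lemma~\ref{lem:fppflocallysplit} every $V_5$-scheme over $\cO_{F,S}$ is split fppf-locally, so the set of isomorphism classes injects into $H^1_{\mathrm{fppf}}(\cO_{F,S},\Aut_{\mathcal{Y}_0/\cO_{F,S}})$ for the split model $\mathcal{Y}_0$; since Proposition~\ref{prop:aut_split_W5} shows this automorphism group scheme is affine and of finite type, the finiteness is then a single citation to \cite{Gille-Moret-Bailly}. You instead work directly on the form side of Theorem~\ref{thm:W5description} and run the classical arithmetic of ternary symmetric bilinear forms: finiteness of $\Pic(\cO_{F,S})$ for the module types, Dirichlet's unit theorem for the discriminants, local rigidity at non-dyadic places plus local finiteness at dyadic and real places for the genera, and Borel--Harish-Chandra for the classes within a genus. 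What the paper's approach buys is brevity and the fact that the fppf topology silently absorbs the delicate feature of this problem, namely that $\Aut_{\mathcal{Y}_0/\cO_{F,S}}$ is \emph{non-smooth} (its fibers over primes dividing $2$ are non-reduced), so no local analysis at dyadic places is ever needed; note that \cite{Gille-Moret-Bailly} is invoked precisely because it covers non-smooth affine group schemes, where \'etale cohomology and the classical torsor arguments would not suffice. What your approach buys is that it is more elementary and more explicit: it exhibits the finite set through discriminants, genera, and class numbers, and is in principle effective, at the cost of needing genuine care at the dyadic places (where symmetric bilinear forms and quadratic forms diverge, and where you correctly do not claim the discriminant determines the local class) and with line-bundle-valued forms on non-free modules, which you should reduce to the lattice-theoretic setting by localizing. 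Both arguments ultimately rest on reduction theory for arithmetic groups --- yours via Borel--Harish-Chandra directly, the paper's via the cohomological finiteness theorem that descends from it.
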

\begin{proof}
Let $\mathcal{Y}_0$ be the split $V_5$-scheme over $\cO_{F,S}$.
By Lemma \ref{lem:fppflocallysplit}, the set appearing in the statement can be seen as a subset of 
\[
H^1_{\mathrm{fppf}} (\cO_{F,S}, \Aut_{\mathcal{Y}_{0}/\cO_{F,S}}).
\]
By Proposition \ref{prop:aut_split_W5}, $\Aut_{\mathcal{Y}_{0}/\cO_{F,S}}$ is affine and of finite type.
Therefore, the desired finiteness follows from \cite[Proposition 7.13]{Gille-Moret-Bailly}.
\end{proof}

\section{Remarks on the automorphism group scheme in characteristic two}
\label{section:characteristic_two}

In this section,  we describe the non-reduced automorphism group scheme of $Y_{\spl}$ in characteristic two, and give some remarks related to this special feature in characteristic two.
In the following, $k$ is a field of characteristic two.
\subsection{The group scheme}
Here we slightly change the notation from Proposition~\ref{prop:aut_split_W5} and write as
\[A=
\begin{pmatrix}
 a_{\alpha\alpha}& a_{\alpha\beta}& a_{\alpha\gamma}\\
  a_{\beta\alpha}& a_{\beta\beta}& a_{\beta\gamma}\\
   a_{\gamma\alpha}& a_{\gamma\beta}& a_{\gamma\gamma}
\end{pmatrix}
=
\begin{pmatrix}
 a&b&e_1\\
 c&d&e_2\\
 f_1&f_2&m
\end{pmatrix}.
\]
Thus $\Aut_{Y/k}$ is isomorphic to
\[
G \subset \PGL_{3,k} = \Proj (k[a,b,c,d,e_1,e_2,f_1,f_2,m]) \setminus (\det =0)
\]
defined by the equations
\[
e_1^2=0,\quad
e_2^2=0,\quad
\begin{pmatrix}
 a&b\\c&d
\end{pmatrix}
\begin{pmatrix}
 f_2\\f_1
\end{pmatrix}
=
\begin{pmatrix}
m e_1\\ m e_2
\end{pmatrix},\quad
\det \begin{pmatrix}
 a&b\\c&d
\end{pmatrix}=m^2+e_1 e_2.
\]
Since
\[
\det A  =m^3+ me_1e_2 =m(m^2+e_1e_2),
\]
we have
\[
m(m^2+e_1e_2) = m\det \begin{pmatrix}
 a&b\\c&d
\end{pmatrix} \neq 0.
\]
Thus $m\neq 0$, and hence we may normalize as $m=1$.
Then the group is written as:
\[
G=\left\{ \left.
 A=
\begin{pmatrix}
 a&b&af_2+bf_1\\
 c&d&cf_2+df_1\\
 f_1&f_2&1
\end{pmatrix} \subset \PGL_{3,k}
\, \right| \,  f_1^2=f_2^2=0, \quad 1+f_1f_2=\det \begin{pmatrix}
 a&b\\c&d
\end{pmatrix}
\right\}.
\]
By the description, we have
\begin{prop}
 The automorphism group is non-reduced, and $\dim T_{[\id]} (\Aut_Y) =5$.
 In particular, we have $h^0(T_Y)=5$ and $h^1(T_Y) =2$ by \cite[(8.13)]{Fuj90}
\end{prop}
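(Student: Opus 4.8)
The plan is to read off everything from the explicit presentation of $G = \Aut_{Y/k}$ obtained just above. On the chart $m = 1$ of $\PGL_{3,k}$, the group $G$ is the affine scheme $\Spec A$ with
\[
A = k[a,b,c,d,f_1,f_2]/(f_1^2,\ f_2^2,\ ad-bc-1-f_1f_2),
\]
the entries $e_1 = af_2 + bf_1$ and $e_2 = cf_2 + df_1$ being determined; note that $ad - bc = 1 + f_1f_2$ is automatically a unit (it is $1$ plus a nilpotent), so no further localization is needed. First I would establish non-reducedness by exhibiting an explicit nonzero nilpotent. The class $f_1 \in A$ satisfies $f_1^2 = 0$, and it is nonzero: the $k$-algebra map $A \to R[f_1]/(f_1^2)$ sending $f_2 \mapsto 0$, where $R = k[a,b,c,d]/(ad-bc-1) = \cO(\SL_{2,k})$ is a domain, is well defined and carries $f_1$ to a nonzero element of the free $R$-module $R \oplus Rf_1$. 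Hence $f_1$ is a nonzero nilpotent in $A$, so $G$ is non-reduced; this argument works over any field $k$ of characteristic two and requires no perfectness. It is consistent with $G_{\red} \simeq \SL_{2,k}$ from Proposition~\ref{prop:aut_split_W5}.

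Next I would compute the tangent space $T_{[\id]}G$ by evaluating $k[\epsilon]/(\epsilon^2)$-points above the identity $(a,b,c,d,f_1,f_2) = (1,0,0,1,0,0)$. Writing $a = 1 + \epsilon a'$, $d = 1 + \epsilon d'$, $b = \epsilon b'$, $c = \epsilon c'$, $f_i = \epsilon f_i'$ and linearizing the three relations, the two relations $f_1^2 = f_2^2 = 0$ impose no linear condition, since their differentials $2f_i\,df_i$ vanish at $f_i = 0$ (this is precisely the characteristic-two feature). The differential of $ad - bc - 1 - f_1f_2$ at the identity is $da + dd$, giving the single condition $a' + d' = 0$. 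Therefore
\[
T_{[\id]}G = \{(a',b',c',d',f_1',f_2') : a' + d' = 0\},
\]
which is $5$-dimensional. The directions with $f_1' = f_2' = 0$ recover $\mathrm{Lie}(\SL_2) = \mathfrak{sl}_2$, while $f_1'$ and $f_2'$ are the two extra infinitesimal automorphisms special to characteristic two. As a sanity check, $5 = \dim T_{[\id]}G > \dim G = \dim \SL_{2,k} = 3$, so $G$ is not smooth, in line with non-reducedness.

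Finally I would translate this into cohomology. Since $Y$ is smooth and projective, $\mathrm{Lie}(\Aut_{Y/k}) = H^0(Y, T_Y)$ canonically (valid even for the non-reduced $\Aut_{Y/k}$), so $h^0(T_Y) = \dim T_{[\id]}G = 5$. To pass to $h^1$, I would invoke \cite[(8.13)]{Fuj90}, which supplies the Euler characteristic $\chi(T_Y) = 3$ together with the higher vanishing $h^2(T_Y) = h^3(T_Y) = 0$; the value $\chi(T_Y) = 3$ can also be seen independently from deformation-invariance of $\chi$ applied to the flat family $Y_{\spl}/\Z$ (whose characteristic-zero fiber has $\Aut = \PGL_2$ and is rigid). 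Then $3 = \chi(T_Y) = h^0 - h^1 = 5 - h^1$ forces $h^1(T_Y) = 2$. The main obstacle is exactly this last step: the value $h^0 = 5$ and the non-reducedness drop straight out of the explicit presentation, but extracting $h^1$ requires the higher vanishing $h^2(T_Y) = h^3(T_Y) = 0$, which is an input from Fujita's deformation theory (or a separate vanishing argument valid in characteristic two) rather than something the group-scheme description alone provides.
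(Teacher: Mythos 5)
Your proposal is correct and takes essentially the same route as the paper: the paper's proof is precisely to read non-reducedness and $\dim T_{[\id]}(\Aut_Y)=5$ off the explicit presentation of $G$ (the relations $f_1^2=f_2^2=0$ contribute no linear conditions at the identity, leaving only $a'+d'=0$), to identify $h^0(T_Y)$ with the dimension of the tangent space of $\Aut_{Y/k}$ at the identity, and to invoke \cite[(8.13)]{Fuj90} for the passage to $h^1(T_Y)=2$. Your added details --- verifying $f_1\neq 0$ via the retraction onto $\cO(\SL_{2,k})[f_1]/(f_1^2)$, and the sanity check $\chi(T_Y)=3$ by deformation invariance in the flat family $Y_{\spl}/\Z$ --- simply flesh out the same argument.
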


Moreover, by the same argument to the proof of Proposition~\ref{prop:aut_split_W5}, we have the following:
\begin{prop}\label{prop:non-reduced_action}
The action of $G$  on $Y_{\spl} \subset \P^6$ is given by
\begin{align*}
&A=\begin{pmatrix}
 a&b&e_1\coloneqq af_2+bf_1\\
 c&d&e_2\coloneqq c f_2+df_1\\
 f_1&f_2&1
\end{pmatrix}
\mapsto \\
&\begin{pmatrix}
a^3 & 0 &a^2b &0 & ab^2 &0 &b^3\\
0&a^2&0&ab&0&b^2&0\\
a^2c&0&a^2d&0&b^2c&0&b^2d\\
0&0&0&1&0&0&0\\
ac^2&0&bc^2&0&ad^2&0&bd^2\\
0&c^2&0&cd&0&d^2&0\\
c^3&0&c^2d&0&cd^2&0&d^3\\
\end{pmatrix}
+
\begin{pmatrix}
0 & a^2e_1 &0 &abe_1 & 0 & b^2e_1 & 0\\
a^2f_1&0&a^2f_2&0&b^2f_1&0&b^2f_2\\
0&a^2e_2&0&e_1+abe_2&0&b^2e_2&0\\
0&0&0&f_1f_2&0&0&0\\
0&c^2e_1&0&cde_1+e_2&0&d^2e_1&0\\
c^2f_1&0&c^2f_2&0&d^2f_1&0&d^2f_2\\
0&c^2e_2&0&cde_2&0&d^2e_2&0
\end{pmatrix}.
\end{align*}
\end{prop}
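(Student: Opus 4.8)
The plan is to rerun the computation in the proof of Proposition~\ref{prop:aut_split_W5}(3) verbatim, but now for the full non-reduced group $G$ instead of only its reduced part $\SL_{2,k}$. One starts from the tautological action of $G \subset \PGL_{3,k}$ on $\P(N_3)=\P^2$ through the matrix $A$ displayed just above the statement. Since each $A \in G$ stabilizes $[\varphi_{\spl}] \in \P(S^2N_3)$, the induced action of $A$ on $S^2N_3$ (via $S^2A$) fixes the quotient $\varphi_{\spl}$, hence preserves $\ker\varphi_{\spl}$ and descends to an action on $U_5 \simeq (\ker\varphi_{\spl})^\vee$. First I would compute this descended action as an explicit $5\times5$ matrix on $U_5$ using the identification \eqref{eq:cokernel_q_dual}; this is precisely where the off-diagonal entries $e_1,e_2,f_1,f_2$ enter, through the images of the six quadratic monomials $\alpha^2,\alpha\beta,\dots,\gamma^2$ under $S^2A$.

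Next I would transport this $5\times5$ action to $\wedge^2 U_5^\vee$ and restrict it to $Y_{\spl}$. Concretely, the $G$-action on $U_5$ preserves $\ker\nu_{\spl}$ (by the functoriality of $\nu_{\spl}=\nu_{\varphi_{\spl}}$ in $\varphi_{\spl}$, exactly as in the proof of Proposition~\ref{prop:aut_split_W5}), so it acts on the Plücker coordinates $b_{ij}$ of $\Gr(U_5^\vee,2)\subset\P(\wedge^2 U_5^\vee)$ compatibly with the three linear relations $b_{14}=b_{23}$, $b_{15}=b_{24}$, $b_{25}=b_{34}$ cutting out $Y_{\spl}$ (Definition-Proposition~\ref{defn-prop:splitform}). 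Substituting $a_{i+j-3}=\overline{b_{ij}}$ then yields the desired $7\times7$ matrix acting on $\P^6$, which I would reduce to the stated form using the defining relations $f_1^2=f_2^2=0$, $e_1=af_2+bf_1$, $e_2=cf_2+df_1$, and $1+f_1f_2=\det\begin{pmatrix} a & b \\ c & d\end{pmatrix}$ of $G$, so that every square or higher nilpotent term is truncated.

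A useful consistency check that guides and validates the computation is that setting $e_1=e_2=f_1=f_2=0$ must recover the reduced action \eqref{eqn:sigma'} of $\SL_{2,k}$ obtained in Proposition~\ref{prop:aut_split_W5}(3); this pins down the first summand of the claimed matrix and isolates the new content in the second summand. All entries of that second summand are linear in $e_1,e_2,f_1,f_2$ except the $(4,4)$-entry $f_1f_2$, which combines with the $1$ of the first summand to give $1+f_1f_2=\det\begin{pmatrix} a & b \\ c & d\end{pmatrix}$, exactly as dictated by the last relation of $G$.

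I expect the main obstacle to be organizational rather than conceptual: the bookkeeping of pushing the nilpotent parameters through the successive functors $S^2(-)$, $(-)^\vee$, $\coker(-)$, and $\wedge^2(-)$, and then checking that, after imposing the relations of $G$, the resulting $7\times7$ matrix splits exactly as the displayed sum. Because $f_1,f_2$ are nilpotent of order two, every intermediate expression truncates to low order, so the verification is finite and can in principle be carried out by hand; as with Claim~\ref{claim:discriminant_square}, it is most safely confirmed with a computer algebra system.
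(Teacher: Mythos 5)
Your proposal takes essentially the same approach as the paper: the paper's entire proof is the remark that the result follows ``by the same argument as in the proof of Proposition~\ref{prop:aut_split_W5}'', which is precisely what you carry out—transporting the $G$-action on $\P(N_3)$ through the identification \eqref{eq:cokernel_q_dual} to an action on $U_5$ preserving $\ker\nu_{\spl}$, then to the Pl\"ucker coordinates of $\Gr(U_5^\vee,2)$ restricted to $Y_{\spl}$, and computing the resulting $7\times 7$ matrix. Your added observations (truncation via $f_1^2=f_2^2=0$, the consistency check against \eqref{eqn:sigma'} at $e_1=e_2=f_1=f_2=0$, and the $(4,4)$-entry combining to $1+f_1f_2=\det\begin{pmatrix} a & b \\ c & d\end{pmatrix}$) are correct refinements of that same computation.
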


\subsection{A characteristic non-reduced subgroup}

As we have seen in Proposition~\ref{prop:aut_split_W5}, $G_{\red}$ is isomorphic to $\SL_{2,k}$, which is given by
\[
\left\{ \left. 
 A=
\begin{pmatrix}
 a&b&0\\
 c&d&0\\
 0&0&1
\end{pmatrix}
\, \right| \,  1=\det \begin{pmatrix}
 a&b\\c&d
\end{pmatrix}
\right\}.
 \]
In the following, we write this subgroup as $\SL_{2,k} \subset G$ by abuse of notation.
The following subgroup $H$ compensates for the non-reduced part of $\Aut_{Y/k}$:
 \[ H \coloneqq
 \left\{ \left. 
\begin{pmatrix}
 1&f_1&f_2\\
 f_2&1&f_1\\
 f_1&f_2&1
\end{pmatrix}
\, \right| \,  f_1^2=f_2^2=0
\right\} \subset G.
 \]
Indeed, we have
\begin{prop}\label{prop:non-reduced_subgroup}
 $H$ contains
  \[ K \coloneqq
 \left\{ \left. 
\begin{pmatrix}
 1&f&f\\
 f&1&f\\
 f&f&1
\end{pmatrix}
\, \right| \, f^2=0
\right\}
 \]
 as a normal subgroup, and $K \simeq H/K \simeq \mu_2$.
 Moreover $\SL_{2,k} \cdot H = G$.
\end{prop}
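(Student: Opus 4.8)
The plan is to realize every element of $H$ as $I+f_1P+f_2P^2$ with $f_1^2=f_2^2=0$, where
\[
P=\begin{pmatrix}0&1&0\\0&0&1\\1&0&0\end{pmatrix}
\]
is the cyclic permutation matrix (so $I+f_1P+f_2P^2$ is exactly the matrix displayed in the definition of $H$), and every element of $K$ as $I+f(P+P^2)$ with $f^2=0$. Since $P^3=I$, these matrices all lie in the commutative $R$-algebra $R[P]/(P^3-1)$ for each $k$-algebra $R$; hence $H$ is commutative, and it is closed under multiplication and inversion because products and inverses of units $1+(\text{nilpotent})$ in $R[P]/(P^3-1)$ are again polynomials in $P$ whose normalized parameters remain square-zero. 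A comparison of representatives in $\PGL_3$ shows the parametrization by $(f_1,f_2)$ is faithful, so $H\simeq\Spec k[f_1,f_2]/(f_1^2,f_2^2)$ is a finite commutative group scheme of order $4$. Because $H$ is commutative, $K$ is automatically normal, which disposes of the normality claim.

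To identify $K$, I would use that $N:=P+P^2$ is idempotent in characteristic two: $N^2=P^2+2P^3+P^4=P^2+P=N$. Consequently $(I+fN)(I+gN)=I+(f+g+fg)N$, and the assignment $I+fN\mapsto 1+f$ sends $K(R)=\{f\in R: f^2=0\}$ bijectively to $\mu_2(R)=\{t\in R: t^2=1\}$ compatibly with the group laws. This is functorial in $R$, so it defines an isomorphism of group schemes $K\simeq\mu_2$.

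The essential point is $H/K\simeq\mu_2$, and here the naive ``eigenvalue'' map $I+f_1P+f_2P^2\mapsto 1+f_1+f_2$ is \emph{not} a homomorphism, precisely because of the rescaling required to normalize products in $\PGL_3$. Instead I would diagonalize $P$ over $k(\omega)$ (with $\omega$ a primitive cube root of unity, available since $3$ is invertible), obtaining eigenvalues $\lambda_0=1+f_1+f_2$, $\lambda_1=1+\omega f_1+\omega^2 f_2$, $\lambda_2=1+\omega^2 f_1+\omega f_2$, and take the weight-zero character
\[
\chi:=\frac{\lambda_0^2}{\lambda_1\lambda_2}.
\]
As a character of the diagonal torus of $\PGL_3$ it is a genuine homomorphism on $H$ over $k(\omega)$; a short computation gives $\lambda_0^2=1$ and $\lambda_1\lambda_2=(1+f_1)(1+f_2)$, so $\chi=(1+f_1)(1+f_2)$, which is defined over $k$ and satisfies $\chi^2=1$. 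Thus $\chi\colon H\to\mu_2$ is a homomorphism (its homomorphism property descends from $k(\omega)$ by faithfully flat descent). Setting $\chi=1$ gives $f_1+f_2+f_1f_2=0$, i.e. $f_2(1+f_1)=f_1$, which forces $f_2=f_1$; hence $\ker\chi=K$ functorially. Counting orders in $1\to K\to H\xrightarrow{\chi}\image\chi\to1$ gives $|\image\chi|=4/2=2$, so $\image\chi=\mu_2$ and $H/K=H/\ker\chi\simeq\mu_2$. I expect this step to be the main obstacle: one must avoid the tempting non-homomorphic eigenvalue map, isolate the correct Galois-invariant weight-zero character, and argue surjectivity by the order count rather than on points, since $H$ is infinitesimal.

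Finally, for $\SL_{2,k}\cdot H=G$ I would give an explicit factorization. Given $A\in G(R)$ with bottom row $(f_1,f_2,1)$, let $H_0\in H(R)$ be the element with parameters $(f_1,f_2)$ and put $S:=AH_0^{-1}$. Matching entries shows $S$ has the form $\begin{psmallmatrix}a'&b'&0\\c'&d'&0\\0&0&1\end{psmallmatrix}$ with
\[
\begin{pmatrix}a'&c'\\b'&d'\end{pmatrix}=(1+f_1f_2)\begin{pmatrix}1&f_2\\f_1&1\end{pmatrix}\begin{pmatrix}a&c\\b&d\end{pmatrix},
\]
its third column being automatically consistent because $f_1^2=f_2^2=0$. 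Taking determinants and using the defining relation $ad-bc=1+f_1f_2$ of $G$ together with $(1+f_1f_2)^2=1$ yields $\det S=1$, so $S\in\SL_{2,k}(R)$ and $A=SH_0$. As this holds functorially, the multiplication map $\SL_{2,k}\times H\to G$ is surjective, i.e. $\SL_{2,k}\cdot H=G$.
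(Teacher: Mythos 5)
Your proposal is correct, but there is nothing in the paper to compare it against: the authors state Proposition~\ref{prop:non-reduced_subgroup} with no proof at all (it is immediately followed by Remark~\ref{rem:quotient_V10}), evidently leaving it as a direct verification. So your argument is not a variant of the paper's proof; it supplies the missing one, and it does so correctly.

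Checking the details: (i) the matrices of $H$ are exactly $I+f_1P+f_2P^2$, and the normalized parameters of a product are $h_1=(f_1+g_1+f_2g_2)(1+\epsilon)$, $h_2=(f_2+g_2+f_1g_1)(1+\epsilon)$ with $\epsilon=f_1g_2+f_2g_1$; these are square-zero, every element satisfies $(I+N)^2=I$, and the parametrization is faithful in $\PGL_3$ (comparing diagonal entries forces the scalar to be $1$), so $H$ is indeed a commutative finite group scheme of order $4$ and normality of $K$ is automatic. (ii) The idempotency $N^2=N$ for $N=P+P^2$ in characteristic two makes $I+fN\mapsto 1+f$ a functorial group isomorphism $K\simeq\mu_2$. (iii) Your character $\chi=(1+f_1)(1+f_2)$ is the right fix for the failure of the naive map $1+f_1+f_2$: conjugation by the matrix diagonalizing $P$ carries $H_{k(\omega)}$ into the diagonal torus of $\PGL_{3,k(\omega)}$, weight-zero characters are genuine characters of that torus, multiplicativity descends to $k$, and $\ker\chi=K$ holds scheme-theoretically and not merely on points, since the ideals $(f_1+f_2+f_1f_2)$ and $(f_1+f_2)$ of $k[f_1,f_2]/(f_1^2,f_2^2)$ coincide (note $f_1f_2=f_1(f_1+f_2)$ and $(f_1+f_2+f_1f_2)(1+f_1)=f_1+f_2$). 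Your order count $|H/K|=4/2=2=|\mu_2|$ then identifies the quotient, and you are right that pointwise surjectivity would be vacuous here because all the groups involved are infinitesimal. (iv) The factorization $A=SH_0$ (with $H_0^{-1}=H_0$) is correct: I recomputed $AH_0$ and obtained your formula, the third column vanishes because $f_1^2=f_2^2=0$, and $\det S_{2\times 2}=(1+f_1f_2)^2\cdot(1+f_1f_2)\cdot(ad-bc)=(1+f_1f_2)^4=1$, so $S\in\SL_{2,k}(R)$; since this holds for every $k$-algebra $R$, the multiplication map $\SL_{2,k}\times H\to G$ is surjective. A small cosmetic point: commutativity of $H$ disposes of normality only once $K$ is known to be a subgroup scheme, which your step (ii) establishes, so the two observations should be ordered accordingly.
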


\begin{rem}\label{rem:quotient_V10}
Here we briefly describe the quotient of $Y$ by this subgroup $H$.
By the above description, an element $\begin{pmatrix}
 1&f_1&f_2\\
 f_2&1&f_1\\
 f_1&f_2&1
\end{pmatrix}$ of $H$
acts on $Y \subset \P^6$ by:
\[
\begin{pmatrix}
1 & f_2 &f_1 &f_1f_2 & 0 & 0 & 0\\
f_1&1&f_2&f_1&0&0&0\\
f_2&f_1&1&f_2&0&0&0\\
0&0&0&1+f_1f_2&0&0&0\\
0&0&0&f_1&1&f_2&f_1\\
0&0&0&f_2&f_1&1&f_2\\
0&0&0&f_1f_2&f_2&f_1&1
\end{pmatrix}.
\]
We can show that this action fixes quadratic forms $a_0^2$, $a_1^2$, $a_3^2$, $a_4^2$, $a_5^2$, $a_6^2$ and $a_1a_5+a_0a_3+a_3a_6$ respectively.
Define $V_{10} \subset \P^7_{(a_0,a_1,\dots,a_6,t)}$ by the equations
\begin{equation}
\label{eqn:V10}
\begin{cases}
a_0 a_4 - a_1 a_3 + a_2^2,\\
a_0 a_5 - a_1 a_4 + a_2 a_3,\\
a_0 a_6 - a_2 a_4 + a_3^2,\\
a_1 a_6 - a_2 a_5 + a_3 a_4,\\
a_2 a_6 - a_3 a_5 + a_4^2,\\
t^2-(a_1a_5+a_0a_3+a_3a_6).
\end{cases}
\end{equation}
Then the map $Y_{\spl} \to V_{10}$ given by
\[
(a_0,\dots,a_6) \mapsto (a_0^2,\dots,a_6^2 ,a_1a_5+a_0a_3+a_3a_6)
\]
provides the quotient map $Y \to Y/H$.
Note that $V_{10}$ is a singular Fano $3$-fold of Gushel--Mukai type ($g=6$), which admits a purely inseparable double cover over $Y_{\spl}$.
More precisely, the Frobenius morphism $Y_{\spl} \to Y_{\spl}$ factors as
\[
Y_{\spl } \to Y/H \simeq V_{10} \to Y_{\spl}.
\]
We can show that the singular locus of $V_{10}$ consists of three lines and three twisted cubics.
\end{rem}

\subsection{Non-trivial first order deformations}\label{subsection:deformation_p=2}
Since $h^1(T_Y) = 2$, the variety $Y$ admits a two-dimensional family of non-trivial first-order deformations.
Here we describe these deformations briefly.

Note that, by Theorem~\ref{thm:W5description}, we also have the corresponding two-dimensional family of non-trivial first-order deformations of the symmetric bilinear form $\varphi_{\spl}$.
This is easier to describe: Let $(\xi,\eta) \in k^2$, and define a ternary symmetric bilinear form 
$\varphi_{(\xi,\eta)}$ over the ring of dual numbers $k[t]/(t^2)$ as the form corresponding to the symmetric matrix
\[
\begin{pmatrix}
   t_1 \coloneqq \xi t & 1 & 0\\
    1 & t_2\coloneqq \eta t & 0\\
    0 & 0 & 1 
\end{pmatrix}.
\]
Then these define a two-dimensional family of non-trivial first-order deformations of the symmetric bilinear form $\varphi_{\spl}$.
Indeed, the reduced automorphism group $\SL_{2,k}$, which acts faithfully on $Y_{\spl}$,  acts on the above space of symmetric bilinear forms $\{(\xi, \eta) \in k^2\}$ via 
\[
\begin{pmatrix}a&b\\c&d\end{pmatrix} \cdot (\xi,\eta) = \begin{pmatrix}a^2&b^2\\c^2&d^2 \end{pmatrix} 
\begin{pmatrix}
 \xi \\ \eta
\end{pmatrix}.
\]
Thus, by Theorem~\ref{thm:W5description}, the corresponding $V_5$-schemes $Y_{(\xi,\eta)}$ over $k[t]/(t^2)$ give the two-dimensional family of non-trivial first order deformations.
By a similar argument to the proof of Definition-Proposition~\ref{defn-prop:splitform}, we  see that $Y_{(\xi,\eta)}$ is defined by the three equations
\[
\begin{cases}
    b_{34} = b_{25}+t_2b_{23}+t_1b_{45},\\
    b_{23} = b_{24}+t_2b_{12}+t_1b_{34},\\
    b_{24} = b_{25}+t_2b_{13}+t_1b_{35}
\end{cases}
\]
in $\Gr(U_5^\vee,2)$.
Equivalently, the deformation $Y_{(\xi,\eta)}$ is defined in $\P^6$ by the equations
\begin{equation}
\label{eqn:W5_deformation}
\begin{cases}
a_0 a_4 + a_1 a_3 + a_2^2 + t_1(a_0 a_6 + a_1 a_5 + a_2 a_4) + t_2 a_1^2,\\
a_0 a_5 + a_1 a_4 + a_2 a_3 + t_1 a_3 a_4 + t_2 a_0 a_3,\\
a_0 a_6 + a_2 a_4 + a_3^2 + t_1 a_3 a_5 + t_2 a_1 a_3,\\
a_1 a_6 + a_2 a_5 + a_3 a_4 + t_1 a_3 a_6 + t_2 a_2 a_3,\\
a_2 a_6 + a_3 a_5 + a_4^2 + t_1 a_5^2 + t_2(a_0 a_6 + a_1 a_5 + a_2 a_4).
\end{cases}
\end{equation}
Note that, by the above observation, the reduced automorphism group $\SL_{2,k}$ acts on $H^1(T_{Y})$ via the Frobenius twist of the standard representation, i.e.,\ the representation on $H^1(T_{Y})$ is isomorphic to
\[
\begin{pmatrix}a&b\\c&d\end{pmatrix} \mapsto \begin{pmatrix}a^2&b^2\\c^2&d^2 \end{pmatrix}.
\]

Note that, the $V_5$-scheme $Y$ over $B:=\mathbb{A}^2_{(t_1,t_2)}$ defined by (\ref{eqn:W5_deformation}) 
satisfies the following property:
\begin{itemize}
\item 
 All closed fibers of $Y$ are isomorphic to one another, but $Y$ is not locally trivial in the \'{e}tale topology.
\end{itemize}
Such examples were constructed in \cite[Section 8]{Poczobut_isotrivial} using Enriques surfaces in characteristic two and surfaces of general type in characteristic five.

\printbibliography
\end{document}